\documentclass[12pt,reqno]{amsart}
\usepackage{longtable,array,booktabs}
\usepackage[T1]{fontenc}
\usepackage{lmodern}
\usepackage{amsmath,amssymb,amsthm,mathrsfs}
\usepackage{aliascnt}
 \usepackage{enumitem}
\usepackage{geometry}
\usepackage{xcolor}

\usepackage{microtype}
\usepackage[colorlinks=true,linkcolor=blue!45!black,citecolor=blue!45!black,urlcolor=blue!45!black]{hyperref}
\usepackage[nameinlink,capitalise,noabbrev]{cleveref}
\allowdisplaybreaks
\setlist[enumerate]{topsep=0.45em,itemsep=0.25em,parsep=0pt}
\setlist[itemize]{topsep=0.45em,itemsep=0.25em,parsep=0pt}
\numberwithin{equation}{section}

\newtheorem{theorem}{Theorem}[section]
\newaliascnt{proposition}{theorem}
\newtheorem{proposition}[proposition]{Proposition}
\aliascntresetthe{proposition}
\newaliascnt{lemma}{theorem}
\newtheorem{lemma}[lemma]{Lemma}
\aliascntresetthe{lemma}
\newaliascnt{corollary}{theorem}
\newtheorem{corollary}[corollary]{Corollary}
\aliascntresetthe{corollary}
\theoremstyle{definition}
\newaliascnt{definition}{theorem}
\newtheorem{definition}[definition]{Definition}
\aliascntresetthe{definition}
\newaliascnt{convention}{theorem}
\newtheorem{convention}[convention]{Convention}
\aliascntresetthe{convention}
\newaliascnt{example}{theorem}
\newtheorem{example}[example]{Example}
\aliascntresetthe{example}
\theoremstyle{remark}
\newaliascnt{remark}{theorem}
\newtheorem{remark}[remark]{Remark}
\aliascntresetthe{remark}

\newtheoremstyle{mainresult}
  {0.75em}
  {0.75em}
  {\normalfont}
  {}
  {\bfseries}
  {.}
  {0.5em}
  {}
\theoremstyle{mainresult}
\newtheorem*{mainthmA}{Theorem A}
\newtheorem*{mainthmB}{Theorem B}
\newtheorem*{mainthmC}{Theorem C}
\newtheorem*{mainthmD}{Theorem D}
\newtheorem*{mainthmE}{Theorem E}
\theoremstyle{remark}

\crefname{proposition}{Proposition}{Propositions}
\Crefname{proposition}{Proposition}{Propositions}
\crefname{lemma}{Lemma}{Lemmas}
\Crefname{lemma}{Lemma}{Lemmas}
\crefname{corollary}{Corollary}{Corollaries}
\Crefname{corollary}{Corollary}{Corollaries}
\crefname{definition}{Definition}{Definitions}
\Crefname{definition}{Definition}{Definitions}
\crefname{convention}{Convention}{Conventions}
\Crefname{convention}{Convention}{Conventions}
\crefname{example}{Example}{Examples}
\Crefname{example}{Example}{Examples}
\crefname{remark}{Remark}{Remarks}
\Crefname{remark}{Remark}{Remarks}
\crefname{scalarlemma}{Lemma}{Lemmas}
\Crefname{scalarlemma}{Lemma}{Lemmas}

\newcommand{\CC}{\mathbb C}
\newcommand{\RR}{\mathbb R}
\newcommand{\ZZ}{\mathbb Z}
\newcommand{\QQ}{\mathbb Q}
\newcommand{\NN}{\mathbb N}
\newcommand{\TT}{\mathbb T}
\newcommand{\CP}{\mathbb{CP}}
\newcommand{\Sphere}{\mathbb S}
\newcommand{\Vol}{\operatorname{Vol}}
\newcommand{\Ind}{\operatorname{Ind}}

\newcommand{\rank}{\operatorname{rank}}
\newcommand{\Span}{\operatorname{span}}
\newcommand{\sgn}{\operatorname{sgn}}
\newcommand{\ord}{\operatorname{ord}}
\newcommand{\lcm}{\operatorname{lcm}}

\newcommand{\Id}{\operatorname{Id}}
\newcommand{\cH}{\mathcal H}
\newcommand{\cL}{\mathcal L}

\newcommand{\cM}{\mathcal M}

\newcommand{\dd}{\,d}
\newcommand{\bmu}{\boldsymbol\mu}

\newcommand{\cE}{\mathcal E}
\newcommand{\cN}{\mathcal N}

\newcommand{\cC}{\mathscr C}
\newcommand{\Diff}{\operatorname{Diff}}

\newcommand{\vol}{\operatorname{vol}}
\newcommand{\tr}{\operatorname{tr}}

\newcommand{\dist}{\operatorname{dist}}

\newcommand{\PaperTitle}%
 {The Geometry and Dynamics of Spiral Minimal Products}
\newcommand{\PaperShortTitle}{Spiral Minimal Products}
\newcommand{\PaperAuthors}{Haizhong Li and Yongsheng Zhang}

\title[\PaperShortTitle]{\small The Geometry and Dynamics of Spiral Minimal Products}
\author{Haizhong Li}
\address{Department of Mathematical Sciences, Tsinghua University, Beijing 100084, P. R. China}
\email{lihz@mail.tsinghua.edu.cn}
\author{Yongsheng Zhang}
\address{Academy for Multidisciplinary Studies, Capital Normal University, Beijing 100048, P. R. China}
\email{yongsheng.chang@gmail.com}
\date{}
\subjclass[2020]{53C42, 53D12, 58E10, 37J35, 70H33}
\keywords{Spiral minimal products,
minimal submanifolds in spheres,
weighted geodesics,
Hamiltonian reduction,
Sturm--Liouville operators,
Routh reduction,
complete-cell phase maps,
Morse index growth,
embeddedness,
special Legendrian submanifolds,
minimal Lagrangians}

\hypersetup{
 pdftitle={\PaperTitle},
 pdfauthor={\PaperAuthors},
 pdfsubject={Minimal submanifolds, phase dynamics, dense closing, Routh--Sturm index growth, and embeddedness},
 pdfkeywords={Spiral minimal products, weighted geodesics, Hamiltonian reduction, Routh reduction, dense closing, Morse index growth, embeddedness, special Legendrian submanifolds, minimal Lagrangians}
}

\begin{document}

       \begin{abstract}
                     We study the spiral product
           $G_\gamma(t,x,y)=(z_1(t)f_1(x),z_2(t)f_2(y))$,
           which couples two spherical $\cC$-totally real immersed factors
           through a profile curve $\gamma=(z_1,z_2)\subset\Sphere^3$.
           Its minimality is governed by a weighted-geodesic system:
           $G_\gamma$ is minimal precisely when both factors are minimal
           and $\gamma$ is an unparametrized geodesic of
           $|z_1|^{2k_1}|z_2|^{2k_2}g_{\Sphere^3}$.
           This flow is Liouville integrable and,
           when $k_1+k_2>0$,
           its phase map has an open dense full-rank locus.
           Consequently,
           closed profiles of arbitrarily large primitive order occur densely.
           For compact minimal factors,
           Routh reduction and Sturm oscillation give
           $\Ind(G_\gamma)\geq 
           \Ind(f_1)+\Ind(f_2)+2m_\gamma-3$
           for a closed oscillatory profile of primitive closing order $m_\gamma$.
           On the contact level,
           a factor-adapted choice of profiles produces,
           from any prescribed pair of compact connected embedded special Legendrians,
           embedded special Legendrian products of every sufficiently large prime closing order.
           Finally,
           canonical finite horizontal lifts and Hopf projection give Delaunay-type minimal Lagrangians in complex projective spaces.
           For compact embedded inputs and ordinarily closed profiles,
           the primitive spherical quotient is embedded,
           while its projective quotient is embedded exactly 
           when the reduced relative winding is one.
       \end{abstract}

\maketitle

\section*{Introduction}
          \label{se1}

            Minimal submanifolds in spheres have long been a central subject
            in differential geometry,
            e.g. \cite{Takahashi1966,Simons1968,Lawson1970}.
            They arise naturally as critical points of area
            and as links of minimal cones in Euclidean space.
     %       Their systematic construction, together with the global questions of closedness, Morse index, and embeddedness, 
        %    remains a basic problem.
%
                Constructing systematic families of closed examples
                and controlling their Morse index and embeddedness
                remain fundamental global problems.

              The spiral product studied here couples two spherical immersions through a profile curve in $\Sphere^3$.
             A priori,
             its minimality equations involve the geometry of all three inputs.
             Theorem A identifies their common structural reduction:
             for minimal $\cC$-totally real factors,
             the full minimality system becomes the geodesic equation of a  weighted metric on $\Sphere^3$,
             and the factors enter this equation only through their dimensions.
                      %The decoupling also has a second-variation counterpart consequence,
             The same decoupling persists in the second variation,
             allowing negative directions from the factors and the profile
             to be combined additively.
              Thus the weighted-geodesic system is the common geometric and dynamical source of the phase dynamics,
             Morse-index estimates,
             factor-adapted embeddedness,
             and projective constructions developed below.
%
   %          The spiral product thereby transfers the dynamics of a curve
      %       into the geometry of a higher-dimensional minimal submanifold:
         %    closed orbits produce closed products,
            % radial oscillations force Morse-index growth,
            % and phase return and factor incidences govern embeddedness.

          Let $Jz=iz$ on a complex Euclidean space and
          $\omega(u,v)=\langle Ju,v\rangle$.  
          On its unit sphere 
          set
          $\alpha_z(v)=\langle Jz,v\rangle$.

       \begin{definition}[$\cC$-totally real immersion]
           \label{df1}
           A smooth immersion $f:M^k\to\Sphere^{2n+1}\subset\CC^{n+1}$
            is \emph{$\cC$-totally real} 
            if $f^*\alpha=0$,
            equivalently,
           \begin{equation}\label{eq1}
                              \langle Jf,df(X)\rangle=0
                               \qquad\text{for every }X\in TM.
           \end{equation}
           Then $f^*\omega=0$ and
           $Jdf(TM)\subset N_fM$. 
           For $k=n$, 
                              this is the Legendrian condition.
       \end{definition}

          Throughout this paper, 
          all manifolds are smooth and without boundary.  
          A connected
          zero-dimensional factor is a point, with volume $1$ and normal
          Morse index $0$.
          Unless stated otherwise,
          the factors
          \[
                              f_i:M_i^{k_i}\longrightarrow \Sphere^{2n_i+1}\subset\CC^{n_i+1},
                               \qquad i=1,2,
          \]
          are spherical $\cC$-totally real immersions.

       \begin{definition}[Spiral product]
           \label{df2}
           Let $I\subset \RR$ be a connected interval
            and $\gamma=(z_1,z_2):I\to\Sphere^3\subset\CC^2$
            an immersed smooth curve.
           Write $a=|z_1|$ and $b=|z_2|$,
            so that $a,b\geq 0$ and $a^2+b^2=1$.
           For the coordinatewise $\TT^2$-action on $\Sphere^3\subset\CC^2$,
            the two \emph{coordinate circles} are
            $\{z_1=0\}$ and $\{z_2=0\}$.
           On every subinterval on which $z_1z_2\ne0$,
            choose real-valued phase lifts so that
           \begin{equation}\label{eq3}
                              z_1(t)=a(t)e^{i\theta_1(t)},\qquad
                              z_2(t)=b(t)e^{i\theta_2(t)}.
           \end{equation}
           The associated \emph{spiral product} is
           \begin{equation}\label{eq4}
            \begin{split}
                                &
                               G_\gamma:I\times M_1\times M_2
                               \longrightarrow\Sphere^{2n_1+2n_2+3},\\
                                &
                               G_\gamma(t,x,y)
                                =\bigl(z_1(t)f_1(x),z_2(t)f_2(y)\bigr).
            \end{split}
           \end{equation}
       \end{definition}

          By  \eqref{eq1}, 
                       the profile and factor directions  are mutually    orthogonal.  
          Hence the induced metric and volume density split blockwise. 
          Let $g_{\Sphere^3}$ be the standard round metric.
          Writing $a=\cos s$, $b=\sin s$, 
               set
          \[
             \rho(s)=(\cos s)^{k_1}(\sin s)^{k_2},\qquad
             \bar g=\rho(s)^2g_{\Sphere^3}.
          \]
          Then
          $\rho(s)|\gamma'|_{g_{\Sphere^3}}=|\gamma'|_{\bar g}$
          is the profile contribution to the volume density of $G_\gamma$.

       \begin{mainthmA}[Weighted-geodesic characterization]
           For the spherical $\cC$-totally real inputs $f_1,f_2$ above,
            an immersed spiral product  is minimal 
            if and only if 
            both factors are minimal and
         $\gamma$ is an unparametrized geodesic of $\bar g$.
       \end{mainthmA}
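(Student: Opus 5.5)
I will prove Theorem A by computing the mean curvature of $G_\gamma$ directly and splitting it into components along mutually orthogonal subbundles of the normal bundle. Work locally on a subinterval where $z_1z_2\neq0$, and choose local coordinates on $M_1$ and $M_2$. The tangent vectors are $G_t=(z_1'f_1,z_2'f_2)$, $G_{x}=(z_1\,df_1,0)$ and $G_y=(0,z_2\,df_2)$. By \eqref{eq1}, together with $\langle f_i,df_i\rangle=0$, the vector $G_t$ is orthogonal to $G_x$ and $G_y$. Hence the induced metric is $|\gamma'|^2dt^2\oplus a^2 g_{M_1}\oplus b^2 g_{M_2}$. The volume density is then $\rho|\gamma'|\,dv_{M_1}dv_{M_2}$, as already noted after Definition~\ref{df2}. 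The plan is to compute $\Delta G+(\dim)G$, i.e.\ the trace of the second fundamental form in $\CC^{N}$, and to project it onto the normal bundle of $G_\gamma$ in $\Sphere^{2n_1+2n_2+3}$.

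\textbf{Step 1: the second fundamental form.} The factor–factor block is $\nabla_{X}\nabla_{Y}G$ for $X,Y\in TM_1$. It equals $z_1(\nabla df_1)(X,Y)$ padded with zero. Its normal part consists of $z_1 H_{f_1}$ (the factor's spherical mean curvature, multiplied by $z_1$) plus a multiple of $(z_1f_1,0)$ coming from the $-g(X,Y)f_1$ term. The same holds for $M_2$. Mixed $t$–$x$ derivatives are tangential or vanish in trace. The $t$–$t$ term is $(z_1''f_1,z_2''f_2)$.

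The normal space decomposes orthogonally into three pieces. The first is $\{(z_1\nu_1,0):\nu_1\perp f_1,\,Jf_1,\,df_1(TM_1)\}$ together with its $M_2$ analogue; these contain the factor mean curvatures. The second is the span of the two vectors $(w_1f_1,w_2f_2)$ with $(w_1,w_2)$ normal to $\gamma$ in $T\Sphere^3$. The point is that the vectors $f_i$ and $Jf_i=if_i$ are orthogonal to $df_i$ and to the normal part $\nu_i$. Consequently the trace of the factor blocks projects onto the profile slice $\{(w_1f_1,w_2f_2)\}$ only through the terms $-k_1 (z_1f_1,0)/a^2\cdot a^2$ and the corresponding $M_2$ term.

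\textbf{Step 2: the mean curvature splits.} Each piece of $H$ must vanish separately. The factor pieces give $H_{f_1}=0$ and $H_{f_2}=0$, which is minimality of the factors. This uses $z_i\neq0$ and the isometric embedding of $\nu_i\mapsto(z_1\nu_1,0)$. For the profile piece, collect the terms in $\CC^2$ by the identification $(w_1f_1,w_2f_2)\leftrightarrow(w_1,w_2)$; this identification is isometric since $|f_i|=1$. The resulting condition is that the normal component in $T\Sphere^3$ of
\[
\frac{\gamma''}{|\gamma'|^2}-k_1\frac{(z_1,0)}{a^2}-k_2\frac{(0,z_2)}{b^2}
\]
vanishes. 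Here I use $\langle Jf_i,df_i\rangle=0$ to see that the $J$-directions do not mix with the factors.

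\textbf{Step 3: identify the weighted-geodesic equation.} For the conformal metric $\bar g=\rho^2g$, unparametrized geodesics satisfy $\kappa_g\,N=(\nabla\log\rho)^{\perp}$ up to sign. In $\CC^2$ coordinates we have $\nabla\log\rho=\nabla(k_1\log a+k_2\log b)$, whose gradient on $\Sphere^3$ is the tangential projection of $k_1(z_1,0)/a^2+k_2(0,z_2)/b^2$. This matches Step 2 exactly, sign included, which I will check with the trivial case $k_1=k_2=0$ (great circles). Equivalently, and as a check, the first variation of the volume $\int\rho|\gamma'|\,\vol(M_1)\vol(M_2)$ under profile variations gives the same conclusion.

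\textbf{Main obstacle.} The delicate point is the orthogonality bookkeeping. I must show that the $J$-direction components $(iz_1\phi f_1, iz_2\psi f_2)$ of the profile-normal space receive no contribution from the factor blocks. This uses $\langle\nabla df_i,Jf_i\rangle=-\langle df_i,Jdf_i\rangle=\omega$-terms, which vanish because $f_i^*\omega=0$. The other part of the obstacle is the points where $z_1z_2=0$. There I will argue by continuity and density. Immersedness of $G_\gamma$ forces $z_i\neq0$ whenever $k_i>0$. If instead $k_i=0$, the factor is a point and that factor block is absent, so the argument goes through with $\rho$ smooth there.
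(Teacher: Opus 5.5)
Your proposal is correct and is essentially the paper's proof of \cref{th1}. Both split the mean curvature orthogonally into two factor-normal pieces and a profile-normal piece, using $\cC$-total reality and the vanishing Reeb component of $B^{f_i}$ from \cref{lm2}. Both then use the conformal-change formula to identify the profile piece with $\nabla^{\Sphere^3}_{T_\gamma}T_\gamma=(\nabla^{\Sphere^3}\log\rho)^\perp$. Your Euclidean bookkeeping with $\nabla\log|z_i|$ in place of $S^\perp$ avoids phase lifts, so it applies verbatim at coordinate-circle points with $k_i=0$. That direct argument is what you need there, since a density argument fails if $\gamma$ runs along a coordinate circle on an interval.

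One fix: the test case $k_1=k_2=0$ cannot detect the sign of the $\nabla\log\rho$ term, because that term vanishes there. Instead read the sign off $\nabla^{\bar g}_XY=\nabla_XY+X(\log\rho)Y+Y(\log\rho)X-g(X,Y)\nabla\log\rho$, which gives exactly the sign your Step 2 produces.
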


          We call such a minimal immersion a 
                                                                       \emph{spiral minimal product}
          and
           the corresponding $\bar g$-geodesic a 
                                                                                    \emph{profile}.  
     The profile equation depends only on $k_1, k_2$
     and  hence
                        the same profile solutions uniformly couple
                        any pair of minimal $\cC$-totally real factors
                        of those dimensions.
                        % As a result, the spiral minimal product actually gives an algorithm.
When the first factor is a point and $\theta_2\equiv0$,
the second component undergoes no rotation and is only rescaled by  $b(t)$.
Consequently, $f_2$ can be any minimal immersion into  a unit sphere.
Taking $f_2$ to be the identity embedding of a round sphere
reduces the construction to the classical cohomogeneity-one setting of
Hsiang--Lawson \cite{HsiangLawson1971}
and recovers the spherical rotational family studied by Otsuki
\cite{Otsuki1970}.
          It is worth pointing out that
           every spherical minimal immersion becomes minimal $\cC$-totally real
           via complexification of the ambient Euclidean space,
           so the construction can be iterated.

          In complex Euclidean space,
          Castro--Urbano \cite{CastroUrbano2004}
          constructed special Lagrangian immersions from two minimal
          Legendrian factors and a suitable Lagrangian surface in $\CC^2$.
          In the spherical contact setting,
          Castro--Li--Urbano \cite{CastroLiUrbano2006}
          formulated the block construction \eqref{eq4}
          with two Legendrian factors and a Legendre curve in $\Sphere^3$,
          and derived the profile equation governing contact-minimality,
          including
          the minimal Legendrian case.
          Haskins--Kapouleas \cite{HaskinsKapouleas2012}
          subsequently developed the global closing theory for this construction
          and proved,
          for standard real equatorial sphere inputs,
          that every closed profile in their family yields an embedded image.
           %in particular,
           %with standard real equatorial sphere inputs, they further proved that the resulting images with every closed profiles are all embedded submanifolds.

          Theorem A goes beyond the contact setting just described
          and reveals the decoupling principle underlying the construction.
          Once the two factors are minimal $\cC$-totally real,
          the PDE system expressing the minimality of the spiral product
          \eqref{eq4} reduces to the geodesic equation
          of the weighted $\Sphere^3$.
          The subsequent questions are therefore how abundant closed profiles are,
          how the Morse indices of the resulting spiral minimal products behave,
          and which closed profiles yield embedded spiral minimal products.

            The starting point for this analysis is the $\TT^2$-symmetry of the weighted metric under  
            rotations of the two phase variables. 
            For 
            an
            affinely parametrized
             $\bar g$-geodesic, set
            \[
\cH:=\frac12|\dot\gamma|_{\bar g}^{2},
\qquad
\mu_1=\rho^2\cos^2s\,\dot\theta_1,
\qquad
\mu_2=\rho^2\sin^2s\,\dot\theta_2.
\]
            Here $\mu_1,\mu_2$ are the Noether momenta and $\cH$ is the geodesic energy.
            These three functions are generically independent and pairwise Poisson-commuting first integrals. 
            Hence the profile system is Liouville integrable.
            Replacing $\gamma(t)$ by $\gamma(\lambda t)$ with $\lambda>0$
                 sends
                        $
                              (\mu_1,\mu_2,\cH)
                                   \mapsto
                                   (\lambda\mu_1,\lambda\mu_2,\lambda^2\cH).
                         $
          Thus, on $\mu_1\ne0$, 
                    fix
                                     $\varepsilon_1:=\sgn(\mu_1)$ and introduce the scale-invariant coordinates on this chart
                        \begin{equation}\label{eq5}
                        C_1=\frac{\mu_2}{\mu_1},     \qquad
                        C_2=\frac{2\cH}{\mu_1^2}.
                        \end{equation}
            Fixing $(C_1, C_2)$ reduces $s$-dynamics to a one-dimensional effective-potential equation:
            % and thus performs a pendulum oscillation 
            $s$ oscillates between two turning values, 
            while the phase variables are recovered by quadrature.

       \begin{definition}[Regular oscillatory profiles and complete cells]
           \label{df3}
           A $\bar g$-geodesic profile
            is \emph{regular oscillatory} if its magnitude $s$
            oscillates between $s_-$ and $s_+$, where
            $
                              0<s_-<s_+<\frac{\pi}{2}.
            $
           A \emph{complete cell} is one oriented magnitude
            excursion $s_-\to s_+\to s_-$.
       \end{definition}
     
     \begin{definition}
[Phase map and primitive ordinary closing order]
\label{df4}
Let $\Delta_i(C_1,C_2)$ be the  increment of
$\theta_i$ over one oriented complete cell,
and define
\begin{equation}\label{eq6}
   \Pi(C_1,C_2)
   :=
   \left(
      \frac{\Delta_1(C_1,C_2)}{2\pi},
      \frac{\Delta_2(C_1,C_2)}{2\pi}
   \right).
\end{equation}
The complete-cell return on the phase torus is
$
   \left(e^{i\Delta_1},e^{i\Delta_2}\right).
$
Hence the corresponding profile closes ordinarily after
$m$ cells precisely when
$
   m\Pi(C_1,C_2)\in\ZZ^2.
$
When such an $m$ exists,
the least positive one is denoted by $m_\gamma$
and called the \emph{primitive ordinary closing order}.
\end{definition}

       %   The  normalized chart on $\mu_2\neq 0$ is similar with 
    %      $\left(\frac{1}{C_1},\frac{C_2}{C_1^2}\right)$
%on the overlap of the two charts.
         %An alternative understanding is global.
         %The normalized chart on $\mu_2\neq 0$
        % and  transition map are described in the proof of Theorem \ref{th5}.
       \begin{mainthmB}[Generic full rank and dense closing]
          In the chart $\mu_1\ne0$,
           the regular doubly spiral locus has the two chambers
           $C_1>0$ and $C_1<0$.
          If $k_1+k_2>0$,
           on each chamber, 
           $\Pi$
           is real analytic and its full-rank locus
          $
             \{(C_1,C_2):
                  \rank D\Pi(C_1,C_2)=2\}
          $
           is open and dense.
           Moreover, for every $M\in\NN$,
the parameters determining ordinarily closed profiles
$\gamma$ with $m_\gamma>M$ are dense  in each chamber.
    %      Every nonempty open subset of $\mathscr V$ contains infinitely
      %     many ordinary-closing parameters with unbounded primitive closing
      %     orders.
       \end{mainthmB}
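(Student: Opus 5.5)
Our plan is to write everything explicitly in the coordinates $(s,\theta_1,\theta_2)$ and reduce $\Pi$ to a pair of abelian-type integrals. We use $s$ as the magnitude variable, with $a=\cos s$ and $b=\sin s$.

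\textbf{Effective potential.} With $\bar g=\rho^2(ds^2+\cos^2s\,d\theta_1^2+\sin^2s\,d\theta_2^2)$, the momenta give
\[
\dot\theta_1=\frac{\mu_1}{\rho^2\cos^2s},\qquad \dot\theta_2=\frac{\mu_2}{\rho^2\sin^2s}.
\]
The energy identity then reads
\[
\rho^4\dot s^2=2\cH\rho^2-\frac{\mu_1^2}{\cos^2s}-\frac{\mu_2^2}{\sin^2s}.
\]
Divide by $\mu_1^2$ and change variables $u=\sin^2s\in(0,1)$. This gives
\[
\frac{\Delta_1}{2\pi}=\frac{\varepsilon_1}{\pi}\int_{u_-}^{u_+}\frac{du}{2(1-u)\sqrt{u(1-u)}\sqrt{P(u)}},\qquad P(u)=\frac{C_2\,\rho^2}{1}-\frac{1}{1-u}-\frac{C_1^2}{u},
\]
where $\rho^2=(1-u)^{k_1}u^{k_2}$. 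The formula for $\Delta_2$ is analogous, with $C_1/u$ in place of $1/(1-u)$ and carrying the factor $\sgn(C_1)\varepsilon_1$.

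The regular oscillatory locus is the set where $P$ has exactly two simple zeros $0<u_-<u_+<1$ with $P>0$ between them. This is an open condition, and it is nonempty, since for fixed $C_1$ it holds for $C_2$ slightly above the critical value making the minimum of the effective potential touch zero. This yields the two chambers $C_1>0$ and $C_1<0$ (the condition $\mu_2\neq0$ is forced for regular oscillation away from the circles). On each chamber, $\sqrt{P}$ vanishes like a square root at simple turning points, so the integrals are real analytic. To see this, substitute $u=\frac{u_++u_-}{2}+\frac{u_+-u_-}{2}\cos\phi$; the integrand then becomes analytic in $(\phi,C_1,C_2)$, and $u_\pm$ depend analytically on $(C_1,C_2)$ by the implicit function theorem at simple roots. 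Each chamber should also be shown to be connected, for instance as a region lying above the graph of the critical-value function $C_2^{*}(C_1)$.

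\textbf{Full rank.} By analyticity and connectedness, $\det D\Pi$ is either identically zero on a chamber or vanishes only on a nowhere-dense analytic set. So it suffices to exhibit a single point where $\det D\Pi\neq0$. We plan to compute asymptotically near the bottom of the well, the elliptic equilibrium, which is a relative equilibrium given by a torus-knot circle at $u=u_0(C_1)$.

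In that limit, $\Pi$ tends to $\frac{1}{2\pi}(\text{frequency ratios})$. Explicitly, the leading term of $\Delta_i$ equals (phase velocity)$\times$(period of small radial oscillation), both computed from the second derivative of the potential at $u_0$. The derivative in $C_2$ appears only at the next order, through the quadratic (amplitude) correction, in the manner of a Birkhoff normal form. The Jacobian determinant therefore reduces, to leading order, to a nonvanishing combination of $d u_0/dC_1$, the leading frequencies, and the first anharmonic coefficient.

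We expect this twist-type nondegeneracy computation to be the main obstacle. The hypothesis $k_1+k_2>0$ must enter exactly here. When $k_1=k_2=0$ the metric is round, all geodesics are great circles, and $\Pi$ is degenerate (its rank is at most $1$). So we would first try to compute the expansion explicitly in terms of $k_1,k_2$ and show that the determinant is a nonzero multiple of an expression that vanishes only when $k_1+k_2=0$. If that proves messy, the fallback is the opposite boundary, the separatrix limit $C_2\to\infty$ or degeneration toward a coordinate circle. There, logarithmic or power singularities of different orders in the two components force independence.

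\textbf{Dense closing.} Take a nonempty open set $U$ in a chamber. Since the full-rank locus is open and dense, $U$ contains a point where $D\Pi$ is invertible, so $\Pi$ is a local diffeomorphism onto an open set $V\subset\RR^2$. Rational points $(p_1/m,p_2/m)$ with reduced common denominator $m>M$ are dense in $V$; for instance, take $m$ prime and large. The preimages of these points are parameters with $m\Pi\in\ZZ^2$ and minimal such $m$ equal to the common denominator, so $m_\gamma>M$ by \cref{df4}. Hence such parameters are dense in each chamber.
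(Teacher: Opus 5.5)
\textbf{Gap: the full-rank step is not proved.} Your setup is sound: the $u$-integrals, analyticity via the cosine substitution at simple turning points, connectedness of each chamber as a supergraph, and the final density argument. The missing piece is the decisive step of exhibiting one point with $\det D\Pi\neq0$.

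You only announce a Birkhoff/twist computation at the steady boundary and state what you \emph{expect} it to give. Nothing in the proposal rules out that $\det D\Pi$ vanishes identically along $C_2=R_{k_1,k_2}(C_1)$ while being nonzero inside. That would happen if the limiting curve of steady frequency ratios were everywhere parallel to the first-order twist direction. So this route may fail to detect the rank, and in any case it is not carried out.

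Your fallback does not help. The effective potential is a single well, so there is no separatrix. Moreover, for each fixed $C_1$ in a chamber one finds $\Pi(C_1,C_2)\to\bigl(\frac{\varepsilon_1}{2(k_1+1)},\frac{\varepsilon_1\sgn C_1}{2(k_2+1)}\bigr)$ as $C_2\uparrow\infty$, independently of $C_1$. So the high-energy end is also degenerate at leading order.

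\textbf{Missing idea: work across the wall $C_1=0$.} The paper works on the whole connected domain $\mathcal U_{k_1,k_2}$ rather than chamber by chamber. Your remark that $\mu_2\neq0$ is forced by regular oscillation is false for $k_2>0$. By \cref{pr3}, the singly spiral profiles on $C_1=0$ still oscillate inside $(0,\frac{\pi}{2})$, so $\Pi$ is real analytic across the wall between the two chambers. For $k_2=0$, one uses the adjusted lift $\Pi^\sharp$ of \cref{lm6}.

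On that wall the computation is trivial by parity. Since $D_{-C_1,C_2}=D_{C_1,C_2}$, $\Delta_1$ is even and $\Delta_2$ is odd in $C_1$. Hence $\partial_{C_1}\Delta_1$ and $\partial_{C_2}\Delta_2$ vanish there, and
\[
\det D\Pi(0,C_2)=-\frac{1}{(2\pi)^2}\,\partial_{C_2}\Delta_1(0,C_2)\,\partial_{C_1}\Delta_2(0,C_2).
\]
Both factors are nonzero at suitable points:
\begin{itemize}
\item The transverse factor $\partial_{C_1}\Delta_2(0,C_2)=2\varepsilon_1\int_{s_-}^{s_+}\cot s\,D_{0,C_2}(s)^{-1/2}\,\dd s$ is nonzero. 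For $k_2=0$, \cref{lm6} gives $\varepsilon_1\partial_{C_1}\Delta_2^\sharp(0,C_2)>0$.
\item $\partial_{C_2}\Delta_1(0,C_2^*)\neq0$ for some $C_2^*$, because the analytic function $\Delta_1(0,\cdot)$ has different limits at the threshold and at infinity (\cref{lm8}). This is exactly where $k_1+k_2>0$ enters.
\end{itemize}

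The identity theorem on the connected $\mathcal U_{k_1,k_2}$ then makes $\{\det D\Pi=0\}$ nowhere dense there, hence nowhere dense in each open chamber. Your closing argument then finishes the proof.
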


    For compact inputs,
     Theorem B therefore yields densely many closed spiral minimal products
     of unbounded primitive order.
     Every closed minimal submanifold of positive dimension and  codimension in a unit sphere is unstable \cite{Simons1968}.
     The next result quantifies this instability in terms of the primitive
     closing order.
     %Theorem B can generate infinitely many closed minimal submanifolds in spheres (based on compact inputs). 
    %It is well known that closed minimal submanifold of positive dimension and codimension in a round sphere is unstable \cite{Simons1968}.
     %Then how unstable our spiral minimal products could be?
     %Here we derive the following lower bound for the Morse index $\Ind(G_\gamma)$.

       \begin{mainthmC}[Additive index growth]
           Assume that the factors are compact minimal $\cC$-totally real
           immersions 
           and 
           that $\gamma$ is an ordinarily closed regular oscillatory profile 
           with $\mu_1^2+\mu_2^2\ne0$ and primitive closing order $m_\gamma$.  
           Then
           $
             \Ind(G_\gamma)\geq 
             \Ind(f_1)+\Ind(f_2)+2m_\gamma-3.
           $
%           Here $\Ind(f_i)$ is the normal Morse index in the block sphere
%           $\Sphere^{2n_i+1}$.
         %  If the spiral immersion descends after $m_G$ complete cells to a
        %   quotient immersion $\overline G$, then
        %   $\Ind(\overline G)\geq \max\{0,2m_G-3\}$.
       \end{mainthmC}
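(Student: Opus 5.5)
We will show that the index form of $G_\gamma$ is negative definite on a direct sum of three mutually orthogonal families of normal variations: $\Ind(f_1)$ directions coming from $f_1$, $\Ind(f_2)$ directions coming from $f_2$, and $2m_\gamma-3$ directions coming from the profile. Throughout, $\gamma$ is closed with period $m_\gamma$ complete cells, so $G_\gamma$ is a compact immersion of $\RR/(m_\gamma T)\ZZ\times M_1\times M_2$. The normal bundle splits blockwise by \eqref{eq1}: the first-block normal directions $z_1(t)\nu_1$ with $\nu_1\in N_{f_1}M_1$, the analogous second-block directions, and the profile normal directions spanned by $(z_1'f_1,z_2'f_2)$-orthogonal combinations of $(\zeta_1 f_1,\zeta_2 f_2)$ with $(\zeta_1,\zeta_2)\perp\gamma,\gamma'$ in $\CC^2$, i.e. normals to $\gamma$ in $\Sphere^3$. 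The decoupling step is to compute the second variation on each family. This is the second-variation counterpart of Theorem A mentioned in the introduction.

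\emph{Factor directions.} Let $\nu$ be a negative direction for $f_1$. We take $V=\phi(t)\,(z_1\nu,0)$ with a weight $\phi(t)>0$ chosen so that the cross terms cancel; the natural choice is $\phi$ a power of $a=\cos s$. Using the warped structure $g=|\gamma'|^2dt^2+a^2g_{M_1}+b^2g_{M_2}$, the index form becomes
\[
\int \rho\,|\gamma'|\Bigl(a^{-2}\bigl(Q_{f_1}\text{-integrand}\bigr)+\text{($t$-derivative and curvature terms in }\phi)\Bigr).
\]
We choose $\phi$ so that the $t$-part vanishes identically or is nonpositive. Then summing over a negative eigenspace of the Jacobi operator of $f_1$ gives an $\Ind(f_1)$-dimensional negative subspace. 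The same construction applies to $f_2$. Because the three families are pointwise orthogonal and the operator is block diagonal on these blocks, the quadratic form is block diagonal and the negative spaces add. For fibrewise-constant variations, the cross terms between the profile block and the factor blocks should vanish after integrating over $M_i$. The reason is that factor eigenfunctions in the negative eigenspace are $L^2$-orthogonal to the constants, which are the only ones that couple.

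\emph{Profile directions (Routh--Sturm).} For variations $V=(\zeta_1f_1,\zeta_2f_2)$ depending only on $t$, the index form of $G_\gamma$ reduces, after integrating over $M_1\times M_2$, to the second variation of $\bar g$-length of $\gamma$ in $(\Sphere^3,\bar g)$. The factor volumes then enter only as constants, plus zeroth-order terms arising from the normal curvature of the fibres; these are exactly the terms that are nonpositive. So it suffices to bound from below the index of $\gamma$ as a closed $\bar g$-geodesic, restricted to $\TT^2$-invariant-compatible variations. We use Routh reduction: fixing the momenta $\mu_1,\mu_2$ eliminates the cyclic phases and yields a one-dimensional Lagrangian in $s$ with effective potential $V_{\mathrm{eff}}(s)$. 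On this reduced system, the Jacobi operator along $\gamma$ restricted to the radial direction becomes a periodic Sturm--Liouville operator $L=-\frac{d}{dt}(p\frac{d}{dt})+q$ on the circle of length $m_\gamma T$. The function $\dot s$ is a Jacobi field of $L$ (time translation), and it has exactly $2m_\gamma$ zeros on the period, two per cell. By Sturm oscillation for periodic problems, a solution with $2m_\gamma$ zeros lies at eigenvalue index $2m_\gamma-1$ or $2m_\gamma$, so at least $2m_\gamma-1$ eigenvalues are strictly negative, up to a loss of one. The remaining loss in $2m_\gamma-3$ comes from correcting the Routh-reduced form back to the unreduced form. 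The reduced Routhian form dominates the true index form only on the codimension-one subspace where the linearized momentum variation vanishes. Imposing that constraint, together with the closing condition on the phases, costs at most two dimensions. The hypothesis $\mu_1^2+\mu_2^2\ne0$ is used here: it guarantees that $V_{\mathrm{eff}}$ genuinely comes from a nontrivial Routh reduction, and that the constraint is a nondegenerate linear functional.

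\emph{Main obstacle.} The delicate step is the comparison between the Routh-reduced quadratic form and the true second variation. We must show that phase variations can be chosen, by solving $\delta\mu_i=\text{const}$ through quadrature, to be periodic, and that the constants lost number at most two. The first approach would be to take the phase variation solving $\delta\mu_i=c_i$ and impose periodicity of $\delta\theta_i$. These are two linear conditions on the $(2m_\gamma-1)$-dimensional Sturm negative space, leaving $2m_\gamma-3$ negative directions. Adding the $\Ind(f_1)+\Ind(f_2)$ factor directions, which are orthogonal, then gives the stated bound.
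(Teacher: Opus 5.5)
Your architecture is the paper's: two factor sectors plus a Routh--Sturm profile sector, pairwise $Q_{G_\gamma}$-orthogonal, with $2m_\gamma-1$ Sturm directions cut by two closing constraints. The profile part is essentially right in substance. The genuine gap is in the factor sector.

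\textbf{The factor sector.} The blockwise normal splitting you state is false. If $\nu$ has a $Jf_1$-component, then $(z_1\nu,0)$ is not normal to $G_\gamma$, since $\langle(z_1Jf_1,0),G_\gamma'\rangle=a^2\theta_1'\neq0$ whenever $\mu_1\neq0$. The correct splitting is $\cN_1^\circ\oplus\cN_2^\circ\oplus\cN_{\mathrm{prof}}$, with $\cN_i^\circ\perp Jf_i$.

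Restricting to $\cN_1^\circ$ does not rescue the argument, because negative directions of $Q_{f_1}$ generally have $Jf_1$-components and the index can disappear entirely. For the real equatorial Legendrian $\Sphere^2\subset\Sphere^5$ one has $\Ind(f)=3$ (the constant sections $iv$). On sections $JY$ of $\cN^\circ$, however, $\nabla^\perp_X(JY)=J\nabla_XY-\langle X,Y\rangle Jf$, hence $Q_f(JY)=\int(|\nabla Y|^2-|Y|^2)\geq0$ by Bochner.

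A weight $\phi(t)$ cannot cancel anything either. For normal $W$ and $\mathcal J$ the Jacobi operator, $Q(\phi W)=\int\phi^2\langle\mathcal JW,W\rangle+\int\dot\phi^{2}v_\gamma^{-2}|W|^2$, so a weight only reweights and adds a nonnegative term. The sign of the residual ``$t$-part'' is exactly what must be proved, and you only assert it.

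The missing idea is \cref{pr9}. Take $\phi\equiv1$ and lift an arbitrary $\xi_i\in\Gamma(\cN_{f_i}^{\Sphere})$, including its $Jf_i$-part. Use that at a closed minimal immersion the Hessian sees only the normal projection $\mathcal F_i^\perp\xi_i$. Then compute the volume of the honest variation obtained by replacing $f_i$ with $f_{i,\varepsilon}$. The only new metric entry is the mixed block $a_i^2\dot\theta_i\beta_{i,\varepsilon}$, with $\beta_{i,\varepsilon}=f_{i,\varepsilon}^*\alpha$. The profile factor of the density becomes $\rho\sqrt{v_\gamma^2-a_i^2\dot\theta_i^2|\beta_{i,\varepsilon}|^2}$, which gives $Q_{G_\gamma}(\mathcal F_i^\perp\xi_i)=\mathcal A_iQ_{f_i}(\xi_i)-\mathcal B_i\|\mathscr C_i\xi_i\|^2\leq\mathcal A_iQ_{f_i}(\xi_i)$. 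This transfers all of $\Ind(f_i)$.

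\textbf{Orthogonality.} This step needs a different justification. Pointwise orthogonality of the sectors does not give $Q_{G_\gamma}$-orthogonality, and the mechanism is not that negative eigenfunctions are orthogonal to constants. In the two-parameter version of the same volume computation, the contact-defect factor is quadratic in $\beta_{i,\varepsilon}$. So every mixed derivative with a profile variation factors through the first variation of $\Vol(f_i)$, which vanishes by minimality. The two defects enter in separate squares. This kills the cross terms for every $\xi_i$, not only for negative eigenfunctions.

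\textbf{The profile sector.} State the square completion of \cref{th7} explicitly:
$I_E(V,V)=Q_R(\nu)+\sum_i\int_0^TW_i^{-1}(\delta\mu_i)^2\,dt$, where $\delta\mu_i=W_i\dot v_i+W_i'\dot\theta_i\nu$. Thus $I_E\geq Q_R$ always, with equality exactly when $\delta\mu_i\equiv0$. Periodicity of the $v_i$ so obtained is precisely the pair of conditions $\mathfrak L_i(\nu)=0$.

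Periodic Sturm theory gives $\Ind(Q_R)\geq2m_\gamma-1$ with no further ``loss of one''; with such a loss you would end at $2m_\gamma-4$.

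The field $\nu\partial_s+v_1\partial_{\theta_1}+v_2\partial_{\theta_2}$ is not $\bar g$-normal. Project off $\eta\dot\gamma$ before lifting; this changes $I_E$ by $-c_\gamma^2\int\dot\eta^2$, so dimension and negativity survive. Finally, the lift gives exactly $\Vol(M_1)\Vol(M_2)I_L(V,V)$, because $\Vol(G_{\gamma_\varepsilon})=\Vol(M_1)\Vol(M_2)L_{\bar g}(\gamma_\varepsilon)$. There are no fibre-curvature terms to control.
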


   The normal Jacobi problem is generally matrix-valued.
   Here the block structure and Routh reduction isolate a scalar periodic Sturm problem, 
   whose negative directions combine orthogonally with those from the factors.

           We now turn to the distinguished contact level
$\mu_1+\mu_2=0$,
equivalently $C_1=-1$ in the chart above.
Here the profile is Legendrian,
          and the product of Legendrian factors is Legendrian.  
          After a constant  phase rotation, 
          a connected minimal Legendrian becomes special  Legendrian \cite{CastroLiUrbano2006}
          and the cone over it is special Lagrangian
          \cite{HarveyLawson1982}.

          The Strominger--Yau--Zaslow picture relates mirror Calabi--Yau manifolds through dual special Lagrangian torus fibrations \cite{StromingerYauZaslow1996}.
          On the singular side,
          special Lagrangian cones and their special Legendrian links
          provide natural local models for conical singularities \cite{JoyceConicalI}.
             At the immersed level, 
             integrable systems and spectral curves yield families
             of special Legendrian tori
             \cite{McIntosh2003,CarberryMcIntosh2004},
             while Haskins--Kapouleas constructed higher-genus links by a gluing method \cite{HaskinsKapouleas2007}.
             By contrast, compact embedded links are substantially harder to obtain. 
             Classical embedded examples 
                                 include the Harvey--Lawson tori \cite{HarveyLawson1982}, 
                                              Joyce's invariant examples \cite{Joyce2002Symmetries}
                                              and 
                                              Haskins's $S^1$-invariant tori \cite{Haskins2004}. 
                  More recently,
                  Heller--Pedit--Ouyang \cite{HellerOuyangPedit2026}
                  proposed a method for constructing compact embedded
                  special Legendrian surfaces in $\Sphere^5$ with unbounded genus.
          
          Embedded special Legendrian links are central to Joyce's theory of conical singularities 
          \cite{JoyceConicalI},
          play a key role in recent bridge constructions 
          \cite{DimlerGaia2026}
          and 
          enter the
          cylindrical tangent-cone uniqueness results 
          in \cite{CollinsLi2023}. 
          In our setting, however, closedness of the profile alone does not ensure embeddedness: 
          even embedded factors may give a self-intersecting product; 
          see \cref{ex2}. 
          Combining finiteness of the factors' incident phase sets with a low-energy estimate, 
          we select prime-order profiles that exclude all unwanted coincidences.
 %          We overcome this %difficulty 
%           through a factor-adapted selection of 
%           profiles.
           %avoid the phase-incidence sets of the prescribed factors
           %can be chosen to avoid the finitely many nontrivial phase incidences. 
           %This yields an infinite family of compact embedded special Legendrian products from every prescribed pair.
         % On the contact slice, prime-order profiles can be chosen to avoid the finitely many nontrivial phase incidences.

       \begin{mainthmD}[Factor-adapted embedded special Legendrian families]
          Let $f_i:L_i^{k_i}\hookrightarrow\Sphere^{2k_i+1}$,
           $i=1,2$, 
           be any pair of compact connected embedded special Legendrians
           with  $k_1+k_2>0$.
          For every sufficiently large prime $\ell$,
           there is a contact profile of primitive ordinary closing order
           $\ell$ whose associated product
          $
             G_\ell: 
             S^1\times L_1\times L_2
              \hookrightarrow\Sphere^{2k_1+2k_2+3}
          $
           is an embedded special Legendrian satisfying
          $
             \Ind(G_\ell)\geq 
              \Ind(f_1)+\Ind(f_2)+2\ell-3.
          $
         Moreover,  
         %the family 
         $\{G_\ell\}$
         has uniformly bounded  second fundamental forms % are uniformly bounded 
         and linear volume growth in $\ell$.
         % The family contains an infinite pairwise noncongruent subfamily.
       \end{mainthmD}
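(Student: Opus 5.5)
We restrict to the contact slice $C_1=-1$, i.e.\ $\mu_1+\mu_2=0$, and use the remaining parameter $C_2$ (equivalently the energy level) to tune the phase map. On this slice the profile is Legendrian in $\Sphere^3$. Theorem A then makes $G_\gamma$ a minimal Legendrian product, and a constant phase rotation makes it special Legendrian. Restricted to the slice, $\Pi$ becomes a real-analytic curve $C_2\mapsto(\Delta_1,\Delta_2)/2\pi$. The Legendrian constraint ties $\dot\theta_1$ and $\dot\theta_2$ together, so the two phase increments satisfy a linear relation; I expect something like $\cos^2 s\,\dot\theta_1+\sin^2 s\,\dot\theta_2=0$ to force $\Delta_1$ and $\Delta_2$ to be determined by a single rotation number $\Theta(C_2)$ up to fixed integer data.

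\textbf{Step 1 (low-energy asymptotics).} Near the equilibrium of the effective potential, the profile is the Clifford-type circle at $\tan^2 s=k_2/k_1$; when one $k_i$ vanishes the equilibrium moves to the corresponding boundary regime. Linearizing there gives the limiting value of $\Theta$ as the oscillation amplitude tends to $0$. I would show that $\Theta$ is real analytic and nonconstant on the regular oscillatory interval, with irrational limit or nonzero derivative at the limit. I then aim for an expansion $\Theta(C_2)=\Theta_0+c\,\epsilon^2+O(\epsilon^4)$ with $c\ne0$, where $\epsilon$ is the amplitude. It follows that for every sufficiently large prime $\ell$ there is a rational value $p/\ell$ in lowest terms in the range of $\Theta$, attained at small amplitude. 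This gives a closed contact profile of primitive order exactly $\ell$; primality guarantees that no proper divisor closes it. In fact there are roughly $\gtrsim\ell\cdot|\text{range}|$ admissible numerators $p$, which leaves room to discard finitely many bad ones.

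\textbf{Step 2 (embeddedness, the main obstacle).} Suppose $G(t,x,y)=G(t',x',y')$. Taking norms of the blocks gives $|z_1(t)|=|z_1(t')|$. Since $f_1,f_2$ are embedded, we get $x'$ and $y'$ with $f_1(x')=e^{i\phi_1}f_1(x)$ and $f_2(y')=e^{i\phi_2}f_2(y)$, where $e^{i\phi_j}$ are the phase differences of $z_j(t)$ and $z_j(t')$. Compactness and the special Legendrian condition should make each incident phase set $\{e^{i\phi}:e^{i\phi}f_i(L_i)\cap f_i(L_i)\neq\emptyset\}$ finite. Indeed, $e^{i\phi}f_i(L_i)$ is special Legendrian with phase rotated by $(k_i+1)\phi$, so two distinct such submanifolds intersect only at isolated points. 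I would therefore argue that $\phi$ lies in the finite group of $(k_i+1)$-th roots of unity plus the trivial element, after handling connectedness; this is the finiteness the introduction alludes to. There are then two kinds of coincidence. Points with $s(t)=s(t')$ at the same position within a cell differ by the cell return $(e^{i\Delta_1},e^{i\Delta_2})$ raised to a power $j<\ell$. Because $\ell$ is prime, these powers are primitive $\ell$-th roots; for $\ell$ larger than the orders in the finite incidence set, they avoid it except at $j=0$. Points with $s(t)=s(t')$ on the ascending versus descending branch produce a phase mismatch; the low-energy estimate bounds it by $O(\epsilon)$ plus a multiple of $2\pi j/\ell$, and it must equal an incident phase. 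I would choose $p$ (with many choices available) and $\epsilon$ small so that no incident phase is within the error of this lattice, except for the genuine coincidence $t=t'$. Finally I would check that at $t=t'$ the only remaining solutions are $x=x'$ and $y=y'$, so that $G_\ell$ is injective on $S^1\times L_1\times L_2$ with $S^1$ of period $\ell$ cells. Being an injective immersion of a compact manifold, it is an embedding.

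\textbf{Step 3 (index, curvature, volume).} The index bound is Theorem C with $m_\gamma=\ell$, using $\mu_1=-\mu_2\neq0$. Since all profiles lie in a compact small-amplitude region near the fixed circle, their $\bar g$-geodesic curvature is uniformly bounded. The blockwise second fundamental form depends only on $s$, its derivatives, and the fixed factors, so it is bounded uniformly in $\ell$. The volume equals the volume of the factors times $\ell$ times the $\bar g$-length of one cell, and that cell length converges to a positive constant as $\epsilon\to0$. This gives linear volume growth in $\ell$.
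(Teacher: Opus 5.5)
Your overall architecture is the paper's: the contact slice, prime closing order, a finite incident-phase set, primality for same-branch returns, low energy for opposite branches, and Theorem C with \cref{co1} for the quantitative part. However, the step that carries the proof, excluding coincidences between oppositely oriented branches, does not work as you set it up.

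Let $t=0$ be a turning point, and let $t_\pm(s)$ be the outgoing and incoming times at magnitude $s$. The phase difference $\bigl(\theta_i(t_+(s))-\theta_i(t_-(s))\bigr)_{i=1,2}$ runs continuously from $0$ at $s=s_-$ to the full increment $(\Delta_1,\Delta_2)$ at $s=s_+$. By \cref{lm15}, the $\Delta_i$ tend to nonzero limits as the amplitude shrinks. So the mismatch is not ``$O(\epsilon)$ plus a power $u^j$''. Its component along $K_0=\{(e^{iq_0\tau},e^{-ip_0\tau})\}$ sweeps the whole circle, and only its component transverse to $K_0$ is small.

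No choice of numerator or amplitude can then keep the mismatch away from incident phases lying on $K_0$, and such phases are common. Examples are $(\zeta,\zeta^{-1})$ with $\zeta\in\mathsf H_{f_1}\cap\mathsf H_{f_2}$ when $p=q$ (two Harvey--Lawson tori), or $(-1,-1)$ for antipodally invariant factors with $p_0,q_0$ odd.

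The missing ingredient is the exact phase-character law of \cref{lm12}. With $p=k_1+1$ and $q=k_2+1$, the combination $\Psi=p\theta_1+q\theta_2$ equals $\Psi_0\mp\alpha(s)$ on ascending/descending branches, where $\alpha(s)=\arctan\sqrt{C_2\cos^{2p}s\sin^{2q}s-1}$. Hence every opposite-branch difference $\zeta$ satisfies $\chi_{p,q}(\zeta)=e^{\pm2i\alpha(s)}$ with $0<\alpha(s)\le\arctan\sqrt{C_2/P_*-1}$. This excludes $\mathcal P\cap\ker\chi_{p,q}$ exactly. It also excludes the rest of the finite set $\mathcal P$ once $C_2$ lies in a fixed interval close enough to $P_*$, chosen from the factors before $\ell$.

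The same law, since $\alpha(s_\pm)=0$, gives $p\Delta_1+q\Delta_2=0$. Your pointwise relation $\cos^2s\,\dot\theta_1+\sin^2s\,\dot\theta_2=0$ has variable coefficients and does not integrate to a relation between $\Delta_1$ and $\Delta_2$. It is $p\Delta_1+q\Delta_2=0$ that lets the single parameter $C_2$ put both $\Delta_i/2\pi$ in $\frac1\ell\ZZ$ with exact denominator $\ell$.

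Your finiteness argument for the incident phase sets is also wrong. The Legendrian angle constrains \emph{stabilizer} phases: $e^{i\phi}L=L$ forces $e^{i(k+1)\phi}=\pm1$. An incidence $e^{i\phi}L\cap L\ne\varnothing$ imposes no such condition, and isolatedness of intersection points says nothing about which $\phi$ occur. The paper's own example in \cref{apnp} is an embedded special Legendrian $M^3\subset\Sphere^7$ with an incident phase $\lambda_*$ satisfying $\lambda_*^8\ne1$.

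Finiteness instead comes from \cref{lm18}. The incidence set $\{(\zeta,a,b):b=\zeta a\}$ is compact real analytic with finitely many connected strata, and $\cC$-total reality makes $\zeta$ constant along each stratum. With this (possibly non-torsion) finite set, your same-branch argument still works. For $0<j<\ell$ the element $u^j$ has order exactly $\ell$, and a finite set contains no element of order $\ell$ once $\ell$ is a large prime.

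Two minor points. On the contact slice the equilibrium is at $\tan^2s_*=q/p$ and is always interior. Your second-order expansion is unnecessary, since nonconstancy of the analytic map $C_2\mapsto\Delta_1$ already follows from the distinct limits in \cref{lm15,lm16}.
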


          A refinement of this construction allows selected  nontrivial stabilizer returns
          and produces embedded special Legendrians with nonproduct topology,
          even from factors that are not phase-saturated;
          see \cref{rmmon,expp}.

          Canonical horizontal lifts of compact connected embedded minimal Lagrangians in complex projective spaces are phase-saturated.
            For every 
            %ordinarily 
            closed nonsteady contact profile, 
            their spiral minimal product has a compact embedded primitive spherical quotient,
            which is special Legendrian after a phase rotation;
            see \cref{th17}.
             Representative embedded minimal Lagrangians in complex projective spaces and their lifts are collected in \cref{tb1}.

             The machinery also  applies to 
          compact connected minimal embeddings into round spheres once their ambient Euclidean spaces are complexified, 
          producing embedded minimal $\cC$-totally real families. 
          In this complexified real case,
          the selection of closed profiles can be made universal
          in terms of $(k_1,k_2)$ alone rather than factor-adapted;
          see \cref{th15}.

          %A refinement permits selected nontrivial stabilizer returns.
          %It produces twisted mapping-torus quotients even from
          %non-phase-saturated factors; see \cref{rmmon,expp}.

          The last main result of this paper concerns minimal Lagrangians
          in complex projective spaces.
          Let $\iota_i:L_i^{n_i}\looparrowright\CP^{n_i}$ be connected minimal Lagrangian immersions.
          Then they admit horizontal special Legendrian lifts
          $\widehat L_i\looparrowright\Sphere^{2n_i+1}$;
          see \cref{pr13} for details. 
          Via the spiral product to these lifts along a contact profile $\gamma$
          and 
          then
          taking the Hopf projection,
            we get   an immersion (unique up to relative phase difference of two lifts)
          $
                              \iota_\gamma:=\pi_{\mathrm H}\circ G_\gamma:
                              \RR\times\widehat L_1\times\widehat L_2
                              \longrightarrow
                              \CP^{n_1+n_2+1}.
          $
          The local construction is due to \cite{CastroLiUrbano2006}; the
          point--sphere case recovers Anciaux's examples \cite{Anciaux2006}.
          Globally,
          however,
          the ordinary return order $m_\gamma$,
          spherical return order $m_G$,
          and projective return order $N$ need not coincide.
         % We identify their mapping-torus domains.
            We determine the corresponding quotient domains.
                 For compact embedded inputs and an ordinarily closed profile,
                 the phase-saturated lifts give
             $
N \mid m_G \mid m_\gamma.
$
          %  Use $\overline \iota_\gamma$ for the primitive projective quotient of $\iota_\gamma$.
                 Thus ${\iota}_\gamma$ descends after $N$ cells to
                 ${\overline{\iota}_\gamma}:
                 \mathcal X_\gamma\longrightarrow
                 \CP^{n_1+n_2+1}$,
                 which we call the primitive projective quotient.
                 
       \begin{mainthmE}[Projective Delaunay construction]
For the connected minimal Lagrangian immersions $\iota_i$ above,
every regular oscillatory contact profile $\gamma$ determines a Delaunay-type minimal Lagrangian immersion
$ \iota_\gamma$ in $\mathbb{CP}^{n_1+n_2+1}$.
If the $\iota_i$ are compact embeddings
and $\gamma$ is ordinarily closed,
then the spiral minimal product of their horizontal lifts
has an embedded primitive spherical quotient,
and $\overline \iota_\gamma$  is embedded precisely when
the reduced relative winding number $A$ is one.
Under these compactness and closing assumptions,
$
\operatorname{Ind}_{\mathrm{Ham}}(\overline \iota_\gamma)
\geq 
\operatorname{Ind}_{\mathrm{Ham}}(\iota_1)
+\operatorname{Ind}_{\mathrm{Ham}}(\iota_2)
+2N-2.
$
       \end{mainthmE}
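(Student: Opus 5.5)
The plan is to split Theorem E into four parts: the local minimal Lagrangian property, the tower of return orders and the quotient domains, the embeddedness criterion, and the Hamiltonian index bound. Each part is handled by reducing to the spherical statements already available, namely Theorems A and C and the phase-saturation of canonical horizontal lifts.

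\emph{Local statement.} By \cref{pr13} the lifts $\widehat L_i$ are connected minimal special Legendrian immersions, hence minimal and $\cC$-totally real. A contact profile satisfies $\mu_1+\mu_2=0$ and is Legendrian. Theorem A then makes $G_\gamma$ a minimal Legendrian immersion of $\RR\times\widehat L_1\times\widehat L_2$ into $\Sphere^{2n_1+2n_2+3}$. The Hopf projection of a minimal Legendrian is a minimal Lagrangian (Reckziegel), and the Legendrian condition keeps the immersion horizontal, so $\iota_\gamma$ is an immersed minimal Lagrangian. Regular oscillation of $s$ gives the periodic neck/bulge structure, which is the Delaunay-type behaviour.

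\emph{Return orders and embeddedness.} Next I analyse when $G_\gamma(t+m\tau,x,y)$ coincides with $e^{i\phi}G_\gamma(t,x',y')$, where $\tau$ is one cell.
\begin{itemize}
\item Write $\Sigma_i\subset S^1$ for the phase stabilizer of $\widehat L_i$. Since the inputs are embedded and the lifts are phase-saturated, $\Sigma_i$ is a finite cyclic group that acts freely on $\widehat L_i$, and $e^{i\psi}\widehat L_i=\widehat L_i$ exactly when $e^{i\psi}\in\Sigma_i$.
\item The spherical return after $m$ cells requires $e^{im\Delta_j}\in\Sigma_j$ for $j=1,2$. This gives $m_G\mid m_\gamma$, with the quotient domain a mapping torus $\mathcal X$.
\item The projective return allows an extra common phase $e^{i\phi}$, namely $e^{i(m\Delta_j+\phi)}\in\Sigma_j$. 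This gives $N\mid m_G$.
\end{itemize}
Embeddedness of the spherical quotient follows by case analysis. A coincidence forces equal $s$ values, so $t$ and $t'$ are either at the same cell position or at mirror positions. The oscillation is strictly monotone on half-cells, and the contact relation makes $\theta_1,\theta_2$ strictly monotone. The mirror branch is therefore excluded by comparing phase velocities, while the same-position branch is exactly a return, which has been quotiented out. The projective quotient adds the $S^1$ fibre. A point of $\mathcal X_\gamma$ then has preimages in the Hopf circle indexed by the relative phase differences consistent with both factors. I express their count as the reduced relative winding number $A$, an index of the subgroup generated by $(\Delta_1,\Delta_2)$ relative to the diagonal modulo $\Sigma_1\times\Sigma_2$. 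Injectivity of $\overline\iota_\gamma$ then holds iff $A=1$.

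\emph{Index bound.} Hamiltonian variations of the projection lift to Legendrian normal variations of $G_\gamma$ that are invariant under the deck group of order $m_G/N$. I will rerun the Routh–Sturm argument of Theorem C on the $N$-cell quotient instead of the $m_\gamma$-cell one. The scalar periodic Sturm problem over $N$ cells has at least $2N-1$ negative eigenvalues. Only the Killing field $\partial_t$-type translation direction needs discarding, since the $J$-rotation is trivial projectively. This gives $2N-2$, and adding the factor Hamiltonian indices orthogonally yields the stated bound.

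\emph{Main obstacle.} I expect the hardest step to be identifying exactly which coincidences the Hopf fibre introduces, and showing they are measured by the single integer $A$. Getting the stabilizer bookkeeping, and the divisibility $N\mid m_G\mid m_\gamma$, correct is delicate.
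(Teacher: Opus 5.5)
Your local statement and the divisibility $N\mid m_G\mid m_\gamma$ are fine. Both embeddedness steps, however, have gaps.

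\textbf{Spherical quotient.} Comparing phase velocities at mirror positions shows only that, if the incoming and outgoing branches met, they would meet with different tangent spaces. It does not exclude the meeting. Such a meeting is a crossing and already destroys embeddedness; this is why \cref{th16} needs a separate low-energy estimate. What you must show is that no opposite-branch phase difference lies in $\mathsf H_{f_1}\times\mathsf H_{f_2}$. Two ingredients are missing:
\begin{itemize}
\item the Maslov--holonomy constraint of \cref{pr13}, which forces $h_1^ph_2^q\in\{\pm1\}$ for $h_i\in\mathsf H_{f_i}$;
\item the phase-character law of \cref{lm12}, which gives every opposite-branch difference the character $e^{\pm2i\alpha(s)}\notin\{\pm1\}$.
\end{itemize}

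\textbf{Projective quotient.} Your invariant, an index of $\langle(e^{i\Delta_1},e^{i\Delta_2})\rangle$ modulo the diagonal and the stabilizers, sees only $e^{i\kappa\delta}$. It therefore recovers $N$, not $A=N\kappa\delta/2\pi$, which depends on the real number $\delta$. Moreover, on $\mathcal X_\gamma$ every complete-cell identification has already been divided out. Yet \cref{ex1} has $N=5$, $A=6$ and $25$ self-intersection tori, so these must come from opposite branches. Once Hopf projection kills the common phase, the character obstruction disappears and only $\psi=\theta_1-\theta_2$ modulo $\mathsf K_{12}$ matters. The opposite-branch difference $B(s)$ increases strictly from $0$ to $\delta$. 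Incidences occur exactly when $B(s)+r\delta\in\frac{2\pi}{\kappa}\ZZ$, i.e.\ when $B(s)\in\frac{2\pi}{\kappa N}\ZZ$, which gives $A-1$ crossing levels per cell. This monotone sweep is the argument you are missing.

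\textbf{Index bound.} The directions from the Routh--Sturm argument of \cref{coprofindex} are negative for the full normal form $Q_{G_\gamma}$. But $\Ind_{\mathrm{Ham}}$ counts only variations $J\nabla v$ with $v$ single-valued on $\mathcal X_\gamma$.
\begin{itemize}
\item Fixed-momentum variations keep $\mu_1+\mu_2=0$, so they are first-order Legendrian, and their projections have Hamiltonian potential $w=\cos^2s\,v_1+\sin^2s\,v_2$.
\item Descending to the $N$-cell quotient forces $v_1$ and $v_2$ to have equal jumps, since the vertical direction is $(1,1)$.
\item Single-valuedness of $w$ forces that common jump to vanish.
\end{itemize}
These are again both conditions $\mathfrak L_1(\nu)=\mathfrak L_2(\nu)=0$, so rerunning Theorem C only guarantees $2N-3$. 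Your claim that only a ``$\partial_t$-translation'' direction is lost is unsupported: the loss in Theorem C comes from the closing functionals, not from a symmetry.

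The paper's route is different:
\begin{itemize}
\item By Oh's formula, $\Ind_{\mathrm{Ham}}$ counts eigenvalues of $\Delta_0$ in $(0,2m)$.
\item On profile functions, $\Delta_0$ is the periodic Sturm--Liouville operator $L_{\mathrm{prof}}$.
\item Takahashi's identity for $|Z_1|^2$ gives the explicit eigenfunction $u=m\cos^2s-p$ with $L_{\mathrm{prof}}u=2mu$, and $u$ has exactly $2N$ zeros on the $N$-cell circle.
\item Hence $2m\in\{\lambda_{2N-1},\lambda_{2N}\}$, and removing the constant mode leaves $2N-2$ eigenvalues in $(0,2m)$.
\end{itemize}
The factor contributions come from the Hamiltonian lifts $\mathcal T_i\phi=\mathfrak a_i\mathfrak p_i^*\phi$ of \cref{ap11}. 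Their orthogonality to each other and to the profile sector is also needed, and your ``adding orthogonally'' leaves it unproved.
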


          Here ``Delaunay-type'' means periodic neck--bulge variation, 
          not the  classical surfaces of Delaunay \cite{Delaunay1841}. 
          Winding number $A$ is defined in \cref{sub62}.
          The index is the
          %intrinsic exact-form 
          Hamiltonian index on the immersion domain; see
          \cref{ss9,co9}.
        %  For embedded inputs;  e.g. see \cref{tb1}.

%%%%%%%%%%%%%%%%%%%%%%%%%%%%%%%%%%

%\majorsectiongap
\section{Extrinsic Geometry and the Reduced Minimality Equations}
          \label{se2}

         In this section
            we separate the factor-normal and profile-normal parts of the minimality equation.
           This splitting proves Theorem A in \cref{th1}.
The calculation also leads to the curvature estimates used later.

   \subsection{Local regimes of spiral minimal products}
          \label{ss1}

        Retain the notation of \cref{df2} and \eqref{eq3}--\eqref{eq4}, 
        use the real Euclidean inner product,
        and write $g_i=f_i^*g_{\Sphere^{2n_i+1}}$.
       
       \begin{convention}
           \label{cv1}
           Statements involving phase coordinates or division by $a$ or $b$
            are understood componentwise on connected subintervals
            where $z_1z_2\ne0$.
           A prime denotes differentiation in the displayed parameter, and in an
           $s$-graph it means $\frac{d}{ds}$.
           From \cref{se4} onward,
           $t$ is affine for $\bar g$
           and a dot means $\frac{d}{dt}$.
           Round arclength derivatives are written $\frac{d}{d\tau}$.
       \end{convention}

  %     \medskip
       \noindent\textit{Profile notation and terminology.}
                   On a connected interval on which $z_1z_2\ne0$,
           the magnitudes are \emph{steady} when $a,b$ are constant
            and 
            \emph{nonsteady} otherwise.
            The branch is \emph{doubly spiral} when $\theta_1'\theta_2'\ne0$ throughout,
            and \emph{singly spiral} when exactly one phase is constant.
          For affinely parametrized weighted geodesics,
           the doubly and singly spiral terminology agrees respectively with
           $\mu_1\mu_2\ne0$ and with exactly one of $\mu_1,\mu_2$ vanishing.
          At either coordinate circle, where $a=0$ or $b=0$,
           we shall use ambient or signed coordinates.

          On such an interval,
           both phases are constant if and only if
           $(\theta_1',\theta_2')\equiv(0,0)$,
           and we call such a profile a \emph{meridian}.
          If an affinely parametrized weighted geodesic satisfies
           $(\theta_1',\theta_2')=(0,0)$ at one point,
           conservation forces $(\theta_1',\theta_2')\equiv(0,0)$,
           and hence it must be a meridian.
          Finally,
           if $a(t_0)=0$ with $k_1>0$,
           or $b(t_0)=0$ with $k_2>0$,
           the corresponding factor block of $dG_\gamma$ vanishes,
           so $G_\gamma$ is not an immersion there.

   \subsection{Consequences of \texorpdfstring{$\cC$}{C}-total reality}
          \label{ss2}

          Two useful properties of  $\cC$-total reality separate the factor-normal and profile-normal directions.

       \begin{lemma}%[Two consequences of $\cC$-total reality]
           \label{lm2}
           If $f:M^k\to\Sphere^{2n+1}$ is $\cC$-totally real,
            then
           \begin{equation}\label{eq9}
                              \langle Jdf(X),
                              df(Y)\rangle=0
                               \qquad\text{for all }X,
                              Y\in TM.
           \end{equation}
           Its spherical second fundamental form also has no Reeb component:
           \begin{equation}\label{eq10}
                              \langle B^f(X,
                              Y),
                              Jf\rangle=0
                               \qquad\text{for all }X,
                              Y\in TM.
           \end{equation}
           In particular,
            the spherical mean-curvature vector of $f$ is orthogonal to $Jf$.
       \end{lemma}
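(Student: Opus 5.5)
The plan is to differentiate the defining identity \eqref{eq1} once, and to use that $J$ is a parallel, skew-adjoint isometry of $\CC^{n+1}$. Throughout, regard $f$ as a map into $\CC^{n+1}$ and let $D$ denote the flat connection. Fix vector fields $X,Y$ on $M$.

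For \eqref{eq9}, differentiate the function $\langle Jf, df(Y)\rangle\equiv 0$ in the direction $X$. This gives
\[
0=\langle J\,df(X),df(Y)\rangle+\langle Jf, D_X df(Y)\rangle .
\]
The second term is handled by splitting
\[
D_X df(Y)=df(\nabla_XY)+B^f(X,Y)-\langle df(X),df(Y)\rangle f ,
\]
which combines the Euclidean Gauss formula for $\Sphere^{2n+1}\subset\CC^{n+1}$ (unit position normal) with the spherical Gauss formula for $f$. Pairing with $Jf$, the tangential term vanishes by \eqref{eq1}, and the last term vanishes because $\langle Jf,f\rangle=0$. Hence
\[
\langle J\,df(X),df(Y)\rangle=-\langle Jf,B^f(X,Y)\rangle .
\]

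The left side is skew in $(X,Y)$, since $J$ is skew-adjoint. The right side is symmetric, since $B^f$ is symmetric. Both sides therefore vanish, which proves \eqref{eq9} and \eqref{eq10} together. The equivalent formulation $f^*\omega=0$ is just \eqref{eq9}, which is consistent with \cref{df1}. Tracing \eqref{eq10} over a $g$-orthonormal frame then gives $\langle H^f,Jf\rangle=0$.

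The only delicate point is bookkeeping. We must check that $Jf$ is indeed tangent to the sphere, so that $B^f$ may legitimately be paired with it, and that the radial correction term drops out. This uses $\langle Jf,f\rangle=\langle if,f\rangle=0$ for the real inner product, which is immediate.
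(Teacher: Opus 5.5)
Your proof is correct and is essentially the paper's argument: the paper obtains \eqref{eq9} from $f^*d\alpha=d(f^*\alpha)=0$ together with $d\alpha(U,V)=2\langle JU,V\rangle$, and then differentiates $\langle df(Y),Jf\rangle=0$ just as you do to get \eqref{eq10}. Your skew/symmetric splitting of that one differentiated identity is the same computation done in a single step, since the antisymmetric part of the derivative of the one-form $f^*\alpha$ is exactly $d(f^*\alpha)$.
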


       \begin{proof}
           Since $f^*\alpha=0$ and $d\alpha(U,V)=2\langle JU,V\rangle$, the first identity follows.
            By differentiating $\langle df(Y),Jf\rangle=0$
            and using \eqref{eq9} in the spherical Gauss formula,
            we obtain
             \eqref{eq10}.
       \end{proof}

          For either factor $f$,
           define the reduced spherical normal bundle 
                   $\cN_f^\circ:=\{\xi\in\cN_f^{\Sphere}:\langle\xi,Jf\rangle=0\}$.
          By \cref{lm2},
           $B^f$ and the mean-curvature vector of $f$ take values in $\cN_f^\circ$.

          The same contact orthogonality gives the {induced} metric and the immersion criterion.

       \begin{lemma}[Induced metric and regularity]
           \label{lm1}
           The tangent blocks of $G_\gamma$ are
            mutually orthogonal and
           \begin{equation}\label{eq7}
                              G_\gamma^*g_{\Sphere}
                               =v_\gamma^2\,\dd t^2+a^2g_1+b^2g_2,
                               \qquad
                              v_\gamma^2=|\gamma'|_{\Sphere^3}^2.
           \end{equation}
           The map $G_\gamma$ is an immersion at $(t,x,y)$ precisely
            when $\gamma$ is immersed at $t$,
            both factors are immersions,
            with $a(t)>0$ if $k_1>0$
            and $b(t)>0$ if $k_2>0$,
            and 
            %its volume density is
           \begin{equation}\label{eq8}
                              \dd\vol_{G_\gamma}
                               =a^{k_1}b^{k_2}v_\gamma\,\dd t\,
                              \dd\vol_{g_1}\,\dd\vol_{g_2}.
           \end{equation}
       \end{lemma}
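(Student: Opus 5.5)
We compute $dG_\gamma$ directly on each of the three tangent blocks and then take inner products, using \cref{lm2} for the cross terms. For $T\in\partial_t$, $X\in TM_1$ and $Y\in TM_2$,
\[
dG_\gamma(\partial_t)=(z_1'f_1,\;z_2'f_2),\qquad
dG_\gamma(X)=(z_1\,df_1(X),\;0),\qquad
dG_\gamma(Y)=(0,\;z_2\,df_2(Y)).
\]
The factor blocks are orthogonal to each other trivially, since they live in different summands $\CC^{n_1+1}\oplus\CC^{n_2+1}$. For the profile–factor term we write $z_1'\bar z_1=u+iv$ with $u,v$ real. Multiplication by a unit complex number is a real isometry, so
\[
\langle z_1'f_1,\,z_1df_1(X)\rangle=\operatorname{Re}\bigl(z_1'\bar z_1\bigr)\langle f_1,df_1(X)\rangle+\operatorname{Im}\bigl(z_1'\bar z_1\bigr)\langle Jf_1,df_1(X)\rangle .
\]
Here $\langle f_1,df_1(X)\rangle=0$ because $|f_1|\equiv1$, and $\langle Jf_1,df_1(X)\rangle=0$ by \eqref{eq1}. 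This cross term therefore vanishes, and the same argument applies to the second block.

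The diagonal terms are as follows. We have $|dG_\gamma(X)|^2=|z_1|^2|df_1(X)|^2=a^2g_1(X,X)$, and similarly $b^2g_2$ on the second block. Also $|dG_\gamma(\partial_t)|^2=|z_1'|^2|f_1|^2+|z_2'|^2|f_2|^2=|\gamma'|^2_{\Sphere^3}$, since $|f_i|=1$ and the round metric on $\Sphere^3$ is the restriction of the Euclidean one. This gives \eqref{eq7}, and here $v_\gamma^2=|\gamma'|^2$ needs no phase lifts. This is the only place where zeros of $z_1$ or $z_2$ could matter, and the formula is valid on all of $I$.

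Regularity follows from the block orthogonality. Because the three images are mutually orthogonal, $dG_\gamma$ is injective iff it is injective on each block. On the profile block this means $v_\gamma(t)>0$, i.e.\ $\gamma$ is immersed at $t$. On the first block, $X\mapsto z_1df_1(X)$ is injective iff $df_1$ is injective at $x$ and $z_1(t)\ne0$. When $k_1=0$ the block is zero-dimensional and no condition arises. The second block is handled the same way. Finally, the induced metric in \eqref{eq7} is a block-diagonal product metric scaled by $a^2,b^2$ on $k_1$- and $k_2$-dimensional blocks. Its volume density is therefore $v_\gamma\,a^{k_1}b^{k_2}\,\dd t\,\dd\vol_{g_1}\,\dd\vol_{g_2}$, which is \eqref{eq8}.

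No step here is a real obstacle. The only point needing care is the mixed term, where the Reeb part $\operatorname{Im}(z_1'\bar z_1)$ must be killed by \eqref{eq1}, and the radial part by the unit-norm condition.
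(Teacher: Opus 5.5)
Your proof is correct and follows the paper's own argument. The cross terms vanish by $|f_i|=1$ together with \eqref{eq1}, the diagonal blocks give $v_\gamma^2$, $a^2g_1$ and $b^2g_2$, and block orthogonality then yields both the immersion criterion and the volume density \eqref{eq8}, which spells out what the paper's three-line proof leaves implicit. One minor point: your cross-term identity rests on $\langle\lambda w,\mu v\rangle=\langle\lambda\bar\mu\,w,v\rangle$, not on $z_1$ being unimodular, and the fact you actually use is \eqref{eq1}, not \cref{lm2}.
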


       \begin{proof}
           The factor blocks are orthogonal and have metrics $a^2g_1$ and
           $b^2g_2$.
           Their pairings with $G_\gamma'$ vanish by \cref{df1}.
           This proves \eqref{eq7}.
           A positive-dimensional factor block is nondegenerate 
           exactly when
            its coefficient is nonzero,
            which gives the immersion criterion.
            Correspondingly,  \eqref{eq8} follows.
       \end{proof}

   \subsection{Orthogonal tangent and normal splittings}
          \label{ss3}

          With $v_\gamma$ as in \eqref{eq7}, 
          define
            $E_0:=\frac{G_\gamma'}{v_\gamma}$.
          If $\{e_\alpha\}_{\alpha=1}^{k_1}$ and $\{u_\beta\}_{\beta=1}^{k_2}$ are local orthonormal frames on the two factors,
            define
          \[
                              E_\alpha=(e^{i\theta_1}df_1(e_\alpha),
                              0),
                               \qquad
                              F_\beta=(0,
                              e^{i\theta_2}df_2(u_\beta)).
          \]
          These are unit tangent fields of $G_\gamma$;
           the corresponding domain fields are $a^{-1}e_\alpha$
            and $b^{-1}u_\beta$.

          For $i=1,2$,
           let $\cN_i^\circ$ denote the natural lift of $\cN_{f_i}^\circ$ to the target product sphere.
           Let $\cN_{\mathrm{prof}}$ denote the natural lift of
                 $(\RR\gamma')^\perp\subset T\Sphere^3$ 
             under the real-linear isometry
           $(w_1,w_2)\mapsto(w_1f_1(x),w_2f_2(y))$.
          The ambient block orthogonality and a rank count show 
          that their orthogonal complement in the spherical normal bundle of $G_\gamma$ agrees with $\cN_{\mathrm{prof}}$,
            and give
          \[
                              TG_\gamma
                               =\RR E_0\oplus\Span\{E_\alpha\}
                              \oplus\Span\{F_\beta\},
                               \qquad
                              \cN_{G_\gamma}^{\Sphere}
                               =\cN_1^\circ\oplus\cN_2^\circ
                              \oplus\cN_{\mathrm{prof}}.
          \]

   \subsection{Second fundamental form}
          \label{ss4}

          Let $B$ denote the second fundamental form of $G_\gamma$ in the target sphere,
          $B_i$ 
          that of $f_i$ in each source sphere,
          and 
    $\mathbf H_i=\tr B_i$  the unnormalized  mean-curvature vector of $f_i$.

          For a vector $\xi_i$ along $f_i$, denote its phase-rotated lifts by
           $\xi_1^\sharp=(e^{i\theta_1}\xi_1,0)$
            and $\xi_2^\sharp=(0,e^{i\theta_2}\xi_2)$.
          The same notation is used for $B_i^\sharp$
            and $\mathbf H_i^\sharp$.
          Let $T_\gamma=\frac{\gamma'}{v_\gamma}$
          stand for the unit tangent to the profile,
           and set
          $
                              S=(-be^{i\theta_1},
                              ae^{i\theta_2})\in T_\gamma\Sphere^3,
                             %  \qquad
                              S^\perp=S-\langle S,
                              T_\gamma\rangle T_\gamma.
          $
          The same symbols $T_\gamma,S,S^\perp$ denote their natural lifts
           along $G_\gamma$.

       \begin{lemma}[Block identities of $B$]
           \label{lm4}
           The second fundamental form of $G_\gamma$ is given by
           \begin{align}
                              B(E_\alpha,
                              E_{\alpha'})
                               &=\frac1aB_1^\sharp(e_\alpha,
                              e_{\alpha'})
                               +\delta_{\alpha\alpha'}\frac ba S^\perp,
                              \label{eq11}\\
                              B(F_\beta,
                              F_{\beta'})
                               &=\frac1bB_2^\sharp(u_\beta,
                              u_{\beta'})
                               -\delta_{\beta\beta'}\frac ab S^\perp,
                              \label{eq12}\\
                              B(E_\alpha,F_\beta)&=0.
                              \label{eq13}
           \end{align}
           The remaining blocks are
           \[
            \begin{aligned}
                              B(E_0,E_\alpha)&=\frac{\theta_1'}{v_\gamma}JE_\alpha,
                               \quad
                              B(E_0,F_\beta)=\frac{\theta_2'}{v_\gamma}JF_\beta,
                               \quad
                              B(E_0,E_0)=\nabla^{\Sphere^3}_{T_\gamma}T_\gamma
                              \in\cN_{\mathrm{prof}}.
            \end{aligned}
           \]
       \end{lemma}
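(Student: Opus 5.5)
Since $B(X,Y)$ is the spherical-normal part of $D_XdG_\gamma(Y)$ (the ambient radial part is $-\langle X,Y\rangle G_\gamma$ and is discarded), I will compute the ambient second derivatives of $G_\gamma$ along the domain fields $\partial_t$, $a^{-1}e_\alpha$, $b^{-1}u_\beta$. Then I project onto $\cN_1^\circ\oplus\cN_2^\circ\oplus\cN_{\mathrm{prof}}$ using the splitting of \cref{ss3}. The tools are \cref{lm2}, \cref{lm1}, and the block orthogonality from \eqref{eq1}. The computation is local, so I work where $z_1z_2\ne0$ and use the phase lifts \eqref{eq3}.

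\emph{Factor blocks.} I differentiate $dG(e_{\alpha'})=(z_1df_1(e_{\alpha'}),0)$ along $e_\alpha$, which gives $(z_1\,D_{e_\alpha}df_1(e_{\alpha'}),0)$. Next I use the Gauss formula of $f_1$ in $\Sphere^{2n_1+1}$:
\[
D_{e_\alpha}df_1(e_{\alpha'})=df_1(\nabla_{e_\alpha}e_{\alpha'})+B_1(e_\alpha,e_{\alpha'})-\delta_{\alpha\alpha'}f_1 .
\]
Dividing by $a^2$ and recalling $z_1=ae^{i\theta_1}$, the first term is tangent to $G_\gamma$. The second term gives $\frac1aB_1^\sharp$, and by \cref{lm2} together with orthogonality to $e^{i\theta_1}df_1$ it lies in $\cN_1^\circ$. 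The third term gives $-\delta_{\alpha\alpha'}\frac1a(e^{i\theta_1}f_1,0)$.

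To handle this last vector, I decompose $(e^{i\theta_1},0)$ in $\CC^2$ along $\frac{\gamma}{1}=(ae^{i\theta_1},be^{i\theta_2})$ and $S=(-be^{i\theta_1},ae^{i\theta_2})$. This gives $(e^{i\theta_1},0)=a\gamma-bS$. The lift of $\gamma$ is $G_\gamma$, which is radial and is dropped. The remaining piece $+\delta\frac ba S$ has its $T_\gamma$-component tangent (it is the $E_0$ direction), so its normal part is $\frac ba S^\perp$. This proves \eqref{eq11}. For \eqref{eq12} I argue symmetrically, using $(0,e^{i\theta_2})=b\gamma+aS$, which produces the minus sign. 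For \eqref{eq13}, differentiating a first-block field along a second-factor direction gives zero, since $z_1f_1(x)$ does not depend on $y$.

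\emph{Mixed and profile blocks.} I compute $\partial_t(z_1df_1(e_\alpha))=z_1'df_1(e_\alpha)$ and write $z_1'/z_1=a'/a+i\theta_1'$. The real part is a multiple of $E_\alpha$, so it is tangent. The imaginary part gives $i\theta_1'(e^{i\theta_1}df_1(e_\alpha),0)$, and after dividing by $a\,v_\gamma$ this becomes $\frac{\theta_1'}{v_\gamma}JE_\alpha$. This vector is normal: it is orthogonal to the $E$-block by \eqref{eq9}, to $E_0$ by \eqref{eq1}, and to $G_\gamma$ and $F$ by the block structure. The same argument gives $B(E_0,F_\beta)$.

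Finally, $\partial_t^2G=(z_1''f_1,z_2''f_2)$ is the lift of $\gamma''$ under the real-linear isometry. Its component along $G_\gamma$ is radial and is dropped. What remains is the lift of the $\Sphere^3$-normal part of $v_\gamma^{-2}\gamma''$, which is $\nabla^{\Sphere^3}_{T_\gamma}T_\gamma\in\cN_{\mathrm{prof}}$. Here I must check that the lift of any $w\in T_\gamma\Sphere^3$ is orthogonal to every factor tangent vector. This holds because, for $\xi=df_1(X)$, each pairing $\langle w_1f_1,e^{i\theta_1}\xi\rangle$ is a combination of $\langle f_1,\xi\rangle=0$ and $\langle Jf_1,\xi\rangle=0$.

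The main obstacle is this last type of bookkeeping: each lifted vector has to be placed correctly into the splitting, using exactly \eqref{eq1}, \eqref{eq9} and \eqref{eq10}, with no leftover Reeb-direction components.
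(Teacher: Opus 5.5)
Your proposal is correct and follows the paper's route. The factor and mixed blocks come from the spherical Gauss formula for $f_i$ together with $\cC$-total reality; your decompositions $(e^{i\theta_1},0)=a\gamma-bS$ and $(0,e^{i\theta_2})=b\gamma+aS$ are exactly what produce the $\pm S^\perp$ terms. The profile block comes from the fact that, for fixed $(x,y)$, the curve lies in the totally geodesic great $\Sphere^3$ given by the isometric lift of $\CC^2$. The only difference is cosmetic: the paper removes $df_1(\nabla_{e_\alpha}e_{\alpha'})$ by choosing normal-coordinate frames, whereas you simply discard it as tangential.
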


       \begin{proof}
                         At $p=(t,x,y)$ choose normal-coordinate frames on the factors centered at $x$ and $y$ respectively. 
                         Since $B$ is tensorial, 
                            it suffices to compute at $p$. 
                           Identities except the last follow by
                             spherical Gauss formula and the $\cC$-total reality of $f_1, f_2$. 
                            With fixed $(x,y)$, 
                            the profile lies in a great $\mathbb S^3$
                            of the target sphere,
                            which gives the last identity.
       \end{proof}

       \begin{proposition}[Mean-curvature splitting]
           \label{pr1}
           Let $\mathbf H^{G_\gamma}:=\tr B$ be the unnormalized mean-curvature vector of $G_\gamma$.
           It decomposes orthogonally as
           \begin{equation}\label{eq14}
                              \mathbf H^{G_\gamma}
                               =\frac1a\mathbf H_1^\sharp+\frac1b\mathbf H_2^\sharp
                               +\mathbf H_{\mathrm{prof}},
           \end{equation}
           where
           $\mathbf H_{\mathrm{prof}}
             :=\nabla^{\Sphere^3}_{T_\gamma}T_\gamma
               +\left(\frac{k_1b}{a}-\frac{k_2a}{b}\right)S^\perp$
            is the profile-normal component.
       \end{proposition}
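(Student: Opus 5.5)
We take the trace of $B$ in the orthonormal tangent frame $\{E_0,E_\alpha,F_\beta\}$ of \cref{ss3}:
\[
\mathbf H^{G_\gamma}=B(E_0,E_0)+\sum_{\alpha=1}^{k_1}B(E_\alpha,E_\alpha)+\sum_{\beta=1}^{k_2}B(F_\beta,F_\beta).
\]
We then substitute the block identities of \cref{lm4}. The mixed terms $B(E_0,E_\alpha)$, $B(E_0,F_\beta)$ and $B(E_\alpha,F_\beta)$ are off-diagonal and do not enter the trace. By \eqref{eq11}, the sum over $\alpha$ is
\[
\frac1a\sum_\alpha B_1^\sharp(e_\alpha,e_\alpha)+k_1\frac ba S^\perp=\frac1a\mathbf H_1^\sharp+\frac{k_1b}{a}S^\perp .
\]
This uses the fact that $\{e_\alpha\}$ is $g_1$-orthonormal, so its trace is exactly $\mathbf H_1$, and that the lift $\xi\mapsto\xi^\sharp$ is real linear. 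In the same way, \eqref{eq12} turns the sum over $\beta$ into $\frac1b\mathbf H_2^\sharp-\frac{k_2a}{b}S^\perp$. Adding $B(E_0,E_0)=\nabla^{\Sphere^3}_{T_\gamma}T_\gamma$ and collecting the two $S^\perp$ terms gives \eqref{eq14}, with $\mathbf H_{\mathrm{prof}}$ as stated.

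It remains to check that the three summands are mutually orthogonal and lie in the claimed normal subbundles. By \cref{lm2}, $\mathbf H_i$ takes values in $\cN_{f_i}^\circ$. Its phase-rotated lift $\mathbf H_i^\sharp$ therefore lies in $\cN_i^\circ$: multiplication by $e^{i\theta_i}$ preserves orthogonality to $f_i$, $Jf_i$ and $df_i(TM_i)$, and the lift sits in the $i$-th block. Next, $S$ is tangent to $\Sphere^3$ at $\gamma$, since $\langle S,\gamma\rangle=\mathrm{Re}(-ab\,e^{i\theta_1}\overline{e^{i\theta_1}}+ab\,e^{i\theta_2}\overline{e^{i\theta_2}})=0$. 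Hence $S^\perp\in(\RR\gamma')^\perp\subset T\Sphere^3$, and its natural lift lies in $\cN_{\mathrm{prof}}$. The geodesic curvature $\nabla^{\Sphere^3}_{T_\gamma}T_\gamma$ is orthogonal to $T_\gamma$ because $|T_\gamma|=1$, so it also lies in $\cN_{\mathrm{prof}}$, as \cref{lm4} already records.

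Finally, the normal splitting $\cN_{G_\gamma}^{\Sphere}=\cN_1^\circ\oplus\cN_2^\circ\oplus\cN_{\mathrm{prof}}$ of \cref{ss3} is orthogonal, which gives the orthogonality of the decomposition. The only point needing care is that \cref{ss3} and \cref{lm4} use the same lift conventions, so that $S^\perp$ in \eqref{eq11}--\eqref{eq12} is the same lifted vector; this holds by the definitions in \cref{ss4}. Everything else is bookkeeping, and the formulas are read on subintervals where $ab\ne0$, per \cref{cv1}.
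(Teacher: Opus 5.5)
Your proof is correct and follows the paper's argument: the paper proves this proposition as the trace of the block identities in \cref{lm4}, and orthogonality comes from the splitting $\cN_{G_\gamma}^{\Sphere}=\cN_1^\circ\oplus\cN_2^\circ\oplus\cN_{\mathrm{prof}}$ of \cref{ss3}. Your proposal just makes explicit the membership checks that the paper leaves implicit: \cref{lm2} for $\mathbf H_i^\sharp$, and tangency of $S$ to $\Sphere^3$.
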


       \begin{proof}
           This is the trace version of \cref{lm4}.
       \end{proof}

       \begin{theorem}[Weighted-geodesic characterization]
           \label{th1}
For the spherical $\cC$-totally real inputs $f_1, f_2$ above, 
         an immersed $G_\gamma$ is minimal 
             if and only if 
             both factors are minimal and $\gamma$ is an unparametrized geodesic of $\bar g$ on $I$,
       where     $\bar g=a^{2k_1}b^{2k_2} g_{\Sphere^3}$.
       \end{theorem}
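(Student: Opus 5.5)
The plan is to read off minimality from the orthogonal decomposition \eqref{eq14} of \cref{pr1}. The three summands lie in the mutually orthogonal subbundles $\cN_1^\circ$, $\cN_2^\circ$ and $\cN_{\mathrm{prof}}$ of \cref{ss3}. Hence $\mathbf H^{G_\gamma}=0$ if and only if each summand vanishes separately.

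For the factor parts, $\mathbf H_1^\sharp$ is the phase rotation of $\mathbf H_1$ by the unit scalar $e^{i\theta_1}$, placed in the first block. So $a^{-1}\mathbf H_1^\sharp=0$ exactly when $\mathbf H_1=0$ at $x$. Since $y$ and $t$ vary independently, this says $f_1$ is minimal, and likewise for $f_2$. A zero-dimensional factor carries no condition. If $k_i=0$, the corresponding block is empty and the coefficient of $S^\perp$ drops that term, so the argument is uniform. On the set where $a$ or $b$ vanishes with the relevant $k_i=0$, I will work on subintervals with $z_1z_2\neq0$ (\cref{cv1}) and extend by continuity and analyticity of geodesics.

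The main step is to show that $\mathbf H_{\mathrm{prof}}=0$ is the unparametrized geodesic equation of $\bar g=\rho^2g_{\Sphere^3}$. Use the conformal-change formula for geodesic curvature: for $\bar g=e^{2\phi}g$ with $\phi=\log\rho$, the $\bar g$-curvature vector of a curve is $e^{-2\phi}\bigl(\nabla_TT-(\nabla\phi)^\perp\bigr)$, where $\perp$ denotes the $g$-normal component to $T_\gamma$. Equivalently, the first variation of the length $\int\rho\,|\gamma'|\,dt$ with compactly supported normal variations gives Euler--Lagrange equation $\rho\nabla_TT-(\nabla\rho)^\perp=0$. So it suffices to verify
\[
\nabla^{\Sphere^3}\log\rho=-\Bigl(\frac{k_1b}{a}-\frac{k_2a}{b}\Bigr)S .
\]
With $a=\cos s$, $b=\sin s$ we have $\log\rho=k_1\log\cos s+k_2\log\sin s$, so $d\log\rho/ds=-k_1b/a+k_2a/b$. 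It remains to check that $S=(-be^{i\theta_1},ae^{i\theta_2})$ is the unit $g_{\Sphere^3}$-gradient of $s$. Indeed $S$ is $\partial_s$ of $(\cos s\,e^{i\theta_1},\sin s\,e^{i\theta_2})$, it has unit length, and it is orthogonal to the torus directions $\partial_{\theta_j}$. This gives $\nabla\log\rho=(d\log\rho/ds)\,S$. Consequently $\mathbf H_{\mathrm{prof}}=\nabla_TT-(\nabla\log\rho)^\perp$, which is $\rho^2$ times the $\bar g$-curvature vector. Thus $\mathbf H_{\mathrm{prof}}=0$ if and only if $\gamma$ is a $\bar g$-pregeodesic.

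The obstacle I expect is bookkeeping at the coordinate circles. On an interval where $a=0$ but $k_1=0$, the phase $\theta_1$ is undefined and $S$ has to be reinterpreted. I will handle this by noting that the identity $\rho\nabla_TT=(\nabla\rho)^\perp$ is stated invariantly. The weight $\rho$ is smooth away from the excluded circles (those with $k_i>0$, where $G_\gamma$ is not immersed by \cref{lm1}), and $\mathbf H_{\mathrm{prof}}$ extends smoothly across. The equivalence therefore holds on all of $I$ by density of the set where $z_1z_2\neq0$ and continuity.
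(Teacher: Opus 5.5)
Your proposal is correct and is essentially the paper's proof: the orthogonal splitting \eqref{eq14}, the conformal-change identity $(\nabla^{\bar g}_{T_\gamma}T_\gamma)^\perp=\nabla^{\Sphere^3}_{T_\gamma}T_\gamma-(\nabla^{\Sphere^3}\log\rho)^\perp$, and the computation $\nabla^{\Sphere^3}\log\rho=\bigl(-\frac{k_1b}{a}+\frac{k_2a}{b}\bigr)S$ are exactly the three ingredients the paper uses.

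The only difference is at coordinate-circle points, which the paper handles through the signed coordinate of \cref{rm1}. Your density argument would not cover a profile that runs along a coordinate circle on a whole subinterval, which can happen when the corresponding $k_i=0$. There a direct computation works instead, since the $S^\perp$ term carries the vanishing factor $a$ or $b$.
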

       \begin{proof}
           For $\bar g=\rho^2g_{\Sphere^3}$,
           the standard conformal-change formula \cite[Theorem 1.159(a)]{Besse1987} gives
           \[
                              \nabla^{\bar g}_X Y
                               =
                               \nabla^{\Sphere^3}_X Y
                                +d(\log\rho)(X)Y+d(\log\rho)(Y)X
                                                                               -g_{\Sphere^3}(X, Y)
                                                                                                      \,            \nabla^{\Sphere^3}\log\rho.
           \]
                  Since $T_\gamma$ is unit for $g_{\Sphere^3}$ and conformal scaling preserves orthogonality,
            the normal component %is
           \[
                              (\nabla^{\bar g}_{T_\gamma}T_\gamma)^\perp
                               =
                               {\nabla^{\Sphere^3}_{T_\gamma}T_\gamma}
                               -\bigl(
                                               \nabla^{\Sphere^3}\log\rho
                                               \bigr)^\perp.
           \]
           The unparametrized geodesic equation is
           \begin{equation}\label{eq17}
                              \nabla^{\Sphere^3}_{T_\gamma}T_\gamma
                               =\bigl(\nabla^{\Sphere^3}\log\rho\bigr)^\perp        .
           \end{equation}
           We have
            $\nabla^{\Sphere^3}\log\rho=
            \left(-\frac{k_1b}{a}+\frac{k_2a}{b}\right)S$,
            so \eqref{eq17} agrees exactly with the vanishing of the profile
            component in \eqref{eq14}.
           By the orthogonal splitting \eqref{eq14}, the other components vanish exactly when the factors are minimal.
           At a coordinate-circle point compatible with immersion,
            the corresponding $k_i$ is zero;
            the same conclusion follows in the signed coordinate of \cref{rm1},
            with an identically vanishing block omitted.
       \end{proof}

   \begin{remark}[Coordinate-circle points]
           If $\gamma(t_0)$ lies on $\{z_i=0\}$ and $G_\gamma$ is  immersed at $t_0$, 
           then $k_i=0$ ,
               $\bar g$ is smooth and nondegenerate near $\gamma(t_0)$,
           and ``geodesic of $\bar g$'' has its usual meaning there.
           The coordinate-circle and signed-meridian cases are therefore
           included in \cref{th1} without a separate boundary equation.
       \end{remark}

       \begin{remark}
Although the characterization also follows from Takahashi's theorem
             \cite{Takahashi1966,ZhangTakahashi2025},
            we retain the direct mean-curvature splitting because its
            separate normal components
            will be
                      used in the
            curvature and index analysis.
       \end{remark}

   \subsection{Curvature decomposition and estimates}
          \label{ss5}

          Let $\tau$ denote round arclength.
         Then
           $T_\gamma=\frac{d\gamma}{d\tau}$.
          Set
          \[
                              \omega_i=\frac{d\theta_i}{d\tau},
                               \qquad
                              \kappa_\gamma=
                              \nabla^{\Sphere^3}_{T_\gamma}T_\gamma,
                               \qquad
                              \chi_\gamma=|S^\perp|^2.
          \]
          On a magnitude chart $a=\cos s$,
           $b=\sin s$ one
           has
           $\chi_\gamma=1-\left(\frac{ds}{d\tau}\right)^2
           =a^2\omega_1^2+b^2\omega_2^2$.

       \begin{proposition}[Exact curvature decomposition]
           \label{pr2}
           \begin{align}
                              |B^{G_\gamma}|^2
                               ={}&\frac{|B_1|^2}{a^2}
                               +\frac{|B_2|^2}{b^2}
                               +\left(k_1\frac{b^2}{a^2}
                               +k_2\frac{a^2}{b^2}\right)\chi_\gamma
                               +2k_1\omega_1^2
                               +2k_2\omega_2^2
                               +|\kappa_\gamma|^2.
                              \label{eq18}
           \end{align}
           If $k_i=0$,
            omit the corresponding tangent block
            and its terms.
           If $G_\gamma$ is minimal,
            then $\kappa_\gamma=(-k_1\tan s+k_2\cot s)S^\perp$,
            and hence
            $ |B^{G_\gamma}|^2$ becomes
           \begin{equation}
                           \frac{|B_1|^2}{\cos^2s}
                               +\frac{|B_2|^2}{\sin^2s}
                               +2k_1\omega_1^2
                               +2k_2\omega_2^2
                               +\Bigl[
                              k_1\tan^2s
                               +k_2\cot^2s
                               +\bigl(k_2\cot s-k_1\tan s\bigr)^2
                              \Bigr]\chi_\gamma.
                              \label{eq19}
           \end{equation}
       \end{proposition}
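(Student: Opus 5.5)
The plan is to compute $|B^{G_\gamma}|^2$ directly from the block identities of \cref{lm4}, using the orthonormal tangent frame $\{E_0,E_\alpha,F_\beta\}$ of \cref{ss3}. In this frame
\[
|B|^2=\sum_{A,C}|B(e_A,e_C)|^2,
\]
where every off-diagonal pair is counted twice. I will group the terms by the blocks $(E,E)$, $(F,F)$, $(E,F)$, $(E_0,E)$, $(E_0,F)$ and $(E_0,E_0)$, and evaluate each group in turn.

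\textbf{The $(E,E)$ block.} By \eqref{eq11},
\[
\sum_{\alpha,\alpha'}|B(E_\alpha,E_{\alpha'})|^2
=\frac1{a^2}\sum|B_1^\sharp(e_\alpha,e_{\alpha'})|^2
+\frac{2b}{a^2}\,\bigl\langle \mathbf H_1^\sharp,S^\perp\bigr\rangle
+k_1\frac{b^2}{a^2}|S^\perp|^2 .
\]
The first term equals $|B_1|^2/a^2$, because phase rotation by $e^{i\theta_1}$ is an isometry and $\{e_\alpha\}$ is $g_1$-orthonormal. The cross term vanishes: by \cref{lm2}, $B_1$ takes values in $\cN_{f_1}^\circ$, so $\mathbf H_1^\sharp\in\cN_1^\circ$, which is orthogonal to $\cN_{\mathrm{prof}}\ni S^\perp$ by the splitting in \cref{ss3}. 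This orthogonality is the one point I regard as needing care; it is exactly what makes the decomposition exact rather than an inequality. The last term is $k_1\frac{b^2}{a^2}\chi_\gamma$.

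\textbf{The remaining blocks.} The $(F,F)$ block is handled symmetrically via \eqref{eq12} and gives $\frac{|B_2|^2}{b^2}+k_2\frac{a^2}{b^2}\chi_\gamma$. The $(E,F)$ block vanishes by \eqref{eq13}. For the $(E_0,E_\alpha)$ block, $|JE_\alpha|=1$ and $\theta_1'/v_\gamma=d\theta_1/d\tau=\omega_1$. Each of the $k_1$ off-diagonal entries therefore contributes $\omega_1^2$ twice, for a total of $2k_1\omega_1^2$; likewise the $(E_0,F)$ block gives $2k_2\omega_2^2$. Finally, $B(E_0,E_0)=\kappa_\gamma$ contributes $|\kappa_\gamma|^2$. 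Summing these contributions yields \eqref{eq18}. When $k_i=0$, the corresponding frame block is empty and its terms are simply absent.

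\textbf{The minimal case.} I combine \eqref{eq17} with the gradient computed in the proof of \cref{th1}:
\[
\kappa_\gamma=\bigl(\nabla^{\Sphere^3}\log\rho\bigr)^\perp
=\Bigl(-\frac{k_1b}{a}+\frac{k_2a}{b}\Bigr)S^\perp .
\]
With $a=\cos s$ and $b=\sin s$ this becomes $(-k_1\tan s+k_2\cot s)S^\perp$, so $|\kappa_\gamma|^2=(k_2\cot s-k_1\tan s)^2\chi_\gamma$. Substituting this together with $b^2/a^2=\tan^2 s$ and $a^2/b^2=\cot^2 s$ into \eqref{eq18} gives \eqref{eq19}.

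The stated identity $\chi_\gamma=1-(ds/d\tau)^2=a^2\omega_1^2+b^2\omega_2^2$ is only used for interpretation. It follows from the orthogonal decomposition of the unit vector $T_\gamma$ into its radial part and its two phase parts, noting that $S$ is the unit vector orthogonal to the radial and $Jz$-directions.
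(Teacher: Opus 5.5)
Your proposal is correct and follows the paper's own argument. You square the block identities of \cref{lm4} in the orthonormal frame and kill the only cross terms, $\frac{2b}{a^2}\langle\mathbf H_1^\sharp,S^\perp\rangle$ and its $F$-block analogue, via the orthogonality $\cN_1^\circ\perp\cN_{\mathrm{prof}}$, which is exactly the ``orthogonal normal decomposition'' the paper invokes; the minimal case is then the same substitution of the profile equation from \cref{th1}, equivalently \eqref{eq17} with $\nabla^{\Sphere^3}\log\rho=(-k_1\tan s+k_2\cot s)S$.
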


       \begin{proof}
       By squaring the block identities in \cref{lm4}
       and using the orthogonal normal decomposition 
      we get  \eqref{eq18}.
       When $G_\gamma$ is minimal,
       \cref{th1} says
$
\kappa_\gamma=(k_2\cot s-k_1\tan s)S^\perp
$
and the substitution consequently leads to \eqref{eq19}.
                \end{proof}

\begin{corollary}[Uniform curvature bound]
    \label{co1}
    Assume that $G_\gamma$ is minimal,        $K_i:=\sup_{M_i}|B_i|<\infty $
    and
    $0<s_-\leq  s\leq  s_+<\frac{\pi}{2}$
    along $\gamma$.
    Then, pointwise,
    \begin{equation}      \label{eq20}
        |B^{G_\gamma}|^2
        \leq {}
        \frac{K_1^2+2k_1}{\cos^2s_+}
        +\frac{K_2^2+2k_2}{\sin^2s_-}
        +k_1\tan^2s_+
        +k_2\cot^2s_-  
        +\bigl(k_1\tan s_+ + k_2\cot s_-\bigr)^2 .
    \end{equation}
\end{corollary}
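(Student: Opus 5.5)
The plan is to start from the exact minimal-case formula \eqref{eq19} of \cref{pr2} and bound each term pointwise, using only the hypotheses $|B_i|\le K_i$ and $s_-\le s\le s_+$. Because $0<s_-\le s\le s_+<\frac{\pi}{2}$, monotonicity gives
\[
\cos s\ge\cos s_+,\qquad \sin s\ge \sin s_-,\qquad \tan s\le\tan s_+,\qquad \cot s\le \cot s_- ,
\]
and all of these quantities are positive. The factor terms are then immediate:
\[
\frac{|B_1|^2}{\cos^2 s}\le \frac{K_1^2}{\cos^2 s_+},\qquad \frac{|B_2|^2}{\sin^2 s}\le \frac{K_2^2}{\sin^2 s_-}.
\]

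The angular terms $2k_1\omega_1^2+2k_2\omega_2^2$ need the profile identity $\chi_\gamma=\cos^2 s\,\omega_1^2+\sin^2 s\,\omega_2^2=1-(ds/d\tau)^2\le 1$. This gives $\cos^2 s\,\omega_1^2\le1$, so $\omega_1^2\le 1/\cos^2 s\le 1/\cos^2 s_+$. In the same way $\omega_2^2\le 1/\sin^2 s_-$. Hence
\[
2k_1\omega_1^2+2k_2\omega_2^2\le \frac{2k_1}{\cos^2 s_+}+\frac{2k_2}{\sin^2 s_-},
\]
which supplies the $2k_i$ contributions grouped with $K_i^2$ in \eqref{eq20}. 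If some $k_i=0$, that block is absent by \cref{pr2}, and the corresponding term is simply dropped.

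For the bracketed term, $\chi_\gamma\le1$ again, and $k_1\tan^2 s+k_2\cot^2 s\le k_1\tan^2 s_++k_2\cot^2 s_-$. For the square, the triangle inequality with $\tan s,\cot s>0$ gives
\[
\bigl|k_2\cot s-k_1\tan s\bigr|\le k_1\tan s_++k_2\cot s_- .
\]
Summing the three estimates yields \eqref{eq20}.

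The only step that is not pure monotonicity is the bound on $\omega_i^2$. It relies on recognizing that $\chi_\gamma\le 1$ controls each weighted angular speed separately, and this is why the estimate holds uniformly, independent of the particular profile and of the number of cells. Everything else is routine. I would also check, as a side remark, that at coordinate-circle points with $k_i=0$ the omitted terms cause no difficulty; under the standing hypothesis $0<s_-$, $s_+<\frac{\pi}{2}$, such points do not occur anyway.
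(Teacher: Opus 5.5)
Your proposal is correct and follows the paper's proof. Both start from \eqref{eq19} and use $\chi_\gamma=\cos^2 s\,\omega_1^2+\sin^2 s\,\omega_2^2\le 1$ to control each $\omega_i^2$ separately. The remaining terms are then bounded by the monotonicity of $\cos,\sin,\tan,\cot$ on $[s_-,s_+]$ together with the triangle-inequality estimate $|k_2\cot s-k_1\tan s|\le k_1\tan s_+ + k_2\cot s_-$.
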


\begin{proof}
    As 
    $\chi_\gamma
                       =a^2\omega_1^2+b^2\omega_2^2\leq 1$,
    we get $\omega_1^2\leq  a^{-2}$ and $\omega_2^2\leq  b^{-2}$.
 With $ |k_2\cot s-k_1\tan s|        \leq       k_2\cot s_-        +k_1\tan s_+$
     and the monotonicity of $\tan$ and $\cot$, 
    \eqref{eq19} gives \eqref{eq20}.
\end{proof}

%\majorsectiongap
\section{\texorpdfstring{Weighted Geodesics, First Integrals, and Turning Points}
 {Weighted Geodesics, First Integrals, and Turning Points}}
          \label{se3}

          We integrate the weighted-geodesic equation, determine its admissible levels, and continue solutions across the turning points.

   \subsection{Noether conservation laws on a magnitude graph}
          Consider a nonsteady profile on a monotone magnitude interval.
          Using  $s$ as parameter, 
          the profile can be written as
          $
                              \gamma(s)=
                              \bigl(
                              a(s)e^{i\theta_1(s)},
                              b(s)e^{i\theta_2(s)}
                              \bigr)
                              \text{where }
                              a=\cos s
                              \text{ and }
                              b=\sin s.
        $
         We shall use prime to denote $\frac{d}{ds}$
         on the $s$-graph as in \cref{cv1}.
          Set $\Theta(s)=a^2(\theta_1')^2+b^2(\theta_2')^2$
            and $\rho=a^{k_1}b^{k_2}$,
             so that
             $\left\lvert\frac{d\gamma}{ds}\right\rvert_{g_{\Sphere^3}}^2
             =1+\Theta(s)$.

          The weighted length of an $s$-graph is $\int\mathcal L\,ds$,
            where $\mathcal L=\rho(s)\sqrt{1+\Theta}$.
          Nearby unparametrized curves may again be represented as $s$-graphs,
            and the two phase variables are cyclic.
          The Euler--Lagrange equations of this weighted-length functional
          are therefore
          \begin{equation}\label{eq21}
                              \frac d{ds}\left(\frac{\rho a^2\theta_1'}{\sqrt{1+\Theta}}\right)=0,
                               \qquad
                              \frac d{ds}\left(\frac{\rho b^2\theta_2'}{\sqrt{1+\Theta}}\right)=0.
          \end{equation}
   \subsection{First integrals and the explicit phase pair}
                                   On a monotone $s$-branch
                                   in the chart $\mu_1\ne0$,
                                   let $t$ be an affine parameter
                                   and let dots denote differentiation with respect to $t$.
                                   Since
                                   $    \sqrt{2\cH}=\rho|\dot s|\sqrt{1+\Theta} $,
                                   the two conserved quantities in \eqref{eq21} satisfy
$$
    \frac{\rho a^2\theta_1'}{\sqrt{1+\Theta}}
    =\sgn(\dot s)\frac{\mu_1}{\sqrt{2\cH}}
    =\frac{\sigma}{\sqrt{C_2}},
    \qquad
    \frac{\rho b^2\theta_2'}{\sqrt{1+\Theta}}
    =\sgn(\dot s)\frac{\mu_2}{\sqrt{2\cH}}
    =\frac{\sigma C_1}{\sqrt{C_2}},
$$
where
           $ 
                 \sigma:=\sgn(\dot s)\sgn(\mu_1)\in\{\pm1\}.
            $
          The sign $\sigma$ encodes the direction of the magnitude branch
           relative to the fixed sign of $\mu_1$ and changes sign across a turning point,
           whereas $C_1,C_2$ are branch-independent.
          Eliminating common normalization leads to
           $b^2\theta_2'=C_1a^2\theta_1'$,
            and 
            moreover,
            $\frac{\Theta}{1+\Theta}
            =\frac{b^2+C_1^2a^2}{C_2a^{2k_1+2}b^{2k_2+2}}$.
        Set 
                $$
                         P_{C_1}(s):=    \frac{b^2+C_1^2a^2}{\rho^2a^2b^2},
                         % \text{ and }
                          \qquad
                          D_{C_1,C_2}(s)
                              :=
                              \rho^2a^2b^2\bigl(C_2-P_{C_1}(s)\bigr)
                              =
                              C_2a^{2k_1+2}b^{2k_2+2} -(b^2+C_1^2a^2).
                        $$
          The corresponding threshold function is
          \begin{equation}\label{eq24}
                              R_{k_1,k_2}(C_1)
                              :=\inf_{0<s<\frac{\pi}{2}}P_{C_1}(s),
          \end{equation}
          where the boundary value at $s=0$ is included by continuity
            for $(C_1,k_2)=(0,0)$.
          For a given branch,
           its magnitude interval is a connected component of
           $\left\{s\in\left(0,\frac{\pi}{2}\right):
           D_{C_1,C_2}(s)>0\right\}$.
          On this interval the reduced phase equations are
          \begin{equation}\label{eq25}
          % \begin{aligned}
                               \theta_1' =\frac{\sigma\tan s}{\sqrt{D_{C_1,C_2}(s)}},
                               \qquad
                                \theta_2'=\frac{\sigma C_1\cot s}{\sqrt{D_{C_1,C_2}(s)}}.
         %  \end{aligned}
          \end{equation}
          The levels $C_1=0$ and $C_1=-1$ correspond respectively to the
           singly spiral and contact cases.

   \subsection{\texorpdfstring{Threshold function and simple turning points}
    {Threshold function and simple turning points}}

       \begin{proposition}[Threshold minimum and turning points]
           \label{pr3}
           If $(C_1,k_2)\neq(0,0)$, 
                            then $P_{C_1}$ diverges at both endpoints and has a unique critical point
                                              $s_*\in\left(0,\frac{\pi}{2}\right)$,
                                              which is a nondegenerate global minimum.
        For every $C_2>R_{k_1,k_2}(C_1)$,             
                $D_{C_1,C_2}$ has exactly two simple zeros      $s_-<s_+$,
                 and
    $  \{D_{C_1,C_2}>0\}=(s_-,s_+) $.
    If $C_1=k_2=0$, then
    $
        P_0(s)=\cos^{-2k_1-2}s,
        R_{k_1,0}(0)=P_0(0)=1.
   $
    In this case $P_0$ is strictly increasing, and every
    $C_2>1$ gives a unique interior zero $s_+$ of
    $D_{0,C_2}$, with
$
        \{D_{0,C_2}>0\}=(0,s_+).
$
       \end{proposition}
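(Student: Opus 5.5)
The plan is to pass to the variable $u=\sin^2 s\in(0,1)$. There the threshold function becomes a rational-power expression whose logarithmic derivative reduces to a quadratic polynomial, and everything follows from a sign analysis of that quadratic. With $a^2=1-u$, $b^2=u$ and $c:=C_1^2$, $d:=1-c$, we have
\[
P_{C_1}=\bigl(u+c(1-u)\bigr)(1-u)^{-k_1-1}u^{-k_2-1}.
\]
Since $s\mapsto u$ is an increasing diffeomorphism $(0,\tfrac\pi2)\to(0,1)$, critical points, their nondegeneracy, and monotonicity all transfer directly.

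\emph{Endpoint behaviour.} As $u\to1$, the numerator tends to $1$ and $(1-u)^{-k_1-1}$ blows up, since $k_1+1\ge1$. As $u\to0$, the numerator tends to $c$. If $C_1\ne0$, the factor $u^{-k_2-1}$ forces divergence. If $C_1=0$, then $P_0\sim u^{-k_2}$, which diverges exactly when $k_2>0$. This is where the hypothesis $(C_1,k_2)\ne(0,0)$ enters.

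\emph{Critical points.} Next I compute
\[
\frac{d}{du}\log P_{C_1}=\frac{d}{u+c(1-u)}+\frac{k_1+1}{1-u}-\frac{k_2+1}{u}.
\]
Multiplying by the positive quantity $u(1-u)\bigl(u+c(1-u)\bigr)$ gives
\[
Q(u)=du(1-u)+(k_1+1)u\bigl(c+du\bigr)-(k_2+1)(1-u)\bigl(c+du\bigr),
\]
a polynomial of degree at most $2$ with the same sign as $P_{C_1}'$. Its endpoint values are $Q(0)=-(k_2+1)c\le0$ and $Q(1)=k_1+1>0$.

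If $C_1\ne0$, then $Q(0)<0<Q(1)$. So $Q$ has an odd number of roots in $(0,1)$, counted with multiplicity, and has degree at most $2$. Hence it has exactly one root there, and that root is simple.

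If $C_1=0$ and $k_2>0$, then $Q(u)=u\bigl[(k_1+k_2+1)u-k_2\bigr]$. Its only interior root is $u_*=k_2/(k_1+k_2+1)$, which is again simple.

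In both cases $P_{C_1}'$ changes sign from negative to positive exactly once. Combined with the endpoint divergence, the critical point $s_*$ is the global minimum and $P_{C_1}$ is strictly monotone on each side of it. Nondegeneracy follows because at a simple root of $Q$ we have $P''=P\,Q'/\bigl(u(1-u)(u+c(1-u))\bigr)\cdot(du/ds)^2\ne0$, the terms involving $Q$ itself dropping out.

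\emph{Zeros of $D_{C_1,C_2}$.} Write $D_{C_1,C_2}=\rho^2a^2b^2\,(C_2-P_{C_1})$, whose prefactor is positive on $(0,\tfrac\pi2)$. For $C_2>R_{k_1,k_2}(C_1)=P_{C_1}(s_*)$, strict monotonicity on each side of $s_*$ together with divergence at both ends gives exactly two solutions $s_-<s_*<s_+$ of $P_{C_1}=C_2$. Positivity of $D$ then holds exactly on $(s_-,s_+)$. At these zeros $D'=-\rho^2a^2b^2P_{C_1}'\ne0$, because $s_\pm\ne s_*$, so the zeros are simple.

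\emph{The excluded case $C_1=k_2=0$.} Here $P_0=\cos^{-2k_1-2}s$ directly. This is strictly increasing with limit $1$ at $s=0$, so $R_{k_1,0}(0)=1$. For $C_2>1$, the same factorization gives a unique simple zero $s_+$ and $\{D_{0,C_2}>0\}=(0,s_+)$.

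I expect the main obstacle to be the uniqueness of the critical point. A direct differentiation in $s$ produces trigonometric expressions that are awkward to sign. The substitution $u=\sin^2s$, which turns the problem into a quadratic with opposite-sign endpoint values, is the step I would commit to first.
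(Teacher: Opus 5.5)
Your proof is correct and follows essentially the paper's argument. The paper substitutes $x=\cos^2 s$ where you use $u=\sin^2 s=1-x$, and it reduces the critical-point equation to the same quadratic up to reflection and sign, whose endpoint values have opposite signs (hence exactly one simple interior root). It treats $C_1=0$ separately and gets the simple zeros of $D_{C_1,C_2}$ from $D'=-\rho^2a^2b^2P_{C_1}'$, just as you do.
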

       \begin{proof}
           Set $x=\cos^2s$.  
           When $C_1\neq 0$,
           the critical-point equation for
           $P_{C_1}=\frac{1+(C_1^2-1)x}{x^{k_1+1}(1-x)^{k_2+1}}$
             is    
                     $(C_1^2-1)(k_1+k_2+1)x^2      +(2k_1+k_2+2-C_1^2k_1)x       -(k_1+1)=0$
           which has endpoint values
            $-(k_1+1)$ and $C_1^2(k_2+1)$.
           No matter whether $C_1^2=1$ or not,
           there is exactly one root in $(0,1)$
           which is simple and decides the global minimum of $P_{C_1}$. 
           For $C_1=0$,
          $P_0$ becomes 
           $x^{-k_1-1}(1-x)^{-k_2}$.
                       If $k_2>0$, 
                                   its derivative has a unique simple zero in $(0,1)$    which gives the interior minimum. 
                        If $k_2=0$, 
                                then
                                $    P_0(s)=\cos^{-2k_1-2}s $
                                        is strictly increasing from its boundary minimum $P_0(0)=1$.
                                Finally, at any interior zero $s_0$ of $D_{C_1,C_2}$,
                                $    D_{C_1,C_2}'(s_0)
                                      =-\rho^2a^2b^2P_{C_1}'(s_0)\ne0,
                                   $
                                   so the zero is simple.
       \end{proof}

  \subsection{Global continuation and magnitude oscillation}
          \label{ss6}

       \begin{proposition}[Geometric continuation and global magnitude oscillation]
           \label{pr4}
           Fix         a level  $C_2>R_{k_1,k_2}(C_1)$
           having two simple turning values
           $0<s_-<s_+<\frac{\pi}{2}$.
                    Every profile on this level admits a unique global real-analytic continuation, 
           whose magnitude is periodic between $s_-$ and $s_+$.
       \end{proposition}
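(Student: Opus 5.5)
The plan is to pass from the first-order reduced equations \eqref{eq25}, which are singular at the turning values, to the regular second-order geodesic flow of $\bar g$. On the open set $U=\{0<s<\frac\pi2\}\subset\Sphere^3$ the metric $\bar g=\rho^2g_{\Sphere^3}$ is real analytic and nondegenerate, because $\rho=\cos^{k_1}s\,\sin^{k_2}s>0$ there. Hence the affinely parametrized geodesic equation is a real-analytic second-order ODE on $TU$. By Cauchy--Kovalevskaya/analytic ODE theory, every profile on the level $(C_1,C_2)$ is, after an affine reparametrization, the restriction of a unique maximal real-analytic $\bar g$-geodesic. The quantities $\mu_1,\mu_2,\cH$ are conserved along this maximal geodesic, so the ratios $C_1,C_2$ are constant along it.

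The key step is an energy identity for $s$. Writing $|\dot\gamma|_{\bar g}^2=\rho^2(\dot s^2+a^2\dot\theta_1^2+b^2\dot\theta_2^2)$ and substituting $\dot\theta_1=\mu_1/(\rho^2a^2)$ and $\dot\theta_2=\mu_2/(\rho^2b^2)$, I expect to obtain
\[
\rho^4\dot s^2=2\cH-\mu_1^2P_{C_1}(s),\qquad\text{equivalently}\qquad \rho^6a^2b^2\,\dot s^2=\mu_1^2D_{C_1,C_2}(s).
\]
By \cref{pr3}, $D_{C_1,C_2}>0$ exactly on $(s_-,s_+)$. The geodesic therefore stays in the compact region $\{s_-\le s\le s_+\}\subset U$, where $\rho$ is bounded below. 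Since the speed $|\dot\gamma|_{\bar g}$ is constant and the region is compact with $\bar g$ uniformly comparable to the round metric, the solution cannot escape in finite time, and the maximal geodesic is defined for all $t\in\RR$.

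Next I will show that $s$ oscillates. Differentiating the identity above gives a second-order equation $\ddot s=F(s)$ for a real-analytic $F$ on $[s_-,s_+]$, namely a Newton equation with effective potential $V(s)=\mu_1^2P_{C_1}(s)/(2\rho^4)$ at the energy level $\cH$. The zeros $s_\pm$ of $D$ are simple by \cref{pr3}, so at $s=s_\pm$ we have $\dot s=0$ and $\ddot s\ne0$, with $\ddot s>0$ at $s_-$ and $\ddot s<0$ at $s_+$. The simple zeros also keep the time of one monotone passage,
\[
\int_{s_-}^{s_+}\frac{\rho^3ab\,ds}{|\mu_1|\sqrt{D_{C_1,C_2}}},
\]
finite. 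Hence $s$ reaches each turning value in finite time and reverses there: no equilibrium lies on the level, and $s$ cannot approach $s_\pm$ asymptotically. By uniqueness for $\ddot s=F(s)$, reflection symmetry about each turning time shows that $s(t)$ is periodic with period twice the passage time.

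The main obstacle is the turning points, where \eqref{eq25} degenerates through $\sqrt{D}$ and the $s$-graph description breaks down. I will avoid it by never using \eqref{eq25} there and working instead with the regular geodesic flow. The remaining care is to check that $\ddot s\neq0$ at $s_\pm$; I plan to obtain this from $D'(s_\pm)=-\rho^2a^2b^2P'_{C_1}(s_\pm)\neq0$ by differentiating the energy identity.
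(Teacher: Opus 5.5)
Your proposal is correct and follows essentially the same route as the paper. The energy identity confines $s$ to the compact band $[s_-,s_+]$, where $\bar g$ is analytic and nondegenerate, and bounded speed gives global existence. Simple zeros give finite passage times, $\ddot s\neq0$ at $s_\pm$ forces reversal, and time-reversal symmetry plus uniqueness yields periodicity.

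The only slip is bookkeeping. Substituting $\dot\theta_i=\mu_i/W_i$ actually gives $\rho^2\dot s^{2}=\mu_1^2\bigl(C_2-P_{C_1}(s)\bigr)$, i.e.\ $\rho^4a^2b^2\dot s^{2}=\mu_1^2D_{C_1,C_2}(s)$. The passage-time integrand is therefore $\rho^2ab/(|\mu_1|\sqrt{D_{C_1,C_2}})$, not $\rho^3ab/(|\mu_1|\sqrt{D_{C_1,C_2}})$. Also, the reduced system has position-dependent mass $W_0=\rho^2$, so your $V$ is not literally its effective potential. Since $\rho$ is bounded above and below on the band, none of your qualitative conclusions change.
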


       \begin{proof}
                The energy identity 
                $\rho(s)^2\dot s^{\,2} =\mu_1^2(C_2-P_{C_1}(s))$
                  confines the magnitude to $[s_-,s_+]$.
                On this compact band,
                the metric $\bar g$ is real analytic and nondegenerate,
                while conservation of energy bounds the velocity.
                Hence the given geodesic extends uniquely as a real-analytic geodesic for all affine time.

      Since the zeros of $C_2-P_{C_1}$ at $s_\pm$ are simple,
      the energy identity shows that each passage between them takes finite time.
         Through differentiating the energy identity where $\dot s\neq0$ and extending by continuity,
         we obtain
         \begin{equation}\label{eqsode}
              \ddot s  +  \frac{\rho'(s)}{\rho(s)}\dot s^{\,2}
               =
                   -\frac{\mu_1^2P_{C_1}'(s)}{2\rho(s)^2}.
          \end{equation}
              At either turning point,
              $\dot s=0$ and $P_{C_1}'(s)\neq0$,
              so \eqref{eqsode} gives $\ddot s\neq0$.
              Thus the magnitude reverses direction there.
              Time-reversal symmetry and uniqueness for \eqref{eqsode}
              then show that successive passages are reflections of one another.
              Hence the magnitude oscillates periodically between $s_-$ and $s_+$.
       \end{proof}

      \begin{remark}[{Passage through a coordinate circle}]
           \label{rm1}
           By symmetry, 
                 it is enough to consider a regular level reaching $s=0$
                 which forces $C_1=k_2=0$. 
                 Since $\mu_2=0$, 
                 after a constant phase rotation we can write $z_2=u\in\RR$,
                 so that $b=|u|$.
                 Now the metric
                 $    \bar g=(1-|z_2|^2)^{k_1}g_{\Sphere^3}$
                 is real analytic and nondegenerate at $u=0$. 
                                   Moreover, for $C_2>1$,
                $
                    \left(\frac{du}{d\tau}\right)^2
                    \big|_{u=0}
                    =\frac{C_2-1}{C_2}>0.
                 $
                 Hence the geodesic equation gives a unique real-analytic continuation in the signed coordinate $u$.
                 In the polar coordinate $b=|u|$,
                 the same passage is represented by two polar branches meeting at $b=0$,
                 with the sign change by $\theta_2\mapsto\theta_2+\pi$.
          \end{remark}

          \par
       \begin{remark}[Geometric viewpoint on global spiral minimal products]
           \label{rmhl}
           The extension theorem of Harvey and Lawson \cite{HarveyLawson1975}
           gives an independent global formulation at the level of the image:
           adjacent product branches meeting at an ordinary turn or a coordinate-circle passage as described in \cref{rm1} form locally a single real-analytic minimal submanifold.
       \end{remark}

   \subsection{The threshold curve}
          \label{ss7}

          Abbreviate
          $
                              L=k_1+k_2+1.
        $
                      For $t\geq 0$, set
                      $$
                            F_{k_1,k_2}(x,t)
                         =
                            \frac{1+(t-1)x} {x^{k_1+1}(1-x)^{k_2+1}},
                            \     \qquad 0<x<1.
                           $$
                        Then, with $x=\cos^2s$, 
                        \eqref{eq24} gives
          \begin{equation}\label{eq29}
                          R_{k_1,k_2}(C_1)
                           =
                           \inf_{0<x<1}F_{k_1,k_2}(x,C_1^2)
                           =
                           \inf_{0<s<\frac{\pi}{2}}P_{C_1}(s).
          \end{equation}

       \begin{proposition}[Admissible parameter domain]
           \label{th3}
           The function $R_{k_1,k_2}$ is even and continuous.  
           %It is real analytic when $(C_1, k_2)\neq (0,0)$. %$C_1\ne0$, and also at $C_1=0$ when $k_2>0$.  
            Moreover, the set            \begin{equation}\label{eq30}
                              \mathcal U_{k_1,k_2}
                              :=\{(C_1,C_2)\in\RR^2:C_2>R_{k_1,k_2}(C_1)\}
            \end{equation}
            is an open connected parameter domain. 
                   \end{proposition}

       \begin{proof}
    This is clear.  
           \end{proof}

       \begin{remark}[Factor exchange]\label{rm2}
       For $C_1\neq0$, 
       interchanging   factors induces
       $
        (k_1,k_2;C_1,C_2)
        \mapsto
        \left(k_2,k_1;C_1^{-1},\frac{C_2}{C_1^2}\right).
       $
        Consequently,
        $
        R_{k_1,k_2}(C_1)
        =
        C_1^2R_{k_2,k_1}(C_1^{-1}),
        $
        so the behavior of the threshold at infinity reduces to that near
        $C_1=0$, with $k_1$ and $k_2$ interchanged.
        \end{remark}

%\majorsectiongap
\section{Hamiltonian Reduction, Phase Maps, and Global Closing}
          \label{se4}

        In this section, 
          we use Hamiltonian reduction to study the global closing problem for profiles. 
          Fixing the conserved phase momenta reduces the dynamics to a one-degree-of-freedom system. 
          Its regular nonsteady energy levels give the oscillatory profiles, 
          while its equilibria give the steady profiles. 
          The corresponding Routhian yields the scalar variational problem used in Section \ref{se9}.

    The phase advances over one complete magnitude period define the complete-cell phase map, 
    which is real analytic. 
    This gives the exact ordinary closing criterion 
    and 
    shows that every open subset of the admissible region contains ordinarily closed profiles with arbitrarily large return order. 
    Finally, finite phase stabilizers of the factors account for spherical returns 
    and 
    the corresponding primitive spherical quotients.

   \subsection{Hamiltonian and Routh reduction}

          Let $Q=\left(0,\frac{\pi}{2}\right)\times\TT^2$
          with coordinates $(s,\theta_1,\theta_2)$.
          With $                               
                              W_0(s)=\rho(s)^2,\,
                               W_1(s)=\rho(s)^2\cos^2s,\,
                               W_2(s)=\rho(s)^2\sin^2s$,
          $
                               \bar g=W_0\,\dd s^2+W_1\,\dd\theta_1^2+W_2\,\dd\theta_2^2,
          $
         the weighted energy density with respect to affine parameter $t$ is
          \[
                              \cL_E
                               =\frac12\left(W_0\dot s^{2}
                               +W_1\dot\theta_1^{2}
                               +W_2\dot\theta_2^{2}\right).
          \]
          Denote round and weighted arclength by $\tau$ and $\ell_{\bar g}$.
          Then along a trajectory of energy $E=\cH$,
          \begin{equation}\label{eq34}
                              \dd\ell_{\bar g}=\rho(s)\,\dd\tau=\sqrt{2E}\,\dd t.
          \end{equation}
           The phase translations form an isometric $\TT^2$-action,
          with momentum map $\bmu=(\mu_1,\mu_2)$:
          \begin{equation}\label{eq35}
                              p_s=W_0\dot s,
                               \qquad
                               \mu_1=W_1\dot\theta_1,
                               \qquad
                               \mu_2=W_2\dot\theta_2.
          \end{equation}
          The Legendre transform gives the geodesic Hamiltonian
          \begin{equation}\label{eq36}
                              \cH(s ;p_s,\bmu)
                               =\frac12\left(
                              \frac{p_s^2}{W_0}
                               +
                              \frac{\mu_1^2}{W_1}
                               +
                              \frac{\mu_2^2}{W_2}
                              \right).
          \end{equation}
          Since $\theta_1$ and $\theta_2$ are cyclic,
          $\mu_1$ and $\mu_2$ are conserved.
          For fixed $\bmu$, set
          \begin{equation}\label{eq40}
           U_{\bmu}(s)
                               =
                              \frac12\sum_{i=1}^2
                              \frac{\mu_i^2}{W_i(s)}.
          \end{equation}
          Then the reduced Hamiltonian is
          $
                              \cH_{\bmu}(s,p_s)
                               =
                              \frac{p_s^2}{2W_0(s)}
                               +
                              U_{\bmu}(s).
          $
        
       The partial Legendre transform in the cyclic velocities gives the Routhian \cite{Routh1877}.
       For a modern geometric formulation, 
       see \cite[Sections III.B and III.D]{MarsdenRatiuScheurle2000}.
          \[
                              \mathscr R_{\bmu}
                               :=\left.\left(\cL_E-\sum_{i=1}^2\mu_i\dot\theta_i\right)
                              \right|_{\dot\theta_i=\frac{\mu_i}{W_i}}
                               =\frac12W_0(s)\dot s^{\,2}-U_{\bmu}(s).
          \]
          Reconstruction is given by
          $\dot\theta_i=\frac{\mu_i}{W_i(s)}$,
          and the Routh energy is
          \[
                              \cE_{\mathscr R_{\bmu}}
                               =
                              \frac12W_0(s)\dot s^2+U_{\bmu}(s)
                               =
                              \cH_{\bmu}(s,W_0\dot s)
                               =
                              E.
          \]
          The scalar operator in \cref{se9} is the Hessian of the Routh action,
          and its relation to the full profile Hessian is given by \cref{th7}.

\subsection{Steady profiles}

               Assume $(C_1,k_2)\ne(0,0)$.
          A profile is steady exactly when
          $                              C_2=R_{k_1,k_2}(C_1)$.
          In this case $s(t)\equiv s_*$,
          where $s_*$ is the unique minimizer of $P_{C_1}$.
          Setting $a_*=\cos s_*$ and $b_*=\sin s_*$,
          phase translation and affine reparametrization give
          \[
                              \gamma_*(t)=(a_*e^{it},b_*e^{i\hat c t}),
                              \qquad \hat c=\frac{\dot\theta_2}{\dot\theta_1},
                              \qquad
                              \frac{k_1}{L}<a_*^2 \leq  \frac{k_1+1}{L},
                              \qquad
                              \hat c^{\,2}
                               =
                              \frac{a_*^2(k_1+1-La_*^2)}
                              {b_*^2(La_*^2-k_1)}.
          \]
          Here $\hat c=C_1\cot^2s_*$ and the profile is %ordinarily
          closed exactly when $\hat c\in\QQ$. 
          If $k_1=k_2=0$, then
          $\hat c^{\,2}=1$ and the profiles are great circles.

    \subsection{The complete-cell phase map}
          \label{se6}

  By \cref{pr3}, a profile in the chart $\mu_1\ne0$ is regular oscillatory exactly when
           $(C_1,C_2)\in\mathcal U_{k_1,k_2}$ if $k_2>0$, 
           and when
           $(C_1,C_2)\in\mathcal U_{k_1,0}\cap\{C_1\ne0\}$ if $k_2=0$.
     In either case, it is doubly spiral exactly when $C_1\ne0$.
     Thus the two regular doubly spiral chambers are
     $\mathcal U_{k_1,k_2}^{\pm}:=\mathcal U_{k_1,k_2}\cap\{\pm C_1>0\}$. 
    For such a profile, the energy equation gives
          $$
                              p_s^2
                               =\mu_1^2W_0(s)\bigl(C_2-P_{C_1}(s)\bigr)
          $$
          with turning values $s_-<s_+$,
          and
         the affine time of one complete cell is
            \[
                              T_{\mathrm{cell}}
                               =\frac{2}{|\mu_1|}\int_{s_-}^{s_+}
                              \frac{\rho(s)}{\sqrt{C_2-P_{C_1}(s)}}\dd s
                               =\frac{2}{|\mu_1|}\int_{s_-}^{s_+}
                              \frac{\rho(s)^2\sin s\cos s}{\sqrt{D_{C_1,C_2}(s)}}\dd s.
          \]

          Recall
          $\varepsilon_1=\sgn(\mu_1)$. 
          Integrating \eqref{eq25} over the two monotone halves of a cell gives
          \begin{equation}\label{eq47}
%           \begin{aligned}
                              \Delta_1
                               %&
                               =2\varepsilon_1\int_{s_-}^{s_+}
                                 \frac{\tan s}{\sqrt{D_{C_1,C_2}(s)}}\,\dd s,
                                 \qquad
                              \Delta_2
                               %&
                               =2\varepsilon_1C_1\int_{s_-}^{s_+}
                                 \frac{\cot s}{\sqrt{D_{C_1,C_2}(s)}}\,\dd s.
%           \end{aligned}
          \end{equation}

          These increments define the complete-cell phase map $\Pi$  in \eqref{eq6}. 
          When $k_2=0$,
          a continuous real-valued lift across $C_1=0$ requires a $2\pi$-adjustment.
          
      \begin{lemma}[Adjusted phase lift across $C_1=0$]
           \label{lm6}
           Assume $k_2=0$.
           For $C_1\ne0$,
           let $\Delta_i$ be the complete-cell increments in \eqref{eq47}.
           Along $C_1=0$ in the interior of $\mathcal U_{k_1,0}$, 
           define
           \[
             \Delta_1(0,C_2)
              =2\varepsilon_1\int_0^{s_+(C_2)}
                \frac{\dd s}{\cos s\sqrt{C_2\cos^{2(k_1+1)}s-1}},
             \qquad
             \cos s_+(C_2)=C_2^{-\frac{1}{2(k_1+1)}},
           \]
           and set
           \begin{equation}\label{eq42}
             \Delta_2^\sharp(C_1,C_2)=
             \begin{cases}
               \Delta_2(C_1,C_2),&C_1>0,\\
               \varepsilon_1\pi,&C_1=0,\\
               \Delta_2(C_1,C_2)+2\varepsilon_1\pi,&C_1<0.
             \end{cases}
           \end{equation}
            Then the adjusted phase map
          \[
             \Pi^\sharp(C_1,C_2)
              :=
             \frac{1}{2\pi}
             \bigl(\Delta_1(C_1,C_2),
                   \Delta_2^\sharp(C_1,C_2)\bigr)
          \]
          extends real analytically across $C_1=0$.
          For $C_1\ne0$,
          it represents the same torus-valued return as $\Pi$.
          The unadjusted lift has one-sided limits
          $\Delta_2\to\pm\varepsilon_1\pi$.
          If $k_1>0$, then
          \[
             \varepsilon_1\,
             \partial_{C_1}\Delta_2^\sharp(0,C_2)>0,
             \qquad C_2>1.
          \]
               \end{lemma}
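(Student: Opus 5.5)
The plan is to prove the analytic extension geometrically, using the signed coordinate of \cref{rm1}, and to get the two explicit facts (the one-sided limits and the derivative sign) from an asymptotic analysis of \eqref{eq47} near $s=0$. Throughout, $k_2=0$, so
\[
D_{C_1,C_2}(s)=\sin^2 s\,\bigl(C_2\cos^{2k_1+2}s-1\bigr)-C_1^2\cos^2 s .
\]
This depends on $C_1$ only through $C_1^2$. By \cref{pr3} the turning values $s_\pm$ are simple zeros, and $s_+$ is real analytic in $(C_1^2,C_2)$. As $C_1\to0$ we have $s_-\to0$ and $s_+\to s_+(C_2)$, with $\cos s_+(C_2)=C_2^{-1/(2(k_1+1))}$.

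\emph{Step 1: one-sided limits.} Near $s=0$ we have $D\approx (C_2-1)s^2-C_1^2$. I will split $\int_{s_-}^{s_+}\cot s\,D^{-1/2}\,ds$ into a model part and a remainder. The model part is $\int_{s_-}\frac{ds}{s\sqrt{(C_2-1)s^2-C_1^2}}$, which integrates exactly to $|C_1|^{-1}\operatorname{arcsec}(\cdot)$ and contributes $\frac{\pi}{2|C_1|}+O(1)$. The remainder is $O(1)$ uniformly in $C_1$, by dominated convergence after the substitution $s=s_-\,\sigma$. Multiplying by $2\varepsilon_1C_1$ then gives $\Delta_2\to\varepsilon_1\sgn(C_1)\pi$, and hence $\Delta_2^\sharp$ is continuous across $C_1=0$ with value $\varepsilon_1\pi$. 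The formula for $\Delta_1$ at $C_1=0$ is simply the $C_1\to0$ limit of \eqref{eq47}, whose integrand has no singularity there.

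\emph{Step 2: real analyticity.} I work in ambient coordinates $(z_1,w)$ with $w=z_2\in\CC$. There the metric $\bar g=(1-|w|^2)^{k_1}g_{\Sphere^3}$ is real analytic and nondegenerate through $w=0$. Start the geodesic at $s=s_+$ with initial data real analytic in $(C_1,C_2)$: position $(\cos s_+,\sin s_+)$, and velocity fixed by $\mu_1=\varepsilon_1$, $\mu_2=\varepsilon_1C_1$, $\dot s=0$. The first return time to the next maximum of $|w|^2$ is a nondegenerate zero of $\frac{d}{dt}|w|^2$, because $\ddot s\ne0$ by \eqref{eqsode}. It is therefore real analytic by the implicit function theorem, including at $C_1=0$, where the orbit crosses $w=0$ transversally by \cref{rm1}.

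This gives real-analytic torus returns $e^{i\Delta_1}$ and $e^{i\Delta_2}$ in a full neighbourhood of $\{C_1=0\}$. The cell is measured from $s_+$ rather than $s_-$, which does not change the increments. At $C_1=0$ the return is $w\mapsto -w$, so $e^{i\Delta_2}=-1$ there. Taking the analytic logarithm near $-1$ with value $\varepsilon_1\pi$ produces an analytic real lift. On each chamber this lift differs from the continuous lift $\Delta_2$ by a constant in $2\pi\ZZ$, and Step 1 identifies the constants as $0$ for $C_1>0$ and $2\varepsilon_1\pi$ for $C_1<0$. This is exactly \eqref{eq42}, and it also shows that $\Pi^\sharp$ and $\Pi$ give the same torus return. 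The same argument, with the $\theta_1$-lift, which never degenerates, handles $\Delta_1$.

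\emph{Step 3: the derivative sign, which I expect to be the main obstacle.} By Step 2, $\partial_{C_1}\Delta_2^\sharp(0,C_2)$ is the limit as $C_1\to0^+$ of $(\Delta_2-\varepsilon_1\pi)/C_1$. Refining Step 1 to the next order, I expect this limit to be $2\varepsilon_1$ times the Hadamard finite part
\[
\operatorname{f.p.}\int_0^{s_+(C_2)}\frac{\cos s\,ds}{\sin^2 s\,\sqrt{g(s)}},\qquad g(s)=C_2\cos^{2k_1+2}s-1 .
\]
To identify the sign, I would write $\cos s/\sin^2 s=-\frac{d}{ds}(1/\sin s)$ and integrate by parts. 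Because $\sqrt g$ blows up in $1/\sqrt g$ at $s_+$, I would first change variable to $v=\sqrt{g}$ or $g$ itself, so that the upper endpoint becomes regular. The aim is an absolutely convergent integral whose integrand involves $-g'(s)=(2k_1+2)C_2\cos^{2k_1+1}s\sin s$ minus its flat-space value. As a consistency check, for $k_1=0$ the profile is a great circle, i.e.\ a straight line in the $w$-plane, which sweeps exactly $\pi$; so the finite part must vanish identically at $k_1=0$. For $k_1>0$ the strict decrease of $\cos^{2k_1+2}$ compared with its $k_1=0$ counterpart should then make the integrand strictly positive. If this integration by parts becomes unwieldy, the fallback is to differentiate the analytic Cartesian return map directly in $\mu_2$ at $\mu_2=0$. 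That reduces the question to a linear variational equation along the real meridian, and the sign should come from the focusing term $k_1>0$ in $\bar g$.
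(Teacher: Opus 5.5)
Your Steps 1 and 2 are essentially sound, and they reach the analytic extension, the torus-return statement and the one-sided limits by a different, more geometric route than the paper. You use analytic dependence of the $\bar g$-geodesic flow in the coordinate $w=z_2$, where $\bar g=(1-|w|^2)^{k_1}g_{\Sphere^3}$ is regular, then an implicit-function argument for the return time, and finally you match real lifts using the one-sided limits. This is the idea behind the first argument in the proof of \cref{pr5}, and it explains why the jump is exactly $\pm\pi$: the return at $C_1=0$ is $w\mapsto -w$. The paper instead derives everything from one exact formula. Set $c=C_1$, $E=C_2$, $p=k_1+1$ and $y=\tan^2 s$. Then $\mathcal F_E(y)-c^2=(y-\alpha)(\beta-y)K$ has two simple roots uniformly in $c$. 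Write $h=[(1+y)K]^{-1/2}=h_0+yh_1$. The identity $\int_0^{\pi/2}y_\vartheta^{-1}\,d\vartheta=\pi/(2\sqrt{\alpha\beta})$, together with $\alpha\beta K(0,c^2,E)=c^2$, gives
\[
\Delta_2=\varepsilon_1\pi\sgn c+2\varepsilon_1 c\int_0^{\pi/2}h_1(y_\vartheta,c^2,E)\,d\vartheta .
\]

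The genuine gap is Step 3. The strict inequality $\varepsilon_1\partial_{C_1}\Delta_2^\sharp(0,C_2)>0$ is exactly what \cref{th5} needs for $\det D\Pi^\sharp(0,C_2^*)\neq0$ when $k_2=0$, and your proposal does not establish it. First, the identification of $\lim_{C_1\to0^+}(\Delta_2-\varepsilon_1\pi)/C_1$ with $2\varepsilon_1$ times a Hadamard finite part is only conjectured: Step 1 bounds the remainder but does not compute its limit. Second, the positivity of that finite part is asserted on heuristic grounds, and the integration by parts you sketch does not obviously produce a signed integrand.

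The missing idea is to compare with a $k_1=0$ model that has the \emph{same} turning point, via the factorization of $(1+\beta)^p-(1+y)^p$. In the paper, $E=(1+\beta)^p$ at $c=0$ gives $h(y,0,E)^{-2}=\sum_{j=0}^{p-1}\bigl(E^{1/p}/(1+y)\bigr)^j$. This is strictly decreasing in $y$ for $p>1$, so $h_1=(h-h_0)/y>0$ and $\varepsilon_1\partial_c\Delta_2^\sharp(0,E)=2\int_0^{\pi/2}h_1(y_\vartheta,0,E)\,d\vartheta>0$.

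Your finite part yields to the same trick. Put $u=\sin s$ and $u_+=\sin s_+(C_2)$. Then $C_2(1-u^2)^p-1=(u_+^2-u^2)\Psi(u)$ with $\Psi(u)=C_2\sum_{j=0}^{p-1}(1-u^2)^j(1-u_+^2)^{p-1-j}$, which is strictly decreasing for $p=k_1+1\geq2$. The finite part of $\int_0^{u_+}u^{-2}(u_+^2-u^2)^{-1/2}\,du$ is zero. Hence your finite part equals $\int_0^{u_+}\bigl(\Psi(u)^{-1/2}-\Psi(0)^{-1/2}\bigr)u^{-2}(u_+^2-u^2)^{-1/2}\,du>0$. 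You would still have to justify the next-order expansion linking the derivative to this finite part; the paper's exact decomposition avoids that step.
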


       \begin{proof}
           See \cref{ap2}.
       \end{proof}

       \begin{proposition}[Analyticity of the phase map]\label{pr5}
      If $k_2>0$,
       then $\Pi$ is real analytic on $\mathcal U_{k_1,k_2}$.
       If $k_2=0$,
       then $\Pi^\sharp$ is real analytic on $\mathcal U_{k_1,0}$.
       \end{proposition}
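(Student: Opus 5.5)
The integrals in \eqref{eq47} have square-root singularities at the simple turning values $s_\pm(C_1,C_2)$, which themselves move with the parameters. I would remove this moving singularity by a substitution that turns them into integrals of analytic integrands over a fixed interval, and then conclude by differentiation under the integral sign. Throughout, I work locally near a point $(C_1^0,C_2^0)\in\mathcal U_{k_1,k_2}$ (or in $\mathcal U_{k_1,0}$ when $k_2=0$).

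\textbf{Analytic endpoints.} By \cref{pr3}, for $(C_1,k_2)\ne(0,0)$ the function $D_{C_1,C_2}$ is a polynomial in $(C_1,C_2,\cos^2s,\sin^2s)$, hence jointly real analytic. Its zeros $s_\pm$ are simple, so the analytic implicit function theorem makes $s_\pm(C_1,C_2)$ real analytic near $(C_1^0,C_2^0)$.

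\textbf{Regularizing substitution.} Write $s=\tfrac{s_++s_-}{2}-\tfrac{s_+-s_-}{2}\cos\phi$ with $\phi\in[0,\pi]$. Then
\[
D_{C_1,C_2}(s)=(s-s_-)(s_+-s)\,Q(s;C_1,C_2),
\]
where $Q$ is obtained by dividing the analytic function $D$ by its two simple factors. It is therefore analytic in all variables and positive on the closed band $[s_-,s_+]$, since $D>0$ inside and the zeros are simple. Moreover $(s-s_-)(s_+-s)=\tfrac{(s_+-s_-)^2}{4}\sin^2\phi$ and $ds=\tfrac{s_+-s_-}{2}\sin\phi\,d\phi$. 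Hence
\[
\Delta_1=2\varepsilon_1\int_0^\pi\frac{\tan s(\phi)}{\sqrt{Q(s(\phi);C_1,C_2)}}\,d\phi ,
\]
and similarly for $\Delta_2$ with $C_1\cot s$. The integrand is real analytic in $(\phi,C_1,C_2)$ on $[0,\pi]$ times a neighborhood, and extends holomorphically to a complex neighborhood. This relies on $0<s_-<s_+<\pi/2$, so $\tan$ and $\cot$ are analytic and $Q$ is bounded away from $0$. The integral of such a function over a fixed compact interval is real analytic, by holomorphic dependence on parameters and Morera/Cauchy estimates. This proves the case $k_2>0$ on all of $\mathcal U_{k_1,k_2}$, including $C_1=0$ (there $\Delta_2\equiv0$), and the case $k_2=0$ away from $C_1=0$. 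In the latter case $\Pi^\sharp$ differs from $\Pi$ by a locally constant shift.

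\textbf{The main obstacle: $k_2=0$, $C_1=0$.} Here $s_-\to0$ and the factor $\cot s$ blows up, so the one-sided limits $\Delta_2\to\pm\varepsilon_1\pi$ of \cref{lm6} reflect a genuine jump. I would invoke \cref{lm6}, whose proof (\cref{ap2}) supplies the analytic extension of $\Pi^\sharp$ across $C_1=0$. If one wants to reprove it, I would pass to the signed coordinate of \cref{rm1}, $z_2=u$, in which $\bar g$ is analytic at $u=0$ and $\Delta_1$ is analytic after a similar substitution in $u$. For $\Delta_2^\sharp$ I would write $\theta_2$ as an argument of $z_2$ along one cell and identify $\Delta_2^\sharp$ with the winding of an analytic closed curve around $0$ plus an analytic correction. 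Alternatively, I would split off $C_1\int ds/(s\sqrt{(s-s_-)\cdots})$ explicitly, where $\arctan$-type terms produce the $\pi$-jump, and leave an analytic remainder. I expect this step to need the most care, and I would defer it to \cref{lm6}.
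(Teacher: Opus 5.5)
Your proposal is correct and matches the paper's analytic route. Your substitution $s=\frac{s_++s_-}{2}-\frac{s_+-s_-}{2}\cos\phi$ is the paper's $s=s_-+(s_+-s_-)\sin^2u$ from \cref{lm20} with $\phi=2u$, and, as you do, the paper defers the case $k_2=0$, $C_1=0$ to \cref{lm6}. The paper also gives a one-line geometric proof: the profiles never meet the degenerate locus of $\bar g$, so analyticity follows from analytic dependence of geodesics on their initial data.
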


\begin{proof}
As the geodesics under our consideration on the weighted sphere never meet any degenerate locus of the weighted metric,
the statement directly follows by the analytic dependence of geodesics on initial position and tangent direction.

Using pure analysis, one can instead argue as follows.
For $k_2>0$,
apply \cref{lm20} to \eqref{eq47}.
The case $k_2=0$ is \cref{lm6}.
\end{proof}

   \subsection{Closing and full rank}
          \label{se7}

       \begin{proposition}[Ordinary closing and return order]\label{th4}
           A regular oscillatory profile is ordinarily closed 
           if and only if $\Pi(C_1,C_2)\in\QQ^2$.
           When this holds,
          assume
            $\frac{\Delta_i}{2\pi}=\frac{\hat p_i}{\hat q_i}$,
            where $\hat q_i$ is a positive integer with $\gcd(\hat p_i,\hat q_i)=1$,
           ($\widehat q_i=1$ when $\Delta_i=0$).
           Then the ordinary return order is
            $m_\gamma=\lcm(\widehat q_1,\widehat q_2)$
            and the primitive ordinary period is $m_\gamma T_{\mathrm{cell}}$.
       \end{proposition}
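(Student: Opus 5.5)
I will reduce ordinary closing of $\gamma$ to a statement about the complete-cell return map, using the magnitude periodicity from \cref{pr4} and the explicit phase increments \eqref{eq47}.

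First I show that the state of the profile determines when it closes. The profile is an affinely parametrized geodesic of the $\TT^2$-invariant metric $\bar g$. By \cref{pr4}, $s$ is periodic with least period $T_{\mathrm{cell}}$, and $\dot\theta_i=\mu_i/W_i(s)$ depends only on $s$. Hence over every complete cell starting at time $t$, the phase increment equals $(\Delta_1,\Delta_2)$, and
\[
\gamma(t+mT_{\mathrm{cell}})=\bigl(e^{im\Delta_1}z_1(t),\,e^{im\Delta_2}z_2(t)\bigr).
\]
Here I use that $z_1z_2\ne0$ on a regular oscillatory profile, so the phases are globally defined.

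Next I show that any period of $\gamma$ is a multiple of $T_{\mathrm{cell}}$. Suppose $\gamma(t+T)=\gamma(t)$ for all $t$. Then $s(t+T)=s(t)$, so $T$ is a period of $s$. Since $s$ is nonconstant, analytic and periodic, its periods form the group $T_{\mathrm{cell}}\ZZ$. So $T=mT_{\mathrm{cell}}$ for some $m\in\ZZ$.

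For such $T$, the displayed identity shows that $\gamma(t+T)=\gamma(t)$ holds if and only if $e^{im\Delta_i}=1$ for $i=1,2$, that is, $m\Pi(C_1,C_2)\in\ZZ^2$. Conversely, if $m\Pi\in\ZZ^2$ with $m\ge1$, the same identity shows that $mT_{\mathrm{cell}}$ is a period. This matches \cref{df4}.

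It follows that $\gamma$ is ordinarily closed if and only if some positive integer $m$ satisfies $m\Pi\in\ZZ^2$, which holds if and only if $\Pi\in\QQ^2$. The set of admissible $m$ is $\{m\ge1: \hat q_1\mid m p\text{-wise}\}$: writing $\Delta_i/(2\pi)=\hat p_i/\hat q_i$ in lowest terms, $m\hat p_i/\hat q_i\in\ZZ$ holds if and only if $\hat q_i\mid m$, because $\gcd(\hat p_i,\hat q_i)=1$. The convention $\hat q_i=1$ covers the case $\Delta_i=0$. Therefore the least admissible $m$ is $m_\gamma=\lcm(\hat q_1,\hat q_2)$. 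By the second step, the primitive period of $\gamma$ is $m_\gamma T_{\mathrm{cell}}$.

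The only delicate point is the justification that every period of $\gamma$ is a multiple of $T_{\mathrm{cell}}$. I handle it through the least period of $s$. The reflection structure in \cref{pr4} guarantees that $s$ attains $s_-$ exactly once per cell, which rules out a smaller period.
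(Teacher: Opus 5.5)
Your proof is correct and follows the same route as the paper's proof. That proof argues that every ordinary period consists of an integral number $m$ of complete cells, that closing is equivalent to $m\Delta_i\in2\pi\ZZ$ for $i=1,2$, and that the least such $m$ is $\lcm(\hat q_1,\hat q_2)$. You also supply details the paper leaves implicit, namely the cell-shift formula for $\gamma$ and the reason $T_{\mathrm{cell}}$ is the least period of $s$. One correction: the garbled set description should read $\{m\ge1:\hat q_1\mid m \text{ and } \hat q_2\mid m\}$.
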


       \begin{proof}
           Every ordinary period contains an integral number $m$ of complete cells. 
           Closing is equivalent to $m\Delta_i\in2\pi\ZZ$ for $i=1,2$, 
           whose least positive solution is $\lcm(\widehat q_1,\widehat q_2)$.
       \end{proof}

       \begin{lemma}[Endpoint limits at $C_1=0$]
           \label{lm8}
          Let $k_1\geq 0$ and $k_2\geq 0$.
          The first phase increment
         $C_2\mapsto\Delta_1(0,C_2)$
          %defined by \eqref{eq47}
           is real analytic for
          $C_2>R_{k_1,k_2}(0)$ and satisfies
           \begin{align}
                              |\Delta_1(0,C_2)|&\longrightarrow
                              \begin{gathered}
                                 \begin{cases}
                                    \displaystyle \frac\pi{\sqrt {k_1+1}},&k_2=0,\\[4pt]
                                    \displaystyle \pi\sqrt{\frac{2}{k_1+1}},&k_2>0,
                                 \end{cases}
                              \end{gathered}
                               &&
                                    C_2\downarrow R_{k_1,k_2}(0),
                              \label{eq48}\\
                              |\Delta_1(0,C_2)|&\longrightarrow
                              \frac\pi {k_1+1}
                               && C_2\uparrow\infty.
                               \label{eq49}
           \end{align}
         If $k_1+k_2>0$, then
           $\partial_{C_2}\Delta_1(0,C_2)\ne0$
           at some  $C_2>R_{k_1,k_2}(0)$.
       \end{lemma}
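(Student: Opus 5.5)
The plan is to work with the explicit integral at $C_1=0$ and reduce the lemma to two asymptotic computations plus a non-constancy argument. Setting $C_1=0$ in $D_{C_1,C_2}$ gives $D_{0,C_2}=b^2\bigl(C_2a^{2k_1+2}b^{2k_2}-1\bigr)$. Hence \eqref{eq47} becomes
\[
|\Delta_1(0,C_2)|=2\int_{s_-}^{s_+}\frac{\dd s}{\cos s\sqrt{C_2\cos^{2k_1+2}s\,\sin^{2k_2}s-1}} ,
\]
with $s_-=0$ when $k_2=0$, in agreement with \cref{lm6}. Real analyticity in $C_2$ would be taken directly from \cref{pr5}, which invokes \cref{lm20} for $k_2>0$ and \cref{lm6} for $k_2=0$.

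\emph{Low-energy limit \eqref{eq48}.} Write $C_2-P_0(s)=D_{0,C_2}/(\rho^2a^2b^2)$, so that the integrand equals $g(s)/\sqrt{C_2-P_0(s)}$ with $g=\tan s/(\rho ab)$. As $C_2\downarrow R:=R_{k_1,k_2}(0)$, the band $[s_-,s_+]$ shrinks to the minimizer $s_*$ of $P_0$. For $k_2>0$, $s_*$ is interior and nondegenerate by \cref{pr3}. The standard harmonic-oscillator computation, made rigorous by the Morse-lemma substitution $P_0(s)-R=\tfrac12P_0''(s_*)y^2$ followed by dominated convergence, gives the limit
\[
2\pi\, g(s_*)\sqrt{2/P_0''(s_*)} .
\]
One then evaluates this using the critical-point equation from the proof of \cref{pr3} in the variable $x=\cos^2s$, namely $x_*=(k_1+1)/(k_1+k_2+2)$ when $C_1=0$. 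This should produce $\pi\sqrt{2/(k_1+1)}$.

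For $k_2=0$ the minimum sits at the boundary point $s=0$, where $P_0=\cos^{-2k_1-2}s\approx 1+(k_1+1)s^2$. In the signed coordinate of \cref{rm1} this boundary point is the center of a full oscillation. The integral over $[0,s_+]$ therefore captures only half of the harmonic period, which yields $\pi/\sqrt{k_1+1}$. This accounts for the missing factor $\sqrt2$.

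\emph{High-energy limit \eqref{eq49}.} As $C_2\to\infty$ we have $s_+\to\pi/2$ and, when $k_2>0$, $s_-\to0$. I split the integral at $s=\pi/4$.
\begin{itemize}
\item On the upper piece, substitute $u=C_2^{1/(2k_1+2)}\cos s$. The integrand then converges, by dominated convergence, to $\dd u/\bigl(u\sqrt{u^{2k_1+2}-1}\bigr)$ on $(1,\infty)$. The identity $\int_1^\infty \dd u/\bigl(u\sqrt{u^{2k_1+2}-1}\bigr)=\pi/(2(k_1+1))$ then gives a contribution of $\pi/(k_1+1)$ after the factor $2$.
\item On the lower piece, the integrand is bounded by a constant times $1/\sqrt{C_2s^{2k_2}c-1}$. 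Rescaling by $s=C_2^{-1/(2k_2)}v$ shows this contribution is $O\bigl(C_2^{-1/(2k_2)}\bigr)\to0$. When $k_2=0$ the lower piece is $O\bigl(C_2^{-1/2}\bigr)$ directly.
\end{itemize}

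\emph{Nonvanishing derivative.} If $k_1+k_2>0$, the two limits differ. For $k_2=0$ we need $k_1>0$, and then $1/\sqrt{k_1+1}\ne1/(k_1+1)$. For $k_2>0$, equality $\sqrt{2/(k_1+1)}=1/(k_1+1)$ would force $2(k_1+1)=1$, which is impossible. So $C_2\mapsto\Delta_1(0,C_2)$ is nonconstant on the interval $(R,\infty)$, and its derivative must be nonzero somewhere by the mean value theorem.

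I expect the main obstacle to be making the two limits rigorous, that is, supplying uniform integrable majorants near the simple turning points after each rescaling. The $k_2=0$ half-period bookkeeping at the coordinate circle also needs care.
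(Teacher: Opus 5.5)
Your proposal follows the paper's proof essentially step for step. It uses the same $\sec s$ integral, the same harmonic-oscillator limit at the nondegenerate maximum of $\cos^{2k_1+2}s\,\sin^{2k_2}s$ (with the signed-coordinate half-interval when $k_2=0$), and the same rescaling $y=C_2^{1/(2k_1+2)}\cos s$ giving $\int_1^\infty \dd y/(y\sqrt{y^{2k_1+2}-1})=\pi/(2k_1+2)$. It also uses the same argument that distinct limits force a nonzero derivative. Two small slips need fixing.

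\emph{The minimizer.} At $C_1=0$ the minimizer of $P_0=x^{-k_1-1}(1-x)^{-k_2}$ is $x_*=\cos^2 s_*=(k_1+1)/(k_1+k_2+1)$. The value $(k_1+1)/(k_1+k_2+2)$ that you wrote is the contact-level minimizer $p/(p+q)$. With the correct $x_*$, your expression $2\pi g(s_*)\sqrt{2/P_0''(s_*)}$ reduces to $2\pi\sec s_*/\sqrt{\kappa}$ with $\kappa=2(k_1+k_2+1)$. This does equal $\pi\sqrt{2/(k_1+1)}$; the value you wrote would not give it.

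\emph{The lower piece at high energy.} The majorant $1/\sqrt{cC_2s^{2k_2}-1}$ is undefined on $[s_-,(cC_2)^{-1/(2k_2)}]$, because $A_0\ge cs^{2k_2}$ forces $s_-\le (cC_2)^{-1/(2k_2)}$. So it fails exactly at the turning point. Use instead, as the paper does, the coordinate $u=A_0(s)^{1/(2k_2)}$. It has bounded inverse derivative near $0$ and places the turning point exactly at $u=C_2^{-1/(2k_2)}$. Also, for $k_2=1$ the rescaled integral $\int_1^{V}\dd v/\sqrt{v^2-1}$ grows like $\log V$. The correct bound is therefore $O(C_2^{-1/(2k_2)}\log C_2)$ rather than $O(C_2^{-1/(2k_2)})$, which is still $o(1)$.

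With these corrections the argument is complete.
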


       \begin{proof}
           See \cref{ap4}.
       \end{proof}

           \begin{theorem}[Full rank and dense closing]
    \label{th5}
        Assume $k_1+k_2>0$.
       Then $\Pi$
                  (or $\Pi^\sharp$ when $k_2=0$)
            is real analytic on $\mathcal U_{k_1,k_2}$
        and has open dense full-rank locus.
    Consequently,
       ordinarily closed profiles with arbitrarily large
        primitive closing order are dense in $\mathcal U_{k_1,k_2}$.
\end{theorem}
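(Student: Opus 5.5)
Analyticity is already settled by \cref{pr5}, so what remains is the full-rank statement and dense closing. The plan is to use the standard identity principle for real-analytic maps. On a connected open set, the function $\det D\Pi$ (or $\det D\Pi^\sharp$) is real analytic. Hence it either vanishes identically or its zero set is a closed nowhere dense analytic subset. The domain $\mathcal U_{k_1,k_2}$ is connected by \cref{th3}. The real-analytic map is $\Pi$ if $k_2>0$, and the adjusted $\Pi^\sharp$ if $k_2=0$, which is analytic across $C_1=0$ by \cref{lm6}. Therefore it suffices to show that $\det D\Pi$ is not identically zero. For the two chambers separately, note that when $k_2>0$ each chamber $\mathcal U^\pm$ is an open subset of the connected domain on which $\Pi$ is analytic. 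So nonvanishing somewhere in $\mathcal U_{k_1,k_2}$ gives density of the full-rank locus in each chamber. When $k_2=0$, $\Pi^\sharp$ differs from $\Pi$ only by a constant on each chamber, so the Jacobians agree.

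To exhibit a point of full rank, I will work on the line $C_1=0$, or at its limit. There $\Delta_2$ (or $\Delta_2^\sharp$) is constant in $C_2$: from \eqref{eq47}, $\Delta_2(0,C_2)=0$ when $k_2>0$, and $\Delta_2^\sharp(0,C_2)=\varepsilon_1\pi$ when $k_2=0$. Hence $\partial_{C_2}\Delta_2=0$ along $C_1=0$, and the Jacobian there reduces to
\[
\det D\Pi(0,C_2)=-\frac{1}{4\pi^2}\,\partial_{C_2}\Delta_1(0,C_2)\,\partial_{C_1}\Delta_2(0,C_2).
\]
\cref{lm8} provides a $C_2$ with $\partial_{C_2}\Delta_1(0,C_2)\neq0$. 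It remains to show $\partial_{C_1}\Delta_2(0,C_2)\neq0$. For $k_2=0$ and $k_1>0$, this is exactly the last assertion of \cref{lm6}. For $k_2>0$, differentiate \eqref{eq47} at $C_1=0$. Since $\Delta_2=2\varepsilon_1C_1\,I(C_1,C_2)$ with $I=\int_{s_-}^{s_+}\cot s\,D^{-1/2}\dd s$ analytic, we get $\partial_{C_1}\Delta_2(0,C_2)=2\varepsilon_1 I(0,C_2)$. This is strictly positive because the integrand is positive on $(s_-,s_+)$.

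The remaining case is $k_2=0$, $k_1>0$. Here, by \cref{lm6}, $\varepsilon_1\partial_{C_1}\Delta_2^\sharp>0$ for every $C_2>1$, which covers this case too. The hypothesis $k_1+k_2>0$ is what makes \cref{lm8} applicable. This confirms $\det D\Pi(0,C_2)\neq0$ at some point, and hence on an open dense set.

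For dense closing, take any nonempty open $V$ in a chamber and any $M$. By density, pick a full-rank point in $V$. Then $\Pi$ restricted to a small ball $B\subset V$ is a local diffeomorphism onto an open set $\Pi(B)\subset\RR^2$. Choose a prime $q>M$ and integers $p_1,p_2$ with $(p_1/q,p_2/q)\in\Pi(B)$ and $\gcd(p_1,q)=1$. Such points exist because the lattice $q^{-1}\ZZ^2$ becomes dense as $q\to\infty$, and excluding $p_1\in q\ZZ$ removes only a sparse sublattice. By \cref{th4}, the preimage parameter gives an ordinarily closed profile with $m_\gamma=\lcm(\hat q_1,\hat q_2)=q>M$. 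For $\Pi^\sharp$ the adjustment is by $\pm\varepsilon_1\pi$, which changes $\Delta_2/2\pi$ by a half-integer shift. Only in the $k_2=0$ case does one need care: there I would use points with $p_1/q$ reduced and note that $\lcm$ is still divisible by $q$.

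The main obstacle I anticipate is justifying the derivative of $\Delta_2$ in $C_1$ at $C_1=0$ for $k_2>0$. This requires that $I(C_1,C_2)$ be analytic up to $C_1=0$ with moving endpoints $s_\pm$, which is where I would invoke the appendix analyticity lemma (\cref{lm20}) rather than differentiate naively under the integral.
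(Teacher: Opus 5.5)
Your proposal is correct and follows essentially the same route as the paper. Both take analyticity from \cref{pr5} and evaluate $\det D\Pi$ on the line $C_1=0$, where it factors as $-\frac{1}{4\pi^2}\partial_{C_2}\Delta_1\,\partial_{C_1}\Delta_2$; both then get nonvanishing from \cref{lm8} together with either the positive $\cot s$ integral or \cref{lm6}, and conclude by the identity principle on the connected domain. One small slip is harmless: on the chamber $C_1<0$ the adjustment is $2\varepsilon_1\pi$, which shifts $\Delta_2/2\pi$ by an integer (the half-integer value occurs only on $C_1=0$ itself), and since $\hat q_1=q$ your prime-denominator closing argument goes through unchanged.
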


       \begin{proof}
According to           \cref{pr5},
            $\Pi$  
                    (or $\Pi^\sharp$ when $k_2=0$)
                    is
            analytic on the connected domain
           $\mathcal U_{k_1,k_2}$. 
           If $k_2>0$, 
           then
           $D_{-C_1,C_2}=D_{C_1,C_2}$,
           so $\Delta_1$ is even and $\Delta_2$ is odd in $C_1$.
           Hence
           \begin{equation}\label{eq50}
             \begin{gathered}
                              \partial_{C_1}\Delta_1(0,C_2)=0,
                               \qquad \qquad\quad\ 
                              \partial_{C_2}\Delta_2(0,C_2)=0,\\
                              \partial_{C_1}\Delta_2(0,C_2)
                               =2\sgn(\mu_1)\int_{s_-}^{s_+}
                              \frac{\cot s}{\sqrt{D_{0,C_2}(s)}}\,\dd s\ne0.
             \end{gathered}
           \end{equation}
           Since
           $\Pi=\left(\frac{\Delta_1}{2\pi},
           \frac{\Delta_2}{2\pi}\right)$,
we obtain
           \begin{equation}\label{eq51}
                              \det D\Pi(0,C_2)
                               =-\frac1{(2\pi)^2}
                              \partial_{C_2}\Delta_1(0,C_2)
                              \partial_{C_1}\Delta_2(0,C_2).
           \end{equation}
 The distinct limits in \eqref{eq48}--\eqref{eq49}
           give some $C_2^*$ for which
           $\partial_{C_2}\Delta_1(0,C_2^*)\ne0$.
           Hence \eqref{eq51} is nonzero at $C_2^*$.

           If instead $k_2=0$,
           then $k_1>0$.
           Along $C_1=0$,
           the function $\Delta_1$ is even in $C_1$,
           while $\Delta_2^\sharp(0,C_2)=\varepsilon_1\pi$;
           hence the two diagonal derivatives vanish.
           By the distinct limits in \eqref{eq48}--\eqref{eq49},
           choose $C_2^*>1$ with
           $\partial_{C_2}\Delta_1(0,C_2^*)\ne0$.
           \cref{lm6} gives
           $\partial_{C_1}\Delta_2^\sharp(0,C_2^*)\ne0$.
           Therefore
           \[
              \det D\Pi^\sharp(0,C_2^*)
              =-\frac1{(2\pi)^2}
              \partial_{C_2}\Delta_1(0,C_2^*)
              \partial_{C_1}\Delta_2^\sharp(0,C_2^*)\ne0.
           \]
   %        
       %   If $k_2>0$,
%the transverse derivative is nonzero by \eqref{eq50}.
%If $k_2=0$,
%the corresponding statement for the adjusted lift follows from
%\cref{lm6}.
           So the nonvanishing locus is open and dense.
           The last claim simply follows.
       \end{proof}
  
\begin{remark}
      If $k_1=k_2=0$, then $\bar g$ is the standard round metric 
      and
       $\Pi=\frac12(\sgn\mu_1,\sgn\mu_2)$ on every regular nonsteady doubly spiral family. 
       Thus $D\Pi=0$, showing that the hypothesis in \cref{th5} is sharp.
\end{remark}

   \subsection{Phase stabilizers and spherical return}
          \label{se8}

         After $m$ complete cells, 
         the profile closes when  $e^{im\Delta_i}=1$ for $i=1,2$, 
         whereas the spiral product may return earlier through phase symmetries of the factors.

       \begin{definition}[Phase stabilizer]
           \label{df5}
          For a compact connected embedding
           $f:M\to\Sphere^{2n+1}$, 
           %define
           \begin{equation}\label{eq53}
             \mathsf H_f:=\{\zeta\in\Sphere^1:\zeta f(M)=f(M)\}.
           \end{equation}
           For $\zeta\in\mathsf H_f$, 
           embeddedness determines a unique $\varphi_\zeta\in\Diff(M)$ by
           $f\circ\varphi_\zeta=\zeta f$.
       \end{definition}

       \begin{lemma}[Finite stabilizer]
           \label{lm9}
              If $M$ is compact and $f$ is $\cC$-totally real,
    then $\mathsf H_f$ is finite cyclic,
    and its induced action on $(M,g_f)$ is faithful and isometric.
       \end{lemma}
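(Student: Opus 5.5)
The plan is to show that $\mathsf H_f$ is a closed subgroup of $\Sphere^1$ that is not all of $\Sphere^1$, so it is finite, hence cyclic. After that, faithfulness and isometry follow from the defining relation $f\circ\varphi_\zeta=\zeta f$.

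First, $\mathsf H_f$ is a subgroup, since $\zeta f(M)=f(M)$ is preserved under products and inverses. It is closed because $f(M)$ is compact: if $\zeta_j\to\zeta$ with $\zeta_jf(M)=f(M)$, then $\zeta f(M)\subset f(M)$ by closedness, and the same argument applied to $\zeta_j^{-1}$ gives equality. The closed subgroups of $\Sphere^1$ are the finite cyclic groups and $\Sphere^1$ itself.

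To exclude $\mathsf H_f=\Sphere^1$, suppose it holds. Then the orbit $\{e^{i\phi}f(x)\}$ of any point lies in $f(M)$. Because $f$ is an embedding, $f(M)$ is an embedded submanifold whose tangent space at $f(x)$ is $df(T_xM)$. Differentiating the curve $\phi\mapsto e^{i\phi}f(x)$ at $\phi=0$ gives $Jf(x)\in df(T_xM)$. But $\cC$-total reality \eqref{eq1} says $\langle Jf,df(X)\rangle=0$ for all $X$, and $|Jf|=1$, so $Jf\notin df(T_xM)$, a contradiction. Hence $\mathsf H_f$ is a proper closed subgroup, and therefore finite cyclic. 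This exclusion is the only substantive step. The main point to check is that a curve contained in an embedded compact submanifold has velocity in its tangent space. That holds because an embedding of a compact manifold is proper, so $f(M)$ is an embedded submanifold.

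For the action, $\varphi_\zeta=f^{-1}\circ(\zeta f)$ is a diffeomorphism because $f:M\to f(M)$ is a diffeomorphism and multiplication by $\zeta$ preserves $f(M)$. Uniqueness of $\varphi_\zeta$ gives $\varphi_{\zeta\zeta'}=\varphi_\zeta\circ\varphi_{\zeta'}$, so this is an action. It is isometric because multiplication by $\zeta$ is an isometry of $\Sphere^{2n+1}$, so
\[
\varphi_\zeta^*g_f=(f\circ\varphi_\zeta)^*g_{\Sphere}=(\zeta f)^*g_{\Sphere}=g_f.
\]
It is faithful because $\varphi_\zeta=\Id$ forces $\zeta f(x)=f(x)$ for any $x$, and $|f(x)|=1$ then gives $\zeta=1$.
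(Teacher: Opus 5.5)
Your proof is correct and follows the same route as the paper. You show the stabilizer is a closed subgroup of $\Sphere^1$ and rule out the full circle, since $Jf$ would then be tangent to $f(M)$, contradicting $\cC$-total reality. Faithfulness and isometry then follow from $f\circ\varphi_\zeta=\zeta f$ with $\zeta$ unitary, and you supply the routine details (closedness via compactness, tangency via the embedded image) that the paper leaves implicit.
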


       \begin{proof}
           The stabilizer is a closed subgroup of $\Sphere^1$. 
           If it were infinite, 
           it would be all of $\Sphere^1$; differentiating its pulled-back action would make $Jf$ tangent to $f(M)$,
           contrary to $\cC$-total reality.
            Hence it is finite cyclic, 
            and
           embeddedness together with unitary phase rotation gives the faithful isometric action.
       \end{proof}

      For simplicity, set $\mathsf H_i:=\mathsf H_{f_i}$.
       \begin{definition}[Spherical return data]
           \label{df6}
          Define $  
          \mathcal R_G
              :=\{m\in\ZZ:e^{im\Delta_i}\in\mathsf H_i,
                    \ i=1,2\}
                    $.
                    If $ \mathcal R_G
                    =m_G\ZZ\ne\{0\}
                    $
          with $m_G>0$, 
    set
           $T_G:=m_GT_{\mathrm{cell}}$,
           $h_i:=e^{im_G\Delta_i}\in\mathsf H_i$, and
           $\Phi:=\varphi_{h_1}\times\varphi_{h_2}$.
       \end{definition}

       \begin{proposition}[Spherical return]\label{th6}
           Let the factors be compact, connected, embedded, and   $\cC$-totally real, 
           and $\gamma$
            a regular oscillatory profile with $\mathcal R_G\ne\{0\}$. 
            Then $G_\gamma$ descends 
            %smoothly 
            to
           \[
             \cM_G:=([0,T_G]\times M_1\times M_2)/
             ((T_G,x,y)\sim(0,\Phi(x,y))).
           \]
           Moreover, $\gamma$ is ordinarily closed and
           $m_\gamma=m_G\ord(\Phi)$.
       \end{proposition}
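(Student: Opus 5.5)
The plan is to use the $\TT^2$-equivariance of the profile flow together with the defining property $f_i\circ\varphi_{h_i}=h_if_i$ from \cref{df5}.

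\textbf{Step 1: the cell-shift identity.} First record how the profile behaves after $m$ complete cells. By \cref{pr4} the magnitude is $T_{\mathrm{cell}}$-periodic. By the reconstruction $\dot\theta_i=\mu_i/W_i(s)$, each phase advances by exactly $\Delta_i$ per cell. Hence for every $m\in\ZZ$,
\[
 z_1(t+mT_{\mathrm{cell}})=e^{im\Delta_1}z_1(t),\qquad z_2(t+mT_{\mathrm{cell}})=e^{im\Delta_2}z_2(t).
\]

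\textbf{Step 2: the product descends.} Take $m=m_G$ and use $f_i\circ\varphi_{h_i}=h_if_i$. This gives
\[
G_\gamma(t+T_G,x,y)=\bigl(z_1(t)h_1f_1(x),z_2(t)h_2f_2(y)\bigr)=G_\gamma(t,\Phi(x,y)).
\]
The map $(t,x,y)\mapsto(t+T_G,\Phi^{-1}(x,y))$ therefore generates a free, properly discontinuous $\ZZ$-action on $\RR\times M_1\times M_2$. Free and properly discontinuous follows from the translation in $t$. Since $\Phi$ is a diffeomorphism, and an isometry by \cref{lm9}, the quotient is the mapping torus $\cM_G$ with smooth structure, and $G_\gamma$ is invariant under this action. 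Smoothness of the descended map at the gluing seam is automatic, because it is the restriction of a smooth invariant map on the cover. Also $\mathcal R_G$ is a subgroup of $\ZZ$, since each $\mathsf H_i$ is a group, so the generator $m_G$ in \cref{df6} is well defined.

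\textbf{Step 3: ordinary closing and the order formula.} By \cref{lm9}, $\mathsf H_i$ is finite cyclic, say of order $n_i$. Then $e^{im_G n_i\Delta_i}=h_i^{n_i}=1$, so $m_G\lcm(n_1,n_2)\Delta_i\in2\pi\ZZ$ and $\gamma$ is ordinarily closed by \cref{th4}.

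For the equality $m_\gamma=m_G\ord(\Phi)$, note that ordinary closing after $m$ cells means $e^{im\Delta_i}=1$ for $i=1,2$. This forces $m\in\mathcal R_G$, so $m=jm_G$ and $e^{im\Delta_i}=h_i^{\,j}$. Thus $m$ closes iff $h_1^{\,j}=h_2^{\,j}=1$, i.e. iff $\ord(h_1)$ and $\ord(h_2)$ both divide $j$.

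It remains to show $\ord(h_1,h_2)=\ord(\Phi)$, which is where faithfulness enters. The map $\zeta\mapsto\varphi_\zeta$ is a homomorphism $\mathsf H_i\to\Diff(M_i)$, since $f\circ\varphi_\zeta\circ\varphi_\eta=\zeta\eta f$ and $f$ is injective. It is injective by \cref{lm9}. Hence $\Phi^j=\varphi_{h_1^j}\times\varphi_{h_2^j}=\Id$ iff $h_1^{\,j}=1=h_2^{\,j}$. So the least admissible $j$ is $\ord(\Phi)$, and $m_\gamma=m_G\ord(\Phi)$.

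\textbf{Main obstacle.} The only delicate point is this order identification. It needs $\varphi_\zeta$ to be well defined and faithful. Well-definedness requires embeddedness, since immersions could have several preimages. Faithfulness rules out a nontrivial $h_i$ acting as the identity on $M_i$, i.e. $h_if_i=f_i$ with $h_i\neq1$; this is impossible because $|f_i|=1$. Everything else is the direct cell-shift computation.
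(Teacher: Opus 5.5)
Your proof is correct and follows the paper's argument: the cell-shift identity $z_i(t+T_G)=h_iz_i(t)$ combined with $f_i\circ\varphi_{h_i}=h_if_i$ gives the descent, and faithfulness of $\zeta\mapsto\varphi_\zeta$ shows that an ordinary return after $jm_G$ cells occurs exactly when $\Phi^j=\Id$. You also spell out steps the paper leaves implicit, namely that every ordinary closing order lies in $\mathcal R_G$, the homomorphism property of $\zeta\mapsto\varphi_\zeta$, and the direct faithfulness argument via $|f_i|=1$.
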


       \begin{proof}
    By the definitions of $\Delta_i$, $m_G$, and $h_i$,
 $
        z_i(t+T_G)
        =
        e^{im_G\Delta_i}z_i(t)
        =
        h_i z_i(t),\,  i=1,2.
    $    
    Since $f_i\circ\varphi_{h_i}=h_i f_i$,
    it follows that
$
        G_\gamma(t+T_G,x,y)
        =
        G_\gamma(t,\Phi(x,y)),
$
    which proves the descent.
    By \cref{lm9},
    $\Phi$ has finite order.
    Faithfulness shows that an ordinary return after $rm_G$ cells
    occurs exactly when $\Phi^r=\Id$.
    Hence the least such $r$ is $\ord(\Phi)$.
       \end{proof}

%\majorsectiongap
\section{Second Variation and Morse Index}
          \label{se9}

          We separate the instability coming from the profile from that of the
           two factors.
          For a minimal immersion
          $f:M^k\to\Sphere^{n}$,
           define
          \[
             Q_f(\xi)=\int_M\bigl(|\nabla^\perp\xi|^2-|A_\xi|^2-k|\xi|^2\bigr)\,\dd\vol_M
          \]
          and $\Ind(f)$ its maximal negative dimension.
          We use the
          standard second variations of weighted energy and length, denoted
          $I_E$ and $I_L$; see \cite{Simons1968,Eastham1973,Duistermaat1976}.
          Along every profile considered below,
          $\bar g$ is smooth and nondegenerate;
          at a coordinate-circle passage,
          $G_\gamma$'s being immersion forces the corresponding $k_i$ to vanish,
          and the assertion follows from the signed coordinate of \cref{rm1}.
          Thus the usual geodesic index form has meaning
          and we use
          $\Ind_{\bar g}(\gamma)$ for its Morse index.

\subsection{Routh reduction}
          Fix the factors and let $\gamma$ be a closed minimal profile for
          which $G_\gamma$ is a smooth immersion.
          Parametrize $\gamma$
          affinely and set $c_\gamma=|\dot\gamma|_{\bar g}$.
          A
          $\bar g$-normal field $V=(V_1,V_2)$ along
          $\gamma\subset\Sphere^3$ induces the normal field in the target sphere
          \[
             \widetilde V(t,x,y)=(V_1(t)f_1(x),V_2(t)f_2(y)).
          \]

       \begin{lemma}[Profile variation formula]
           \label{pr8}
         Suppose the factors are compact minimal $\cC$-totally real  immersions.  
         For every periodic normal field $V$,
           \begin{equation}\label{eq62}
             Q_{G_\gamma}(\widetilde V)
              =\Vol(M_1)\Vol(M_2) I_L(V,V)
              =\frac{\Vol(M_1)\Vol(M_2)}{c_\gamma}I_E(V,V).
           \end{equation}
         The same formula holds on a compact profile segment 
         $[0,T]$ for fields with Dirichlet boundary conditions
         %with 
         and $Q_{G_\gamma}$ evaluated over
         $[0,T]\times M_1\times M_2$.
         In the periodic case, the index of $Q_{G_\gamma}$ restricted to the lifted profile fields is $\Ind_{\bar g}(\gamma)$.
       \end{lemma}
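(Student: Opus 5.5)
The plan is to realize $\widetilde V$ as the variation field of a one-parameter family of spiral products, so that $Q_{G_\gamma}(\widetilde V)$ becomes a second derivative of volume. By \eqref{eq8}, that volume equals $\Vol(M_1)\Vol(M_2)$ times the $\bar g$-length of the varied profile. Given a periodic $\bar g$-normal field $V=(V_1,V_2)$ along $\gamma$, set
\[
\gamma_u:=\frac{\gamma+uV}{|\gamma+uV|}\subset\Sphere^3 ,
\]
which is a closed profile for small $|u|$. Since $|f_i|\equiv1$ and the blocks of $\CC^{n_1+1}\oplus\CC^{n_2+1}$ are orthogonal, we have $|G_\gamma+u\widetilde V|^2=|z_1+uV_1|^2+|z_2+uV_2|^2=|\gamma+uV|^2$. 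Hence
\[
G_{\gamma_u}=\frac{G_\gamma+u\widetilde V}{|G_\gamma+u\widetilde V|}.
\]
This is exactly the standard normalized spherical variation of $G_\gamma$ with initial field $\widetilde V$. Because $\bar g$ is conformal to $g_{\Sphere^3}$, $\bar g$-normality of $V$ is the same as round normality. Under the identification in \cref{ss3}, $\widetilde V$ is then a section of $\cN_{\mathrm{prof}}$, and in particular it is normal to $G_\gamma$. For small $|u|$, $G_{\gamma_u}$ stays an immersion, since $\gamma_u$ stays in the compact band $0<s_-'\le s\le s_+'<\frac\pi2$ (or, at a coordinate-circle passage, the relevant $k_i$ vanishes and we use the signed coordinate of \cref{rm1}).

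Next, apply \eqref{eq8} to $G_{\gamma_u}$ and integrate over the compact factors. This gives
\[
\Vol(G_{\gamma_u})=\Vol(M_1)\Vol(M_2)\,L_{\bar g}(\gamma_u),
\]
because $a^{k_1}b^{k_2}v_\gamma\,\dd t=\rho|\gamma'|_{\Sphere^3}\dd t=\dd\ell_{\bar g}$. Differentiating twice at $u=0$, the left side equals $Q_{G_\gamma}(\widetilde V)$ plus an acceleration term $\int\langle \mathbf H^{G_\gamma},\nabla_u\partial_uG\rangle$ and a tangential divergence term. The first vanishes because $G_\gamma$ is minimal by \cref{th1}; the second integrates to zero on the closed domain. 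The right side is $I_L(V,V)$ plus an acceleration term that vanishes because $\gamma$ is a $\bar g$-geodesic. This yields the first equality in \eqref{eq62}. For the second equality, use the standard relation between length and energy second variations for normal fields along an affinely parametrized geodesic with constant speed $c_\gamma$: $I_L(V,V)=c_\gamma^{-1}I_E(V,V)$. The Dirichlet-segment version is identical, since there the boundary terms vanish.

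For the index statement, note that $V\mapsto\widetilde V$ is linear and injective: $\widetilde V$ determines $V$ by evaluating at any point $(x,y)$. By \eqref{eq62}, $Q_{G_\gamma}$ restricted to the image equals the positive constant $\Vol(M_1)\Vol(M_2)$ times $I_L$ on periodic normal fields. So the negative index of the restricted form equals the index of $I_L$ on normal fields. This is $\Ind_{\bar g}(\gamma)$, because tangential components contribute nothing to $I_L$.

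The main obstacle I expect is the careful bookkeeping of the acceleration and tangential terms. One must check that the spherical normalization produces precisely the second variation $Q_{G_\gamma}$ as defined, including the curvature term $k|\xi|^2$. One must also check that the tangential part of the variation integrates away on $M_1\times M_2$ and on the closed profile. Here I would rely on the orthogonal splitting \eqref{eq14} and compactness of the factors.
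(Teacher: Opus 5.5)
Your proposal is correct and follows the paper's own route. You show the lift is normal via $\cC$-total reality and conformality, use \eqref{eq8} to get $\Vol(G_{\gamma_u})=\Vol(M_1)\Vol(M_2)L_{\bar g}(\gamma_u)$, differentiate twice with the acceleration terms killed by minimality of $G_\gamma$ and the $\bar g$-geodesic equation, and finish with $I_E=c_\gamma I_L$ on normal fields. Your explicit variation $\gamma_u=(\gamma+uV)/|\gamma+uV|$, which makes $G_{\gamma_u}$ exactly the normalized spherical variation of $G_\gamma$ with initial field $\widetilde V$, usefully spells out a step the paper leaves implicit.
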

       \begin{proof}
           The lift is normal because $\bar g$ is conformal to the standard round metric, 
           while the $\cC$-total reality guarantees
           $df_i(TM_i)\perp_{\RR}\operatorname{span}_{\RR}\{f_i,Jf_i\}$.
           Moreover,
           \[
             \Vol(G_{\gamma_\varepsilon})
                =\Vol(M_1)\Vol(M_2)L_{\bar g}(\gamma_\varepsilon).
           \]
           Taking second variations gives the first equality in
           \eqref{eq62}
           and the second follows from
           $I_E=c_\gamma I_L$ for constant-speed geodesic.
       \end{proof}

          With the notation of \cref{se4},
           fix $T>0$.
         On $[0,T]$ we impose periodic boundary conditions
          when $T$ is a period of the profile,
          and Dirichlet boundary conditions otherwise.
          The same condition is imposed on all components of a profile variation.
          For a fixed-momentum variation $s_\varepsilon=s+\varepsilon\nu$,
          the Routh action has Hessian
                    \[
             Q_R(\nu)=\int_0^{T}
                \bigl(W_0\dot\nu^{\,2}-\mathcal V_R\nu^2\bigr)\,\dd t,
             \qquad
             \mathcal V_R=U_{\bmu}''+W_0'\ddot s+\frac12W_0''\dot s^{\,2},
          \]
          on either the periodic or Dirichlet domain.
            The associated Sturm--Liouville operator is
          \[
             L_R\nu=-\frac{\dd}{\dd t}(W_0\dot\nu)-\mathcal V_R\nu.
          \]
          For a full profile variation,
          take
          $V=\nu\partial_s+v_1\partial_{\theta_1}+v_2\partial_{\theta_2}$.

       \begin{lemma}[Square completion]
           \label{th7}
           Along an affinely parametrized $\bar g$-geodesic with momenta $\bmu$,
           \[
             I_E(V,V)
              =Q_R(\nu)+\sum_{i=1}^2\int_0^{T}
                W_i\left(\dot v_i+\frac{W_i'}{W_i}\dot\theta_i\nu\right)^2\dd t.
           \]
       \end{lemma}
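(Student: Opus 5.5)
The identity is a direct computation of the second variation of the energy $\cL_E$ in the diagonal metric $\bar g=W_0\,ds^2+W_1\,d\theta_1^2+W_2\,d\theta_2^2$, followed by a completion of squares in the cyclic directions.

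\textbf{Step 1: expand the second variation.} For a variation $(s+\varepsilon\nu,\theta_i+\varepsilon v_i)$, the quadratic coefficient of the energy action is
\[
\int_0^T\Bigl(W_0\dot\nu^2+2W_0'\dot s\dot\nu\nu+\tfrac12W_0''\dot s^2\nu^2+\sum_i\bigl(W_i\dot v_i^2+2W_i'\dot\theta_i\dot v_i\nu+\tfrac12W_i''\dot\theta_i^2\nu^2\bigr)\Bigr)dt.
\]
The first-order terms vanish because $\gamma$ is a geodesic. The boundary contributions are absent under the periodic and Dirichlet conditions imposed on all components.

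One caveat: the index form $I_E(V,V)$ is usually defined via normal fields and curvature. I will use that, along a geodesic, it coincides with this coordinate second variation for any field $V$ vanishing at the endpoints or periodic. Tangential components contribute only through the reparametrization invariance of the energy, up to null terms, which is the standard identification.

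\textbf{Step 2: complete the square in each $v_i$.} Write
\[
W_i\dot v_i^2+2W_i'\dot\theta_i\dot v_i\nu=W_i\Bigl(\dot v_i+\tfrac{W_i'}{W_i}\dot\theta_i\nu\Bigr)^2-\tfrac{(W_i')^2}{W_i}\dot\theta_i^2\nu^2 .
\]
Substitute $\dot\theta_i=\mu_i/W_i$. The leftover $\nu^2$-terms from the $i$-th block become
\[
\Bigl(\tfrac12W_i''-\tfrac{(W_i')^2}{W_i}\Bigr)\tfrac{\mu_i^2}{W_i^2}.
\]
Compare this with $U_{\bmu}=\tfrac12\sum_i\mu_i^2/W_i$, whose second derivative is
\[
U_{\bmu}''=\sum_i\mu_i^2\Bigl(\tfrac{(W_i')^2}{W_i^3}-\tfrac{W_i''}{2W_i^2}\Bigr).
\]
Hence the total leftover $\nu^2$-term equals exactly $-U_{\bmu}''\nu^2$.

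\textbf{Step 3: handle the $s$-block.} The cross term integrates by parts:
\[
\int 2W_0'\dot s\,\nu\dot\nu=-\int\frac{d}{dt}\bigl(W_0'\dot s\bigr)\nu^2=-\int\bigl(W_0''\dot s^2+W_0'\ddot s\bigr)\nu^2 .
\]
Adding $\tfrac12W_0''\dot s^2\nu^2$ leaves $-\bigl(W_0'\ddot s+\tfrac12W_0''\dot s^2\bigr)\nu^2$. Together with $-U_{\bmu}''\nu^2$ from Step 2, this gives precisely $W_0\dot\nu^2-\mathcal V_R\nu^2$, which is the integrand of $Q_R(\nu)$.

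The computation itself is routine. The only delicate point is justifying that $I_E$ equals this coordinate expression for general (not necessarily normal) $V$, as noted in Step 1. The formula also remains valid near coordinate circles by working in the signed coordinate of \cref{rm1}.
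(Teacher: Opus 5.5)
Your proposal is correct and follows the paper's proof essentially step for step. Both expand the coordinate second variation of the energy, complete the square in each $\dot v_i$, integrate the $s$-block cross term by parts (the boundary term vanishing under the periodic or Dirichlet conditions), and identify the leftover $\nu^2$-coefficient with $-U_{\bmu}''$. One aside in your Step 1 is inaccurate, although the proof does not depend on it: the energy Hessian at a critical curve is already defined on all fields, so no normal/tangential identification is needed, and tangential fields are not null for $I_E$, since $I_E(\eta\dot\gamma,\eta\dot\gamma)=c_\gamma^2\int_0^T\dot\eta^2\,\dd t$ (a fact the paper uses later).
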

       \begin{proof}
     Direct differentiation of the energy action gives
           $$
           I_E(V,V)=\int_0^T\Bigg[
              W_0\dot\nu^{2}
              +2W_0'\dot s\,\nu\dot\nu
              +\frac12W_0''\dot s^{2}\nu^2 \\
              +\sum_{i=1}^2\left(
                 W_i\dot v_i^{2}
                 +2W_i'\dot\theta_i\nu\dot v_i
                 +\frac12W_i''\dot\theta_i^{2}\nu^2
              \right)
           \Bigg]\,\dd t.
            $$
           The identities
           \[
           \begin{aligned}
           2W_0'\dot s\,\nu\dot\nu
              &=\frac{\dd}{\dd t}(W_0'\dot s\,\nu^2)
                -(W_0''\dot s^{2}+W_0'\ddot s)\nu^2,\\
           W_i\dot v_i^{2}+2W_i'\dot\theta_i\nu\dot v_i
              &=W_i\left(\dot v_i+
                 \frac{W_i'}{W_i}\dot\theta_i\nu\right)^2
                -\frac{(W_i')^2}{W_i}\dot\theta_i^{2}\nu^2
           \end{aligned}
           \]
           and $\mu_i=W_i\dot\theta_i$ imply
          $
              -U_{\bmu}''
              =\sum_{i=1}^2
              \left(\frac12W_i''-\frac{(W_i')^2}{W_i}\right)
              \dot\theta_i^{2}.
          $
           The total derivative has zero integral,
           by periodicity in the periodic case
           and because $\nu=0$ at both endpoints in the Dirichlet case.
           Substitution yields the formula.
       \end{proof}

        The index-iteration mechanism goes back to Bott
       \cite{Bott1956}; 
       see also \cite{Long2002}. 
        In the present
       setting, Routh reduction turns it into the following scalar Sturm
       count.

       \begin{proposition}[Geodesic-index growth]
           \label{coprofindex}
           Let $\gamma$ be an ordinarily closed regular oscillatory profile with $\bmu\ne0$. 
           Then
        $
            \Ind_{\bar g}(\gamma)\geq 2m_\gamma-3.
        $
           If exactly one momentum is nonzero, 
           then
           $\Ind_{\bar g}(\gamma)\geq  2m_\gamma-2$.
       \end{proposition}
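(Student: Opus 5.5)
By \cref{th7}, the index of $I_E$ on the full period $[0,m_\gamma T_{\mathrm{cell}}]$ is at least the index of the scalar Routh form $Q_R$ on the corresponding space. The plan is therefore to extend each negative direction $\nu$ of $Q_R$ to a profile field $V=\nu\partial_s+v_1\partial_{\theta_1}+v_2\partial_{\theta_2}$ that kills the square terms. For this we solve $\dot v_i=-\frac{W_i'}{W_i}\dot\theta_i\nu$. The difficulty is periodicity: $v_i$ is periodic only if $\int_0^{T}\frac{W_i'}{W_i}\dot\theta_i\nu\,\dd t=0$. When $\mu_i=0$ the $i$-th constraint is vacuous, since $\dot\theta_i\equiv0$. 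Each nonzero momentum thus imposes one linear constraint, and we lose at most one dimension per nonzero momentum. We must also arrange that $V$ is $\bar g$-normal, or else quotient out the tangent direction $\dot\gamma$; this costs nothing, because $I_E$ vanishes on tangential reparametrization fields, so working modulo them is harmless. Hence $\Ind_{\bar g}(\gamma)\ge \Ind(Q_R)-\#\{i:\mu_i\ne0\}$, and it suffices to show $\Ind(Q_R)\ge 2m_\gamma-1$ on the periodic domain of length $m_\gamma T_{\mathrm{cell}}$.

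For the scalar count we use Sturm oscillation. The function $\nu_0=\dot s$ solves $L_R\nu_0=0$; this comes from differentiating the reduced Euler--Lagrange equation of $\mathscr R_{\bmu}$ in time, since $\mathcal V_R$ is exactly the linearization. Being periodic, $\dot s$ lies in the kernel of $L_R$ on the periodic domain. On one cell, $\dot s$ vanishes exactly at the two turning times, so over $m_\gamma$ cells it has precisely $2m_\gamma$ simple zeros per period (simplicity from $\ddot s\ne0$ in \cref{pr4}). The periodic Sturm comparison theorem (Eastham) says that a periodic eigenfunction with $2j$ zeros corresponds to eigenvalue $\lambda_{2j-1}$ or $\lambda_{2j}$ in the ordering $\lambda_0<\lambda_1\le\lambda_2<\lambda_3\le\lambda_4<\cdots$. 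Hence $0\in\{\lambda_{2m_\gamma-1},\lambda_{2m_\gamma}\}$, and at least $2m_\gamma-1$ eigenvalues are strictly negative. The lower bound $2m_\gamma-1$ is what we need.

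Combining gives $\Ind_{\bar g}(\gamma)\ge 2m_\gamma-1-2=2m_\gamma-3$ when both momenta are nonzero, and $\ge 2m_\gamma-2$ when exactly one is nonzero.

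Two details require care. First, the negative space of $Q_R$ must inject under $\nu\mapsto V$ into the normal index space, which follows because $\nu$ is the $\partial_s$ component. Second, and this is the main obstacle, we need the constrained negative subspace to remain negative and of the claimed dimension after imposing the linear constraints and passing to normal fields. I would handle this by restricting $Q_R$ to the codimension-$\le2$ subspace cut out by the constraints within its negative eigenspace. On that subspace $I_E(V,V)=Q_R(\nu)<0$ exactly. Any tangential component $\phi\dot\gamma$ is then subtracted from $V$; this does not change $I_E$ and does not create kernel issues, because a purely tangential field has vanishing $\nu$ only if $\phi\dot s=0$, which forces $\phi=0$ away from turning points.
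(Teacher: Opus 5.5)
Your proposal is correct and follows the paper's proof essentially step for step: the periodic Sturm count $\Ind(Q_R)\geq 2m_\gamma-1$ from the $2m_\gamma$ simple zeros of $\dot s\in\ker L_R$, one closing functional $\int_0^T\frac{W_i'}{W_i}\dot\theta_i\nu\,\dd t$ per nonzero momentum, reconstruction of periodic $v_i$ so that \cref{th7} gives $I_E(V,V)=Q_R(\nu)<0$, and then normal projection. One correction in the last step: $I_E$ does not vanish on tangential fields (that is true of $I_L$, not of the energy form). For $V=V^\perp+\eta\dot\gamma$ the cross term vanishes and $I_E(V^\perp,V^\perp)=I_E(V,V)-c_\gamma^2\int_0^T|\dot\eta|^2\,\dd t$, which is the identity the paper uses; so removing the tangential part can only lower $I_E$, and the strict inequality $I_E(V^\perp,V^\perp)<0$ also gives injectivity of $V\mapsto V^\perp$ on the negative subspace directly, replacing your zero-counting remark about $\phi\dot s$.
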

       \begin{proof}
           Let $T=m_\gamma T_{\mathrm{cell}}$ 
           and 
           $E_R^-$ the  negative spectral space of $L_R$ with periodic boundary conditions.
           Since $L_R\dot s=0$ and $\dot s$ has $2m_\gamma$ simple zeros,
           periodic Sturm oscillation gives
           \[
             \dim E_R^-=\Ind(Q_R)\geq 2m_\gamma-1.
           \]
           Define two closing functionals
           $
             \mathfrak L_i(\nu)
             =\int_0^T\frac{W_i'}{W_i}\dot\theta_i\nu\,\dd t,
             \, i=1,2
           $
           and
           $K=E_R^-\cap\ker\mathfrak L_1\cap\ker\mathfrak L_2$. 
           So    $\dim K\geq 2m_\gamma-3$.
          In the singly spiral case,
          as we only need to require one $\mathfrak L_i$ to be zero (as the integrand for the other automatically vanishes),
          it follows that $\dim K\geq 2m_\gamma-2$.
           For $\nu\in K$, 
           we can  reconstruct periodic functions $v_i$ satisfying $\dot v_i=-\frac{W_i'}{W_i}\dot\theta_i\nu$ with $v_i(0)=0$
           and \cref{th7} gives $I_E(V,V)=Q_R(\nu)<0$. 
           Finally, normal projection preserves both dimension and negativity 
           because we get
           $I_E(V^\perp,V^\perp)= I_E(V,V) - c_\gamma^2\int_0^T|\dot \eta |^2\,\dd t $
           where   $V=V^\perp+\eta \dot\gamma$.
         Thus the statement follows.
       \end{proof}

      \subsection{Factor--profile splitting}
\label{se12}
   Let $t\in[0,T]$ parametrize the closed profile,
         and set $a_1=a$ and $a_2=b$.
         For $\xi_i\in\Gamma(\cN_{f_i}^{\Sphere})$,
             the lifts are then
             $    \widehat\xi_1=(z_1\xi_1,0),\,    \widehat\xi_2=(0,z_2\xi_2) $.
             The field $\widehat\xi_i$ is orthogonal to both factor tangent blocks,
             but its $Jf_i$-component overlaps the profile tangent $E_0$. 
             Its normal projection is therefore
$
    \mathcal F_i^\perp\xi_i
    :=
    \widehat\xi_i
    -
    \langle\widehat\xi_i,E_0\rangle E_0
    $
    and
$    \langle\widehat\xi_i,E_0\rangle
    =
    \frac{a_i^2\dot\theta_i}{v_\gamma}
    \langle\xi_i,Jf_i\rangle .
$
Set
    $    \mathscr C_i\xi_i  :=    \frac{d}{d\varepsilon}\big|_{0}
    f_{i,\varepsilon}^*\alpha
    $.
Then, for $X_i\in TM_i$, we have
\[
    \left\langle
    \partial_tG_{i,\varepsilon},
    dG_{i,\varepsilon}(X_i)
    \right\rangle
    =
    a_i^2\dot\theta_i
    \bigl(f_{i,\varepsilon}^*\alpha\bigr)(X_i),
    \quad
    \left.
    \frac{d}{d\varepsilon}
    \right|_0
    \left\langle
    \partial_tG_{i,\varepsilon},
    dG_{i,\varepsilon}(X_i)
    \right\rangle
    =
    a_i^2\dot\theta_i
    (\mathscr C_i\xi_i)(X_i).
\]

\begin{lemma}[Factor-Hessian splitting]
\label{pr9}
                   Assume $f_1,f_2$ are compact minimal $\cC$-totally real immersions 
                   and
                    $\gamma$ a closed profile for which $G_\gamma$ is a minimal immersion. 
                    For $\{i,j\}=\{1,2\}$, 
                    set
$
    \mathcal A_i
    =
    \Vol(M_j)L_{\bar g}(\gamma)>0,
    \,
    \mathcal B_i
    =
    \Vol(M_j)
    \int_0^T
    \frac{\rho a_i^2\dot\theta_i^2}{v_\gamma}\,\dd t
    \geq 0.
$
Then, for $i=1$ and $2$,
\begin{equation}\label{eq63}
    Q_{G_\gamma}\bigl(\mathcal F_i^\perp\xi_i\bigr)
    =
    \mathcal A_iQ_{f_i}(\xi_i)
    -
    \mathcal B_i
    \|\mathscr C_i\xi_i\|_{L^2(M_i)}^2.
\end{equation}
Moreover,
the two factor sectors and the lifted profile sector are pairwise
$Q_{G_\gamma}$-orthogonal.
\end{lemma}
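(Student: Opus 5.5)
The plan is to compute the second variation of volume directly along the one-parameter family of immersions
\[
G_\varepsilon(t,x,y)=\bigl(z_1(t)f_{1,\varepsilon}(x),\,z_2(t)f_2(y)\bigr),
\]
where $f_{1,\varepsilon}$ is a normal variation of $f_1$ in $\Sphere^{2n_1+1}$ with variation field $\xi_1$; the case $i=2$ is symmetric. The first step is to check that the variation field of $G_\varepsilon$ is $\widehat\xi_1$. Its normal part is $\mathcal F_1^\perp\xi_1$, and its tangential part is $\langle\widehat\xi_1,E_0\rangle E_0$. Since $G_\gamma$ is minimal, the second variation of volume equals $Q_{G_\gamma}$ of the normal part plus a term coming from the tangential part. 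Rather than tracking that term abstractly, I would compute $\frac{d^2}{d\varepsilon^2}\big|_0\Vol(G_\varepsilon)$ directly and identify it with $Q_{G_\gamma}(\mathcal F_1^\perp\xi_1)$ plus an explicit remainder.

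The second step is the metric expansion. As $f_{1,\varepsilon}$ is no longer $\cC$-totally real, the block orthogonality of \cref{lm1} fails at first order. The induced metric is
\[
v_\gamma^2\,\dd t^2+a^2g_{1,\varepsilon}+b^2g_2+2\,\dd t\odot\beta_\varepsilon,
\qquad
\beta_\varepsilon=a^2\dot\theta_1\,f_{1,\varepsilon}^*\alpha+\bigl(\text{terms from }\langle z_1\dot{}f_{1,\varepsilon},z_1f_{1,\varepsilon}\rangle\bigr).
\]
One also has $|f_{1,\varepsilon}|=1$ and $\langle \dot z_1 f,z_1 f\rangle$-type terms are unchanged. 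The cross term $\beta_\varepsilon$ vanishes at $\varepsilon=0$, and its $\varepsilon$-derivative is $a^2\dot\theta_1\,\mathscr C_1\xi_1$, as in the display preceding the lemma. The volume density is $\sqrt{\det}$ of the block matrix. Expanding it to second order produces two contributions. The first is $v_\gamma a^{k_1}b^{k_2}$ times the factor density of $g_{1,\varepsilon}$, which after integration gives $\Vol(M_2)L_{\bar g}(\gamma)\Vol(f_{1,\varepsilon})$ up to the conformal bookkeeping $\rho v_\gamma\,\dd t=\dd\ell_{\bar g}$. The second is the Schur-complement correction $-\frac12 v_\gamma^{-2}|\beta'|^2_{a^2g_1}$ times the density. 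Here the factor $a^{-2}$ from the inverse metric combines with $a^4\dot\theta_1^2$, and the factor $v_\gamma^{-1}$ with $\rho$, to give exactly the weight $\rho a^2\dot\theta_1^2/v_\gamma$ in $\mathcal B_1$. The first contribution has second derivative $\mathcal A_1 Q_{f_1}(\xi_1)$, because $f_1$ is minimal in its sphere and $\xi_1$ is normal. The second contributes $-\mathcal B_1\|\mathscr C_1\xi_1\|^2$.

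The third step accounts for the tangential part. Its contribution should be shown to vanish, or to have already been absorbed. I would argue that the $E_0$-component $\eta E_0$ of $\widehat\xi_1$ generates, via minimality, only a term of the form $\int|\nabla^\perp$-cross terms$|$. The cleaner route is to note that the second variation of volume for a variation with field $X^\perp+X^\top$ at a minimal immersion equals $Q(X^\perp)$ plus $\int \langle B(X^\top,\cdot),\dots\rangle$-type terms. These integrate to $0$ only when the acceleration is accounted for. I would therefore instead choose $f_{1,\varepsilon}$ to be the normal exponential variation and directly verify that the tangential correction is a total $t$-derivative, integrating to zero by periodicity of the profile. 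This is the main obstacle: the precise bookkeeping of the $E_0$-overlap, and confirming that no extra $\dot\eta$-terms remain. If needed, I would handle it via the formula $I(X)=Q(X^\perp)$ for variations of minimal immersions up to reparametrization by tangential flows. These flows do not change area, so the area second derivative of $G_\varepsilon$ equals that of its normal-projected variation.

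Finally, for the orthogonality claims I would polarize. The cross terms $Q_{G_\gamma}(\mathcal F_1^\perp\xi_1,\mathcal F_2^\perp\xi_2)$ and $Q_{G_\gamma}(\mathcal F_i^\perp\xi_i,\widetilde V)$ come from the mixed second derivative of the volume of the two-parameter family obtained by varying both inputs. Using the block structure of \cref{lm4} and the orthogonal normal splitting of \cref{ss3}, these mixed terms are integrals over $M_i$ that are linear in $\xi_i$ with coefficients independent of $x$. In each case the coefficient is a first variation of $\Vol(f_i)$ or of $\int f_i^*\alpha$-type quantities, evaluated at $\varepsilon=0$. Both vanish: the first by minimality of $f_i$, the second because $f_i^*\alpha=0$ and the linear term in $\beta$ enters the density only quadratically.
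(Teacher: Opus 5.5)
Your proposal is correct and follows essentially the same route as the paper. You vary one factor and read off the block metric, whose only new entry is the mixed term $a_i^2\dot\theta_i\beta_{i,\varepsilon}$; the Schur-complement density $\rho\sqrt{v_\gamma^2-a_i^2\dot\theta_i^2|\beta_{i,\varepsilon}|^2_{h_{i,\varepsilon}}}$ then gives the formula, and orthogonality comes from the vanishing first variations of $\Vol(f_i)$ together with the contact defects entering only as separate squares. Your Step 3 worry is settled exactly by the fallback you name, which is also the paper's one-line justification: at a critical point of the reparametrization-invariant volume, $\frac{d^2}{d\varepsilon^2}\big|_0\Vol(G_{i,\varepsilon})$ is the Hessian evaluated on the normal part $\mathcal F_i^\perp\xi_i$ of the initial velocity. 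The normal-exponential/total-derivative detour is therefore unnecessary, and it would not remove the $E_0$-overlap anyway, since that overlap comes from $\langle\xi_i,Jf_i\rangle$ rather than from the choice of $f_{i,\varepsilon}$.
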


\begin{proof}
Let $G_{i,\varepsilon}$ be the variation obtained by replacing $f_i$    with $f_{i,\varepsilon}$
and
$
    \widehat\xi_i =    \partial_\varepsilon G_{i,\varepsilon} \big|_0 .
$
Along a closed minimal immersion,
the volume Hessian depends only on the normal component of a variational field, namely
$
    Q_{G_\gamma}\bigl(\mathcal F_i^\perp\xi_i\bigr)
    =
    \frac{d^2}{d\varepsilon^2}
    \big |_0
    \Vol(G_{i,\varepsilon}).
$

Introduce
$
    h_{i,\varepsilon}
    =
    f_{i,\varepsilon}^*g_{\Sphere}, \,
    \beta_{i,\varepsilon}
    =
    f_{i,\varepsilon}^*\alpha.
$
The two factor blocks remain orthogonal.
Since $f_j^*\alpha=0$,
the only potentially nonzero mixed block involving the profile direction is
$
    \left\langle
    \partial_tG_{i,\varepsilon},
    dG_{i,\varepsilon}(X_i)
    \right\rangle
    =
    a_i^2\dot\theta_i
    \beta_{i,\varepsilon}(X_i).
$
With respect to the splitting
$
    \RR\partial_t\oplus TM_1\oplus TM_2,
$
the induced metric has the block form
\[
    g_{i,\varepsilon}
    =
    \begin{pmatrix}
        v_\gamma^2
        &
        a_i^2\dot\theta_i\beta_{i,\varepsilon}
        &
        0
        \\
        a_i^2\dot\theta_i\beta_{i,\varepsilon}^{\mathsf T}
        &
        a_i^2h_{i,\varepsilon}
        &
        0
        \\
        0
        &
        0
        &
        a_j^2g_j
    \end{pmatrix}.
\]
Thus we arrive at
$
    \dd\vol_{G_{i,\varepsilon}}
    =
    \rho
    \sqrt{
        v_\gamma^2
        -
        a_i^2\dot\theta_i^2
        |\beta_{i,\varepsilon}|_{h_{i,\varepsilon}}^2
    }\,
    \dd t\,
    \dd\vol_{h_{i,\varepsilon}}
    \dd\vol_{g_j}.
$
As $f_i^*\alpha=0$,
we get
$
    \beta_{i,0}=0
$,
$
    \partial_\varepsilon\beta_{i,\varepsilon}
    \big|_0
    =
    \mathscr C_i\xi_i,
$
and
$
    \partial_\varepsilon^2
    \big |_0
    \sqrt{
        v_\gamma^2
        -
        a_i^2\dot\theta_i^2
        |\beta_{i,\varepsilon}|_{h_{i,\varepsilon}}^2
    }
    =
    -
    \frac{a_i^2\dot\theta_i^2}{v_\gamma}
    |\mathscr C_i\xi_i|_{g_i}^2.
$
As a result,
\[
    Q_{G_\gamma}\bigl(\mathcal F_i^\perp\xi_i\bigr)
    =
    \Vol(M_j)
    \left(
        \int_0^T\rho v_\gamma\,\dd t
    \right)
    Q_{f_i}(\xi_i)
    -
    \Vol(M_j)
    \left(
        \int_0^T
        \frac{\rho a_i^2\dot\theta_i^2}{v_\gamma}\,\dd t
    \right)
    \|\mathscr C_i\xi_i\|_{L^2(M_i)}^2,
\]
which is \eqref{eq63}.

For mixed variations,
the factor-volume terms contain a first variation of
$\Vol(f_i)$
and hence vanish by minimality.
The two contact defects occur in separate squares.
This proves the asserted $Q_{G_\gamma}$-orthogonality.
\end{proof}

       \begin{theorem}[Additive index bound]
           \label{th10}
          Under the hypotheses of \cref{pr9},
           \begin{equation}\label{eq65}
             \Ind(G_\gamma)\geq 
             \Ind(f_1)+\Ind(f_2)+\Ind_{\bar g}(\gamma).
           \end{equation}
           In particular, for an ordinarily closed regular oscillatory profile
           with $\bmu\ne0$,
           \[
             \Ind(G_\gamma)\geq 
             \Ind(f_1)+\Ind(f_2)+2m_\gamma-3,
           \]
           with $2m_\gamma-2$ in the singly spiral case.
       \end{theorem}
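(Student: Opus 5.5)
The plan is to build three mutually $Q_{G_\gamma}$-orthogonal subspaces of normal fields along $G_\gamma$, each negative definite for $Q_{G_\gamma}$, with dimensions $\Ind(f_1)$, $\Ind(f_2)$ and $\Ind_{\bar g}(\gamma)$. Their sum is then negative definite, and the bound \eqref{eq65} follows once we know the sum is direct.

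For the factor sectors, let $E_i^-\subset\Gamma(\cN_{f_i}^{\Sphere})$ be a maximal subspace on which $Q_{f_i}$ is negative definite, so $\dim E_i^-=\Ind(f_i)$. By \cref{pr9}, for $\xi_i\in E_i^-\setminus\{0\}$,
\[
Q_{G_\gamma}(\mathcal F_i^\perp\xi_i)=\mathcal A_iQ_{f_i}(\xi_i)-\mathcal B_i\|\mathscr C_i\xi_i\|^2<0,
\]
because $\mathcal A_i>0$ and $\mathcal B_i\ge0$; the contact defect only helps. We also need $\mathcal F_i^\perp$ to be injective on $E_i^-$. The factor lift $\widehat\xi_i=(z_i\xi_i,0)$ is injective whenever $a_i\not\equiv0$. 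The projection subtracts only an $E_0$-multiple, and that multiple is supported in the first block only through $Jf_i$-components. If $\mathcal F_i^\perp\xi_i=0$, then $\widehat\xi_i$ is parallel to $E_0$; comparing the other block (where $E_0$ has component $z_j'f_j$), or using the $x$-dependence against the $t$-only profile, forces $\xi_i=0$ on an open set of $t$. I would instead argue via the quadratic form: a nonzero $\xi_i$ with zero image would give $Q_{G_\gamma}=0$, contradicting strict negativity. The same argument gives injectivity on the whole direct sum.

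For the profile sector, take a maximal negative subspace of $I_L$ (equivalently $I_E$) of $\bar g$-normal periodic fields $V$, of dimension $\Ind_{\bar g}(\gamma)$. By \cref{pr8}, the lifts $\widetilde V$ satisfy $Q_{G_\gamma}(\widetilde V)=\Vol(M_1)\Vol(M_2)I_L(V,V)<0$. Pairwise $Q_{G_\gamma}$-orthogonality of the three sectors is the final assertion of \cref{pr9}. Hence on the sum of the three images, $Q_{G_\gamma}$ of a general element $w_1+w_2+w_3$ equals $\sum Q(w_k)$, which is negative unless every $w_k=0$. This shows both that the sum is direct (a vanishing sum with some $w_k\neq0$ would have negative $Q$, yet $Q(0)=0$) and that it is negative definite. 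Therefore $\Ind(G_\gamma)\ge\Ind(f_1)+\Ind(f_2)+\Ind_{\bar g}(\gamma)$.

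The second assertion then follows by inserting \cref{coprofindex}: $\Ind_{\bar g}(\gamma)\ge2m_\gamma-3$, and $\Ind_{\bar g}(\gamma)\ge2m_\gamma-2$ in the singly spiral case. Because the periodic index in \cref{pr8} refers to the period $T=m_\gamma T_{\mathrm{cell}}$, we must check that $G_\gamma$ is parametrized over that same period, so that its domain is $\RR/T\ZZ\times M_1\times M_2$ and the fields constructed are genuinely periodic. The main obstacle is making the directness argument rigorous when cross terms are only known to vanish through \cref{pr9}. The plan relies on that lemma's orthogonality statement applying to arbitrary pairs from different sectors, including cross terms between the factor sectors and lifted profile fields, which is what the lemma asserts.
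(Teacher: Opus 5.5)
Your proposal is correct and is essentially the paper's proof: you lift maximal negative spaces of $Q_{f_1}$ and $Q_{f_2}$ via $\mathcal F_i^\perp$ and of $I_L$ via $V\mapsto\widetilde V$, use \cref{pr8,pr9} for negativity and pairwise $Q_{G_\gamma}$-orthogonality, and then insert \cref{coprofindex}. Your strict-negativity argument for the injectivity of each lift and the directness of the sum makes explicit what the paper leaves implicit; the block-comparison attempt you abandoned is not needed and can be dropped.
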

       \begin{proof}
       We can lift negative spaces for $Q_{f_1}$ and $Q_{f_2}$, 
       together   with that of $\gamma$ of dimension $\Ind_{\bar g}(\gamma)$.  
           By \cref{pr8,pr9} these lifts form a $Q_{G_\gamma}$-orthogonal negative space. 
           The second inequality then follows from  \cref{coprofindex}.
       \end{proof}

   \subsection{Spherical quotient index}
          \label{se10}
           Suppose the hypotheses of \cref{th6} hold,
          and denote the descended primitive spherical quotient by
          $$
           \overline G_\gamma:
             \cM_G\longrightarrow\Sphere^{2n_1+2n_2+3}.
          $$
          Thus
          $
          G_\gamma(t+T_G,x,y)=G_\gamma(t,\Phi(x,y))
          $
          and
          $
          T_G=m_GT_{\mathrm{cell}}.
          $
          For a regular oscillatory profile,
          choose $t=0$ at a turning point and
         set $Q_R^D=Q_R|_{H_0^1(0,T_G)}$.

       \begin{lemma}[Seam approximation]
           \label{lm11}
           Every $H^1$ normal field on
           $[0,T_G]\times M_1\times M_2$ vanishing at $t=0,T_G$ 
           %continuously 
           closes up at the boundary and hence defines an $H^1$ normal field on the quotient.
           Every finite-dimensional negative space of such fields can be approximated,
           without changing dimension or negativity, by smooth quotient fields supported away from the seam.
       \end{lemma}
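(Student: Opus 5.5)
The plan has two parts: closing the field at the seam, and the approximation.

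\emph{Closing at the seam.} Let $V$ be an $H^1$ normal field on $[0,T_G]\times M_1\times M_2$ with $V(0,\cdot)=V(T_G,\cdot)=0$ in the trace sense. The quotient $\cM_G$ is obtained by gluing $\{T_G\}\times M_1\times M_2$ to $\{0\}\times M_1\times M_2$ through $\Phi$. Because both traces vanish, they agree under $\Phi$, and the glued field has no jump across the seam.

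To see that it lies in $H^1$ on $\cM_G$, take a test field supported in a collar of the seam and integrate by parts on each side. The boundary terms carry the traces of $V$, which are zero. Hence the weak derivative of the glued field is the piecewise derivative, and that is in $L^2$.

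The normal bundle matches across the seam. By \cref{th6}, $G_\gamma(t+T_G,x,y)=G_\gamma(t,\Phi(x,y))$, so the normal spaces at $(T_G,x,y)$ and $(0,\Phi(x,y))$ are the same subspace of the ambient space. The normal condition is therefore compatible with the gluing. The metric near the seam is uniformly equivalent to a product metric, since $s$ stays in $[s_-,s_+]$ and \cref{lm1} applies.

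\emph{Approximation.} Fix a finite-dimensional space $N$ of such fields on which $Q=Q_{\overline G_\gamma}$ is negative definite. Choose a basis $V_1,\dots,V_r$. Since $N$ is finite-dimensional, negativity means
\[
\max_{V\in N,\ \|V\|_{H^1}=1} Q(V)=-\delta<0 .
\]
The form $Q$ is continuous on $H^1$, with coefficients built from $|B|^2$ and the ambient curvature; these are bounded by \cref{co1} on the compact band. So it suffices to find smooth fields $\widetilde V_j$ supported away from the seam with $\|\widetilde V_j-V_j\|_{H^1}<\epsilon$.

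For $\epsilon$ small, the following then hold.
\begin{enumerate}
\item The Gram matrix in $H^1$ stays invertible, so the $\widetilde V_j$ remain linearly independent and their span has dimension $r$.
\item By continuity of $Q$ and compactness of the unit sphere of $N$, $Q$ is at most $-\delta/2$ on the unit sphere of the new span.
\end{enumerate}

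To construct the $\widetilde V_j$, first cut off. Let $\eta_k(t)$ vanish on $[0,1/k]\cup[T_G-1/k,T_G]$ and equal $1$ on $[2/k,T_G-2/k]$, with $|\dot\eta_k|\lesssim k$, and form $\eta_kV$. Then mollify inside the interior of $(0,T_G)\times M_1\times M_2$ and reapply the normal projection, which is smooth in the base point.

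\emph{Main obstacle.} The hard step is showing $\eta_kV\to V$ in $H^1$. The only delicate term is $\dot\eta_k V$, which is supported in strips of width $\sim 1/k$ near $t=0$ and $t=T_G$. Its $L^2$ norm is controlled by Hardy's inequality: for an $H^1$ function vanishing at $t=0$ one has $\int |V|^2/t^2\,\dd t \lesssim \int |\partial_t V|^2\,\dd t$, uniformly in $(x,y)$ after integrating over $M_1\times M_2$. On the strip $t\sim 1/k$ this gives
\[
\|\dot\eta_kV\|_{L^2}^2\lesssim \int_{\{t\lesssim 2/k\}} \frac{|V|^2}{t^2}
\lesssim \int_{\{t\lesssim 2/k\}}|\partial_tV|^2 \longrightarrow 0 ,
\]
and the strip near $T_G$ is handled the same way. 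The remaining terms converge by dominated convergence, and the mollification step is standard.
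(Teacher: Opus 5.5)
Your proof is correct and follows the same route as the paper: zero endpoint traces make the twisted seam condition automatic, smooth fields supported away from the endpoint faces are dense in the zero-trace $H^1$ space, and continuity of the Jacobi form then preserves both dimension and negativity when a basis of a finite-dimensional negative space is perturbed slightly. The paper states the density step as standard, while you prove it with a cutoff estimate controlled by Hardy's inequality, followed by mollification and normal projection; this is a valid way to fill in that step.
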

            \begin{proof}
           Zero endpoint traces satisfy every twisted seam condition.
           Smooth sections supported away from the two endpoint faces are dense in the zero-trace $H^1$ space.
            Approximating a basis of any finite-dimensional negative space and using continuity of the Jacobi form preserves both dimension and negative definiteness for sufficiently close approximants.
            These approximants vanish near the endpoints and hence descend to the quotient.
       \end{proof}

       \begin{proposition}[Spherical quotient index bound]
           \label{th9}
           Given compact minimal $\cC$-totally real immersion factors 
           and
           regular oscillatory profile $\gamma$ with $\bmu\ne0$,
           % the quotient $\overline G$ above,
           we get
           \[
             \Ind(Q_R^D)=2m_G-1,
             \qquad
             \Ind(\overline G_\gamma)\geq \max\{0,2m_G-3\}.
           \]
           For the singly spiral case the last term improves to
           $\max\{0,2m_G-2\}$.
       \end{proposition}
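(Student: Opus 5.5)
The plan has two parts: compute $\Ind(Q_R^D)$ exactly by Dirichlet Sturm oscillation, then transfer that negative space to the quotient $\overline G_\gamma$ using \cref{th7}, \cref{pr8} and \cref{lm11}.

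\emph{Step 1: the exact Dirichlet count.} Since $t=0$ is a turning point, $\dot s(0)=0$. The magnitude is $T_{\mathrm{cell}}$-periodic and has exactly two simple turning points per cell (\cref{pr4}). Hence $\dot s$ vanishes at $t=jT_{\mathrm{cell}}/2$ for $j=0,\dots,2m_G$ and nowhere else on $[0,T_G]$, so $\dot s(T_G)=0$ as well. Differentiating the reduced equation \eqref{eqsode} in $t$, or equivalently using time-translation invariance of the Routh system, gives $L_R\dot s=0$. Thus $\dot s$ is a Dirichlet eigenfunction of $L_R$ on $(0,T_G)$ with eigenvalue $0$ and exactly $2m_G-1$ interior zeros.

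By the Sturm oscillation theorem for the regular problem $-(W_0\nu')'-\mathcal V_R\nu=\lambda\nu$, with $W_0=\rho^2>0$ smooth on $[s_-,s_+]$, the $j$-th Dirichlet eigenfunction has $j-1$ interior zeros. So $0$ is the $2m_G$-th eigenvalue. Since Dirichlet eigenvalues are simple, exactly $2m_G-1$ eigenvalues are negative, and $\Ind(Q_R^D)=2m_G-1$.

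\emph{Step 2: imposing the closing conditions.} Let $E^-$ be this $(2m_G-1)$-dimensional negative space. For each $i$ with $\mu_i\ne0$, impose $\mathfrak L_i(\nu)=\int_0^{T_G}\frac{W_i'}{W_i}\dot\theta_i\nu\,\dd t=0$; this cuts $E^-$ down to a subspace $K$ with $\dim K\ge 2m_G-3$. In the singly spiral case the integrand of one functional vanishes identically, so only one condition is needed and $\dim K\ge 2m_G-2$.

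For $\nu\in K$, define $v_i(t)=-\int_0^t\frac{W_i'}{W_i}\dot\theta_i\nu$. Then $v_i(0)=v_i(T_G)=0$, so $V=(\nu,v_1,v_2)$ satisfies Dirichlet conditions on $[0,T_G]$. The square-completion terms in \cref{th7} vanish, giving $I_E(V,V)=Q_R(\nu)<0$. The tangential projection argument in the proof of \cref{coprofindex} then shows that the normal parts $V^\perp$ are still negative and still span a space of the same dimension. To use it here, write the tangential component as $\eta\dot\gamma$ with $\eta=\bar g(V,\dot\gamma)/c_\gamma^2$. At the endpoints $V=0$, so $\eta$ vanishes there too and $V^\perp$ remains Dirichlet. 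Injectivity of $V\mapsto V^\perp$ holds because a purely tangential $V$ gives $I_E(V,V)\ge0$.

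\emph{Step 3: lifting and descending.} By the Dirichlet version of \cref{pr8}, the lifts $\widetilde{V^\perp}$ form a $Q_{G_\gamma}$-negative space of dimension $\dim K$ on $[0,T_G]\times M_1\times M_2$. These fields vanish at $t=0,T_G$, so \cref{lm11} approximates them by smooth fields supported away from the seam. Those fields descend to $\cM_G$ and lie in a negative space of $\overline G_\gamma$ of the same dimension. This gives $\Ind(\overline G_\gamma)\ge\max\{0,2m_G-3\}$, and $\max\{0,2m_G-2\}$ in the singly spiral case.

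The main obstacle is the exactness in Step 1, which needs two facts. First, $\dot s$ must have no further interior zeros; this follows because each passage between $s_-$ and $s_+$ is strictly monotone (\cref{pr4}). Second, the Dirichlet problem must be regular, which follows because $W_0$ is bounded below on $[s_-,s_+]$. The remaining steps are the same bookkeeping as in the periodic case, \cref{coprofindex}.
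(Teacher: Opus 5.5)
Your proposal is correct and follows the same route as the paper. The steps match: the Dirichlet Sturm count from $L_R\dot s=0$ with $2m_G-1$ interior zeros of $\dot s$; the closing functionals $\mathfrak L_i$, which make the reconstructed $v_i$ vanish at both endpoints; square completion via \cref{th7}; normal projection; and transfer to $\overline G_\gamma$ through \cref{pr8} and \cref{lm11}. Your extra checks, that $\eta$ vanishes at $t=0,T_G$ so $V^\perp$ stays Dirichlet and that the projection is injective, only spell out what the paper leaves implicit.
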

       \begin{proof}
    %  Assume that $t=0$ is a turning point by translating $t$.
Since
$ L_R\dot s=0,  \,    \dot s\in H_0^1(0,T_G)$,
and $\dot s$ has $2m_G-1$ simple interior zeros,
Sturm oscillation gives $\Ind(Q_R^D)=2m_G-1$.
            As in the proof of \cref{coprofindex},
           we can reconstruct $v_i$ by
           $ \dot v_i =    -\frac{W_i'}{W_i}\dot\theta_i\nu$,
           with   $v_i(0)=v_i(T_G)=0$
       losing  at most two dimensions in general, 
       and at most one in the singly spiral case.
                  They therefore provide negative spaces of dimensions at least $2m_G-3$ and $2m_G-2$, respectively.
                      According to \cref{pr8,lm11}, 
                      their normal projections transfer to negative spaces of the same dimensions on the quotient.
       \end{proof}

%\majorsectiongap
\section{\texorpdfstring{Singly Spiral and Contact Levels}{Singly Spiral and Contact Levels}}
          \label{se14}

        This section focuses on when closed spiral minimal products have embedded images.
        Ordinary closing is insufficient,
        since additional coincidences may arise from different profile branches
        or from phase incidence of the factors.
       We use the term \emph{real inputs}
       for compact connected embedded minimal submanifolds
       in $\Sphere^{n_i}\subset \RR^{n_i+1}$,
       automatically viewed as $\cC$-totally real
       in $\Sphere^{2n_i+1}\subset\CC^{n_i+1}$.
        
         At $C_1=0$,
         we characterize exactly when a singly spiral product factors through a compact embedding:
              the rotating factor must be phase-saturated,
              and the profile period must satisfy the corresponding
              arithmetic condition determined by the order
              of its phase stabilizer.
              For real inputs,
              this order is one or two.
         For general phase-saturated inputs,
         every open set in the admissible parameter region
         that meets the line $C_1=0$
         contains infinitely many embedded doubly spiral minimal products.

               At $C_1=-1$, 
                every open interval above the contact threshold contains infinitely many embedded products for real inputs,
                while arbitrary prescribed embedded factors admit analogous families near the threshold.
                     In maximal contact dimension, 
                     a constant phase rotation makes these products special Legendrian.
                     Their return orders, Morse indices, and volumes are unbounded, 
                     while their second fundamental forms remain uniformly bounded.

       %We orient nonzero-momentum profiles by $\mu_1>0$.

   \subsection{\texorpdfstring{The singly spiral level $C_1=0$}
                              {The singly spiral level C1=0}}
          \label{se15}

        Now $\mu_2=0$.
          For $k_2>0$,
          the profile oscillates between two turning points in
          $0<s<\frac{\pi}{2}$;
          for $k_2=0$, the signed coordinate of \cref{rm1} gives a smooth
          passage through $u=0$, whose polar description inserts the phase
          shift $\pi$ described in \cref{lm6}.

          If $k_2=0$ and $k_1>0$,
           the two limits in \eqref{eq48}--\eqref{eq49} show that
  $\frac{\Delta_1(0,C_2)}{2\pi}$ covers an interval.
           Thus
           its rational values yield infinitely many
           closed profiles passing through a coordinate circle
           of unbounded primitive ordinary return order.

          For the next result,
           every regular oscillatory profile is understood on
           its maximal continuation $\gamma:\RR\to\Sphere^3$,
            and $\widetilde M_\gamma:=\RR\times M_1\times M_2$ denotes the full
           parameter domain of the spiral product.
          We say that $G_\gamma:\widetilde M_\gamma\to
          \Sphere^{n}$
           \emph{factors through a compact embedding}
           if $G_\gamma=\overline G\circ\mathfrak q$
            for a compact manifold $M_\gamma$ of dimension $\dim\widetilde M_\gamma$,
            a smooth covering map
            $\mathfrak q:\widetilde M_\gamma\to M_\gamma$,
            and an embedding $\overline G:M_\gamma\hookrightarrow
            \Sphere^{n}$.

               Recall from \cref{df5} that its phase stabilizer is
    $
        \mathsf H_f
        =\{\zeta\in\Sphere^1:\zeta f(M)=f(M)\}.
    $
       \begin{definition}[Incident phases and phase saturation]
    \label{dfps}
    Let $f:M\hookrightarrow\Sphere^{2n+1}$ be an embedding.
    Its incident phase set is
   $
        \mathcal P_f
        :=
        \{\zeta\in\Sphere^1:
        \zeta f(M)\cap f(M)\neq\varnothing\}.
   $
    We call $f$ \emph{phase-saturated} if
    $  \mathcal P_f=\mathsf H_f$.
\end{definition}

   Minimal submanifolds of the standard round sphere are real analytic
           \cite{Morrey1966}.
          Hence,
           if a compact connected embedded minimal submanifold $L$
           has the same image germ as $\zeta L$ at one point,
           then the identity principle gives $\zeta L=L$.

       \begin{proposition}[Embedded singly spiral quotients]  
       Assume
           $
                              f_1:M_1^{k_1}\hookrightarrow
                              \Sphere^{2n_1+1}
           $
           is a compact connected embedded minimal $\cC$-totally real submanifold
           and
           $f_2: M_2^{k_2}\hookrightarrow
                              \Sphere^{n_2+1}$
           a compact embedded minimal submanifold with $k_2>0$.
           Consider a regular oscillatory profile on $C_1=0$,
            oriented so that $\Delta_1>0$,
            and denote
          $   d=|\mathsf H_{f_1}|,   \,     \zeta_d=e^{\frac{2\pi}{d}i}$.
           Then $G_\gamma$ factors through a compact embedding
            if and only if $f_1$ is phase-saturated
            and
      $
                             \Delta_1=\frac{2\pi}{ \ell d} $
                              for some $\ell\in\NN$.
           In this case
   $
                              m_G=\ell,
                              m_\gamma=d\ell,
   $
           and the primitive spherical quotient has domain
           \begin{equation}\label{eqmt}
                              \left(
                              \frac{[0,\ell T_{\mathrm{cell}}]\times M_1}
                              {(\ell T_{\mathrm{cell}},x)
                               \sim(0,\varphi_{\zeta_d}(x))}
                              \right)\times M_2.
           \end{equation}
           Its canonical $d$-fold cyclic cover is
            $S^1\times M_1\times M_2$.
       \end{proposition}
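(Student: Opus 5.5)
We first record the structure of the singly spiral profile. On $C_1=0$ we have $\mu_2=0$, so $\theta_2$ is constant and after a rotation $z_2=b\ge0$ with $k_2>0$ forcing $0<s_-<s_+<\frac{\pi}{2}$. Thus
\[
G_\gamma(t,x,y)=\bigl(a(t)e^{i\theta_1(t)}f_1(x),\,b(t)f_2(y)\bigr).
\]
Here $a,b$ are $T_{\mathrm{cell}}$-periodic, and $\theta_1$ is strictly monotone with $\theta_1(t+T_{\mathrm{cell}})=\theta_1(t)+\Delta_1$. The magnitude factor $b(t)f_2(y)$ is injective in $y$ because $b>0$ and $f_2$ is an embedding. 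It also determines $b(t)$, since $|f_2|=1$, and hence determines $t$ modulo the reflection within a cell. A coincidence $G_\gamma(t,x,y)=G_\gamma(t',x',y')$ therefore forces $y=y'$ and $b(t)=b(t')$. We then have two cases: either $t'\equiv t$ modulo $T_{\mathrm{cell}}$, or $t'$ is the reflected time $\bar t$ in the cell.

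Given $y=y'$ and $b(t)=b(t')$, the coincidence reduces to the first component
\[
e^{i\theta_1(t)}f_1(x)=e^{i\theta_1(t')}f_1(x').
\]
This says the phase $\zeta=e^{i(\theta_1(t')-\theta_1(t))}$ lies in $\mathcal P_{f_1}$. For sufficiency, assume $\mathcal P_{f_1}=\mathsf H_{f_1}=\langle\zeta_d\rangle$ and $\Delta_1=\frac{2\pi}{\ell d}$. Then $e^{im\Delta_1}\in\mathsf H_{f_1}$ exactly when $\ell\mid m$, so $m_G=\ell$, $h_1=\zeta_d$ and $\Phi=\varphi_{\zeta_d}\times\Id$. \cref{th6} gives the descent to \eqref{eqmt} and $m_\gamma=\ell\cdot\ord(\varphi_{\zeta_d})=d\ell$, using faithfulness from \cref{lm9}. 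Injectivity of the quotient is then checked case by case.
\begin{itemize}
\item \emph{Same cell position.} If $t'=t+mT_{\mathrm{cell}}$, phase saturation forces $e^{im\Delta_1}\in\mathsf H_{f_1}$, so $\ell\mid m$, and the two points are identified in $\cM_G$.
\item \emph{Reflected time.} If $t'$ is the reflected time, the phase difference $\theta_1(\bar t)-\theta_1(t)$ ranges continuously over a nondegenerate interval as $t$ varies within a half-cell. Points where it lies in $\frac{2\pi}{d}\ZZ$ would give genuine crossings. We must rule these out.
\end{itemize}
The key observation for the reflected case is that $\theta_1(\bar t)-\theta_1(t)$ is strictly monotone in $t$ and takes values in $(0,\Delta_1)$ modulo multiples of $\Delta_1$. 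Since $\Delta_1=\frac{2\pi}{\ell d}\le\frac{2\pi}{d}$, and it lies strictly between two consecutive lattice values, it cannot hit $\frac{2\pi}{d}\ZZ$ except at the turning points themselves. At the turning points $t=\bar t$ and nothing is identified. Together with compactness and the immersion property from \cref{lm1}, this gives an injective immersion of a compact manifold, hence an embedding. The canonical $d$-fold cover unwraps $\varphi_{\zeta_d}$ over $[0,d\ell T_{\mathrm{cell}}]$, i.e.\ the ordinary period, which gives $S^1\times M_1\times M_2$.

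For necessity, suppose $G_\gamma=\overline G\circ\mathfrak q$ with $\overline G$ an embedding of a compact manifold. The deck group of $\mathfrak q$ must be generated by a translation $t\mapsto t+mT_{\mathrm{cell}}$ twisted by some $\varphi$. The image near a generic point is a single sheet, so by the analytic identity principle preceding the statement, any incident phase arising at the same $b$-level must stabilize $f_1(M_1)$. This yields $\mathcal P_{f_1}=\mathsf H_{f_1}$. The lattice $\{m:e^{im\Delta_1}\in\mathsf H_{f_1}\}=m_G\ZZ$ must be nonzero, by compactness, and the reflected-time crossings must be avoided. The family of phases $e^{i(\theta_1(\bar t)-\theta_1(t))}$ sweeps the arc of length $\Delta_1$. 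To avoid $\mathsf H_{f_1}\setminus\{1\}$ with no extra coincidences, this forces $m_G\Delta_1=\frac{2\pi}{d}$ exactly, i.e.\ $\Delta_1=\frac{2\pi}{\ell d}$. The main obstacle is making this necessity direction rigorous: both the monotonicity of the reflected phase difference, which comes from \eqref{eq25} with $\sigma$ flipping, and the argument that any transverse self-crossing of $G_\gamma$ obstructs factoring through an embedding. For the latter we would first try an analytic-germ comparison of the two sheets at a coincident point.
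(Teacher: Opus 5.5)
Your sufficiency direction is correct and follows the paper's argument, with more detail. A coincidence forces $y=y'$ and $s(t)=s(t')$. Same-branch coincidences are cell shifts, which phase saturation turns into multiples of $\ell$ and hence into identifications in $\cM_G$. Opposite-branch real phase differences lie in $(0,\Delta_1)+\Delta_1\ZZ$, which misses $\frac{2\pi}{d}\ZZ=\ell\Delta_1\ZZ$. Only strict monotonicity of $\theta_1$ in $t$ is used there.

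The necessity half is only a sketch, and the two steps you flag as obstacles are exactly the ones it rests on.

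\emph{Realizing incident phases.} You never show that an arbitrary $\zeta\in\mathcal P_{f_1}$ occurs as a phase difference at a common nonturning magnitude. The needed fact, which the paper uses, is that same-branch differences give $\Delta_1\ZZ$. Opposite-branch differences fill $\RR\setminus\Delta_1\ZZ$ by the intermediate value theorem over a half-cell; monotonicity is not needed. Then $f_1(x)=\zeta f_1(x')$ yields $G_\gamma(t,x,y)=G_\gamma(t',x',y)$. Since $\mathfrak q$ is a local diffeomorphism and $\overline G$ is injective, the two local images coincide. Slicing by the level $|Z_1|=\cos s(t)$ gives equal germs of $f_1(M_1)$ and $\zeta f_1(M_1)$, so $\zeta\in\mathsf H_{f_1}$ by the identity principle.

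\emph{Opposite-branch coincidences are crossings.} You assert that reflected-time coincidences must be avoided, but you never show they are crossings rather than smooth returns the covering could absorb. The missing computation is short. When $\zeta\in\mathsf H_{f_1}$, the factor tangent blocks agree at the two preimages. However, $\dot\theta_1=\mu_1/W_1(s)$ is unchanged while $\dot s$ flips sign, so the profile velocities are $\pm\dot s\,R+\dot\theta_1(iZ_1,0)$ with $R=(-\tan s\,Z_1,\cot s\,Z_2)$. The vectors $R$ and $(iZ_1,0)$ are nonzero, mutually orthogonal, and orthogonal to both factor blocks (by $|f_i|=1$ and $\cC$-total reality). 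Since $\dot s\ne0\ne\dot\theta_1$, the tangent planes differ. That already contradicts $G_\gamma=\overline G\circ\mathfrak q$: if $\mathfrak q(p)=\mathfrak q(p')$, then $dG_\gamma(T_p)=d\overline G(T_{\mathfrak q(p)})=dG_\gamma(T_{p'})$. No germ comparison is needed, and neither is your unproved claim that the deck group is generated by a twisted cell translation.

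\emph{Closing the argument.} If $\frac{2\pi}{d}\notin\Delta_1\NN$, the real value $\frac{2\pi}{d}$ is an opposite-branch difference. Taking $x=\varphi_{\zeta_d}(x')$ then produces such a crossing, so $\ell\Delta_1=\frac{2\pi}{d}$. This must be applied to the real lift of $\zeta_d$ even when $d=1$. In that case your condition ``avoid $\mathsf H_{f_1}\setminus\{1\}$'' is vacuous, and compactness (a nonzero $m_G\ZZ$) only gives $\Delta_1\in 2\pi\QQ$. The actual obstruction there is the opposite-branch difference $2\pi$, i.e.\ a crossing of the profile itself.
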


       \begin{proof}
           At a fixed nonturning magnitude,
           phase differences between same oriented branches lie in
           $\Delta_1\mathbb Z$,
           while those between oppositely oriented branches
           lie strictly between consecutive elements
           of $\Delta_1\mathbb Z$.
           Hence any phase in
           $\mathcal P_{f_1}\setminus\mathsf H_{f_1}$
           produces a self-intersection
           which cannot be removed by taking a quotient.
           Therefore,
           if $G_\gamma$ factors through a compact embedding,
           then $\mathcal P_{f_1}=\mathsf H_{f_1}$,
           i.e., $f_1$ is phase-saturated.
    Since $\mathsf H_{f_1}=\langle\zeta_d\rangle$,
    if $\frac{2\pi}{d}\notin\Delta_1\mathbb N$,
    the phase $\zeta_d$ produces an opposite-branch collision
    with distinct tangent directions,
    contradicting the embeddedness.
    Thus
    $\ell\Delta_1=\frac{2\pi}{d}$
    for some $\ell\in\mathbb N$.

           Conversely,
           assume that $f_1$ is phase-saturated with
           $\ell\Delta_1=\frac{2\pi}{d}$.
           Since
           $\mathcal P_{f_1}=\mathsf H_{f_1}=\langle\zeta_d\rangle$,
           every incident phase is then a multiple of
           $\frac{2\pi}{d}=\ell\Delta_1$.
           As a result,
           the first return occurs after $\ell$ complete cells,
           and
           $
           G_\gamma(t+\ell T_{\mathrm{cell}},x,y)
           =
           G_\gamma(t,\varphi_{\zeta_d}(x),y).
           $
           Thus $G_\gamma$ induces the stated embedded primitive spherical quotient
           with $m_G=\ell$ and $m_\gamma=d\ell$.
          \end{proof}

%\noindent\emph{Application to real inputs.}
   \begin{corollary}[Real-input trichotomy]
\label{coreal}
       Assume that the embedding factor $f_1$ is real with image $L_1$.
        Since
       $\mathcal P_{f_1}\subset\{1,-1\}$,
        exactly one of the following occurs.
       \begin{enumerate}[label=\textup{(\roman*)}]
           \item
           If $L_1=-L_1$,
            then
       $
                              \mathcal P_{f_1}
                               =\mathsf H_{f_1}
                               =\{1,-1\},
                        \, d=2.
       $
           Hence $G_\gamma$ factors through a compact embedding
            exactly when
         $
                              \ell\Delta_1=\pi
                             \, \text{for some }\ell\in\NN.
        $
           In this case
          $
                              m_G=\ell, \,
                              m_\gamma=2\ell.
          $
          % and the primitive quotient is the mapping torus of
          % $\varphi_{-1}$ with canonical double cover
          % $\Sphere^1\times M_1\times M_2$.

           \item
           If $L_1\cap(-L_1)=\varnothing$,
            then
       $
                              \mathcal P_{f_1}
                               =\mathsf H_{f_1}
                               =\{1\},
                               \,d=1.
     $
           Hence $G_\gamma$ factors through a compact embedding
            exactly when
     $
                              \ell\Delta_1=2\pi
                              \, \text{for some }\ell\in\NN
      $
          with
       $
                              m_G=m_\gamma=\ell.
        $
     %      and the primitive quotient has domain
    %       $\Sphere^1\times M_1\times M_2$.

           \item
           If
         $
                              L_1\cap(-L_1)\ne\varnothing,
                            \,
                              L_1\ne-L_1,
         $
           then
         $
                              \mathcal P_{f_1}=\{1,-1\},
                \,
                              \mathsf H_{f_1}=\{1\}.
        $
           Thus $f_1$ is not phase-saturated,
            and no such singly spiral product factors through
            a compact embedding.
                   \end{enumerate}
       \end{corollary}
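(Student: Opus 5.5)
The plan is to read the corollary off the preceding proposition on embedded singly spiral quotients. The only new work is to compute $\mathcal P_{f_1}$ and $\mathsf H_{f_1}$ for a real input, and then to specialize the arithmetic condition $\Delta_1=\frac{2\pi}{\ell d}$ to each case.

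\emph{Step 1: incident phases of a real input.} Take $f_1$ real, so $L_1\subset\Sphere^{n_1}\subset\RR^{n_1+1}$, and regard it inside $\CC^{n_1+1}$. Suppose $\zeta\in\Sphere^1$ and $\zeta p=q$ with $p,q\in L_1$. Then $\zeta p$ is a real vector, and $p\neq0$ is real. Comparing any nonzero real coordinate of $p$ forces $\zeta\in\RR$, so $\zeta=\pm1$. Hence $\mathcal P_{f_1}\subset\{1,-1\}$, and $1$ always lies in it. This gives three cases:
\begin{enumerate}[label=\textup{(\roman*)}]
\item If $L_1=-L_1$, then $-1\in\mathsf H_{f_1}\subset\mathcal P_{f_1}$, so both sets equal $\{1,-1\}$ and $d=2$.
\item If $L_1\cap(-L_1)=\varnothing$, then $-1\notin\mathcal P_{f_1}$, so both sets equal $\{1\}$ and $d=1$.
\item If $L_1\cap(-L_1)\neq\varnothing$ but $L_1\neq-L_1$, then $-1\in\mathcal P_{f_1}$ and $-1\notin\mathsf H_{f_1}$. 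Since $\mathsf H_{f_1}\subset\mathcal P_{f_1}\subset\{\pm1\}$, this gives $\mathcal P_{f_1}=\{1,-1\}$ and $\mathsf H_{f_1}=\{1\}$.
\end{enumerate}
The three conditions are mutually exclusive and exhaustive, which is where "exactly one" comes from. In case (iii) the identity principle for real-analytic minimal submanifolds quoted just above the proposition is consistent: $L_1$ and $-L_1$ meet but are not equal, so the intersection must be a nontrivial transversal-type incidence.

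\emph{Step 2: apply the proposition.} The hypotheses hold. Here $f_1$ is compact, connected, embedded and minimal, and it is $\cC$-totally real as a real input. The factor $f_2$ is as in the proposition.
\begin{enumerate}[label=\textup{(\roman*)}]
\item Phase saturation holds with $d=2$. The criterion $\Delta_1=\frac{2\pi}{2\ell}$ is equivalent to $\ell\Delta_1=\pi$, and then $m_G=\ell$ and $m_\gamma=2\ell$.
\item Phase saturation holds with $d=1$. The criterion becomes $\ell\Delta_1=2\pi$, and then $m_G=m_\gamma=\ell$.
\item $f_1$ is not phase-saturated. The "only if" direction of the proposition then rules out factoring through a compact embedding.
\end{enumerate}

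\emph{Main obstacle.} Essentially nothing is new. The only point requiring care is the reality argument $\zeta p\in\RR^{n_1+1}\Rightarrow\zeta=\pm1$. That argument uses that the image lies in the real sphere and that $p\neq0$, so I expect the verification to be routine.
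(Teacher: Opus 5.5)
Your proposal is correct and follows the paper's route: the corollary is read off the preceding proposition on embedded singly spiral quotients once $\mathcal P_{f_1}\subset\{1,-1\}$ is known, and your coordinate argument for real inputs is exactly the verification of that inclusion. Your aside in case (iii) about a ``transversal-type'' incidence is unnecessary and slightly overstated, since the identity principle only rules out a shared open germ and does not exclude tangential contact; nothing in your argument depends on it.
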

       
       \begin{remark}[Lawson surfaces   \cite{Lawson1970}]\label{rmLaw}
           For Lawson's embedded surface
           $\xi_{m,k}\subset\Sphere^3$,
       we have
           $
                              -\xi_{m,k}=\xi_{m,k}
            $
            if and only if
            $
                              m\equiv k\pmod 2.
           $
           Thus,
            if $m$ and $k$ are both even or both odd,
            then $\xi_{m,k}$ is antipodally invariant
            and belongs to case~\textup{(i)}.
           If one of $m,k$ is even and the other is odd,
                    $
                              -\xi_{m,k}\ne\xi_{m,k}.
         $
          Now
           Frankel's theorem   \cite{Frankel1966} therefore asserts
         $
                              \xi_{m,k}\cap(-\xi_{m,k})\ne\varnothing.
        $
           Hence these surfaces belong to the mixed
            antipodal case \textup{(iii)}.         
       \end{remark}

         \begin{remark}[Spherical isoparametric examples]\label{rmIso}
             Let $g$ be the number of distinct principal curvatures.
             By M\"unzner's theorem,
             the family admits a normalized homogeneous Cartan--M\"unzner polynomial $F$ of degree $g$,
                 and contains a unique minimal regular leaf and two minimal focal manifolds \cite{Munzner1980,Munzner1981}.       
                 Since $F(-x)=(-1)^gF(x)$,
every level is antipodally invariant when $g$ is even.
When $g$ is odd,
the antipodal map exchanges  two focal manifolds.
Hence the minimal regular leaf is always in case \textup{(i)},
while the focal manifolds are in case~\textup{(i)}
for even $g$ and in case \textup{(ii)} for odd $g$.
\end{remark}

         Next result is not exactly limited to the slice $C_1=0$ 
         but instead
         tells us that there are abundant examples
         of embedded spiral minimal products for closed $\gamma$
         arising from parameters near $\{C_1=0\}$
         in the admissible region.
          
       \begin{theorem}[Embedded closing near       $C_1=0$]
           \label{co8}
           Assume $k_2>0$
            and
             $f_i:M_i^{k_i}\hookrightarrow\Sphere^{2n_i+1}$
            be compact,
             connected,
             embedded,
             minimal,
            phase-saturated $\cC$-totally real submanifolds.
        Every nonempty open set
$\mathcal O\subset\mathcal U_{k_1,k_2}$
meeting the singly spiral line $C_1=0$
contains a sequence of ordinarily closed doubly spiral parameters
      $\lambda_j=(C_{1,j},C_{2,j})$,
            with $C_{1,j}>0$,
             whose primitive spherical quotients are compact   embedded minimal submanifolds
       $
                              \overline G_j:\cM_j
                              \hookrightarrow\Sphere^{2n_1+2n_2+3}.
        $
       \end{theorem}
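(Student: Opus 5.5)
The plan is to pick, inside $\mathcal O$, doubly spiral parameters very close to the line $C_1=0$. For these the second phase advances by a tiny prescribed amount per cell, $\Delta_2=2\pi/N$ with $d_2:=|\mathsf H_{f_2}|$ dividing $N$. The first phase advance is a rational multiple of $2\pi$. The smallness of $\Delta_2$ will separate oppositely oriented branches in the second phase coordinate. The quotient by $\mathsf H_{f_1}\times\mathsf H_{f_2}$ of \cref{th6} will then absorb all same-branch coincidences.

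\emph{Step 1 (local diffeomorphism at the line).} Since $k_2>0$, $\Pi$ is real analytic on $\mathcal U_{k_1,k_2}$ by \cref{pr5}. Along $C_1=0$, \eqref{eq50} gives $\partial_{C_1}\Delta_2(0,C_2)\neq0$ and $\Delta_2(0,C_2)=0$. By \cref{lm8}, $C_2\mapsto\Delta_1(0,C_2)$ is analytic and nonconstant, so its derivative vanishes only at isolated points. Because $\mathcal O$ is open and meets the line, I choose $\lambda_0=(0,C_2^0)\in\mathcal O$ with $\partial_{C_2}\Delta_1(0,C_2^0)\ne0$. By \eqref{eq51}, $\Pi$ restricts to a diffeomorphism from a small neighbourhood $\mathcal O_0\subset\mathcal O$ of $\lambda_0$ onto a neighbourhood of $(\Delta_1(0,C_2^0)/2\pi,0)$. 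Orient by $\varepsilon_1=+1$, so that $\partial_{C_1}\Delta_2>0$. Then the targets
\[
\Bigl(\tfrac{p_j}{q_j},\tfrac1{N_j}\Bigr),\qquad d_2\mid N_j,\quad N_j\to\infty,\quad \tfrac{p_j}{q_j}\to\tfrac{\Delta_1(0,C_2^0)}{2\pi},
\]
are attained by parameters $\lambda_j\in\mathcal O_0$ with $C_{1,j}>0$ and $\lambda_j\to\lambda_0$. Each such profile is ordinarily closed by \cref{th4}, with $m_\gamma$ dividing $\lcm(q_j,N_j)$. Hence $\mathcal R_G\neq\{0\}$, and \cref{th6} gives the primitive spherical quotient $\overline G_j:\cM_j\to\Sphere^{2n_1+2n_2+3}$.

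\emph{Step 2 (reduction of a coincidence to phase data).} Suppose $G_\gamma(t,x,y)=G_\gamma(t',x',y')$. Comparing block norms gives $s(t)=s(t')$. Then $e^{i(\theta_1(t)-\theta_1(t'))}f_1(x)=f_1(x')$, so this phase lies in $\mathcal P_{f_1}=\mathsf H_{f_1}$, and similarly the second phase difference lies in $\mathsf H_{f_2}=\langle e^{2\pi i/d_2}\rangle$. Phase saturation is used exactly here.

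\emph{Step 3 (opposite branches never collide).} I expect this to be the main obstacle. I will write the profile by cells and use the reflection symmetry of \cref{pr4} together with \eqref{eq25}. Here $\theta_2$ is strictly monotone, since $C_1\neq0$. At a common non-turning magnitude, an ascending and a descending branch have $\theta_2$-difference strictly between consecutive elements of $\Delta_2\ZZ=(2\pi/N)\ZZ$. This is the argument already used in the singly spiral proposition, applied now to $\theta_2$. Since $d_2\mid N$, this difference never lies in $(2\pi/d_2)\ZZ$. So such points never coincide, whatever the first factor does.

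At the turning magnitudes $s_\pm$ the two branches meet. There the coincident points are parameter points of the same cell boundary, and these are identified by continuity. Two turning points belonging to different cells differ by $(j\Delta_1,j\Delta_2)$, so they fall under the same-branch case of Step 4. I will verify this carefully, including that the first-phase differences need no control.

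\emph{Step 4 (same branches give exactly the deck action).} For equally oriented branches at the same magnitude, the phase difference is $(j\Delta_1,j\Delta_2)$. By Step 2 it lies in $\mathsf H_{f_1}\times\mathsf H_{f_2}$, hence $j\in\mathcal R_G=m_G\ZZ$. Embeddedness of each $f_i$ forces $(x',y')=\Phi^{j/m_G}(x,y)$. Thus every self-coincidence of $G_\gamma$ is a deck identification of \cref{th6}, and $\overline G_j$ is injective.

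\emph{Step 5 (conclusion).} $\overline G_j$ is an immersion on a compact manifold, by \cref{lm1} and since $0<s_-<s_+<\frac\pi2$. Being injective, it is an embedding. It is minimal by \cref{th1}.
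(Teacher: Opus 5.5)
Your proposal is correct and follows essentially the same route as the paper: invert $\Pi$ near a point $(0,C_2)\in\mathcal O$ with $\partial_{C_2}\Delta_1\neq0$, and choose targets whose second coordinate is $1/m$ with $|\mathsf H_{f_2}|$ dividing $m$. You then rule out opposite-branch incidences because their $\theta_2$-difference lies strictly between consecutive multiples of $2\pi/m$ (the paper's $0<\beta(s)<2\pi/m$), and use phase saturation to turn every same-branch coincidence into a spherical return. The only cosmetic difference is that the paper gives the first target coordinate the same denominator $m$, so that $m_\gamma=m$ exactly, which your argument does not need.
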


       \begin{proof}
        Fix $\mu_1>0$.
                     Since $C_2\mapsto\Delta_1(0,C_2)$ is analytic and nonconstant,
           choose $\lambda_*=(0,C_{2,*})\in\mathcal O$ with $\partial_{C_2}\Delta_1(\lambda_*)\ne0$.
           By \eqref{eq50}--\eqref{eq51},
           $\Pi$ is a local diffeomorphism around $\lambda_*$.
          Let $\Pi(\lambda_*)=(\rho_*,0)$
                 and 
               $d_2=|\mathsf H_{f_2}|$.
           Choose integers $m_j\to\infty$ divisible by $d_2$
           and integers $a_j$ such that
           $\frac{a_j}{m_j}\to\rho_*$.
           After restricting the inverse neighborhood to $\mathcal O$,
           define
         %  for all large $j$,
         $
                              \lambda_j  :=\Pi^{-1}\left(
                                                   \frac{a_j}{m_j},\frac1{m_j}\right)
          $
for large $j$.

          Since $\Delta_2$ has the sign of $C_1$ by \eqref{eq47},
           $C_{1,j}>0$.
           The second phase has exact order $m_j$,
           so $m_{\gamma,j}=m_j$.
           For  fixed $j$,
           we omit  the index $j$
           %simply 
           and  write
           $
           (C_1,C_2)=\lambda_j, \,
           m=m_j, \,
           \Delta_i=\Delta_i(\lambda_j),
           \,
           u=(e^{i\Delta_1},e^{i\Delta_2}).
           $
           For $s\in(s_-,s_+)$,
           \[
                              \beta(s)
                              =2C_1\int_{s_-}^s
                              \frac{\cot v}
                              {\sqrt{D_{C_1,C_2}(v)}}\,\dd v.
           \]
Since $d_2\mid m$,
we have $\mathsf H_{f_2}
\subset
\left\langle e^{\frac{2\pi i}{m}}\right\rangle$.
If two opposite branches met,
then there exists some $r\in\mathbb Z$ such that
$
e^{i\beta(s)}
\in
e^{ir\Delta_2}\mathsf H_{f_2}
\subset
\left\langle e^{\frac{2\pi i}{m}}\right\rangle .
$
This is impossible because
$\Delta_1=\frac{2a_j\pi}{m}$,
$\Delta_2=\frac{2\pi}{m}$ by the definition of $\Pi$
and
$0<\beta(s)<\frac{2\pi}{m}$.
Thus opposite interior branches are disjoint.
By phase saturation,
every same-oriented coincidence is a spherical return.
Hence the primitive spherical quotient is embedded.
       \end{proof}

\begin{example}[Antipodally invariant real inputs]
    Let  two factors in \cref{co8} be real and antipodally invariant.
    Such inputs include the same-parity Lawson surfaces in
    \cref{rmLaw}
    and the minimal regular leaves of spherical isoparametric families
    in \cref{rmIso}.
    Then
    $
        \mathsf H_{f_1}
        =
        \mathsf H_{f_2}
        =
        \{1,-1\}.
    $
    Denote the induced antipodal involution on $M_i$
    by $\tau_i$.

    In the rational closing used in the proof of \cref{co8},
    the denominator $m_j$ is even,
    while the numerator $a_j$ may be chosen even or odd
    without affecting the approximation.
    After half the ordinary period,
    the phase return is $(1,-1)$ when $a_j$ is even
    and $(-1,-1)$ when $a_j$ is odd.
    Thus every $\mathcal O$ as in \cref{co8}
    contains infinitely many embedded spherical quotients
    with gluing map
    $
        \mathrm{id}_{M_1}\times\tau_2,
    $
    and infinitely many with gluing map
    $
        \tau_1\times\tau_2.
    $

    For the real great spheres
    $M_i=\Sphere^{k_i}$,
    with $k_1,k_2>0$ and $k_1$ even,
    exactly one of these two spherical quotients is orientable.
    Hence they are not diffeomorphic.
\end{example}

The separate phase-saturation hypothesis can be weakened to a joint
condition along the chosen profile;
see \cref{rmmon,expp}.

   \subsection{\texorpdfstring{The contact level $C_1=-1$}
                              {The contact level C1=-1}}
          \label{se16}

       \begin{proposition}[Contact criterion]
           \label{pr12}
           For  $\cC$-totally real inputs,
            the spiral product is $\cC$-totally real
            exactly when 
            $  \alpha_\gamma(\dot\gamma)=0$,
                  i.e., $
                              \mu_1+\mu_2=0.
           $
           On the chart $\mu_1\ne0$,
            this is $C_1=-1$.
           If both connected minimal inputs have maximal contact dimension,
            then, up to  a  phase rotation,  the output
          is  special Legendrian.
       \end{proposition}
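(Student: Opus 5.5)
The plan is to compute the pullback of the contact form $\alpha$ under $G_\gamma$ block by block, using the tangent splitting of \cref{lm1}, and then read off the special Legendrian statement from minimality plus a phase normalization.

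\textbf{Step 1: the factor blocks.} At a point $G=(z_1f_1,z_2f_2)$ we have $JG=(iz_1f_1,iz_2f_2)$. For $X\in TM_1$, the pairing is
\[
\langle JG,dG(X)\rangle=\langle iz_1f_1,z_1df_1(X)\rangle=a^2\langle Jf_1,df_1(X)\rangle=0
\]
by \eqref{eq1}. Multiplication by the unit complex scalar $z_1/a$ is a real isometry commuting with $J$, which justifies this step (on $a=0$ the block is absent). The $TM_2$ block vanishes in the same way. So the only possible obstruction is the profile direction.

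\textbf{Step 2: the profile direction.} Using $\langle f_i,f_i\rangle=1$ and $\langle if_i,f_i\rangle=0$, a short expansion gives
\[
\langle JG,\partial_tG\rangle=\mathrm{Re}\bigl(\overline{iz_1}\dot z_1\bigr)+\mathrm{Re}\bigl(\overline{iz_2}\dot z_2\bigr)=\alpha_\gamma(\dot\gamma).
\]
In polar form this equals $a^2\dot\theta_1+b^2\dot\theta_2$. Multiplying by $\rho^2$ and using \eqref{eq35}, it is $\rho^{-2}(\mu_1+\mu_2)$. At coordinate-circle points one uses the signed coordinate of \cref{rm1}, where the same ambient formula $\alpha_\gamma(\dot\gamma)$ holds.

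\textbf{Step 3: the contact level.} By conservation, $\mu_1+\mu_2$ is constant, so the condition $\alpha_\gamma(\dot\gamma)=0$ holds identically or nowhere. On the chart $\mu_1\ne0$, dividing by $\mu_1$ gives $1+C_1=0$, that is, $C_1=-1$ by \eqref{eq5}. This proves the first two claims.

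\textbf{Step 4: the special Legendrian claim.} If $k_i=n_i$, then $\dim G_\gamma=n_1+n_2+1$, which is exactly half the real dimension of $\CC^{n_1+n_2+2}$ minus one. So $G_\gamma$ is Legendrian, and it is minimal by \cref{th1}, since the profile is a $\bar g$-geodesic. The cone over a minimal Legendrian is a minimal Lagrangian cone, hence it has constant Lagrangian angle. A constant phase rotation $e^{i\phi}G_\gamma$ then makes the angle zero, and the link becomes special Legendrian, as in \cite{CastroLiUrbano2006,HarveyLawson1982}.

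\textbf{Main obstacle.} The only delicate point is connectedness, which is needed for the Lagrangian angle to be a single constant. The domain $\RR\times M_1\times M_2$ is connected because the factors are connected, so a single rotation suffices. The rest is routine bookkeeping.
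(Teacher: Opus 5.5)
Your proposal is correct and follows the paper's argument: the contact form vanishes on the factor blocks by $\cC$-total reality and equals $(\mu_1+\mu_2)/\rho^2$ on the profile direction. The special Legendrian claim follows, as in the paper's cited references, because the Lagrangian angle of the minimal Lagrangian cone is constant on the connected domain and can be removed by a constant phase rotation.
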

       \begin{proof}
           The contact form vanishes on the factor directions,
            and on the profile direction it is
            $\frac{\mu_1+\mu_2}{\rho^2}$.
       \end{proof}

          In what follows,
          a contact profile means a profile on $C_1=-1$,
          oriented so that $\mu_1>0$.
          Then $\mu_2=-\mu_1<0$,
          so it avoids both coordinate circles;
          if nonsteady,
          it is regular oscillatory by \cref{pr3}.

   \subsubsection{The phase character and noncrossing}
          \label{se17}

With $p=k_1+1$ and $q=k_2+1$,
introduce the phase character
\begin{equation}\label{chipq}
\chi_{p,q}:\TT^2\longrightarrow \Sphere^1,\qquad
\chi_{p,q}(\zeta_1,\zeta_2)=\zeta_1^p\zeta_2^q.
\end{equation}
It is the phase combination singled out by the contact reduction and will distinguish
same-branch returns from opposite-branch incidences.
      
        \begin{lemma}[Exact phase-character law]
           \label{lm12}
           Let $\gamma$ be a nonsteady contact profile
           with reduced energy $C_2$.
           Choose phase lifts
           $\theta_1,\theta_2:\RR\to\RR$
           and set
           \[
                              \Psi(t):=
                              p\theta_1(t)+q\theta_2(t),
                               \qquad
                              D_{C_2}(s):=
                              C_2\cos^{2p}s\sin^{2q}s-1,
                               \qquad
                              \alpha(s):=
                              \arctan\sqrt{D_{C_2}(s)}.
           \]
           Here $s\in[s_-,s_+]$.
           Then there is a constant $\Psi_0$ such that,
           away from the turning points,
           \[
                              \Psi(t)
                              =
                              \begin{cases}
                              \Psi_0-\alpha(s(t)),
                               &\dot s(t)>0,\\[1mm]
                              \Psi_0+\alpha(s(t)),
                               &\dot s(t)<0.
                              \end{cases}
           \]
           These formulas extend continuously to the turning points,
           where $\Psi=\Psi_0$.
           Consequently,
           \begin{equation}\label{eq74}
                              p\Delta_1+q\Delta_2
                              =
                              0.
           \end{equation}
           If $s(t)=s(t')=s\in(s_-,s_+)$
           and $\dot s(t)>0>\dot s(t')$,
           then
           \[
                              \chi_{p,q}
                              \left(
                              e^{i(\theta_1(t')-\theta_1(t))},
                              e^{i(\theta_2(t')-\theta_2(t))}
                              \right)
                              =
                              e^{2i\alpha(s)}
                              \notin\{\pm1\}.
           \]
           More generally,
           the phase difference between any two oppositely oriented branches
           of magnitude $s$
           has character $e^{\pm2i\alpha(s)}\notin\{\pm1\}$.
       \end{lemma}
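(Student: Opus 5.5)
The plan is to compute $\frac{d\Psi}{ds}$ directly from the reduced phase equations \eqref{eq25} on the contact level. We then recognize it as $\mp\alpha'(s)$, glue the branches at the turning points, and read off the consequences. Throughout we work in the chart $\mu_1>0$, so $\varepsilon_1=1$ and $\sigma=\sgn(\dot s)$.

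\emph{Step 1 (identifying $D$).} At $C_1=-1$ we have $b^2+C_1^2a^2=1$. Hence
\[
D_{-1,C_2}(s)=C_2\cos^{2k_1+2}s\,\sin^{2k_2+2}s-1=C_2\cos^{2p}s\sin^{2q}s-1=D_{C_2}(s).
\]
Then \eqref{eq25} gives, on a monotone branch,
\[
\frac{d\Psi}{ds}=p\theta_1'+q\theta_2'=\frac{\sigma\,(p\tan s-q\cot s)}{\sqrt{D_{C_2}(s)}} .
\]

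\emph{Step 2 (derivative of $\alpha$).} Differentiating gives $D_{C_2}'=(D_{C_2}+1)(-2p\tan s+2q\cot s)$. On $(s_-,s_+)$ we have $D_{C_2}>0$, and
\[
\alpha'(s)=\frac{D_{C_2}'}{2\sqrt{D_{C_2}}\,(1+D_{C_2})}=\frac{q\cot s-p\tan s}{\sqrt{D_{C_2}}}.
\]
Thus $\frac{d\Psi}{ds}=-\sigma\alpha'(s)$. Integrating along each monotone branch gives $\Psi=c-\alpha(s)$ when $\dot s>0$ and $\Psi=c'+\alpha(s)$ when $\dot s<0$, with branch-dependent constants $c,c'$.

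\emph{Step 3 (gluing).} The singularity $1/\sqrt{D}$ at the simple zeros $s_\pm$ (\cref{pr3}) is integrable. Also, $\theta_i$ are smooth in $t$ along the global analytic continuation of \cref{pr4}. So $\Psi(t)$ is continuous, and $\alpha(s_\pm)=\arctan 0=0$. At each turning point the incoming and outgoing branch formulas therefore both take the value of their constant. Hence consecutive constants agree, and by induction a single $\Psi_0$ serves for all branches, with $\Psi=\Psi_0$ at every turning point. In particular, over one complete cell $\Psi$ returns to $\Psi_0$, which gives $p\Delta_1+q\Delta_2=0$, i.e.\ \eqref{eq74}.

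\emph{Step 4 (character of opposite-branch differences).} For $\dot s(t)>0>\dot s(t')$ at the same $s$, Step 3 gives $\Psi(t')-\Psi(t)=2\alpha(s)$. Exponentiating, the character $\chi_{p,q}$ of the phase difference is $e^{i(p(\theta_1(t')-\theta_1(t))+q(\theta_2(t')-\theta_2(t)))}=e^{2i\alpha(s)}$. Since $0<D_{C_2}(s)<\infty$ on the open interval, $\alpha(s)\in(0,\pi/2)$, so $2\alpha\in(0,\pi)$ and $e^{2i\alpha}\ne\pm1$. Swapping the roles of $t,t'$ gives the conjugate $e^{-2i\alpha(s)}$, which yields the general statement.

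The only delicate point is Step 3: the continuity of $\Psi$ across turning points, where the $s$-graph description degenerates. We rely on the $t$-smoothness of the global geodesic from \cref{pr4} and on the integrability of $D^{-1/2}$ near simple zeros. Everything else is a direct computation.
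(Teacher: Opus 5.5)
Your proposal is correct and follows essentially the same route as the paper: you compute $\frac{d\Psi}{ds}=-\sigma\alpha'(s)$ from \eqref{eq25} at $C_1=-1$, glue the branch constants at the turning points using $\alpha(s_\pm)=0$ and the continuity of $\Psi$ in $t$, and then subtract the two branch formulas. Because your single global $\Psi_0$ applies to every branch, it handles arbitrary opposite-branch pairs directly; the paper expresses the same point as invariance of the character under complete-cell shifts via \eqref{eq74}.
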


       \begin{proof}
           On every increasing half-cell,
           \eqref{eq25} with $C_1=-1$ gives
           \[
                              \frac{\dd\Psi}{\dd s}
                              =
                              \frac{p\tan s-q\cot s}{\sqrt{D_{C_2}(s)}}
                              =
                              -\frac{\dd\alpha}{\dd s}.
           \]
    %       where the last equality follows by differentiating
   %        $D_{C_2}(s)+1=C_2\cos^{2p}s\sin^{2q}s$.
           On every decreasing half-cell the sign is reversed.
           Since $\alpha(s_-)=\alpha(s_+)=0$,
           continuity gives the same integration constant at every turn.
           This proves the two branch formulas and \eqref{eq74}.
           Finally,
           subtracting the two branch formulas gives
           $\Psi(t')-\Psi(t)=2\alpha(s)$.
           Complete-cell shifts do not change this character by \eqref{eq74},
           which proves the general assertion.
           As $0<\alpha(s)<\frac{\pi}{2}$
           for $s_-<s<s_+$,
           the resulting character value is neither $1$ nor $-1$.
       \end{proof}

\begin{lemma}[Calibrated-plane rigidity]\label{lmcal}
           For a smooth calibration form $\phi$ of degree $d$, 
           any two  $\phi$-calibrated $d$-planes at a point
          sharing the same oriented $(d-1)$-plane must coincide.
       \end{lemma}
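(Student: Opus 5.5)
The plan is to reduce to a statement about a $d$-dimensional oriented Euclidean space $V=T_pX$ and a constant-coefficient form $\phi_p$ of comass one. Let $\xi,\eta$ be two unit simple $d$-vectors representing $\phi$-calibrated planes, so $\phi(\xi)=\phi(\eta)=1$. Suppose they share the oriented $(d-1)$-plane $W$, with unit simple $(d-1)$-vector $w$. Choose unit vectors $e,f\perp W$ with
\[
\xi=w\wedge e,\qquad \eta=w\wedge f .
\]
The goal is to show $e=f$, which forces $\xi=\eta$.

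The key step is to consider the one-parameter family of unit vectors in the plane spanned by $e$ and $f$. Assume $e\ne f$ and set $u=\frac{e+f}{|e+f|}$. If $f=-e$, then $\eta=-\xi$, so $\phi(\eta)=-1$, a contradiction; hence $e+f\ne0$ and $u$ is well defined. The vector $u$ is a unit vector orthogonal to $W$, so $w\wedge u$ is a unit simple $d$-vector. The comass bound therefore gives $\phi(w\wedge u)\le1$. On the other hand, linearity of $\phi$ in the last slot gives
\[
\phi(w\wedge u)=\frac{\phi(w\wedge e)+\phi(w\wedge f)}{|e+f|}=\frac{2}{|e+f|}.
\]

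Now $|e+f|^2=2+2\langle e,f\rangle<4$ whenever $e\ne f$, since both are unit vectors. Hence $\phi(w\wedge u)>1$, which contradicts the comass inequality. Therefore $e=f$, and the two calibrated planes coincide as oriented planes.

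I do not expect a genuine obstacle here, only bookkeeping. First, one must make sure that the shared oriented $(d-1)$-plane really lets both $d$-vectors be written with the same $w$ and unit normal completions. Second, the case $f=-e$ must be excluded, and this is done by the sign of $\phi$ as above. Smoothness of $\phi$ plays no role, since the argument is entirely pointwise and uses only the comass condition at $p$.
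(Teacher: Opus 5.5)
Your argument is correct and is essentially the paper's proof: the paper also averages the two unit complements, observing $1=\phi\bigl(\xi\wedge\frac{v_1+v_2}{2}\bigr)\le\bigl|\frac{v_1+v_2}{2}\bigr|\le 1$, which forces $v_1=v_2$ and handles the antipodal case $v_2=-v_1$ automatically, so your separate sign check is harmless but unnecessary. One small slip: $V=T_pX$ should be the ambient tangent space, of dimension at least $d$, not a $d$-dimensional space; a $d$-dimensional space would contain only one $d$-plane.
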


       \begin{proof}
           If $\xi$ orients the common hyperplane
           and $v_1,v_2$ are its positive unit complements,
           then
           $1=\phi\left(\xi\wedge\frac{v_1+v_2}{2}\right)
           \leq  \left|\frac{v_1+v_2}{2}\right|\leq 1$.
           Hence $v_1=v_2$.
       \end{proof}

       \begin{proposition}[Noncrossing of contact profiles]
    \label{th13}
    Let $\gamma:\RR\to\Sphere^3$ be a nonsteady contact profile,
    affinely parametrized for $\bar g$.
    If $t_1\ne t_2$ and $\gamma(t_1)=\gamma(t_2)$, then
    \[
        \gamma(t+t_2-t_1)=\gamma(t)
        \qquad\text{for every }t\in\RR.
    \]
    Consequently, $\gamma$ is either (i) injective or (ii) periodic with embedded image.
\end{proposition}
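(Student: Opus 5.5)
Suppose $\gamma(t_1)=\gamma(t_2)$ with $t_1\ne t_2$. Write $s_j=s(t_j)$ and $\theta_i^{(j)}=\theta_i(t_j)$. The plan is to show that the two points of the orbit have the same phase-space state up to a $\TT^2$-translation that is actually trivial. Uniqueness of geodesics then forces the time shift to be a period. Since the magnitudes agree, $s_1=s_2=:s$, and the two phase pairs agree modulo $2\pi\ZZ^2$. The argument splits according to whether $s$ is interior or a turning value, and, in the interior case, according to the signs of $\dot s(t_1)$ and $\dot s(t_2)$.

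\emph{Interior, opposite orientations.} Assume $s\in(s_-,s_+)$ with $\dot s(t_1)>0>\dot s(t_2)$. \cref{lm12} gives
\[
\chi_{p,q}\bigl(e^{i(\theta_1(t_2)-\theta_1(t_1))},e^{i(\theta_2(t_2)-\theta_2(t_1))}\bigr)=e^{2i\alpha(s)}\notin\{\pm1\}.
\]
On the other hand, the phases coincide modulo $2\pi$, so the left side equals $1$. This is a contradiction, so opposite branches never meet at an interior magnitude.

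\emph{Interior, same orientation, and turning points.} If $s$ is interior and $\dot s(t_1),\dot s(t_2)$ have the same sign, then $|\dot s|$ is determined by $s$ through the energy identity $\rho^2\dot s^2=\mu_1^2(C_2-P_{-1}(s))$. The phase velocities $\dot\theta_i=\mu_i/W_i(s)$ also depend only on $s$. Hence $\dot\gamma(t_1)=\dot\gamma(t_2)$ as vectors at the common point $\gamma(t_1)=\gamma(t_2)\in\Sphere^3$; here the velocity components in the $\partial_{\theta_i}$ directions agree because the positions are equal. Affine $\bar g$-geodesics with the same initial position and velocity coincide, so $\gamma(t+t_2-t_1)=\gamma(t)$ for all $t$. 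If instead $s=s_\pm$ is a turning value, then $\dot s=0$ at both times, and again $\dot\gamma(t_1)=\dot\gamma(t_2)$, so the same uniqueness argument applies. In fact, \cref{pr4} shows that turning points carry the same data. This proves the main assertion.

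\emph{Dichotomy.} Let $\Lambda=\{\tau\in\RR:\gamma(t+\tau)=\gamma(t)\ \forall t\}$, a closed subgroup of $\RR$. It is not all of $\RR$, since a nonsteady profile is nonconstant ($s$ varies). Hence $\Lambda$ is either $\{0\}$ or $\tau_0\ZZ$. In the first case the main assertion shows $\gamma$ is injective. In the second case the assertion shows that $\gamma$ restricted to $[0,\tau_0)$ is injective. The image is then an injectively immersed circle, compact, and therefore embedded.

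\emph{Main obstacle.} The delicate step is the opposite-branch exclusion. One must make sure that the character identity of \cref{lm12} applies to two arbitrary times and not only to times within one cell. \cref{lm12} states this generally, using $p\Delta_1+q\Delta_2=0$. A second point to confirm is that equality of the full velocity vectors follows from equality of $s$ and the sign of $\dot s$. This holds because on the contact level $\mu_1,\mu_2$ are fixed constants along the single orbit.
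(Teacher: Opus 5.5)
Your proof is correct, but it takes a different route from the paper's.

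\textbf{The paper's argument.} The paper makes no case split. It couples $\gamma$ to auxiliary special Legendrian factors, e.g.\ the real spheres $\Sphere^{k_i}\subset\Sphere^{2k_i+1}$, so that after a phase rotation the cone over $G_\gamma$ is special Lagrangian. At a self-incidence $\gamma(t_1)=\gamma(t_2)$, the two sheets through $G_\gamma(t_1,x,y)=G_\gamma(t_2,x,y)$ share the oriented hyperplane spanned by the radial and factor directions. The calibrated-plane rigidity of \cref{lmcal} then forces $\dot\gamma(t_1)$ and $\dot\gamma(t_2)$ to be positively parallel, hence equal by constant $\bar g$-speed. Geodesic uniqueness finishes the proof.

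\textbf{Your argument.} You stay inside the reduced integrable system. Conservation of $\mu_1,\mu_2$ and the energy identity make the velocity a function of $(s,\sgn\dot s)$, which settles same-branch and turning-point incidences. The exact character law of \cref{lm12}, giving $e^{2i\alpha(s)}\ne1$, rules out opposite-branch incidences. Your dichotomy via the closed period subgroup $\Lambda$ is also fine.

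\textbf{What each route buys.} The calibration route is uniform and conceptual. It treats every crossing at once and explains noncrossing as a rigidity property of calibrated sheets, at the cost of importing auxiliary factors and the special Lagrangian structure. Your route is elementary and quantitative. It also shows that \cref{th13} is just the trivial-phase case $\zeta=(1,1)$ of the same opposite-branch exclusion the paper later uses in \cref{co5,th16}.

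One small correction: conservation of $\mu_1,\mu_2$ holds on every level. The contact condition enters your argument only through \cref{lm12}, not through the velocity step as your last paragraph suggests.
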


\begin{proof}
    Couple $\gamma$ to any pair of connected compact special Legendrian submanifolds, e.g., the standard real Legendrian factors
    $\Sphere^{k_i}\subset\Sphere^{2k_i+1}$
    (taking one point if $k_i=0$).
    The resulting product is minimal Legendrian,
    so a constant phase rotation makes its cone $\mathcal C$
    special Lagrangian with a fixed calibration.
    At a self-incidence of $\gamma$,
    the two calibrated tangent planes obtained by fixing the factor points
    share the oriented subspace spanned by the radial and factor directions.
    By \cref{lmcal},
    their remaining oriented directions agree.
    Hence $\dot\gamma(t_1)=\dot\gamma(t_2)$,
    and weighted-geodesic uniqueness gives the  periodicity.
\end{proof}

   \subsubsection{Contact phase limits}
          \label{se18}

       \begin{lemma}[Lowest-energy contact limit]
           \label{lm15}
           Let $p,q\geq 1$.
           The unique minimizer $s_*$ of $P_{-1}$ satisfies
           $\cos^2s_*=\frac{p}{p+q}$.
           Set $    P_*:=P_{-1}(s_*)
                              =\frac{(p+q)^{p+q}}{p^p q^q}$.
           The nonsteady contact levels are exactly $C_2>P_*$;
            as $C_2\downarrow P_*$,
             their two turning points coalesce at $s_*$,
            and
           \[
                              s_\pm\longrightarrow s_*,
                               \qquad
                              \Delta_1\longrightarrow\pi\sqrt{\frac{2q}{p(p+q)}},
                               \qquad
                              |\Delta_2|\longrightarrow\pi\sqrt{\frac{2p}{q(p+q)}}.
           \]
       \end{lemma}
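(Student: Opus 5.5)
The plan has three steps: locate the minimizer of $P_{-1}$, identify the nonsteady levels via \cref{pr3}, and compute the low-energy phase limits by a harmonic approximation near the potential minimum.

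\textbf{Minimizer and threshold.} Put $x=\cos^2 s$ and use the formula from the proof of \cref{pr3} with $C_1^2=1$:
\[
P_{-1}=\frac{1}{x^{p}(1-x)^{q}}.
\]
The logarithmic derivative $-p/x+q/(1-x)$ vanishes exactly at $x=\frac{p}{p+q}$, so $\cos^2 s_*=\frac{p}{p+q}$. Substituting gives $P_*=\frac{(p+q)^{p+q}}{p^pq^q}$. By \cref{pr3}, $C_2>P_*$ gives two simple turning points, while $C_2=P_*$ is the steady profile. Hence the nonsteady contact levels are exactly $C_2>P_*$. The coalescence $s_\pm\to s_*$ follows because $P_{-1}$ has a nondegenerate isolated minimum at $s_*$ and $\{P_{-1}<C_2\}$ shrinks to it.

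\textbf{Limits of the phase increments.} By \eqref{eq47} with $C_1=-1$ and $\varepsilon_1=1$,
\[
\Delta_1=2\int_{s_-}^{s_+}\frac{\tan s\,ds}{\sqrt{D}},\qquad D=\rho^2a^2b^2\,(C_2-P_{-1}),\qquad \rho^2a^2b^2=\cos^{2p}s\sin^{2q}s.
\]
Taylor expand $P_{-1}(s)=P_*+\tfrac12P''_*(s-s_*)^2+O((s-s_*)^3)$. The standard oscillator integral gives
\[
\int_{s_-}^{s_+}\frac{ds}{\sqrt{C_2-P_{-1}}}\;\to\;\frac{\pi}{\sqrt{P''_*/2}}.
\]
The remaining weights are continuous at $s_*$, so
\[
\Delta_1\to\frac{2\pi\tan s_*}{\sqrt{c_*}\,\sqrt{P''_*/2}},\qquad c_*=\cos^{2p}s_*\sin^{2q}s_*=\frac{1}{P_*},
\]
and $|\Delta_2|$ has the same limit with $\cot s_*$ in place of $\tan s_*$.

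To justify passing to the limit, I would make the substitution $s=s_*+\epsilon\sin\phi$ adapted to the turning points (equivalently, the Morse lemma $P_{-1}-P_*=w(s)^2$ with $w$ analytic). This converts each integral into a smooth integral over $\phi\in[-\pi/2,\pi/2]$ that is continuous in $\epsilon\to0$.

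\textbf{Evaluating the constant.} From $\log P_{-1}=-p\log x-q\log(1-x)$ with $x=\cos^2 s$, at the critical point
\[
P''_*=P_*\,(\log P_{-1})''(s_*).
\]
Compute $(\log P_{-1})''=2p\sec^2 s+2q\csc^2 s$. With $\sec^2 s_*=\frac{p+q}{p}$ and $\csc^2 s_*=\frac{p+q}{q}$, this gives $(\log P_{-1})''(s_*)=4(p+q)$, so $P''_*=4(p+q)P_*$.

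Then $c_*P''_*/2=2(p+q)$, and
\[
\Delta_1\to\frac{2\pi\tan s_*}{\sqrt{2(p+q)}}=\pi\sqrt{\frac{2}{p+q}}\,\sqrt{\frac{q}{p}},
\]
which matches the claim. Likewise $|\Delta_2|\to\pi\sqrt{\frac{2p}{q(p+q)}}$. As a check, these limits satisfy \eqref{eq74}.

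\textbf{Main obstacle.} The delicate point is the uniform justification of the harmonic limit, because the integrand is singular at the endpoints. The trigonometric substitution handles this cleanly.
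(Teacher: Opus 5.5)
Your proposal is correct and takes essentially the paper's route: the paper writes $D_{-1,C_2}=C_2A(s)-1$ with $A=\cos^{2p}s\sin^{2q}s=1/P_{-1}$, applies its harmonic-limit formula \eqref{eqath} with $w=\tan s$ and $\kappa=-\tfrac12(\log A)''(s_*)=2(p+q)$ (your $c_*P''_*/2$), proves that formula by a Morse coordinate plus a $\sin\vartheta$ substitution, and obtains $|\Delta_2|$ from \eqref{eq74} rather than computing it directly as you do. One small precision: make the sine substitution in the Morse coordinate $w$, where $w(s_\pm)=\pm\sqrt{C_2-P_*}$, rather than as $s=s_*+\epsilon\sin\phi$ in $s$ itself, because for $p\neq q$ the turning points $s_\pm$ are not symmetric about $s_*$.
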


       \begin{lemma}[High-energy contact limit]
           \label{lm16}
           Let $p,q\geq 1$.
           Along the contact levels $C_2>P_*$,
           \[
                              \Delta_1\longrightarrow\frac\pi p,
                               \qquad |\Delta_2|\longrightarrow\frac\pi q
                               \qquad(C_2\uparrow \infty).
           \]
       \end{lemma}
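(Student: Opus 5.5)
By the exact phase-character law \cref{lm12}, $p\Delta_1+q\Delta_2=0$ along every nonsteady contact level. Hence $|\Delta_2|=\frac{p}{q}\Delta_1$, and it suffices to prove $\Delta_1\to\frac{\pi}{p}$ as $C_2\uparrow\infty$.

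On $C_1=-1$ with $\mu_1>0$, formula \eqref{eq47} reads
\[
\Delta_1=2\int_{s_-}^{s_+}\frac{\tan s\,\dd s}{\sqrt{D_{C_2}(s)}},
\qquad
D_{C_2}(s)=C_2\cos^{2p}s\,\sin^{2q}s-1 .
\]
As $C_2\to\infty$, the turning values satisfy $s_-\sim C_2^{-1/(2q)}\to0$ and $\frac{\pi}{2}-s_+\sim C_2^{-1/(2p)}\to0$. I would fix a small $\eta>0$ and split the integral into three pieces: $[s_-,\eta]$, $[\eta,\frac{\pi}{2}-\eta]$ and $[\frac{\pi}{2}-\eta,s_+]$.

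\emph{Middle piece.} On $[\eta,\frac{\pi}{2}-\eta]$ the integrand is $O(C_2^{-1/2})$ uniformly, so this piece tends to $0$.

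\emph{Piece near $\frac{\pi}{2}$.} This carries the limit. Write $s=\frac{\pi}{2}-r$ and $r=\delta v$ with $\delta=C_2^{-1/(2p)}$. Then $\tan s\,\dd s$ becomes $\frac{\dd v}{v}\,(1+O(r^2))$. Also
\[
D=v^{2p}\bigl(\sin^{2p}r/r^{2p}\bigr)\cos^{2q}r-1,
\]
so $D=v^{2p}h(\delta v)-1$ with $h$ analytic, $h(0)=1$. The lower limit $v_-=r_+/\delta$ is the root of $v^{2p}h(\delta v)=1$, so $v_-\to1$. The integrand converges pointwise to $\frac{1}{v\sqrt{v^{2p}-1}}$ on $v\in(1,\infty)$.

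To justify passing to the limit, I would first shift the lower limit to $1$ by the substitution $w^{2p}=v^{2p}h(\delta v)$. This is a diffeomorphism close to the identity for $r\le\eta$ small, since $\frac{\dd}{\dd v}\bigl(v^{2p}h\bigr)$ stays comparable to $v^{2p-1}$. After it, the integrand is dominated by $\frac{C}{w\sqrt{w^{2p}-1}}$, which is integrable, and dominated convergence gives the limit
\[
\int_1^\infty\frac{\dd v}{v\sqrt{v^{2p}-1}}=\frac1p\int_1^\infty\frac{\dd w}{w\sqrt{w^2-1}}=\frac{\pi}{2p}.
\]
The factor $2$ in front then yields $\frac{\pi}{p}$.

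\emph{Piece near $0$.} Write $s=\varepsilon u$ with $\varepsilon=C_2^{-1/(2q)}$. Then $\tan s\,\dd s\approx\varepsilon^2u\,\dd u$ and, with the same near-identity change of variables, $D\approx w^{2q}-1$. So this piece is bounded by
\[
C\varepsilon^2\int_1^{\eta/\varepsilon}\frac{u\,\dd u}{\sqrt{u^{2q}-1}}=O\bigl(\varepsilon^2(1+\eta^2\varepsilon^{-2}+|\log\varepsilon|)\bigr).
\]
This is $O(\eta^2)$ plus $o(1)$, so it is negligible after letting $C_2\to\infty$ and then $\eta\to0$. The same $O(\eta^2)$ error appears in the piece near $\frac{\pi}{2}$ from the $(1+O(r^2))$ factor.

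The main obstacle is making the endpoint domination uniform in $C_2$, because the turning points themselves move with $C_2$. I would handle this via the rescaled variable $w$ above, or alternatively via the elementary lower bound $D(s)\ge c\,C_2\,\bigl(f(s)-f(s_\pm)\bigr)$ with $f=\cos^{2p}s\sin^{2q}s$, which is monotone near each endpoint.
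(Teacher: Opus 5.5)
Your proposal is correct and follows essentially the paper's route. The paper also obtains $|\Delta_2|$ from $p\Delta_1+q\Delta_2=0$, shows that the contributions near $s=0$ and away from the endpoints vanish, and rescales near $s=\frac{\pi}{2}$ by $C_2^{1/(2p)}$ to get $2\int_1^\infty\frac{\dd y}{y\sqrt{y^{2p}-1}}=\frac{\pi}{p}$. The differences are only technical: the paper uses the exact variable $y=C_2^{1/(2p)}\cos s$, so $\tan s\,\dd s=-\frac{\dd y}{y}$ and $D=y^{2p}(1-y^2C_2^{-1/p})^q-1$, and controls the tail by a uniform $Cy^{-p-1}$ bound, whereas you rescale $\frac{\pi}{2}-s$, apply a near-identity change of variables, and use dominated convergence.
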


          Proofs of both lemmas appear in \cref{ap5}.
          These are the $C_1=-1$ counterparts of the $C_1=0$ endpoint
           limits \eqref{eq48}--\eqref{eq49}.

   %    \begin{remark}[Symmetric contact phase monotonicity]
 %          \label{rm4}
 %%          If $p=q\geq 2$,
  %          then $s_+=\frac{\pi}{2}-s_-$,
  %          $\Delta_2=-\Delta_1$, and $C_2\mapsto\Delta_1(C_2)$ is strictly
  %          decreasing from $\frac{\pi}{\sqrt p}$
  %          to $\frac{\pi}{p}$;
    %        see \cref{ap6}.
  %         For $p\ne q$,
 %           global monotonicity is not known in general.
 %  %        When $p=q=1$,
  %          the profile is simply an unweighted great circle 
  %         % under the connected point-factor convention,
   %        with  $(\Delta_1,\Delta_2)=(\pi,-\pi)$ and $m_\gamma=2$.
 %      \end{remark}

   \subsubsection{Odd-prime profiles and real-input products}
          \label{se19}

          For real inputs,
          every incident phase belongs to $\{1,-1\}$;
          hence every possible pair of factor phases lies in the
          coordinate-sign subgroup
          $\TT^2[2]:=\{\pm1\}^2$
          (acting on $\Sphere^3\subset\CC^2$ coordinatewise).
          The next two results concern the profile alone,
          but are tailored to excluding the nontrivial elements
          of this subgroup.

       \begin{lemma}[Coordinate-sign avoidance]
           \label{co5}
           
           An ordinarily closed nonsteady contact profile is disjoint from
%each of
     its images under the nonidentity elements of $\TT^2[2]$
if and only if $m_\gamma$ is odd.
       \end{lemma}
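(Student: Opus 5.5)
The plan is to reduce both directions to a statement about the cyclic subgroup of $\TT^2$ generated by the complete-cell return
\[
u:=\bigl(e^{i\Delta_1},e^{i\Delta_2}\bigr).
\]
By \cref{df4} and \cref{th4}, $\gamma$ is ordinarily closed with primitive order $m_\gamma$ exactly when $u$ has order $m_\gamma$ in $\TT^2$. The claim to prove is then:
\[
\varepsilon\gamma\cap\gamma\neq\varnothing \text{ for some } \varepsilon\in\TT^2[2]\setminus\{1\}
\iff \langle u\rangle\cap\bigl(\TT^2[2]\setminus\{1\}\bigr)\neq\varnothing .
\]
Since $\langle u\rangle$ is cyclic of order $m_\gamma$, it contains a nonidentity element of the $2$-torsion subgroup $\TT^2[2]$ iff it contains an element of order $2$. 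That happens iff $m_\gamma$ is even, which gives the lemma.

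\emph{Cell-shift structure.} The $\TT^2$-action is $\bar g$-isometric and the magnitude is $T_{\mathrm{cell}}$-periodic (\cref{pr4}). By uniqueness of geodesics, $\gamma(t+T_{\mathrm{cell}})=u\cdot\gamma(t)$. Now take $t,t'$ with $s(t)=s(t')$ and the same orientation, meaning both $\dot s>0$, both $\dot s<0$, or both at the same turning value. Monotonicity on half-cells then forces $t'-t\in T_{\mathrm{cell}}\ZZ$, so $\gamma(t')=u^r\gamma(t)$ for some $r\in\ZZ$. The "if" direction follows at once. If $m_\gamma$ is even, then $\varepsilon:=u^{m_\gamma/2}$ has order $2$, so it lies in $\TT^2[2]\setminus\{1\}$. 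Hence $\varepsilon\gamma(t)=\gamma(t+\tfrac{m_\gamma}{2}T_{\mathrm{cell}})$, and $\varepsilon\gamma$ meets $\gamma$.

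\emph{Only if.} Suppose $\gamma(t')=\varepsilon\gamma(t)$ with $\varepsilon\neq1$ in $\TT^2[2]$. The magnitudes agree, so $s(t)=s(t')=:s$. Since $z_1z_2\neq0$ on a contact profile, the phase difference $(\theta_1(t')-\theta_1(t),\theta_2(t')-\theta_2(t))$ is congruent to $\varepsilon$ in $\TT^2$. There are two cases.
\begin{enumerate}
\item If the two times lie on oppositely oriented branches with $s\in(s_-,s_+)$, \cref{lm12} says the character $\chi_{p,q}$ of this phase difference is $e^{\pm2i\alpha(s)}\notin\{\pm1\}$. But $\chi_{p,q}(\varepsilon)=\varepsilon_1^p\varepsilon_2^q\in\{\pm1\}$, a contradiction.
\item Otherwise the times are same-oriented, or both lie at a turning value. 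At $s_\pm$ the two branch formulas of \cref{lm12} coincide and both times sit at the same turning value, so this case is covered by the cell-shift structure. Hence the phase difference equals $u^r$, so $\varepsilon=u^r\in\langle u\rangle$ has order $2$ and $m_\gamma$ is even.
\end{enumerate}

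The main point needing care is the same-orientation step: I must justify that equal magnitude with the same orientation forces a time difference in $T_{\mathrm{cell}}\ZZ$, including at the turning points. This should follow directly from the reflection symmetry of successive passages in \cref{pr4} and the strict monotonicity of $s$ on each half-cell.
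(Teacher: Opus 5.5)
Your proposal is correct and follows essentially the same route as the paper: you identify $\ord(u)=m_\gamma$, use the character law of \cref{lm12} to exclude opposite-branch incidences with phase in $\TT^2[2]$, reduce every remaining incidence to a cell shift $u^r$, and conclude from the parity of the cyclic group $\langle u\rangle$. Your added justification of the cell-shift step, via the $T_{\mathrm{cell}}$-periodicity of $s$ and its strict monotonicity on half-cells, spells out a step the paper leaves implicit.
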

       \begin{proof}
           Set $u=(e^{i\Delta_1},e^{i\Delta_2})$.
           By \cref{th4},
           $\ord(u)=m_\gamma$.
           The last assertion of \cref{lm12} excludes an incidence between opposite branches with phase in $\TT^2[2]$.
           Hence every coordinate-sign incidence is an $r$-cell return,
           so $u^r\in\TT^2[2]$ for some $r\in\ZZ$.
           If $m_\gamma$ is odd,
           then $u^{2r}=1$ implies $m_\gamma\mid r$,
           and the sign is trivial.
           If $m_\gamma$ is even,
           then $u^{\frac{m_\gamma}{2}}$
           is a nontrivial element of $\TT^2[2]$,
           proving the converse.
       \end{proof}

  \begin{proposition}[Odd-prime profile returns]
    \label{th14}
    Let $p+q>2$
    and $\mathcal U\Subset(P_*,\infty)$
    be a nonempty open interval.
    For every sufficiently large odd prime $\ell$,
    there is an energy $C_{2,\ell}\in\mathcal U$
    whose corresponding contact profile $\gamma_\ell$
    is ordinarily closed with
    $m_{\gamma_\ell}=\ell$.
    Moreover,
    $\gamma_\ell(\RR)$ is an embedded circle and
$
        \gamma_\ell(\RR)\cap
        \varepsilon\gamma_\ell(\RR)=\varnothing
$
       for every
       $        \varepsilon\in\TT^2[2]\setminus\{(1,1)\}$.
\end{proposition}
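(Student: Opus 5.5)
On the contact level, \eqref{eq74} gives $\Delta_2=-\tfrac{p}{q}\Delta_1$. So the complete-cell return is governed by the single real-analytic function $\delta(C_2):=\Delta_1(C_2)/(2\pi)$ on $(P_*,\infty)$, which is analytic by \cref{pr5}. The plan has four steps: make $\delta$ nonconstant on $\mathcal U$, solve $\Pi\in\ell^{-1}\ZZ^2$ with the right primitive order, invoke noncrossing for embeddedness, and use odd order for sign avoidance.

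\emph{Nonconstancy.} By \cref{lm15,lm16}, $\delta$ tends to $\tfrac12\sqrt{2q/(p(p+q))}$ as $C_2\downarrow P_*$ and to $\tfrac1{2p}$ as $C_2\uparrow\infty$. These two limits differ precisely when $p+q>2$; for $p=q=1$ both equal $\tfrac12$, which is the great-circle case. Hence $\delta$ is nonconstant, and by analyticity $\delta'\ne0$ on a dense open set. Shrinking $\mathcal U$, I will assume $\delta$ is a diffeomorphism of $\mathcal U$ onto an open interval $J$.

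\emph{Arithmetic choice.} Ordinary closing after $\ell$ cells requires $\ell\delta\in\ZZ$ and $\ell\tfrac{p}{q}\delta\in\ZZ$. I take $\delta=\tfrac{qa}{\ell}$ with $a\in\ZZ$. Then $\Pi=\bigl(\tfrac{qa}{\ell},-\tfrac{pa}{\ell}\bigr)$. For a prime $\ell$ with $\ell>p+q$ and $\ell\nmid a$, both coordinates have exact denominator $\ell$, so \cref{th4} gives $m_\gamma=\ell$. The interval $J/q$ has positive length, so once $\ell\,|J|/q>2$ it contains some $a/\ell$ with $\ell\nmid a$. Set $C_{2,\ell}=\delta^{-1}(qa/\ell)$. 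Every sufficiently large prime is odd.

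\emph{Embeddedness and sign avoidance.} \cref{th13} shows that a closed nonsteady contact profile is periodic with embedded circle image; the least period is $\ell T_{\mathrm{cell}}$, since any period must be a whole number of cells. Because $m_{\gamma_\ell}=\ell$ is odd, \cref{co5} gives $\gamma_\ell(\RR)\cap\varepsilon\gamma_\ell(\RR)=\varnothing$ for every nontrivial $\varepsilon\in\TT^2[2]$.

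The main obstacle is the nonconstancy step: it relies on the two endpoint limits of \cref{lm15,lm16} being distinct. I would check this by verifying that $\tfrac12\sqrt{2q/(p(p+q))}=\tfrac1{2p}$ reduces to $2pq=p+q$, whose only solution with $p,q\ge1$ is $p=q=1$, which is excluded by $p+q>2$. The arithmetic step requires only $\ell$ prime and larger than $p+q$, so no additional obstacle arises there.
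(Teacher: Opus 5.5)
Your proposal is correct and follows the paper's proof essentially step for step. You use nonconstancy of the analytic map $C_2\mapsto\Delta_1/(2\pi)$, from the distinct limits in \cref{lm15,lm16}; a rational value whose numerator is prime to $\ell$, so that \cref{th4} gives $m_\gamma=\ell$; and then \cref{th13,co5} for the embedded circle and coordinate-sign avoidance. The only differences are cosmetic: you take $qa/\ell$ with $\ell>p+q$ where the paper takes the reduced $q_0\beta_\ell/\ell$ with $\ell\nmid p_0q_0$, and you explicitly check that the two limits coincide only when $2pq=p+q$, i.e.\ $p=q=1$.
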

       \begin{proof}
       By \cref{pr5,lm15,lm16},
       the first component $C_2\mapsto\frac{\Delta_1(-1,C_2)}{2\pi}$ of the restricted phase map $\Pi(-1,\cdot)$ is real analytic and nonconstant on $(P_*,\infty)$,
       and hence its image on $\mathcal U$ contains an open interval $J$.
           Set
           $(p_0, q_0)=\frac{(p, q)}{\gcd(p,q)}$.
           For every sufficiently large odd prime $\ell$,
           choose $\beta_\ell\in\ZZ$ with
           $\frac{q_0\beta_\ell}{\ell}\in J$
           and $\ell\nmid\beta_\ell$.
           We may also assume $\ell\nmid p_0q_0$.
           Select $C_{2,\ell}\in\mathcal U$ so that
                  $
                              \frac{\Delta_1}{2\pi}
                              =\frac{q_0\beta_\ell}{\ell}.
         $
           By \eqref{eq74},
           $\frac{\Delta_2}{2\pi}
           =-\frac{p_0\beta_\ell}{\ell}$.
           Both fractions have denominator $\ell$ in lowest terms,
           so \cref{th4} gives $m_\gamma=\ell$.
           The last assertions follow from \cref{th13,co5}.
       \end{proof}

          Coupling these profiles to real inputs gives the following application.

\begin{corollary}[Embedded products from real inputs]
    \label{th15}
    Let
    $f_i:M_i^{k_i}\hookrightarrow\Sphere^{n_i}$
    %$i=1,2$,
    be real inputs with $k_1+k_2>0$.
    Every nonempty open interval $\mathcal I\subset(P_*,\infty)$ contains a sequence of contact energies $C_{2,j}$
    for which the spiral products define  minimal $\cC$-totally real embeddings
       $  G_j:   S^1\times M_1\times M_2   \hookrightarrow    \Sphere^{2n_1+2n_2+3} $
    with
$
        m_{G,j}=m_{\gamma,j}=\ell_j
$
being odd primes
    and $\ell_j\nearrow \infty$.
\end{corollary}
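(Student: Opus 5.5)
The plan is to take the odd-prime contact profiles of \cref{th14} inside $\mathcal I$ and show that coupling them to real inputs produces embedded products of the product topology. First, note that $p+q=k_1+k_2+2>2$ because $k_1+k_2>0$. Then \cref{th14}, applied to a nonempty open subinterval $\mathcal U\Subset\mathcal I$, gives for every sufficiently large odd prime $\ell$ a contact energy $C_{2,\ell}\in\mathcal U$ with $m_{\gamma_\ell}=\ell$. Along this profile:
\begin{itemize}
\item $\gamma_\ell(\RR)$ is an embedded circle;
\item $\gamma_\ell(\RR)\cap\varepsilon\gamma_\ell(\RR)=\varnothing$ for every $\varepsilon\in\TT^2[2]\setminus\{(1,1)\}$.
\end{itemize}
Choosing an increasing sequence of such primes gives $C_{2,j}$ and $\ell_j\nearrow\infty$. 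Minimality of $G_j$ follows from \cref{th1}, since the real inputs are minimal and $\cC$-totally real after complexification. $\cC$-total reality of $G_j$ follows from \cref{pr12}, since $C_1=-1$. Immersion follows from \cref{lm1}, because a contact profile avoids both coordinate circles.

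Next, identify the return orders. For real inputs, $\mathsf H_{f_i}\subset\mathcal P_{f_i}\subset\{1,-1\}$, because $f_i(M_i)$ lies in the real sphere, so $\zeta f_i(M_i)$ meets it only if $\zeta=\pm1$. The spherical return set $\mathcal R_G$ consists of the integers $m$ with $e^{im\Delta_i}\in\mathsf H_i\subset\{\pm1\}$. This forces $u^{2m}=1$, where $u=(e^{i\Delta_1},e^{i\Delta_2})$ has order $\ell$ by \cref{th4}. Since $\ell$ is odd, $\ell\mid m$, so $u^m=1$ and $m_G=m_\gamma=\ell$. Moreover $h_i=1$, so $\Phi=\Id$. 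By \cref{th6}, $G_j$ then descends to $\cM_G=S^1\times M_1\times M_2$.

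The main step is injectivity on $S^1\times M_1\times M_2$. Suppose
\[
\bigl(z_1(t)f_1(x),z_2(t)f_2(y)\bigr)=\bigl(z_1(t')f_1(x'),z_2(t')f_2(y')\bigr).
\]
Taking norms in each block gives $|z_i(t)|=|z_i(t')|$. Write $\zeta_i:=e^{i(\theta_i(t')-\theta_i(t))}$; then $f_1(x)=\zeta_1f_1(x')$ and $f_2(y)=\zeta_2f_2(y')$. Hence $\zeta_i\in\mathcal P_{f_i}\subset\{\pm1\}$, so $\varepsilon:=(\zeta_1,\zeta_2)\in\TT^2[2]$ and $\gamma(t)=\varepsilon\gamma(t')$. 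If $\varepsilon\ne(1,1)$, this contradicts the disjointness from \cref{th14}. So $\varepsilon=(1,1)$ and $\gamma(t)=\gamma(t')$. By \cref{th13} and the embedded circle image, $t\equiv t'$ modulo the period $\ell T_{\mathrm{cell}}$. Then $\zeta_i=1$, and embeddedness of $f_i$ gives $x=x'$ and $y=y'$. So the descended map is injective.

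Finally, the domain is compact, so an injective immersion of it is an embedding. Separating the two block norms is routine. The real obstacle is excluding coincidences between opposite branches, and that is already absorbed by the phase-character law of \cref{lm12} through \cref{co5}, which is what \cref{th14} uses.
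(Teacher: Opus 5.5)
Your proposal is correct and takes the same route as the paper. You choose $\mathcal U\Subset\mathcal I$, invoke \cref{th14} for large odd primes $\ell$, and use that incident phases of real inputs lie in $\{\pm1\}$, so the coordinate-sign avoidance of \cref{co5} (already built into \cref{th14}) rules out every nontrivial coincidence and any premature spherical return; you simply spell out the injectivity and $m_G=m_\gamma=\ell$ checks that the paper compresses into one sentence.
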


       \begin{proof}
           Choose $\mathcal U\Subset\mathcal I$
and apply \cref{th14}.
           Every incident phase of a real factor belongs to $\{\pm1\}$,
           so \cref{co5} excludes every nontrivial product coincidence
           and premature spherical return.
          % Minimality and total reality follow from \cref{th1,pr12}.
       \end{proof}

   \subsubsection{Factor-adapted products for general inputs}
          \label{se20}

We now drop the reality assumption.
For arbitrary embedded $\cC$-totally real factors,
the stabilizers $\mathsf H_{f_i}$ determine spherical closing.
After closing,
embeddedness requires avoiding all additional phase incidences,
which belong to the finite set
$\mathcal P_{f_1}\times\mathcal P_{f_2}$.

       \begin{lemma}[Finiteness of phase incidence]
           \label{lm18}
           If $f:M\hookrightarrow\Sphere^{2n+1}$ is a compact
            minimal $\cC$-totally real embedding,
            then $\mathcal P_f$ is finite.
       \end{lemma}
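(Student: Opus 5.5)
The plan is to show that $\mathcal P_f$ is a closed subset of $\Sphere^1$ whose points are isolated; compactness of $\Sphere^1$ then makes it finite. Closedness is immediate. The set $\mathcal P_f$ is the image of the compact set $\{(\zeta,x,y)\in\Sphere^1\times M\times M:\zeta f(x)=f(y)\}$ under projection to $\Sphere^1$, and that set is compact because $f$ is continuous.

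For isolatedness I would split $\mathcal P_f$ into the stabilizer $\mathsf H_f$ and the remaining incident phases $\mathcal P_f\setminus\mathsf H_f$. By \cref{lm9}, $\mathsf H_f$ is finite. Suppose, for contradiction, that $\mathcal P_f$ is infinite. Then there are distinct $\zeta_j\to\zeta_\infty$ and points with $\zeta_jf(x_j)=f(y_j)$. Passing to a subsequence, $x_j\to x$ and $y_j\to y$, so $\zeta_\infty f(x)=f(y)$. Write $L=f(M)$. The key step is a local analysis at the point $P=f(y)\in L\cap\zeta_\infty L$, which I would do with real analyticity (\cite{Morrey1966}). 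Consider the real-analytic family of compact analytic submanifolds $\zeta L$ with $\zeta$ near $\zeta_\infty$. The set
\[
Z=\{(\zeta,x,y):\zeta f(x)=f(y)\}
\]
is a compact real-analytic subset of $\Sphere^1\times M\times M$. Its projection to $\Sphere^1$ is therefore subanalytic and compact, hence a finite union of points and closed arcs. So if $\mathcal P_f$ is infinite, it contains a nondegenerate arc of phases.

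The main obstacle is ruling out such an arc, and I would do it as in \cref{lm9}, using $\cC$-total reality. On an arc, $Z$ contains an analytic curve $\sigma\mapsto(\zeta(\sigma),x(\sigma),y(\sigma))$ with $\zeta$ nonconstant, by the curve selection lemma. Differentiating $\zeta(\sigma)f(x(\sigma))=f(y(\sigma))$ gives
\[
\zeta'\,Jf(x)\cdot(\zeta/|\zeta'|)+\zeta\,df(\dot x)=df(\dot y),
\]
that is, $\dot\zeta\,J(\zeta f(x))=df(\dot y)-\zeta\,df(\dot x)$. Now pair with $J f(y)=J\zeta f(x)$. The condition $\langle Jf,df\rangle=0$ kills $df(\dot y)$. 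Since multiplication by $\zeta$ is unitary and commutes with $J$, it also gives $\langle J\zeta f(x),\zeta\,df(\dot x)\rangle=\langle Jf(x),df(\dot x)\rangle=0$. We are left with $\dot\zeta\,|f|^2=0$, so $\dot\zeta=0$ along the curve, contradicting the nonconstancy of $\zeta$. Hence $\mathcal P_f$ has no arcs and is finite.

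If invoking subanalytic geometry seems too heavy, the fallback is to argue directly. Near $(\zeta_\infty,x,y)$, the map $(\zeta,x,y)\mapsto \zeta f(x)-f(y)$ is analytic and $Z$ is a local analytic set. The same derivative computation shows that $d\zeta$ vanishes on every analytic arc in $Z$, so $\zeta$ is locally constant on $Z$ by the Łojasiewicz curve-selection principle. By compactness, $Z$ is covered by finitely many neighborhoods on each of which $\zeta$ is constant, so $\zeta$ takes finitely many values. Minimality is used only to get analyticity; $\cC$-total reality supplies the key orthogonality.
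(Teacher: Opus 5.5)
Your proof is correct and takes essentially the same route as the paper: both work on the compact real-analytic incidence set $\{\zeta f(x)=f(y)\}$, where $M$ carries the analytic structure of $L=f(M)$, and both differentiate $\zeta f(x)=f(y)$ and pair with $Jf(y)$, using $\cC$-total reality to show that the phase derivative vanishes along the set. The only difference is the analytic-geometry tool: the paper uses a finite stratification into connected analytic manifolds, so the phase is constant on each stratum, while you use subanalyticity of the projection and curve selection; your fallback, local constancy of $\zeta$ via curve selection on the semianalytic set, is the cleanest version of your argument.
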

       \begin{proof}
           See \cref{ap9}.
       \end{proof}

With $d_0:=\gcd(p,q)$ and
$(p_0, q_0)=\frac{(p,q)}{d_0}$,
 the character function \eqref{chipq} has kernel
\begin{equation}\label{kerchi}
\ker\chi_{p,q}
=
\bigsqcup_{j=0}^{d_0-1}
\left(e^{\frac{2\pi j} p i},1\right)K_0,
%\qquad
 \text{ where }
K_0:=(\ker\chi_{p,q})^\circ
=
\left\{
\left(e^{iq_0\tau},e^{-ip_0\tau}\right):
\tau\in\RR
\right\}
\end{equation}
For a contact profile,
let
$u(C_2):=(e^{i\Delta_1},e^{i\Delta_2})$
be its cell-phase increment.
Since \eqref{eq74} is an exact identity in $\RR$,
the path
$r\mapsto(e^{ir\Delta_1},e^{ir\Delta_2})$,
$0\leq  r\leq 1$,
lies in $\ker\chi_{p,q}$
and joins $(1,1)$ to $u(C_2)$.
Hence the cell-phase map takes values in $K_0$.
In contrast,
the opposite-branch phase differences lie outside
$\ker\chi_{p,q}$ by \cref{lm12}.

       \begin{theorem}[Factor-adapted embedded products]
           \label{th16}
           Let
         $
                              f_i:M_i^{k_i}\hookrightarrow\Sphere^{2n_i+1}
        $
           be compact connected embedded minimal $\cC$-totally real submanifolds
            with $k_1+k_2>0$.
           There is a compact interval
           $\mathcal K\Subset(P_*,\infty)$ such that,
           for every sufficiently large prime $\ell$,
           one can choose a contact energy $C_{2,\ell}\in\mathcal K$
           of ordinary return order $\ell$
           whose associated spiral product
           \[
                              G_\ell:
                              S^1\times M_1\times M_2
                              \hookrightarrow\Sphere^{2n_1+2n_2+3}
           \]
           is embedded
           minimal
           $\cC$-totally real.
           Moreover,
           $m_{G,\ell}=m_{\gamma,\ell}=\ell$.
           If $k_i=n_i$ for $i=1,2$,
           a constant phase rotation of $G_\ell$ is embedded special Legendrian.
       \end{theorem}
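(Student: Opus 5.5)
The plan is to reduce embeddedness of $G_\ell$ to a statement about phase differences between points of the profile lying at the same magnitude. We then make that statement hold using two ingredients: the finiteness of $\mathcal P:=\mathcal P_{f_1}\times\mathcal P_{f_2}$ from \cref{lm18}, and the phase-character law of \cref{lm12}.

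\emph{Reduction to phase incidences.} Suppose $G_\gamma(t,x,y)=G_\gamma(t',x',y')$. Since $|f_i|=1$, taking norms of the two blocks gives $a(t)=a(t')$ and $b(t)=b(t')$, so $s(t)=s(t')$. The phase difference $w:=\bigl(e^{i(\theta_1(t')-\theta_1(t))},e^{i(\theta_2(t')-\theta_2(t))}\bigr)$ then satisfies $w_i f_i(x')=f_i(x)$, hence $w\in\mathcal P$. There are two kinds of such pairs $t,t'$.
\begin{enumerate}
\item[(a)] \emph{Same branch.} Either $t,t'$ lie on same-oriented branches, or at least one of them is a turning point. Then $\Psi$ agrees at $t$ and $t'$ by \cref{lm12}, and $w=u^r$ for some $r\in\ZZ$, where $u=u(C_2)\in K_0$ is the cell increment.
\item[(b)] \emph{Opposite branches.} Then $\chi_{p,q}(w)=e^{\pm2i\alpha(s)}$ with $0<\alpha(s)<\frac{\pi}{2}$, again by \cref{lm12}.
\end{enumerate}

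\emph{Choice of $\mathcal K$: the low-energy estimate.} Since $\mathcal P$ is finite, so is $\chi_{p,q}(\mathcal P)$. Put
\[
\delta:=\min\{|\arg\chi_{p,q}(\zeta)|:\zeta\in\mathcal P,\ \chi_{p,q}(\zeta)\ne1\}>0,
\]
with arguments taken in $(-\pi,\pi]$. Along a contact level, $\sup_{[s_-,s_+]}\alpha=\arctan\sqrt{C_2\cos^{2p}s_*\sin^{2q}s_*-1}$, and this tends to $0$ as $C_2\downarrow P_*$. So there is $\varepsilon>0$ such that every level in $(P_*,P_*+\varepsilon)$ satisfies $2\sup\alpha<\delta$. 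On such a level, case (b) is impossible:
\begin{enumerate}
\item[$\bullet$] if $\chi_{p,q}(w)=1$, this contradicts $\alpha>0$;
\item[$\bullet$] otherwise $|\arg\chi_{p,q}(w)|=2\alpha(s)<\delta$, contradicting the definition of $\delta$.
\end{enumerate}
Now take $\mathcal K\Subset(P_*,P_*+\varepsilon)$ to be a nondegenerate compact interval. By \cref{pr5,lm15,lm16}, $C_2\mapsto\Delta_1(-1,C_2)$ is real analytic and nonconstant, so its image on the interior of $\mathcal K$ contains an open interval $J$.

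\emph{Prime closing and exclusion of case (a).} Only finitely many primes divide the orders of the finite-order elements of $\mathcal P$. Exactly as in \cref{th14}, for every sufficiently large prime $\ell$ with $\ell\nmid p_0q_0$ and $\ell$ exceeding all such orders, we:
\begin{enumerate}
\item[$\bullet$] pick $\beta_\ell$ with $\ell\nmid\beta_\ell$ and $q_0\beta_\ell/\ell\in J$;
\item[$\bullet$] pick $C_{2,\ell}\in\mathcal K$ with $\Delta_1/2\pi=q_0\beta_\ell/\ell$.
\end{enumerate}
Then \eqref{eq74} gives $\Delta_2/2\pi=-p_0\beta_\ell/\ell$, so \cref{th4} gives $m_\gamma=\ell$ and $u$ has exact order $\ell$. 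In case (a), if $u^r\ne1$ then $u^r$ has order exactly $\ell$, and by the choice of $\ell$ it cannot lie in $\mathcal P$. Hence $u^r=1$, i.e.\ $t'-t\in\ell T_{\mathrm{cell}}\ZZ$ (at a turning point the two times are in the same cell class). In that case $f_i(x')=f_i(x)$, and injectivity of $f_i$ gives $(x',y')=(x,y)$.

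\emph{Conclusion.} Since $\mathsf H_i\subset\mathcal P_{f_i}$, the same argument gives $\mathcal R_G=\ell\ZZ$, so $m_G=\ell$ and $\Phi=\mathrm{id}$. Therefore $G_\ell$ descends to an injective immersion of the compact manifold $S^1\times M_1\times M_2$, which is an embedding. Minimality and $\cC$-total reality follow from \cref{th1,pr12}. When $k_i=n_i$, \cref{pr12} also gives the special Legendrian statement after a phase rotation.

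The main obstacle is the opposite-branch exclusion, case (b). The arithmetic choice of $\ell$ cannot touch it, because $\chi_{p,q}(u)=1$ makes the character of an opposite-branch difference independent of the cell shift $r$. This is why $\mathcal K$ must be placed near the threshold $P_*$ and fixed before $\ell$ is chosen. The other point needing care is the bookkeeping at turning points, where a coincidence must be shown to reduce to case (a).
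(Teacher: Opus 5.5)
Your proposal is correct and follows essentially the same route as the paper. Both arguments have three steps:
\begin{enumerate}
\item finiteness of $\mathcal P$ from \cref{lm18};
\item a low-energy interval near $P_*$ on which $2\arctan\sqrt{D_{C_2}}$ stays below the minimal nonzero character gap of $\mathcal P$, which excludes opposite-branch incidences via \cref{lm12};
\item the prime-order closing of \cref{th14}, where your requirement that $\ell$ exceed the orders of all finite-order elements of $\mathcal P$ plays the role of the paper's $K_0[\ell]\cap\mathcal P=\{(1,1)\}$.
\end{enumerate}
One small fix: your $\delta$ needs a default value when no $\zeta\in\mathcal P$ has $\chi_{p,q}(\zeta)\ne1$; the paper takes $\pi$ in that case.
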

       \begin{proof}
           Set
           $
                              \chi=\chi_{p,q},\,
                              \mathcal P=\mathcal P_{f_1}\times\mathcal P_{f_2},
 \,                               K_0=(\ker\chi)^\circ.
           $
           By \cref{lm18},
           $\mathcal P$ is finite.
           Hence, for every sufficiently large prime $\ell$,
           the order-$\ell$ subgroup $K_0[\ell]$ satisfies
           $
                              K_0[\ell]\cap\mathcal P=\{(1,1)\}.
           $
           Finiteness also gives $d_*>0$ such that
           $
                              \dist_{\Sphere^1}(1,\chi(\zeta))\geq  d_*
                              $
                            for every 
                            $\zeta\in\mathcal P$
                              with 
                             $\chi(\zeta)\ne1;$
           if there is no such $\zeta$, take $d_*=\pi$.
           Since
           $D_{C_2}(s)\leq   \frac {C_2} {P_*}-1$,
           choose a nonempty interval
           $\mathcal U\Subset(P_*,\infty)$ sufficiently close to $P_*$ that
        $
                              2\arctan\sqrt{D_{C_2}(s)}<d_*
         $
           along every profile with $C_2\in\mathcal U$,
           and take $\mathcal K=\overline{\mathcal U}$.

           By \cref{th14},
           for every sufficiently large prime $\ell$
           there is $C_{2,\ell}\in\mathcal U$ with
           $m_{\gamma,\ell}=\ell$.
           If
           $u_\ell=(e^{i\Delta_1},e^{i\Delta_2})$,
           then \eqref{eq74} gives $u_\ell\in K_0$ and
           $\langle u_\ell\rangle=K_0[\ell]$.
           Same-oriented phase differences,
           as well as differences between distinct occurrences of a turning
           point,
           are powers of $u_\ell$.
           None belongs to $\mathcal P$ before the full return.
           For oppositely oriented interior branches,
           by \cref{lm12} and our choice above, it follows that
           \[
                              0<\dist_{\Sphere^1}(1,\chi(\zeta))
                              =2\arctan\sqrt{D_{C_{2,\ell}}(s)}<d_*,
           \]
           so $\zeta\notin\mathcal P$ by the definition of $d_*$.
           Since $\mathsf H_{f_1}\times\mathsf H_{f_2}\subset\mathcal P$,
           the preceding exclusions show simultaneously 
           that $G_\ell$ is injective and that no spherical return occurs
           before the full $\ell$-cell return.
           Hence
$
m_{G,\ell}=m_{\gamma,\ell}=\ell.
$
Since $G_\ell$ is an immersion of a compact manifold,
it is an embedding.
           The remaining assertions follow from \cref{th1,pr12}.
       \end{proof}

\begin{remark}[Quantitative consequences]
By \cref{th10},
$
\Ind(G_\ell)
\geq 
\Ind(f_1)+\Ind(f_2)+2\ell-3.
$
Since $\mathcal K\Subset(P_*,\infty)$,
\cref{co1} and \eqref{eq8} give constants $c_V,C_V,  C_B>0$,
independent of $\ell$, such that
$
\|B^{G_\ell}\|_{L^\infty}\leq   C_B
$ and
$
c_V\ell\leq \Vol(G_\ell)\leq  C_V\ell.
$
\end{remark}

\begin{remark}[Compatible stabilizer returns]\label{rmmon}
The construction of \cref{th16} can be modified to realize a compatible
factor symmetry.
          Let
              $d_j=|\mathsf H_{f_j}|$,
       $
k(\tau)=(e^{iq_0\tau},e^{-ip_0\tau}),
\,
K_0=\{k(\tau):\tau\in\RR\}.
$
Set
$D=\gcd(d_1q_0,d_2p_0)$.
Since
$\mathsf H_{f_j}=\langle e^{\frac {2\pi } {d_j} i}\rangle$,
\[
k(\tau)\in\mathsf H_{f_1}\times\mathsf H_{f_2}
\quad\Longleftrightarrow\quad
\tau\in\frac{2\pi}{D}\ZZ.
\]
Consequently,
$
(\mathsf H_{f_1}\times\mathsf H_{f_2})\cap K_0
=
\langle h\rangle,
\text{ where }
h:=k\left(\frac{2\pi}{D}\right)
$
and 
$
\ord(h)=D.
$
In particular,
a nontrivial compatible return exists exactly when $D>1$.

Assume $D>1$,
and let $\mathcal U$ be the low-energy interval chosen in the proof of
\cref{th16}.
Since $u$ is real analytic and nonconstant,
$u(\mathcal U)$ contains an open arc of $K_0$.
For every sufficiently large prime $\ell$,
the $\ell$-th roots of $h$ form a sufficiently fine mesh in $K_0$;
the finiteness of $\mathcal P$ therefore allows one to choose
$C_{2,\ell}\in\mathcal U$ such that,
with $u_\ell:=u(C_{2,\ell})$,
\[
u_\ell^\ell=h,
\qquad
\ord(u_\ell)=D\ell,
\qquad
\langle u_\ell\rangle\cap\mathcal P=\langle h\rangle.
\]
The choice of $\mathcal U$ excludes opposite-branch coincidences,
while the last identity shows that the first remaining coincidence
occurs after $\ell$ cells,
with phase $h$.
Hence
$m_{G,\ell}=\ell$ and $m_{\gamma,\ell}=D\ell$,
and the primitive spherical quotient is  embedded.
%gluing map $\Phi_h=\varphi_{h_1}\times\varphi_{h_2}$.
%Since $h\in K_0$,
%a constant phase rotation makes it special Legendrian
%in maximal contact dimension.
If the factors are special Legendrian,
then the quotient is minimal Legendrian.
Since $h\in K_0\subset\ker\chi_{p,q}$,
the gluing preserves its constant Legendrian angle,
so a constant phase rotation therefore makes the quotient special Legendrian.

Thus full phase saturation is unnecessary:
only the phase incidences met by $\langle u_\ell\rangle$
must be global stabilizers.
\Cref{expp} gives such a quotient with nonproduct topology.
\end{remark}

\begin{example}[Loss of embeddedness at the spherical stage]
\label{ex2}
     In Appendix \ref{apnp} we get an example of embedded special Legendrian  $M\cong     S^1\times  S^2\subset\Sphere^7$
    with $z_+=\lambda_*z_-$ for some $z_\pm\in M$ and $\lambda_*^8\ne1$.
    Two copies of $M$ and a steady contact profile give,
    up to a constant rotation,
    the special Legendrian map $F(\vartheta,z_1,z_2)=\frac{1 }{\sqrt 2} (e^{i\vartheta}z_1,e^{-i\vartheta}z_2)\in\Sphere^{15}$.
    For $\lambda_*=e^{i\vartheta_*}$,
    we then have $F(0,z_+,z_-)=F(\vartheta_*,z_-,z_+)$ with distinct tangent spaces.
    Thus embedded factors alone do not ensure embeddedness
    at the spherical stage.
    \end{example}
         
\begin{example}[Partial phase symmetry and nonproduct topology]
\label{expp}
     The same factor $M$ satisfies  $\mathsf H_M=\langle\zeta\rangle\subsetneq\mathcal P_M$, where $\zeta=e^{\frac \pi 4 i}$.
    Applying \cref{rmmon} to two copies of $M$,
    with $p=q=4$ and $h=(\zeta,\zeta^{-1})$, gives,
    for every sufficiently large prime $\ell$,
    an embedded special Legendrian $E_\ell\subset\Sphere^{15}$ with $m_{G,\ell}=\ell$ and $m_{\gamma,\ell}=8\ell$.
    Its underlying manifold is the mapping torus of
    $\Phi=\varphi_\zeta\times\varphi_{\zeta^{-1}}$.

    Since $\varphi_\zeta$ acts trivially on $H_1(M;\ZZ)$ and as $-1$ on $H_2(M;\ZZ)$,
    the K\"unneth formula and the mapping-torus sequence give $H_2(E_\ell;\ZZ)\cong\ZZ^3\oplus(\ZZ/2)^2$,
    whereas
                   $H_2(S^1\times M\times M;\ZZ)\cong\ZZ^5$.
    Thus $E_\ell$ is not even homotopy equivalent to the trivial product.
\end{example}

       For these same factors,
        a steady profile gives a crossing,
        whereas suitable nonsteady profiles give embedded quotients.
      The distinction lies in which phase incidences the profile realizes.

%\majorsectiongap
\section{Horizontal Lifts and Projective Delaunay Constructions}
          \label{se21}

          Every connected minimal Lagrangian immersion in projective space  admits a horizontal special Legendrian lift after passing to a canonical finite cover.
          If the original immersion is compact and embedded,
          then this lift is also embedded and phase-saturated.
          Consequently,
          for compact embedded inputs,
          every nonsteady contact profile gives a special Legendrian spherical product whose coincidences are all complete-cell returns.
          Whenever a spherical return exists,
          its primitive spherical quotient is therefore embedded.

          Hopf projection introduces a different obstruction:
          the primitive spherical quotient itself need not be phase-saturated,
          even though both factor lifts are.
          Its projective embeddedness will be measured by the reduced relative
          winding number.

      For the horizontal lifts below we have $k_i=n_i$.
       Throughout this section we retain  $p=n_1+1$, $q=n_2+1$, and set $m=p+q$.

   \subsection{Finite horizontal lifts and spherical quotients}

          Let $\pi_H:\Sphere^{2d+1}\to\CP^d$ be the Hopf fibration,
          with the Fubini--Study metric normalized to have holomorphic  sectional curvature $4$.
          For a Lagrangian immersion $\iota:L\to\CP^d$,
          the pullback of the Hopf connection is flat
          and
         we  denote its holonomy representation
          by
                $\chi_L:\pi_1(L)\to\Sphere^1$,
           the image by $\operatorname{im}\chi_L$.

           \begin{proposition}[Finite-holonomy lift] \label{pr13}
           Let $\iota:L^d\looparrowright\CP^d$ be a connected minimal Lagrangian immersion.
           Then $\operatorname{im}\chi_L$ is finite cyclic of order $h_L\mid2(d+1)$,
           and $h_L\mid d+1$ when $L$ is orientable.
           Passing to the cover $\pi_L:\widehat L\to L$ corresponding to $\ker\chi_L$, choose a horizontal lift $f:\widehat L\to\Sphere^{2d+1}$ satisfying $\pi_{\mathrm H}\circ f=\iota\circ\pi_L$.
           After a phase rotation, $f$ is special Legendrian.
           If $\iota$ is compact embedded,
           then $f$ is embedded and phase-saturated  with $\mathsf H_f=\operatorname{im}\chi_L$.
           In particular,
                               $\zeta^{d+1}\in\{\pm1\}$   for every $\zeta\in\mathsf H_f$.
       \end{proposition}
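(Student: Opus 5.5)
We split the argument into four parts: finiteness of the holonomy, special Legendrian structure of the horizontal lift, and embeddedness together with phase saturation in the compact embedded case.

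\emph{Finiteness of the holonomy.} We compare the Hopf connection with the Lagrangian (Maslov) angle. For a Lagrangian immersion $\iota:L\to\CP^d$, take local horizontal lifts $f$ of $\iota$. On a simply connected open set, every other lift has the form $e^{i\phi}f$. The pullback of the standard complex volume form of $\CC^{d+1}$, contracted with the position vector, gives the Legendrian-angle function $\beta$. Concretely, with $f$ an orthonormal-frame lift, one has $\Omega(f,df(e_1),\dots,df(e_d))=e^{i\beta}$. Replacing $f$ by $\zeta f$ multiplies this quantity by $\zeta^{d+1}$. Minimality of $\iota$ is equivalent to $\beta$ being locally constant along horizontal lifts; this is the standard fact that the mean-curvature one-form equals $d\beta$, as in Castro--Li--Urbano. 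Hence, transporting a horizontal lift around a loop $c$, the lift returns as $\chi_L(c)f$, and the constancy of $\beta$ on the connected domain forces $\chi_L(c)^{d+1}=\pm1$. The sign $-1$ occurs exactly when the frame orientation reverses along $c$. Therefore $\chi_L(c)^{2(d+1)}=1$, so $\operatorname{im}\chi_L$ lies in the group of $2(d+1)$-th roots of unity, which is finite cyclic. When $L$ is orientable the frame orientation never reverses, giving $\chi_L(c)^{d+1}=1$ and hence $h_L\mid d+1$.

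\emph{The special Legendrian lift.} On the cover $\widehat L$ associated with $\ker\chi_L$ the holonomy is trivial, so a global horizontal lift $f$ exists. This lift is Legendrian and minimal, because $\pi_{\mathrm H}$ is a Riemannian submersion on horizontal spaces and minimal Lagrangians in $\CP^d$ lift to minimal Legendrians. Since $\widehat L$ is connected, the angle $\beta$ is constant on it. A constant rotation $e^{-i\beta/(d+1)}f$ then makes $f$ special Legendrian, using the connected-phase statement from \cite{CastroLiUrbano2006} recalled after \cref{pr12}.

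\emph{Embeddedness and phase saturation.} Assume $\iota$ is a compact embedding. Then $\pi_{\mathrm H}^{-1}(\iota(L))$ is a compact submanifold $N$ fibered by circles over $\iota(L)$, and the lift $f(\widehat L)$ is a union of leaves of the flat horizontal foliation of $N$.
\begin{itemize}
\item Distinct points of $\widehat L$ with the same image: if $f(x)=f(y)$, then $\iota\pi_L(x)=\iota\pi_L(y)$, so $\pi_L(x)=\pi_L(y)$. The points $x,y$ then differ by a deck transformation $\delta$. Any deck transformation multiplies $f$ by its holonomy value, which is nontrivial for $\delta\neq\mathrm{id}$ because deck transformations of this cover correspond to $\pi_1(L)/\ker\chi_L\cong\operatorname{im}\chi_L$. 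So $x=y$, and since $f$ is a compact injective immersion it is an embedding.
\item Phase saturation: suppose $\zeta f(\widehat L)\cap f(\widehat L)\neq\varnothing$, say $\zeta f(x)=f(y)$. Projecting gives $\pi_L(x)=\pi_L(y)$, so $y=\delta x$ for some deck transformation $\delta$, and $f(\delta x)=\chi(\delta)f(x)$. Thus $\zeta=\chi(\delta)$. Conversely, each $\chi(\delta)$ satisfies $\chi(\delta)f(\widehat L)=f(\delta\widehat L)=f(\widehat L)$.
\end{itemize}
Hence $\mathcal P_f=\mathsf H_f=\operatorname{im}\chi_L$. Combining this with the root-of-unity bound from the first part gives $\zeta^{d+1}=\pm1$ for every $\zeta\in\mathsf H_f$.

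The main obstacle is the first step. One must relate the holonomy precisely to the Lagrangian angle, including the orientation sign that accounts for the factor of $2$ in $h_L\mid 2(d+1)$. We handle this by computing how $\Omega(f,df(e_1),\dots,df(e_d))$ transforms under a change of lift, $f\mapsto\zeta f$, and under a reversal of frame orientation along a loop.
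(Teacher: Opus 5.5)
Your proposal is correct and follows essentially the paper's proof. Constancy of the Legendrian angle on the universal cover gives $\chi_L(c)^{d+1}=\pm1$, with the sign given by the orientation character; you verify this directly from how $\Omega(f,df(e_1),\dots,df(e_d))$ transforms, where the paper cites the Maslov--holonomy identity. The deck-transformation argument then yields embeddedness and $\mathcal P_f=\mathsf H_f=\operatorname{im}\chi_L$.

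One point should be stated explicitly, as the paper does. The same sign identity, applied to loops in $\widehat L$ (where $\chi_L$ is trivial), shows that $\widehat L$ is orientable. This is what makes $\beta$ a single global constant on $\widehat L$ and justifies the final constant phase rotation.
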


       \begin{proof}
                 On the universal cover,
           minimality makes the Legendrian angle of a horizontal lift constant.
           The Maslov--holonomy identity
          \cite{Oh1994,Reckziegel1985,Wang2005,Wolfson2004} gives
          $
                              \chi_L([\sigma])^{d+1}
                              \in\{\pm1\}
          $
          with sign $+1$ precisely for orientation-preserving loops.
           Hence $\operatorname{im}\chi_L$ has the stated order.
           The associated cover is therefore orientable
           and its global horizontal lift becomes special Legendrian after a  phase rotation.

           Suppose now that $\iota$ is embedded.
           If $f(y)=\zeta f(x)$,
           then Hopf projection shows that $x$ and $y$ differ by a deck
           transformation,
           and equivariance gives
           $\zeta\in\operatorname{im}\chi_L$.
           Thus $f$ is phase-saturated and embedded.
       \end{proof}

       \noindent\textit{Representative embedded minimal Lagrangians.}
          Beyond real projective spaces and Clifford tori,
          \cref{tb1} lists some compact embedded minimal Lagrangians
          and their canonical horizontal lifts.
          All entries are homogeneous.
          Parallel and nonparallel refer to the second fundamental form
          of the projective input.
          Each $K$ denotes the full projective isotropy group in its row,
          with identity component $K^0$.

      \begingroup
    \small
    \setlength{\tabcolsep}{3pt}
    \renewcommand{\arraystretch}{1.2}

\begin{longtable}{@{}
   >{\raggedright\arraybackslash}m{0.405\textwidth}
         >{\raggedright\arraybackslash}m{0.32\textwidth}
          >{\raggedright\arraybackslash}m{0.255\textwidth}@{}}

       \caption{Representative minimal Lagrangian embeddings
       and 
       their horizontal 
       lifts.
       }\label{tb1}\\

\toprule
    Projective input $L\subset\CP^d$
    & Horizontal lift $\widehat L\subset\Sphere^{2d+1}$
    & Class/source\\
    \midrule
    \endfirsthead

    \toprule
    Projective input $L\subset\CP^d$
    & Horizontal lift $\widehat L\subset\Sphere^{2d+1}$
    & Class/source\\
    \midrule
    \endhead

    \bottomrule
    \endfoot

          $\displaystyle \mathrm{PSU}(r)
            \hookrightarrow\CP^{r^2-1}$, $r\geq 2$
          & $\displaystyle \mathrm{SU}(r)
            \hookrightarrow\Sphere^{2r^2-1}$
          & Parallel;\newline
            Hamiltonian stable
            \cite{NaitohTakeuchi1982,AmarzayaOhnita2003}\\[0.45ex]

          $\displaystyle\frac{\mathrm{SU}(r)/\mathrm{SO}(r)}{\mathbb Z_r}
            \hookrightarrow\CP^{r(r+1)/2-1}$, $r\geq 3$
          & $\displaystyle
            \frac{\mathrm{SU}(r)}{\mathrm{SO}(r)}
            \hookrightarrow\Sphere^{r(r+1)-1}$
          & Parallel;\newline
            Hamiltonian stable
            \cite{NaitohTakeuchi1982,AmarzayaOhnita2003}\\[0.45ex]

          $\displaystyle
            \frac{\mathrm{SU}(2r)}{\mathrm{Sp}(r)\!\cdot\!\mathbb Z_{2r}}
            \hookrightarrow\CP^{r(2r-1)-1}$, $r\geq 3$
          & $\displaystyle
            \frac{\mathrm{SU}(2r)}{\mathrm{Sp}(r)}
            \hookrightarrow\Sphere^{2r(2r-1)-1}$
          & Parallel;\newline
            Hamiltonian stable
            \cite{NaitohTakeuchi1982,AmarzayaOhnita2003}\\[0.45ex]

          $\displaystyle \frac{E_6}{F_4\!\cdot\!\mathbb Z_3}
            \hookrightarrow\CP^{26}$
          & $\displaystyle \frac{E_6}{F_4}
            \hookrightarrow\Sphere^{53}$
          & Parallel;\newline
            Hamiltonian stable
            \cite{NaitohTakeuchi1982,AmarzayaOhnita2003}\\[0.45ex]

          $\displaystyle \frac{\mathrm{SU}(2)}{F}
            \hookrightarrow\CP^3$,
            $F\simeq\mathbb Z_3\rtimes\mathbb Z_4$
          & $\displaystyle \frac{\mathrm{SU}(2)}{\mathbb Z_3}
            \cong L(3,1)\hookrightarrow\Sphere^7$
          & Nonparallel
          %;
          %\newline
            %Hamiltonian stable
            \cite{Chiang2004}\\[0.45ex]

          $\displaystyle \mathrm{Sp}(3)/K\hookrightarrow\CP^{13}$,\newline
            $K^0=\mathrm{SU}(3)$,
            $K/K^0\simeq\mathbb Z_4$
          & $\displaystyle \frac{\mathrm{Sp}(3)}{\mathrm{SU}(3)}
            \hookrightarrow\Sphere^{27}$
          & Nonparallel
            \cite{BedulliGori2008}\\[0.45ex]

          $\displaystyle \mathrm{SU}(6)/K\hookrightarrow\CP^{19}$,\newline
            $K^0=\mathrm{SU}(3)\times\mathrm{SU}(3)$,
            $K/K^0\simeq\mathbb Z_4$
          & $\displaystyle
            \frac{\mathrm{SU}(6)}{\mathrm{SU}(3)\times\mathrm{SU}(3)}
            \hookrightarrow\Sphere^{39}$
          & Nonparallel
            \cite{BedulliGori2008}\\[0.45ex]

          $\displaystyle
            \frac{\mathrm{SU}(7)}{G_2\!\cdot\!\mathbb Z_7}
            \hookrightarrow\CP^{34}$
          & $\displaystyle \frac{\mathrm{SU}(7)}{G_2}
            \hookrightarrow\Sphere^{69}$
          & Nonparallel
            \cite{BedulliGori2008}\\[0.45ex]

          $\displaystyle \mathrm{Spin}(12)/K\hookrightarrow\CP^{31}$,\newline
            $K^0=\mathrm{SU}(6)$, $K/K^0\simeq\mathbb Z_4$
          & $\displaystyle
            \frac{\mathrm{Spin}(12)}{\mathrm{SU}(6)}
            \hookrightarrow\Sphere^{63}$
          & Nonparallel
            \cite{BedulliGori2008}\\[0.45ex]

          $\mathrm{SU}(8)/K\hookrightarrow\CP^{55}$,\newline
            $K^0=\mathrm{PSU}(3)$, $K/K^0\simeq\mathbb Z_{16}$
          & $\displaystyle \frac{\mathrm{SU}(8)}{\mathrm{PSU}(3)}
            \hookrightarrow\Sphere^{111}$
          & Nonparallel
            \cite{BedulliGori2008}\\[0.45ex]

          $\mathrm{Spin}(14)/K\hookrightarrow\CP^{63}$,\newline
            $K^0=G_2\times G_2$, $K/K^0\simeq\mathbb Z_8$
          & $\displaystyle \frac{\mathrm{Spin}(14)}{G_2\times G_2}
            \hookrightarrow\Sphere^{127}$
          & Nonparallel
            \cite{BedulliGori2008}\\[0.45ex]

\end{longtable}
\endgroup

          In the symmetric-matrix row,
          $\mathbb Z_r$ acts by scalar multiplication on the standard matrix model
          of $\mathrm{SU}(r)/\mathrm{SO}(r)$.
         % The skew-matrix orbit 
         The lift in the third row has different horizontal presentation
          $\mathrm{SU}(2r-1)/\mathrm{Sp}(r-1)$;
          for $r=3$ this is $\mathrm{SU}(5)/\mathrm{Sp}(2)$.

           \begin{theorem}[Spherical return classification]
           \label{th17}
           Let $\iota_i:L_i^{n_i}\hookrightarrow\CP^{n_i}$,
           $i=1,2$,
           be compact connected embedded minimal Lagrangians,
           $f_i:\widehat L_i\hookrightarrow\Sphere^{2n_i+1}$
           their canonical horizontal lifts,
           and $\gamma$  a nonsteady contact profile.
           After a  phase rotation,
           $G_\gamma$ is special Legendrian.

           Every coincidence of $G_\gamma$ satisfies $t'-t=kT_{\mathrm{cell}}$
            for some $k\in\mathcal R_G$.
           Consequently,
           if $\mathcal R_G=\{0\}$,
           then $G_\gamma$ is injective but not an embedding.
           If $\mathcal R_G=m_G\ZZ\ne\{0\}$,
           then,
           up to a phase rotation,
               $G_\gamma$ descends to a compact embedded special Legendrian
                         $ \overline G_\gamma:
              X_\gamma
              \hookrightarrow
              \Sphere^{2n_1+2n_2+3},$
called its primitive spherical quotient.
Here $X_\gamma:=\mathcal M_G$ is the mapping torus of \cref{th6}, and $\overline G_\gamma$ is its descended map.
       \end{theorem}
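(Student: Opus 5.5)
The plan has three parts: apply the existing results to the lifts, classify every coincidence of $G_\gamma$ by the two branch types, and then read off the consequences.

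\emph{Inputs.} By \cref{pr13}, each canonical lift $f_i$ is a compact connected embedded special Legendrian. It is phase-saturated, so $\mathcal P_{f_i}=\mathsf H_{f_i}=\operatorname{im}\chi_{L_i}$, and this group is finite cyclic with $\zeta^{n_i+1}=\zeta^{p}\in\{\pm1\}$ (resp.\ $\zeta^q\in\{\pm1\}$). Since $\gamma$ is a contact profile, \cref{pr12} shows that $G_\gamma$ is minimal Legendrian. A constant phase rotation then makes it special Legendrian.

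\emph{Coincidences.} Suppose $G_\gamma(t,x,y)=G_\gamma(t',x',y')$. Comparing block norms gives $s(t)=s(t')$. Comparing the blocks themselves gives $\zeta_1 f_1(x')=f_1(x)$ and $\zeta_2 f_2(y')=f_2(y)$ with $\zeta_j=e^{i(\theta_j(t')-\theta_j(t))}$. Hence $\zeta_j\in\mathcal P_{f_j}=\mathsf H_{f_j}$. We split into two cases.

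\emph{Opposite branches.} Suppose $t,t'$ lie on oppositely oriented interior branches. Then \cref{lm12} gives $\chi_{p,q}(\zeta_1,\zeta_2)=e^{\pm2i\alpha(s)}\notin\{\pm1\}$. But $\zeta_1^p\in\{\pm1\}$ and $\zeta_2^q\in\{\pm1\}$, so $\chi_{p,q}(\zeta_1,\zeta_2)\in\{\pm1\}$. This is a contradiction, and it is exactly where the Maslov--holonomy constraint in \cref{pr13} is used. Note that no low-energy selection as in \cref{th16} is needed here.

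\emph{Same branches.} Suppose instead $t,t'$ lie on same-oriented branches, or are occurrences of the same turning value. Then, as in \cref{th16}, $t'-t=kT_{\mathrm{cell}}$ and $(\zeta_1,\zeta_2)=(e^{ik\Delta_1},e^{ik\Delta_2})$. One point needs care: the two turning values $s_\pm$ themselves. At $s_-$ versus $s_+$ the norms already differ. At a single turning value all occurrences differ by whole cells by \cref{pr4}. Since $\zeta_j\in\mathsf H_{f_j}$, we get $k\in\mathcal R_G$.

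\emph{Consequences.} If $\mathcal R_G=\{0\}$, the above forces $t=t'$, and injectivity of $f_1,f_2$ then gives $x=x'$ and $y=y'$. So $G_\gamma$ is injective. It is not an embedding: the profile is not closed, since closing would give a nonzero element of $\mathcal R_G$. Hence, by compactness of $[s_-,s_+]\times\TT^2$ and recurrence of $m\mapsto(e^{im\Delta_1},e^{im\Delta_2})$, there are times $t_j=k_jT_{\mathrm{cell}}\to\infty$ with $G_\gamma(t_j,x,y)\to G_\gamma(0,x,y)$. This shows the image topology is not the domain topology.

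If $\mathcal R_G=m_G\ZZ$, then \cref{th6} gives the descent to $\mathcal M_G$ via $\Phi$. Every coincidence is $(t+km_GT_{\mathrm{cell}},x,y)$ versus $(t,\Phi^{k}(x,y))$, using $f_j\circ\varphi_{h_j}=h_jf_j$ and the uniqueness in \cref{df5}. Therefore $\overline G_\gamma$ is injective on the compact manifold $\mathcal M_G$, and being an immersion it is an embedding.

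Finally, the special Legendrian property descends to the quotient. The gluing phase $h=(h_1,h_2)$ lies in $K_0\subset\ker\chi_{p,q}$ by \eqref{eq74}, so, as noted in \cref{rmmon}, it preserves the constant Legendrian angle, and the same constant rotation works on $X_\gamma$.

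The main obstacle I expect is the bookkeeping in the coincidence classification. One must check that the character contradiction indeed uses only $\zeta_j^{p},\zeta_j^q\in\{\pm1\}$, and one must handle the turning points and the non-embeddedness claim in the $\mathcal R_G=\{0\}$ case carefully.
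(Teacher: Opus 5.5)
Your proposal is correct and follows the paper's proof. Phase saturation of the canonical lifts (\cref{pr13}) puts the coincidence phases in $\mathsf H_{f_1}\times\mathsf H_{f_2}$. The Maslov--holonomy constraint then forces their $\chi_{p,q}$-value into $\{\pm1\}$, which \cref{lm12} rules out for opposite branches, so only complete-cell shifts with $k\in\mathcal R_G$ remain. Your recurrence argument for non-embeddedness when $\mathcal R_G=\{0\}$ and your observation that the gluing phase lies in $K_0$ simply make explicit what the paper leaves as ``easy to see'' or obtains directly from \cref{pr12}.
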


       \begin{proof}
           Suppose
           $G_\gamma(t,x_1,x_2)=G_\gamma(t',x_1',x_2')$.
           Then $s(t)=s(t')$ and
           $h_i:=e^{i(\theta_i(t)-\theta_i(t'))}\in\mathsf H_{f_i}$.
           With %$p=n_1+1$, $q=n_2+1$, and
           $\Psi=p\theta_1+q\theta_2$,
           by \cref{pr13} we have
           \[
                              e^{i(\Psi(t)-\Psi(t'))}
                              =h_1^ph_2^q\in\{\pm1\}.
           \]
           \cref{lm12} therefore excludes oppositely oriented interior
           branches.
           As a result,
           $t'-t=kT_{\mathrm{cell}}$ for some $k\in\ZZ$.
           The same holds at a turning point.
           Thus $e^{ik\Delta_i}\in\mathsf H_{f_i}$ and
           $k\in\mathcal R_G$.
           Conversely,
           every $k\in\mathcal R_G$ gives such an identification.

           If $\mathcal R_G=\{0\}$,
            then the immersion is injective,
           and it is easy to see that
           it cannot be an embedding.
           When $\mathcal R_G=m_G\ZZ\ne\{0\}$,
           the primitive spherical quotient is embedded.
           By \cref{pr12},
           it is always,
           up to a phase rotation,
                     special Legendrian.
       \end{proof}

%   The preceding theorem gives an embedded primitive spherical quotient $X_\gamma\subset\Sphere^{2n_1+2n_2+3}$.
%      We now pass to its Hopf quotient.
 %       Since the Hopf map ignores a common phase,
%        the first projective return may occur before the first spherical return. 
 %  The next subsection determines this projective return and the embeddedness of the resulting projective quotient.

Assume henceforth that
    $\mathcal R_G=m_G\ZZ\ne\{0\}$ and retain
       $X_\gamma$ and $\overline G_\gamma$ from \cref{th17}.
       Let $\mathsf H_{G_\gamma}$ be the phase stabilizer of $\overline G_\gamma(X_\gamma)$.
            The Hopf projection of $\overline G_\gamma$ descends to
            \begin{equation}\label{DaXg}
                \overline \iota_\gamma:   \mathcal X_\gamma :=    X_\gamma/\mathsf H_{G_\gamma} 
                \longrightarrow
                           \CP^{n_1+n_2+1}.
            \end{equation}
            Let $q_\gamma:X_\gamma\to\mathcal X_\gamma$ stand for the quotient map.
            Then
            this descent is characterized by
                 $\overline\iota_\gamma\circ q_\gamma =\pi_{\mathrm H}\circ\overline G_\gamma$.
            We call $\overline \iota_\gamma$ the projective Delaunay
            immersion associated with $(\iota_1,\iota_2,\gamma)$.
            As additional Hopf identifications may occur,
            $\overline \iota_\gamma$ need not be embedded.

            \begin{remark}[Block homogeneous coordinates] 
            \label{rm:HC} 
           For $[x]\in\mathcal X_\gamma$, choose a representative
           $x\in X_\gamma$ and
                write $\overline G_\gamma (x)=(Z_1(x),Z_2(x))$ with respect to
            $\CC^{n_1+n_2+2}=\CC^{n_1+1}\oplus\CC^{n_2+1}$.  
            Then
            $$
              \overline \iota_\gamma([x])
                                  =[Z_1(x):Z_2(x)]
                                                     =[(\cos s)e^{i\theta_1}f_1:(\sin s)e^{i\theta_2}f_2]
                                                     $$
                                                     with
            $  |Z_1|^2=\cos^2s,\, |Z_2|^2=\sin^2s$.
            The common phase ambiguity leaves the block norms invariant,
            so functions  $  |Z_1|^2,\, |Z_2|^2$ are globally defined on
            $\mathcal X_\gamma$.
               Since both blocks are nonzero,
               blockwise projectivization gives a well-defined map
               $
               \mathcal X_\gamma \to L_1\times L_2,
               $
               $[Z_1: Z_2]\mapsto ([Z_1], [Z_2])$.
                        \end{remark}

   \subsection{Hopf descent and projective factor slices}\label{sub62}

       \begin{proposition}[Hopf descent criterion]
           \label{pr14}
           Let $F:X^d\hookrightarrow\Sphere^{2d+1}$ be a compact connected
            Legendrian embedding,
             with phase stabilizer $\mathsf H_F$.
           Then $\mathsf H_F$ is finite cyclic and acts freely on $X$.
           The Hopf map induces an immersion
           $\overline F:X/\mathsf H_F\to\CP^d$,
           which is embedded if and only if $F(X)$ is phase-saturated.
           If $F$ is minimal Legendrian,
           then $\overline F$ is minimal Lagrangian.
       \end{proposition}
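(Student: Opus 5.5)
The plan has four steps: finiteness and cyclicity of $\mathsf H_F$, freeness of its action, the immersion and embedding criterion for $\overline F$, and the minimality statement.

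\emph{Finiteness and cyclicity.} A Legendrian embedding of a compact connected $X^d$ into $\Sphere^{2d+1}$ is in particular $\cC$-totally real. \Cref{lm9} therefore applies directly: $\mathsf H_F$ is finite cyclic, and each $\zeta\in\mathsf H_F$ determines a unique isometry $\varphi_\zeta$ of $X$ with $F\circ\varphi_\zeta=\zeta F$, the action being faithful.

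\emph{Freeness.} Suppose $\varphi_\zeta(x)=x$. Then $F(x)=\zeta F(x)$. Since $F(x)$ is a unit vector, $\zeta=1$. So the action on $X$ is free, and $X/\mathsf H_F$ is a compact manifold with smooth covering map $X\to X/\mathsf H_F$. The composite $\pi_{\mathrm H}\circ F$ is invariant, because $\pi_{\mathrm H}(\zeta F)=\pi_{\mathrm H}(F)$. It therefore descends to $\overline F$.

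\emph{Immersion.} To show $\overline F$ is an immersion, I will use that $\ker d\pi_{\mathrm H}$ at $F(x)$ is $\RR JF(x)$. Legendrian means $dF(TX)\perp JF$, so $d\pi_{\mathrm H}\circ dF$ is injective. In fact $d\pi_{\mathrm H}$ restricted to the horizontal space is an isometry, so $\overline F$ is an isometric Lagrangian immersion; $d\pi_{\mathrm H}$ intertwines $J$ on horizontal spaces, and $JdF(TX)\perp dF(TX)$ by \cref{lm2}.

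\emph{Embedding criterion.} I expect this to be the main point. $\overline F([x])=\overline F([y])$ holds iff $F(y)=\zeta F(x)$ for some $\zeta\in\Sphere^1$, that is, $\zeta\in\mathcal P_F$.

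Suppose first that $F(X)$ is phase-saturated, so $\zeta\in\mathsf H_F$. Then $F(y)=F(\varphi_\zeta(x))$, and injectivity of $F$ gives $y=\varphi_\zeta(x)$, hence $[x]=[y]$. So $\overline F$ is an injective immersion of a compact manifold, hence an embedding.

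Conversely, suppose some $\zeta\in\mathcal P_F\setminus\mathsf H_F$, with $F(y)=\zeta F(x)$. Then $[x]\neq[y]$, since otherwise $y=\varphi_\eta(x)$ for some $\eta\in\mathsf H_F$, which forces $\zeta=\eta$. Thus $\overline F$ fails to be injective and is not an embedding. In this direction one must also make sure that no identification is secretly the same point. The argument just given handles this, since $F(x)\neq0$ pins down $\zeta$ uniquely.

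\emph{Minimality.} If $F$ is minimal Legendrian, the cone over $F$ is minimal in $\CC^{d+1}$. Equivalently, the mean curvature of $F$ in the sphere vanishes, and by \cref{lm2} it is horizontal. The standard O'Neill/Reckziegel correspondence for the Riemannian submersion $\pi_{\mathrm H}$ with totally geodesic fibres states that for horizontal (Legendrian) immersions the second fundamental forms correspond under $d\pi_{\mathrm H}$. Hence the mean curvature of $\overline F$ is $d\pi_{\mathrm H}$ of that of $F$, which is zero, and $\overline F$ is minimal Lagrangian. I would quote this correspondence from \cite{Reckziegel1985} rather than recompute it.
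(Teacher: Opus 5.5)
Your proposal is correct and follows the paper's own proof. Both use \cref{lm9} for the finite cyclic stabilizer, horizontality for the immersion, the definition of phase saturation for the embedding criterion, and the Reckziegel/Hopf correspondence for minimality. Your explicit unit-vector argument for freeness fills in a step the paper attributes to \cref{lm9}, which as stated only gives faithfulness.
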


       \begin{proof}
    By \cref{lm9} it follows that $\mathsf H_F$ is finite cyclic,  acting on $X$ freely.
    Since $F$ is horizontal, 
    $\pi_H\circ F$ induces an immersion $\overline F:X/\mathsf H_F\to\CP^d$.
    The equivalence follows directly from the definition of phase saturation.
        The last assertion is the standard Hopf correspondence between minimal Legendrian immersions and minimal Lagrangian immersions; e.g., see \cite{CastroLiUrbano2006,Reckziegel1985}.
       \end{proof}

        %   Assume from now on $\mathcal R_G=m_G\ZZ\ne\{0\}$
         %  and retain
         %  $X_\gamma,      \overline G_\gamma,        \mathcal X_\gamma$
             %  and
            %       $\overline \iota_\gamma$
              % from above.
      Now let us introduce
\[
%\, \, \,\, 
 \mathsf K_{12}
   :=\mathsf H_{f_1}\mathsf H_{f_2}^{-1},
   \qquad
   \kappa:=|\mathsf K_{12}|
   =\operatorname{lcm}\bigl(|\mathsf H_{f_1}|,|\mathsf H_{f_2}|\bigr),
   \qquad
%   \Lambda:=\frac{2\pi}{\kappa},
%   \qquad 
   \delta:=\Delta_1-\Delta_2>0.
\]
            Thus $\mathsf K_{12}=\langle  e^{\frac{2\pi i}{\kappa}}  \rangle$,
            collects relative phases that can be absorbed by the two factor stabilizers.
               For $e^{im_G\delta}\in\mathsf K_{12}$,
         we can write
         \[
         \frac{\kappa\delta}{2\pi} 
         =
                 \frac{A}{N},   \qquad  A,N\in\NN,   \qquad   \gcd(A,N)=1.
            \]
We call $A$ the \emph{reduced relative winding number}.
%Actually this $N$ also has meanings.
%
%
%
            As Hopf projection eliminates common phase, we define the \emph{projective return group} by
\[
   \mathcal R_L
   :=
   \left\{
   r\in\ZZ:
   e^{ir\Delta_i}
   =
   \lambda h_i,
   \,
   \text{ for some }
   \lambda\in\Sphere^1,
   \,
   h_i\in\mathsf H_{f_i},
\,
   i=1,2
   \right\}.
\]
%I.e., $  \mathcal R_L  =   \left\{   r\in\ZZ:    e^{ir\delta}\in\mathsf K_{12} \right\}. $
Its positive generator    is called   the    \emph{projective return order}.

\begin{proposition}[Projective return order]
\label{prPR}
The projective return group  $ \mathcal R_L=N\ZZ $.
Consequently,
$N$ is the projective return order,
and
$
   N\mid m_G\mid m_\gamma.
$
\end{proposition}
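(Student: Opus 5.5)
The plan is to rewrite $\mathcal R_L$ as $\{r\in\ZZ: e^{ir\delta}\in\mathsf K_{12}\}$ and then read off its generator from the fraction $\kappa\delta/2\pi=A/N$.

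\emph{Step 1: reformulation.} Suppose $e^{ir\Delta_i}=\lambda h_i$ with $h_i\in\mathsf H_{f_i}$. Dividing the two equations gives $e^{ir\delta}=h_1h_2^{-1}\in\mathsf K_{12}$. Conversely, if $e^{ir\delta}=h_1h_2^{-1}$, set $\lambda:=e^{ir\Delta_1}h_1^{-1}$. Then $\lambda h_2=e^{ir\Delta_1}e^{-ir\delta}=e^{ir\Delta_2}$, as required. Hence
\[
\mathcal R_L=\{r\in\ZZ:e^{ir\delta}\in\mathsf K_{12}\}.
\]

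\emph{Step 2: the generator is $N$.} Since $\mathsf K_{12}=\langle e^{2\pi i/\kappa}\rangle$ is the group of $\kappa$-th roots of unity, $e^{ir\delta}\in\mathsf K_{12}$ holds exactly when $r\kappa\delta/2\pi\in\ZZ$, that is, when $rA/N\in\ZZ$. Because $\gcd(A,N)=1$, this is equivalent to $N\mid r$. So $\mathcal R_L=N\ZZ$.

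The fraction $A/N$ is well defined because $\kappa\delta/2\pi$ is rational. This follows from the standing hypothesis $e^{im_G\delta}\in\mathsf K_{12}$: the elements $e^{im_G\Delta_i}$ lie in the finite groups $\mathsf H_{f_i}$, so $m_G\in\mathcal R_L$. Positivity of $A$ comes from $\delta>0$.

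\emph{Step 3: divisibility.} Taking $\lambda=1$ shows $\mathcal R_G\subset\mathcal R_L$, so $m_G\in N\ZZ$ and $N\mid m_G$. \Cref{th6} gives $m_\gamma=m_G\ord(\Phi)$, hence $m_G\mid m_\gamma$.

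The argument is elementary. The only point needing care is Step 1: the common phase $\lambda$ must be chosen explicitly so that both coordinate conditions hold at once, rather than only their ratio.
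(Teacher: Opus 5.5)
Your proposal is correct and follows the paper's proof: both rewrite $\mathcal R_L$ as $\{r\in\ZZ: e^{ir\delta}\in\mathsf K_{12}\}$, use $\gcd(A,N)=1$ to get $\mathcal R_L=N\ZZ$, and obtain $N\mid m_G$ from $\mathcal R_G\subset\mathcal R_L$. Your explicit choice $\lambda=e^{ir\Delta_1}h_1^{-1}$ spells out an equivalence the paper treats as immediate. For $m_G\mid m_\gamma$, the paper uses $m_\gamma\in\mathcal R_G$ directly (since $e^{im_\gamma\Delta_i}=1$) rather than citing \cref{th6}; both routes are valid.
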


\begin{proof}
By the definition of $\mathcal R_L$ and $\mathsf K_{12}=\langle  e^{\frac{2\pi i}{\kappa}}  \rangle$,
\begin{equation}\label{eqRL}
   r\in\mathcal R_L
   \quad\Longleftrightarrow\quad
   e^{ir\delta}\in\mathsf K_{12}
 \quad\Longleftrightarrow\quad
    \frac{\kappa r\delta}{2\pi}
       =
   \frac{rA}{N}
    \in\ZZ.
\end{equation}
Hence
$
   N\mid r
$
and 
$\mathcal R_L=N\ZZ$
with
$
   m_\gamma\ZZ
   \subset
   \mathcal R_G
   =
   m_G\ZZ
   \subset
   \mathcal R_L
   =
   N\ZZ.
$
\end{proof}
       We denote by $\mathcal C_\gamma$ the virtual  $N$-cell base circle 
            obtained by quotienting the profile  by the translation
                  $t\mapsto t+NT_{\mathrm{cell}}$ 
                  underlying one primitive projective return, parametrized by projective arclength $\tau$.
                             The proposition determines all  complete-cell projective returns.
                            However, incoming and outgoing slices may meet without forming a global return.  
                             To detect these incidences, 
                              set
\[
   \mathsf D_{12}:=\mathsf H_{f_1}\cap\mathsf H_{f_2}
\]
acting diagonally on each fixed-time slice
and
$
   \mathcal F_t
   :=
   \pi_H\bigl(
   G_\gamma(
   \{t\}\times\widehat L_1\times\widehat L_2)
   \bigr)
   \cong
   (\widehat L_1\times\widehat L_2)/\mathsf D_{12}.
$

\begin{theorem}[Projective self-intersections]
\label{pr15}
Under the hypotheses and notation above,
$\overline \iota_\gamma$ is embedded
if and only if $A=1$.
When $A\geq 2$,
the self-intersection locus of $\overline \iota_\gamma$
is the disjoint union of exactly
$N(A-1)$ projective factor slices.
Along each such slice exactly two local sheets meet, sharing codimension-one tangent space corresponding to the slice.
\end{theorem}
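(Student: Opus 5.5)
The plan is to reduce every self-incidence of $\overline\iota_\gamma$ to a self-crossing of a planar curve. The curve records the magnitude $s$ and the relative phase $\phi:=\theta_1-\theta_2$, taken modulo $2\pi/\kappa$. I then count its double points by a winding argument.

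\emph{Step 1: reduction to the relative phase.} Suppose $\overline\iota_\gamma([t,x])=\overline\iota_\gamma([t',x'])$, with the block coordinates of \cref{rm:HC}. Then $G_\gamma(t,x_1,x_2)=\lambda\,G_\gamma(t',x_1',x_2')$ for some $\lambda\in\Sphere^1$. Comparing block norms gives $s(t)=s(t')$. Comparing blocks gives $e^{i(\theta_j(t)-\theta_j(t'))}\lambda^{-1}f_j(x_j)=f_j(x_j')$ for $j=1,2$. By phase saturation (\cref{pr13}), $e^{i(\theta_j(t)-\theta_j(t'))}\lambda^{-1}\in\mathsf H_{f_j}$. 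Eliminating $\lambda$ yields $e^{i(\phi(t)-\phi(t'))}\in\mathsf K_{12}$.

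Conversely, if $s(t)=s(t')$ and $\phi(t)-\phi(t')\in\frac{2\pi}{\kappa}\ZZ$, choose $\lambda$ and $h_j\in\mathsf H_{f_j}$ absorbing the phases. The maps $\varphi_{h_j}$ then identify the whole slice $\mathcal F_t$ with $\mathcal F_{t'}$. Hence incidences occur slice-by-slice, and they correspond exactly to double points of the closed curve
\[
c:\mathcal C_\gamma\to[s_-,s_+]\times\bigl(\RR/\tfrac{2\pi}{\kappa}\ZZ\bigr),\qquad
\tau\mapsto\bigl(s,\phi\bmod\tfrac{2\pi}{\kappa}\bigr).
\]
Here $\mathcal C_\gamma$ is the $N$-cell base circle. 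By \cref{prPR}, the points of $\mathcal C_\gamma$ are exactly the profile points modulo projective returns, so genuine global returns have already been quotiented out.

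\emph{Step 2: shape of $c$.} On the contact level, \eqref{eq25} with $C_1=-1$ gives $\theta_1'=\sigma\tan s/\sqrt D$ and $\theta_2'=-\sigma\cot s/\sqrt D$. Hence $\phi$ is strictly monotone in $t$ and advances by $\delta$ per cell. Reflection symmetry of each cell about its turning points (\cref{pr4}) lets me write the two half-cells as follows. The increasing branch is $\phi=\phi_0+g(s)$ and the returning branch is $\phi=\phi_0+\delta-g(s)$, where $g$ increases strictly from $0$ at $s_-$ to $\delta/2$ at $s_+$.

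Over $N$ cells $\phi$ advances by $N\delta=A\cdot\frac{2\pi}{\kappa}$. So $c$ is an oscillating graph winding exactly $A$ times around the cylinder.

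\emph{Step 3: counting double points.} Lift to the universal cover $\phi\in\RR$. A double point is a crossing of the lifted curve with a translate by a nonzero element $k\frac{2\pi}{\kappa}$ that is not a multiple of $N\delta$.

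Two same-oriented branches differ by a pure $\phi$-translate of the graph of $g$ or of $\delta-g$, so they are disjoint unless the shift is zero, that is, unless the incidence is a return. An increasing branch and a returning branch, shifted by $\Delta$ relative to each other, meet iff $2g(s)=\delta+\Delta'$ has a solution with $s$ interior. Here $\Delta'$ is the shift reduced to the window $(-\delta,\delta)$, and when a solution exists it is unique and transversal. These intersections are transverse because the two branches have opposite signs of $\dot s$.

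Counting grid translates modulo $N\delta$ that fall into such windows over one fundamental domain of $N$ cells should give exactly $N(A-1)$ unordered crossings. The heuristic behind this count is that a monotone-angle oscillating curve of winding number $A$ picks up $A-1$ crossings per cell. This combinatorial count is the main obstacle. I would carry it out by parametrizing the grid points $k\frac{2\pi}{\kappa}=\frac{kN}{A}\delta$ and using $\gcd(A,N)=1$ to show that, per cell, the $A-1$ nonzero residues each contribute exactly one crossing.

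\emph{Step 4: conclusion.} If $A=1$ there are no double points, so $\overline\iota_\gamma$ is an injective immersion of a compact manifold and therefore an embedding. Conversely, if $A\ge2$ the $N(A-1)$ crossings give that many disjoint slices $\mathcal F_t$ along which $\overline\iota_\gamma$ fails to be injective, so it is not embedded. At each such slice exactly two local sheets meet, since crossings are simple by Step 3. The two sheets share the tangent space of the slice, which comes from the factor directions. Their profile directions differ because the two branches have $\dot s$ of opposite sign.
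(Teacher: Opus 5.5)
Your argument is essentially the paper's: your $2g(s)$ is exactly the paper's $B(s)=\psi(t_+(s))-\psi(t_-(s))$. The phase-saturation incidence criterion, the exclusion of same-oriented branches, and the transversality of opposite branches in the $(s,\theta_1-\theta_2)$-plane are the same steps, and the count you flag as the main obstacle closes in one line as you propose: with $\Lambda=2\pi/\kappa$ and $\delta=A\Lambda/N$, $\gcd(A,N)=1$ gives $\delta\ZZ+\Lambda\ZZ=\frac{\Lambda}{N}\ZZ$, so opposite branches meet exactly at the levels $2g(s_\nu)=\nu\Lambda/N$, $\nu=1,\dots,A-1$, with the partner's cell offset $r$ unique modulo $N$ by $\nu+rA\equiv0\pmod N$, which yields $N(A-1)$ crossing slices with two sheets each (none when $A=1$).
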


\begin{proof}
With $\psi:=\theta_1-\theta_2$,
phase saturation of the factors gives the incidence criterion
\[
   \mathcal F_t\cap\mathcal F_{t'}\ne\varnothing
   \quad\Longleftrightarrow\quad
   \mathcal F_t=\mathcal F_{t'}
   \quad\Longleftrightarrow\quad
   s(t)=s(t')
   \ \text{and}\
   e^{i(\psi(t)-\psi(t'))}\in\mathsf K_{12}.
\]
This criterion alone does not imply a smooth return.

For $s\in[s_-,s_+]$,
let $t_-(s)$ and $t_+(s)$ be the incoming and outgoing points
adjacent to a fixed lower turning point,
and set
\[
   B(s)
   :=
   \psi(t_+(s))-\psi(t_-(s))
   =
   2\int_{s_-}^{s}
   \frac{\tan u+\cot u}
        {\sqrt{D_{C_2}(u)}}\,\dd u.
\]
Then $B$ increases strictly from $0$ to $\delta$
and every opposite-branch phase difference is congruent to
$B(s)$ modulo $\delta$.
          % Since $B'(s)>0$ in the interior,
        Thus the incoming and outgoing profile directions are distinct modulo the tangent space of the common factor slice.  
       Consequently, every opposite-branch incidence is a crossing rather than a smooth return. 
         Therefore, every smooth projective return is a complete-cell shift determined by
        $\mathcal R_L$.
        Two opposite branches separated by $r$ complete cells meet
        projectively precisely when
                  \begin{equation}\label{eqPI}
                      e^{(B(s)+r\delta)i}\in \mathsf K_{12}.
                   \end{equation}
          Set $\Lambda=\frac {2\pi} \kappa$.  
          Requirement \eqref{eqPI} is equivalent to $B(s)+r\delta\in\Lambda\ZZ$.  
          Since $\delta=\frac {A\Lambda} N$ and $\gcd(A,N)=1$, 
                 such an $r$ exists exactly when $B(s)\in \frac {\Lambda} N \ZZ$.  
                 Thus the interior incidence levels are precisely
                        $B(s_\nu)= \frac {\nu\Lambda} N $, for $\nu=1, \, \ldots,A-1$.
               For each $\nu$, 
                   every one of the $N$ cells in a primitive projective period contributes one self-intersecting slice;
                    its cell difference $r_\nu$ is uniquely determined modulo $N$ by $\nu+r_\nu A\equiv0\pmod N$.  
              Strict monotonicity makes the levels $s_\nu$ distinct, 
                        while uniqueness of $r_\nu$ gives each sheet a unique opposite-branch partner.  
                        Hence there are exactly $N(A-1)$ components, 
                        with exactly two local sheets along each one. 
                 By the branch-direction observation above, these are the stated crossings.

     Finally, it is clear that $\overline \iota_\gamma$ is embedded exactly when $A=1$.
\end{proof}

      %    Next corollary tells us that there are resulting embeddings.

       \begin{corollary}[Embedded projective resonance]
           \label{co7}
           For the canonical lifts fixed above,
                      if $p+q>2$ and
$
   \sqrt{\frac{2pq}{p+q}}
   <
   \kappa N
   <
   \frac{2pq}{p+q}
$
for some $N\in\NN$,
           then there is a regular ordinarily closed contact profile with                       $ \frac{\kappa\delta}{2\pi}=\frac1N$.
                                         Its primitive spherical quotient is embedded special
           Legendrian,
           and its Hopf quotient is a compact embedded Delaunay-type minimal
           Lagrangian with projective return order $N$.
       \end{corollary}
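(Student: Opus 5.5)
The plan is to realize the resonance condition by the intermediate value theorem, applied to the relative phase $\delta=\Delta_1-\Delta_2$ along the contact line $C_1=-1$, and then to read off all the conclusions from \cref{th17,prPR,pr15,pr14}.

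First, the range of $\delta$. On the contact level with $\mu_1>0$ we have $\Delta_1>0>\Delta_2$, so $\delta=\Delta_1+|\Delta_2|$. By \cref{pr5}, $\delta$ is real analytic, hence continuous, on $(P_*,\infty)$. \Cref{lm15} gives the low-energy limit
\[
\delta\to\pi\left(\sqrt{\tfrac{2q}{p(p+q)}}+\sqrt{\tfrac{2p}{q(p+q)}}\right)=\pi\sqrt{\tfrac{2(p+q)}{pq}}
\qquad(C_2\downarrow P_*),
\]
and \cref{lm16} gives $\delta\to\pi\frac{p+q}{pq}$ as $C_2\uparrow\infty$. Hence $\frac{\kappa\delta}{2\pi}$ takes every value strictly between $\kappa\frac{p+q}{2pq}$ and $\kappa\sqrt{\frac{p+q}{2pq}}$. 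This interval is nondegenerate because $x:=\frac{2pq}{p+q}>1$ when $p+q>2$, so $\sqrt x<x$.

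Next, the target value $\frac1N$. The inequality $\frac1N>\kappa\frac{p+q}{2pq}$ is equivalent to $\kappa N<\frac{2pq}{p+q}$. The inequality $\frac1N<\kappa\sqrt{\frac{p+q}{2pq}}$ is equivalent to $\kappa N>\sqrt{\frac{2pq}{p+q}}$. So the hypothesis on $\kappa N$ is exactly what places $\frac1N$ in the open range, and the intermediate value theorem yields $C_2>P_*$ with $\frac{\kappa\delta}{2\pi}=\frac1N$. The corresponding profile is nonsteady, hence regular oscillatory by \cref{pr3}.

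Then, ordinary closing. The exact identity \eqref{eq74}, $p\Delta_1+q\Delta_2=0$, combined with $\Delta_1-\Delta_2=\delta$, gives
\[
\Delta_1=\frac{q\delta}{p+q},\qquad \Delta_2=-\frac{p\delta}{p+q}.
\]
Both are rational multiples of $2\pi$, so $\Pi\in\QQ^2$ and the profile is ordinarily closed by \cref{th4}. Consequently $m_\gamma\in\mathcal R_G$, so $\mathcal R_G=m_G\ZZ\ne\{0\}$. \Cref{th17} then says the primitive spherical quotient $\overline G_\gamma$ is a compact embedded special Legendrian, after a phase rotation.

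For the projective side, $e^{im_G\delta}\in\mathsf K_{12}$ holds because $m_G\in\mathcal R_G\subset\mathcal R_L$. The reduced form of $\frac{\kappa\delta}{2\pi}=\frac1N$ has $A=1$ and denominator $N$. By \cref{prPR}, the projective return order is $N$. By \cref{pr15}, $\overline\iota_\gamma$ is embedded since $A=1$. By \cref{pr14}, applied to $\overline G_\gamma$ (which is minimal Legendrian), the Hopf quotient is minimal Lagrangian.

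The only delicate step is the endpoint-limit computation of $\delta$. It is a direct combination of \cref{lm15,lm16}, so I expect no real obstacle. The one point to check is that the sign convention $\mu_1>0$ really makes $\Delta_2<0$, so that the two limits add in absolute value. This follows from \eqref{eq47} with $C_1=-1$.
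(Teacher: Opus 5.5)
Your proof is correct and follows the paper's argument essentially step by step. You take the endpoint limits of $\delta/(2\pi)$ from \cref{lm15,lm16} and use the intermediate value theorem to reach $\frac{\kappa\delta}{2\pi}=\frac1N$; the contact identity \eqref{eq74} then makes $\Delta_1,\Delta_2$ rational multiples of $2\pi$, and $A=1$ together with \cref{th17,pr14,pr15} gives the conclusions, with your explicit appeal to \cref{prPR} for the return order $N$ left implicit in the paper.
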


       \begin{proof}
           By \cref{lm15,lm16},
           $
                              \frac{\delta}{2\pi}
                              \rightarrow
                              \sqrt{\frac{p+q}{2pq}}
                               \quad(C_2\downarrow P_*),
                         \,
                              \frac{\delta}{2\pi}
                              \rightarrow
                              \frac{p+q}{2pq}
                               \quad(C_2\uparrow\infty).
           $
           The hypothesis places $\frac{1}{\kappa N}$ between these limits,
           so the intermediate value theorem asserts 
           the existence of $C_2$ such that
           $\frac{\kappa\delta}{2\pi}=\frac{1}{N}$.
           Equation \eqref{eq74} then gives
           $\frac{\Delta_1}{2\pi}=\frac{q}{\kappa N(p+q)}$ and
           $\frac{\Delta_2}{2\pi}=-\frac{p}{\kappa N(p+q)}$.
           Hence the profile closes ordinarily.
           Since the winding number $A=1$ now,
the conclusions follow from
\cref{th17,pr14,pr15}.
       \end{proof}

                For the standard embeddings  $\mathbb{RP}^{3}\subset\CP^{3}$ and $\mathbb{RP}^{5}\subset\CP^{5}$, the canonical lifts have $(p,q,\kappa)=(4,6,2)$.
       Since $\sqrt{\frac{24}{5}}<4<\frac{24}{5}$,
              \cref{co7} applies with $N=2$
               and yields a compact embedded Delaunay-type minimal Lagrangian in $\CP^{9}$.
       Here $(\frac{\Delta_1}{2\pi}, \frac{\Delta_2}{2\pi})=(\frac{3}{20},-\frac{1}{10})$, and $(m_\gamma, m_G, N, A)=(20, 10, 2, 1)$.
       In the block homogeneous coordinates of \cref{rm:HC},
       the covering in \cref{lm19} has two sheets, distinguished by the relative sign of the blocks.
           The fiber over $\mathcal C_\gamma$ is
           $\mathcal F=(S^3\times S^5)/\ZZ_2$, with diagonal antipodal action,
           and
                  its return map $[x,y]\mapsto[-x,y]$ is isotopic to the identity through rotations of $\Sphere^3$,
               so the domain is diffeomorphic to $S^1\times\mathcal F$.           

          \begin{remark}
          In the symmetric case $p=q=r$,
          the hypothesis is $\sqrt r<\kappa N<r$.
          If $\kappa=1$ and $r\geq 3$,
          one may take $N=r-1$.
         % and 
         % the corresponding energy.
          \end{remark}

   \subsection{Hamiltonian index inheritance}
          \label{ss9}
                
                For a Lagrangian immersion,
                Hamiltonian variations are the exact part of the infinitesimal Lagrangian variations:
                  $\alpha_V:=\omega(\,\cdot\,,V)=\dd v$,
                  or equivalently $V=J\nabla v$. 
                  For a compact connected minimal Lagrangian immersion
                         $F:X^d\looparrowright\CP^d$,
                   let $\Ind_{\mathrm{Ham}}(F)$ denote the index of $v\mapsto Q^{\mathrm{Ham}}_F(J\nabla v)$ on $C^\infty(X)/\RR$,
                identified with the mean-zero functions.
              For a self-intersecting immersion, this index is understood on its domain.

     By Oh's formula \cite{Oh1990},
     the Hamiltonian Jacobi operator is
        $
             \Delta_0\bigl(\Delta_0-2(d+1)\bigr),
         $
       where $\Delta_0$ is the positive Laplacian.
       Hence the Hamiltonian index counts, with multiplicity, the eigenvalues of $\Delta_0$ in $(0,2(d+1))$.
          For $\overline \iota_\gamma$,
          the profile functions form an invariant sector on which $\Delta_0$ reduces to a periodic scalar Sturm--Liouville operator.
       Two further factor sectors are constructed in  Appendix \ref{ap11}.

       \begin{theorem}[Hamiltonian index inheritance]
           \label{th18}
           Under the hypotheses of \cref{pr15},
            we have
           \begin{equation}
                              \label{eq84}
                              \Ind_{\mathrm{Ham}}(\overline \iota_\gamma)
                              \geq 
                              \Ind_{\mathrm{Ham}}(\iota_1)
                               +
                              \Ind_{\mathrm{Ham}}(\iota_2)
                               +
                              2N-2.
           \end{equation}
       \end{theorem}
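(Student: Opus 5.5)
By Oh's formula, $\Ind_{\mathrm{Ham}}(\overline\iota_\gamma)$ is the number of eigenvalues of $\Delta_0$ on $\mathcal X_\gamma$, counted with multiplicity, in $(0,2m)$, where $m=p+q=n_1+n_2+2$. Equivalently, it is the largest dimension of a space of mean-zero functions on which the Rayleigh form
\[
v\mapsto\int|\nabla v|^2-2m\int v^2
\]
is negative definite. The plan is therefore to produce three mutually $L^2$- and Dirichlet-orthogonal subspaces of mean-zero functions on $\mathcal X_\gamma$ with dimensions $2N-2$, $\Ind_{\mathrm{Ham}}(\iota_1)$ and $\Ind_{\mathrm{Ham}}(\iota_2)$, each negative for this form. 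The induced metric is $v_\gamma^2dt^2+\cos^2s\,g_1+\sin^2s\,g_2$ by \cref{lm1}, and its volume density is $\rho v_\gamma\,dt\,d\vol_1\,d\vol_2$. Functions of $t$ alone that are invariant under the $N$-cell projective return descend to $\mathcal X_\gamma$, since a projective return is a complete-cell shift by \cref{prPR}. Functions pulled back from $L_i$ through the blockwise projectivization of \cref{rm:HC} also descend.

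\textbf{Profile sector.} On functions $v(t)$ that are $NT_{\mathrm{cell}}$-periodic, $\Delta_0$ reduces to the weighted Sturm--Liouville operator
\[
-\frac{1}{\rho v_\gamma}\frac{d}{dt}\Bigl(\frac{\rho}{v_\gamma}\dot v\Bigr).
\]
I will exhibit an explicit profile function whose derivative solves a related Jacobi-type equation with $2N$ sign changes per projective period. The natural candidate is a function of $s$, such as $\cos 2s$ or $|Z_1|^2-\tfrac{p}{m}$; for a minimal Lagrangian, the coordinate functions $|Z_j|^2$ minus constants are eigenfunctions of $\Delta_0$ with eigenvalue $2m$ in the ambient sense. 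Periodic Sturm comparison then gives at least $2N-1$ eigenvalues below $2m$, one of which is the constant. That leaves $2N-2$ mean-zero negative directions, mirroring the count in \cref{coprofindex}. The main obstacle is this step: identifying the exact test function whose Rayleigh quotient equals $2m$, and checking that it yields a zero-mode of $\Delta_0-2m$ restricted to the profile sector with exactly $2N$ nodes. I would first test $u=|Z_1|^2-\frac{p}{m}$, since $\Delta_0 u=2m\,u$ for minimal Lagrangians in $\CP^d$ via the standard first eigenfunctions $|Z_j|^2$ restricted.

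\textbf{Factor sectors.} For $\phi$ a mean-zero eigenfunction of $\Delta_{L_1}$ with eigenvalue $\lambda<2(n_1+1)=2p$, take $v=w(t)\,\phi\circ\mathrm{pr}_1$. Its Rayleigh quotient is governed by
\[
\int\Bigl(\frac{\rho}{v_\gamma}\dot w^2+\rho v_\gamma\frac{\lambda}{\cos^2 s}w^2-2m\rho v_\gamma w^2\Bigr)dt .
\]
With $w\equiv1$ this is negative provided $\lambda<2m\cos^2s$ on average; this fails in general. So instead I would choose $w$ to be the ground state of the scalar problem
\[
-\frac{1}{\rho v_\gamma}\Bigl(\frac{\rho}{v_\gamma}\dot w\Bigr)^{\!\cdot}+\frac{\lambda}{\cos^2s}w ,
\]
and compare it with the profile eigenfunction via a Hopf-type weighted identity. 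This is the analogue of the factor-Hessian splitting \cref{pr9}, and Appendix~\ref{ap11} is where these sectors are built. Orthogonality between the sectors follows from $\int_{L_i}\phi=0$ and Fubini. Summing the three dimensions gives \eqref{eq84}.
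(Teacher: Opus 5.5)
Your profile sector follows the paper. Your candidate $|Z_1|^2-\frac pm$ is $u/m$ with $u=m\cos^2s-p$, and $\Delta_0u=2mu$ does hold; the paper derives it from Takahashi's identity and $|\nabla Z_1|^2=n_1+\sin^2s$, which uses the contact condition. The node count you flag as the main obstacle is immediate. $u$ vanishes exactly where $\cos^2s=\frac pm$, i.e.\ at $s=s_*$ by \cref{lm15}. Since $s_-<s_*<s_+$, there are two simple zeros per cell, hence $2N$ on $\mathcal C_\gamma$. Periodic Sturm theory then puts $2m$ at $\lambda_{2N-1}$ or $\lambda_{2N}$, leaving $2N-2$ mean-zero negative modes. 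Your reformulation via the second-order form $\int|\nabla v|^2-2m\int v^2$ on mean-zero functions is also legitimate, since it has the same index as Oh's fourth-order form.

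The genuine gap is in the factor sectors. You correctly note that $w\equiv1$ need not work. You then only propose taking $w$ to be the ground state of $L_{\mathrm{prof}}+\lambda\cos^{-2}s$ and ``comparing via a Hopf-type weighted identity''. Showing that this ground-state eigenvalue lies below $2m$ for every $\lambda<2p$ is exactly the content of the factor-sector claim, and the proposal gives no argument for it. The missing idea is to take $w=\cos^2s=|Z_1|^2$ itself, and $w=\sin^2s$ for the second factor. The warped-product structure gives $\Delta_0(\phi\circ\operatorname{pr}_1)=\cos^{-2}s\,(\Delta_{L_1}\phi)\circ\operatorname{pr}_1$ and $\langle\nabla\cos^2s,\nabla(\phi\circ\operatorname{pr}_1)\rangle=0$. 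Combined with $\Delta_0\cos^2s=2m\cos^2s-2p$, this yields
\[
(\Delta_0-2m)\bigl(\cos^2s\,\phi\circ\operatorname{pr}_1\bigr)=\bigl((\Delta_{L_1}-2p)\phi\bigr)\circ\operatorname{pr}_1 .
\]
So for $\Delta_{L_1}\phi=\lambda\phi$ your second-order form equals $(\lambda-2p)\int_{\mathcal X_\gamma}\cos^2s\,\phi^2<0$ whenever $\lambda<2p$. Equivalently, in your displayed integral, integrating the $\dot w^2$ term by parts against $L_{\mathrm{prof}}\cos^2s=2m\cos^2s-2p$ cancels the $-2m$ term, leaving $(\lambda-2p)\int\rho v_\gamma\cos^2s\,\dd t$. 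This is what Appendix~\ref{ap11} does, phrased with Oh's fourth-order form, where the same identity gives $Q^{\mathrm{Ham}}_{\overline\iota_\gamma}(\mathcal T_i\phi,\mathcal T_i\psi)=\mathfrak c_iQ^{\mathrm{Ham}}_{\iota_i}(\phi,\psi)$. With this test function the ground-state detour is unnecessary; $\cos^2s$ is admissible in the Rayleigh quotient, so it would also bound the ground state. Your orthogonality argument via $\int_{L_i}\phi=0$ and Fubini on the $\kappa$-sheeted cover of \cref{lm19} then completes the count.
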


       \begin{proof}
        Set $m=p+q$.
        Recall the projective quotient $\overline \iota_\gamma : \mathcal X_\gamma\longrightarrow  \CP^{m-1}$ in \eqref{DaXg}.
        Let $\Delta_0$ be the positive Laplacian on $\mathcal X_\gamma$
        and let $\tau$ be the arclength coordinate on $\mathcal C_\gamma$.
         By Oh's formula \cite{Oh1990},
           \[
                              Q_{\overline \iota_\gamma}^{\mathrm{Ham}}(v,w)
                               =
                              \left\langle
                              \Delta_0(\Delta_0-2m)v,w
                              \right\rangle_{L^2}.
           \]
      %     Thus the negative directions correspond to eigenvalues
        %   $0<\lambda(\Delta_0)<2m$.
  %
     %      
          On the profile sector,
          the warped-product formula gives
     \begin{equation}\label{SLop}
          L_{\mathrm{prof}}  = -\frac1\varpi\frac{\dd}{\dd\tau}
          \left(\varpi\frac{\dd}{\dd\tau}\right),
     \qquad
           \varpi=(\cos s)^{n_1}(\sin s)^{n_2},
       \end{equation}
         namely,
         for every profile function $v=v(\tau)$,
         one has
         $   \Delta_0v=L_{\mathrm{prof}}v $.
Now 
         Oh's formula restricts to
         $
              Q_{\overline \iota_\gamma}^{\mathrm{Ham}}(v)
              =
              c_0\big\langle v,
              L_{\mathrm{prof}}(L_{\mathrm{prof}}-2m)v
              \big\rangle_{L^2(\varpi\,\dd\tau)},
              \text{ for some } c_0>0.
        $

           The key point of   \eqref{SLop}  is that 
          profile contribution is a periodic scalar Sturm--Liouville problem.
          The negative modes are precisely the eigenfunctions of
           $L_{\mathrm{prof}}$ with eigenvalues in $(0,2m)$.
             By \cref{rm:HC},
              $|Z_1|^2=\cos^2s$ is globally defined on $\mathcal X_\gamma$.
              After pulling back to $X_\gamma$,
              the Takahashi identity \cite{Takahashi1966} and
              $|\nabla Z_1|^2=n_1+\sin^2s$ give
              $\Delta_0|Z_1|^2=2m|Z_1|^2-2p$.
              Since $X_\gamma\to\mathcal X_\gamma$ is locally isometric,
              this identity descends to $\mathcal X_\gamma$.
              Consequently,
           \begin{equation}
                              \label{eq85}
                              u=m|Z_1|^2-p=m\cos^2s-p,
                              \qquad L_{\mathrm{prof}}u=2m\,u.
           \end{equation}
           Since $s_-<s_*<s_+$ and $\cos^2s_*=\frac{p} m$,
           the function $u$ has exactly two simple zeros in each complete cell.
           The primitive projective return occurs after $N$ complete cells by
           \cref{prPR},
           so, if
           $0=\lambda_0\leq \lambda_1\leq \cdots$
           is the periodic spectrum of $L_{\mathrm{prof}}$
           over the virtual projective circle $\mathcal C_\gamma$,
           Sturm oscillation \cite{Eastham1973} places $2m$ at $\lambda_{2N-1}$ or $\lambda_{2N}$.
           After removing the constant mode,
           the profile sector contributes at least $2N-2$ negative
           directions.
           Appendix~\ref{ap11} identifies two further mutually orthogonal
           sectors with the Hamiltonian Hessians of $\iota_1$ and $\iota_2$,
           up to positive constants.
           Adding the negative subspaces in these three sectors proves
           \eqref{eq84}.
       \end{proof}

   %    \begin{remark}[Lagrangian index inequality]
  %         \label{co9}
  %     %
  %             Hodge decomposition and Oh's formula \cite{Oh1990} lead to
  %    $ \Ind_{\mathrm{Lag}}(\overline \iota_\gamma)         =      
   %                         \Ind_{\mathrm{Ham}}(\overline \iota_\gamma)
    %                           +
    %                          b_1(\mathcal X_\gamma)$.
    %       Together with \eqref{eq83} and \cref{th18}, this gives
    %    $
   %                           \Ind_{\mathrm{Lag}}(\overline \iota_\gamma)
  %                            \geq 
   %                           \Ind_{\mathrm{Lag}}(\iota_1)
    %                           +
    %                          \Ind_{\mathrm{Lag}}(\iota_2)
   %                            +
   %%                           2N-1.
   %    $
    %   \end{remark}

\begin{corollary}[Lagrangian index inheritance]
           \label{co9}
           Let $\Ind_{\mathrm{Lag}}$ denote the index of the Lagrangian  second variation on closed $1$-forms.  
           Under the hypotheses of \cref{pr15},
           \[
              \Ind_{\mathrm{Lag}}(\overline\iota_\gamma)
              \geq 
              \Ind_{\mathrm{Lag}}(\iota_1)
              +\Ind_{\mathrm{Lag}}(\iota_2)+2N-1.
           \]
       \end{corollary}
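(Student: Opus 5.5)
The plan is to combine \cref{th18} with the Hodge decomposition of closed $1$-forms on the compact domain $\mathcal X_\gamma$ and with Oh's description of the Lagrangian second variation. For a compact minimal Lagrangian $F:X^d\looparrowright\CP^d$, an infinitesimal Lagrangian variation corresponds to a closed $1$-form $\alpha_V$. We split it as $\alpha_V=\dd v+\eta$, with $\eta$ harmonic. By Oh's formula \cite{Oh1990}, the Lagrangian Hessian has the form $\langle(\Delta_1-2(d+1))\alpha,\Delta_1\alpha\rangle$ up to a positive constant, where $\Delta_1$ is the Hodge Laplacian on $1$-forms. It is therefore diagonal with respect to this splitting. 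On harmonic forms it equals $-2(d+1)\|\eta\|^2<0$, and on exact forms it is the Hamiltonian Hessian. This gives the identity
\[
\Ind_{\mathrm{Lag}}(F)=\Ind_{\mathrm{Ham}}(F)+b_1(X).
\]
I would apply it to $\overline\iota_\gamma$, to $\iota_1$, and to $\iota_2$. For an immersion with self-intersections we work on the domain, as in \cref{ss9}.

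Substituting into \eqref{eq84}, the claim reduces to the Betti number inequality
\[
b_1(\mathcal X_\gamma)\geq b_1(L_1)+b_1(L_2)+1.
\]
To prove it, I would use the structure of $\mathcal X_\gamma$ as a fiber bundle over the virtual base circle $\mathcal C_\gamma$, whose fiber is the projective factor slice $\mathcal F\cong(\widehat L_1\times\widehat L_2)/\mathsf D_{12}$. By \cref{rm:HC} there is also a globally defined map $\mathcal X_\gamma\to L_1\times L_2$.

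The extra $+1$ should come from pulling back the generator of $H^1(\mathcal C_\gamma;\RR)$ along the projection $\mathcal X_\gamma\to\mathcal C_\gamma$. This class is nonzero because its integral over a lift of the base loop is nonzero. For the factor contributions, I would pull back $H^1(L_1\times L_2;\RR)=H^1(L_1)\oplus H^1(L_2)$ along the blockwise projectivization map of \cref{rm:HC}.

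Injectivity is where I expect the main difficulty. I would first restrict to a fiber $\mathcal F$. The composite $\mathcal F\to L_1\times L_2$ is a finite covering, $\widehat L_1\times\widehat L_2\to L_1\times L_2$ followed by the quotient by $\mathsf D_{12}$. Pullback along a finite covering is injective on real cohomology, since one can average with the transfer. So the pulled-back factor classes stay independent on the fiber. The base class restricts to zero on the fiber. A linear relation among all these classes, restricted to $\mathcal F$, therefore kills the factor part, and what remains is a multiple of the base class, which must then vanish. This gives the required $b_1(L_1)+b_1(L_2)+1$ independent classes and completes the inequality.

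The point needing the most care is the Hodge splitting itself. One must check that the Jacobi operator in Oh's formula preserves the exact and harmonic summands, which holds because it is built from $\Delta_1$. One must also check that every harmonic $1$-form on $\mathcal X_\gamma$ is a strictly negative direction. Both are standard consequences of the Einstein constant $2(d+1)$ of $\CP^d$ for minimal Lagrangians, and I would cite \cite{Oh1990} for them rather than reprove them.
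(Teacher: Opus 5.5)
Your proposal is correct and is essentially the paper's proof: the Hodge-splitting identity $\Ind_{\mathrm{Lag}}=\Ind_{\mathrm{Ham}}+b_1$ applied to $\overline\iota_\gamma,\iota_1,\iota_2$, combined with \cref{th18} and the Betti bound \eqref{eq83}. You reprove that bound by restricting to a fiber, whereas the paper pulls back all of $H^1(\mathcal C_\gamma\times L_1\times L_2)$ through the $\kappa$-sheeted covering $\mathfrak P$ and averages. One slip: Oh's Hessian on closed $1$-forms is $\|\delta\alpha\|^2-2(d+1)\|\alpha\|^2=\langle(\Delta_1-2(d+1))\alpha,\alpha\rangle$, not $\langle(\Delta_1-2(d+1))\alpha,\Delta_1\alpha\rangle$, since the latter vanishes on harmonic forms and contradicts the value $-2(d+1)\|\eta\|^2$ that you (correctly) use.
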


       \begin{proof}
           For a compact minimal Lagrangian immersion
           $F:Y^d\looparrowright\CP^d$, Hodge decomposition and Oh's
           formula \cite{Oh1990} give
          $
              \Ind_{\mathrm{Lag}}(F)
              =\Ind_{\mathrm{Ham}}(F)+b_1(Y).
          $
           Indeed, the exact and harmonic sectors are orthogonal, 
           and the quadratic form on harmonic $1$-forms is
                   $-2(d+1)\|  \, \cdot \,  \|_{L^2}^2$. 
            Apply this identity to  $\overline\iota_\gamma,\iota_1,\iota_2$ and employ \eqref{eq83} and \cref{th18}.
       \end{proof}

       \begin{example}[Loss of embeddedness under Hopf projection]
           \label{ex1}
        Let $f_1=f_2=f$ be the Harvey--Lawson special Legendrian torus in $\Sphere^5$,
           whose Hopf projection is the Clifford torus in $\CP^2$.
           The restricted Hopf map is its canonical threefold holonomy cover,
           with deck group $\mathsf H_f\cong\ZZ_3$.
                      For $p=q=3$,
            \cref{lm15,lm16} and continuity give a regular profile 
           with  $(\Delta_1,\Delta_2)=(\frac{2\pi}{5},-\frac{2\pi}{5})$.
           Its primitive spherical quotient is embedded special Legendrian in
           $\Sphere^{11}$ and $m_\gamma=m_G=N=5$.
           It follows that $(A,N)=(6,5)$.
           By \cref{pr15},
           the Hopf image in $\CP^5$ has exactly $25$ clean self-intersection $4$-tori.
           Since $b_1(\TT^2)=2$,
           \cref{th18,co9}  lead to
           $\Ind_{\mathrm{Ham}}(\overline \iota_\gamma)\geq 8$ and
           $\Ind_{\mathrm{Lag}}(\overline \iota_\gamma)\geq 13$.
       \end{example}

          \appendix

%%\majorsectiongap
\section{Period and Phase Integrals}
          \label{ap1}

          Here we collect the analytic and asymptotic integral facts used for the
          phase map, endpoint anchors, and closing arguments.

   \subsection{Fixed-domain analyticity of period integrals}
% The following lemma gives the analyticity used for the phase and radial
%action integrals.

       \begin{lemma}[Analyticity of two-turning integrals]
           \label{lm20}
           Let $D(s,\boldsymbol\lambda)$ and $\cN(s,\boldsymbol\lambda)$
            be real analytic near
            $[s_-^0,s_+^0]\times\{\boldsymbol\lambda_0\}$,
            where $0<s_-^0<s_+^0<\frac{\pi}{2}$.
           Assume that $s_\pm^0$ are simple zeros of
            $D(\,\cdot\,,\boldsymbol\lambda_0)$,
            and that $D(\,\cdot\,,\boldsymbol\lambda_0)>0$
            between them.
           Then the nearby roots $s_\pm(\boldsymbol\lambda)$,
            and
           \[
                              I(\boldsymbol\lambda)
                               =
                              \int_{s_-(\boldsymbol\lambda)}^{s_+(\boldsymbol\lambda)}
                              \frac{\cN(s,\boldsymbol\lambda)}
                              {\sqrt{D(s,\boldsymbol\lambda)}}\,\dd s,
           \]
           depend real analytically on $\boldsymbol\lambda$.
       \end{lemma}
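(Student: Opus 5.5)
The plan is to remove the moving endpoints by a substitution that puts the integral on a fixed domain with an analytic integrand, and then differentiate under the integral sign in the complex sense.

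First, the roots. Since $s_\pm^0$ are simple zeros of $D(\cdot,\boldsymbol\lambda_0)$, we have $\partial_sD(s_\pm^0,\boldsymbol\lambda_0)\ne0$. The real-analytic implicit function theorem then gives unique nearby roots $s_\pm(\boldsymbol\lambda)$ depending real analytically on $\boldsymbol\lambda$. These are still simple, and $D(\cdot,\boldsymbol\lambda)>0$ strictly between them for $\boldsymbol\lambda$ near $\boldsymbol\lambda_0$. This positivity follows from compactness: away from small neighborhoods of $s_\pm^0$ it holds by continuity, and near them by the sign of $\partial_sD$.

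Next, fix the domain with the affine change $s=\frac{s_++s_-}{2}-\frac{s_+-s_-}{2}\cos\phi$, $\phi\in[0,\pi]$. Then $\dd s=\frac{s_+-s_-}{2}\sin\phi\,\dd\phi$. Write
\[
D(s,\boldsymbol\lambda)=(s-s_-)(s_+-s)\,E(s,\boldsymbol\lambda),
\]
which is possible by factoring out the two simple roots (Weierstrass division, or the explicit divided-difference integral formula). Here $E$ is real analytic in $(s,\boldsymbol\lambda)$ and positive on the closed interval, because the zeros are simple and $D>0$ inside. Since $(s-s_-)(s_+-s)=\bigl(\frac{s_+-s_-}{2}\bigr)^2\sin^2\phi$, the integral becomes
\[
I(\boldsymbol\lambda)=\int_0^\pi\frac{\cN(s(\phi,\boldsymbol\lambda),\boldsymbol\lambda)}{\sqrt{E(s(\phi,\boldsymbol\lambda),\boldsymbol\lambda)}}\,\dd\phi .
\]
The integrand is real analytic in $(\phi,\boldsymbol\lambda)$ on a neighborhood of $[0,\pi]\times\{\boldsymbol\lambda_0\}$, and $\sqrt{E}$ is analytic because $E>0$.

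Finally, complexify $\boldsymbol\lambda$. The integrand extends holomorphically to a complex neighborhood of $\boldsymbol\lambda_0$, uniformly for $\phi\in[0,\pi]$, after shrinking so that $E$ stays away from the negative real axis. Holomorphy of parameter integrals over a compact interval (Morera's theorem together with Fubini) then shows $I$ is holomorphic there, hence real analytic.

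The main obstacle is the factorization step. One must check that $E$ is analytic jointly in $(s,\boldsymbol\lambda)$, including at points where $s$ coincides with a root. I would handle this with the formula
\[
D(s)=(s-s_-)\int_0^1\partial_sD\bigl(s_-+u(s-s_-)\bigr)\,\dd u,
\]
applied once for each root. This expresses $E$ as a double integral of an analytic function, and $E(s_\pm)=\mp\partial_sD(s_\pm)/(s_+-s_-)>0$.
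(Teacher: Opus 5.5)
Your proposal is correct and follows the paper's proof essentially step for step: you get the roots from the implicit function theorem, factor $D=(s-s_-)(s_+-s)K$ with $K>0$, and fix the domain by substitution. Your $s=\frac{s_++s_-}{2}-\frac{s_+-s_-}{2}\cos\phi$ is exactly the paper's $s=s_-+(s_+-s_-)\sin^2u$ with $\phi=2u$, and the only difference is that you give more detail on two points the paper leaves implicit: joint analyticity and positivity of the quotient (via the divided-difference formula), and analyticity of a parameter integral over a fixed compact interval (via complexification).
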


       \begin{proof}
           The implicit-function theorem gives analytic roots
            $s_\pm(\boldsymbol\lambda)$.
           Analytic division gives
                 $
                              D(s,\boldsymbol\lambda) = (s-s_-)(s_+-s)K(s,\boldsymbol\lambda) \,  \text{with } K>0.
                             $
           With
          $
                              s
                               =
                              s_-
                               +
                              (s_+-s_-)\sin^2u,
                               \,
                                0\leq  u\leq \frac{\pi}{2},
          $
           the endpoint factors cancel,
            and
           \[
                              I(\boldsymbol\lambda)
                               =
                              2\int_0^{\frac{\pi}{2}}
                              \frac{\cN(s(u,\boldsymbol\lambda),\boldsymbol\lambda)}
                              {\sqrt{K(s(u,\boldsymbol\lambda),\boldsymbol\lambda)}}\,\dd u.
           \]
           Note that the integrand is jointly real analytic on a fixed neighborhood of
            $[0,\frac{\pi}{2}]\times\{\boldsymbol\lambda_0\}$.
            Hence $I$ is real analytic.
       \end{proof}

   \subsection{Adjusted phase lift at $C_1=0$}
          \label{ap2}

       \begin{proof}[Proof of \cref{lm6}]
         Use $
                              p=k_1+1,
                              \, 
                              c=C_1,
                              \,
                               E=C_2,
                            $
and $\varepsilon=\varepsilon_1$.
   % Since the degenerate endpoint is $b=\sin s=0$,
     Set $y=\frac{b^2}{a^2}=\tan^2s$.
     Then       $
      \dd s=\frac{\dd y}{2\sqrt y(1+y)}
      $.
    By % introducing 
    $\mathcal F_E(y):=y(E(1+y)^{-p}-1)$,
    we get
        $
      D_{c,E}(s)
         =\frac{\mathcal F_E(y)-c^2}{1+y}
         $,
      and
      \[
      \Delta_1
      =\varepsilon\int_\alpha^\beta
        \frac{\dd y}
        {\sqrt{1+y}\sqrt{\mathcal F_E(y)-c^2}},
        \qquad
      \Delta_2
      =\varepsilon c\int_\alpha^\beta
        \frac{\dd y}
        {y\sqrt{1+y}\sqrt{\mathcal F_E(y)-c^2}},
    \]
    where $\alpha<\beta$ satisfy $\mathcal F_E(y)=c^2$.
    At $c=0$,  we have $\alpha=0$ and $\beta=E^{\frac 1 p}-1$.
          Since $E>1$,
          it follows that
          $\mathcal F_E'(0)=E-1>0$ 
          and
          $\mathcal F_E'(\beta)=- \frac {p\beta} {1+\beta}<0$.
         As both roots  %$\alpha$ and  $\beta$ 
          are simple,
    by  implicit function theorem they are analytic in $(c^2,E)$.
    Analytic division gives
    \[
      \begin{aligned}
      \mathcal F_E(y)-c^2
         &=(y-\alpha)(\beta-y)K(y,c^2,E),
         \quad &K&>0,\\
      y_\vartheta
         &=\alpha+(\beta-\alpha)\sin^2\vartheta,
         &h&=[(1+y)K]^{- \frac 1 2}=h_0+yh_1,
      \end{aligned}
    \]
    where $h_0=h(0,c^2,E)$.
    Here $K,h_0,h_1$ are analytic,
    $\alpha\beta K(0,c^2,E)=c^2$,
    and
    $\int_0^{\frac \pi 2} y_\vartheta^{-1}\dd\vartheta
      =\pi (2\sqrt{\alpha\beta})^{-1}$.
    Therefore $y=y_\vartheta$ gives, for $c\ne0$,
    \[
      \begin{aligned}
      \Delta_1
         =2\varepsilon\int_0^{\frac\pi2}
           h(y_\vartheta,c^2,E)\,\dd\vartheta,
           \,
            \qquad
      & \Delta_2
         =\varepsilon\pi\sgn c
           +2\varepsilon c\int_0^{\frac\pi2}
           h_1(y_\vartheta,c^2,E)\,\dd\vartheta,\\
      &\Delta_2^\sharp(c,E)
         =\varepsilon\pi
           +2\varepsilon c\int_0^{\frac\pi2}
           h_1(y_\vartheta,c^2,E)\,\dd\vartheta.
      \end{aligned}
    \]
    The first formula gives the analytic extension of $\Delta_1$
    and agrees at $c=0$ with the value prescribed in the statement.
    The second gives the one-sided limits,
    while the third exhibits  the analytic extension of
    adjusted $\Delta_2^\sharp$.

    When $c=0$, 
          we have $\alpha=0$ and $\beta=E^{\frac 1 p}-1$.
          Hence $E=(1+\beta)^p$.
           Therefore,
           \[
             %\begin{aligned}
             \mathcal F_E(y)
             %&=
            =
             {y\bigl(E-(1+y)^p\bigr)}{(1+y)^{-p}}
             =
         %    {y\bigl((1+\beta)^p-(1+y)^p\bigr)}
          %        {(1+y)^{-p}}
           %       \\
           % & =
             y(\beta-y)
         %    {(1+y)}
           \,  {
             \sum_{j=0}^{p-1}
             (1+\beta)^{p-1-j}(1+y)^{j-p}
             }
             %.
            % \end{aligned}
           \]
           %Comparing this with
          As $\mathcal F_E(y)=y(\beta-y)K(y,0,E)$,
          we get
          $
          (1+y)   K(y,0,E)
             =
               \sum_{j=0}^{p-1}
             (1+\beta)^{p-1-j}(1+y)^{j-p+1}.
         $
           Consequently,
          $
             h(y,0,E)^2
             =
             \frac{1}{(1+y)K(y,0,E)}
             =
             \big[
             \sum_{j=0}^{p-1}
             \big(
             \frac{E^{\frac 1 p}}{1+y}
             \big)^j
             \big]^{-1}.
          $
           If $p>1$, the expression is strictly increasing in $y$.
           Hence
          $
             h_1(y,0,E)
             =
             \frac{h(y,0,E)-h(0,0,E)}{y}>0
              \text{ for }y>0.
         $
           By differentiating 
           $\Delta_2^\sharp$ 
           with respect to $c$
           at $c=0$ 
           we get
                    $
             \varepsilon\,
             \partial_c\Delta_2^\sharp(0,E)
             =
             2\int_0^{\frac\pi2}
             h_1(y_\vartheta,0,E)\,\dd\vartheta>0.
          $
           
                 If $p=1$,
    direct integration gives
    $\Delta_1=\Delta_2^\sharp=\varepsilon\pi$.
              \end{proof}

   \subsection{Endpoint anchors}

         The proofs of \cref{lm8,lm15,lm16} use two limits, with $E=C_2$ in the applications:
          coalescing turning points at the lowest energy,
          and endpoint rescaling at high energy.
          First, 
          let
          $A$ be positive and real analytic near a nondegenerate maximum $s_*$,
          set $E_*=A(s_*)^{-1}$ and $\kappa=-\tfrac12(\log A)''(s_*)>0$, and let $s_-(E)<s_+(E)$ be the adjacent roots of $EA(s)=1$.
          For every continuous $w$ near $s_*$,
          \begin{equation}\label{eqath}
             \int_{s_-(E)}^{s_+(E)}
                \frac{w(s)}{\sqrt{EA(s)-1}}\,\dd s
             \longrightarrow \frac{\pi w(s_*)}{\sqrt\kappa}
             \qquad
             (E\downarrow E_*).
          \end{equation}
          The local analytic coordinate $x=\sgn(s-s_*)\sqrt{1-\frac{A(s)}{A(s_*)}}$  satisfies $x'(s_*)=\sqrt\kappa$.
          Substituting $x=\sqrt{1-\frac{E_*}{E}}\sin\vartheta$ fixes the integration interval at $[-\frac{\pi}2, \frac{\pi} 2]$,
          and the transformed integrand converges uniformly to
          $\frac{w(s_*)} {\sqrt\kappa}$.
          This proves \eqref{eqath}.

          Second, let $p,r\geq 1$, $A_{p,r}(s)=\cos^{2p}s\sin^{2r}s$, and let $s_-(E)<s_+(E)$ solve $EA_{p,r}(s)=1$.
          For $\omega=\sec s$ or $\tan s$,
          \begin{equation}\label{eqahi}
             \int_{s_-}^{s_+}
                \frac{\omega(s)}{\sqrt{EA_{p,r}(s)-1}}\,\dd s
             \longrightarrow
             \int_1^\infty\frac{\dd y}{y\sqrt{y^{2p}-1}}
             =\frac{\pi}{2p}
             \qquad  (E\uparrow \infty).
          \end{equation}
          Split the integral at the maximizer $s_*$ of $A_{p,r}$.
          Near $s=0$, the coordinate $u=A_{p,r}(s)^{\frac 1 {2r}}$ has bounded inverse derivative.
          Rescaling $u$ by $E^{\frac{-1}{2r}}$ bounds the lower-half contribution by  $O(E^{-\frac 1 {2r}}\log E)=o(1)$; away from zero the integrand is $O(E^{-\frac 1 2})$.
          On the upper half,
               set $\lambda=E^{\frac{1}{2p}}$ and $y=\lambda\cos s$.
          Then $EA_{p,r}(s)=y^{2p}(1- \frac{y^2}{\lambda^2})^r$,
               while $\omega(s)\,\dd s$ becomes
                      $-\frac{\dd y}{y\sqrt{1-\lambda^{-2}y^2}}$ for $\omega=\sec s$
          and 
                    $-\frac {\dd y} y$ for $\omega=\tan s$.
          Through 
          factoring the simple zero at $y=\lambda\cos s_+\to1$ 
          one can get the convergence of the integrals on bounded intervals.
          For large $y$,
              both integrands are bounded by $Cy^{-p-1}$,  uniformly in $E$,
              so the tail beyond $R$ is bounded by $CR^{-p}$.
          This proves \eqref{eqahi}.

          \phantomsection
          \label{ap4}

       \begin{proof}[Proof of \cref{lm8}]
          Retain $p=k_1+1$.
          Suppose first that $k_2=0$.
          Analyticity follows from \cref{lm6},
          and
          $|\Delta_1(0,C_2)|
             =2\int_0^{s_+(C_2)}
             \frac{\sec s}{\sqrt{C_2\cos^{2p}s-1}}\,\dd s$.
          In the signed coordinate,
          this is the integral over $[-s_+,s_+]$.
          Applying \eqref{eqath} with
          $A(s)=\cos^{2p}s$,
          $s_*=0$,
          $w(s)=\sec s$,
          and $\kappa=p$
          gives the threshold limit $\frac{\pi}{\sqrt p}$.
          For the high-energy limit,
          set $\lambda=C_2^{\frac{1}{2p}}$ and $y=\lambda\cos s$.
          Then
          $$
             \frac{|\Delta_1(0,C_2)|}{2}
             =
              \int_1^\lambda
   \frac{             \dd y}   
       {y\sqrt{1-{\lambda^{-2}}{y^2}}\sqrt{y^{2p}-1}}
             \longrightarrow
             \frac\pi{2p}.
          $$
          Convergence on each fixed $[1,R]$ is immediate.
          Splitting the remaining interval at $\frac \lambda 2$
          bounds its contribution by $CR^{-p}$,
          uniformly for $\lambda\geq 2R$.

          Now suppose that $k_2>0$.
          Analyticity follows from \cref{pr5}.
          The maximum of $A_0(s)=\cos^{2p}s\sin^{2k_2}s$
          occurs at $\cos^2s_*=\frac p {p+k_2}$,
          with $\kappa=2(p+k_2)$.
          Applying \eqref{eqath} with $w=\sec s$
          gives $|\Delta_1|\to\pi\sqrt{\frac 2 p}$.
          Formula \eqref{eqahi},
          with $r=k_2$,
          gives $|\Delta_1|\to \frac \pi p$ at high energy.
          In either case,
          if $k_1+k_2>0$,
          the two limits are distinct,
          so the analytic function has nonzero derivative somewhere.
       \end{proof}

          \phantomsection
          \label{ap5}
       \begin{proof}[Proofs of \cref{lm15,lm16}]
          For $A(s)=\cos^{2p}s\sin^{2q}s$,
          the maximum occurs at $\tan^2s_*= \frac q p$,
          with $\kappa=2(p+q)$.
          The first formula in \eqref{eq47}, specialized to
          $C_1=-1$, and \eqref{eqath}, with
          $w=\tan s$, give
          \[
             \Delta_1\longrightarrow
             \frac{2\pi\tan s_*}{\sqrt{2(p+q)}}
             =\pi\sqrt{\frac{2q}{p(p+q)}}
             \qquad(C_2\downarrow P_*).
          \]
          The contact relation \eqref{eq74} gives the corresponding limit for
          $|\Delta_2|$.
          At high energy,
          \eqref{eqahi} applied to the same
          integral gives $\Delta_1\to\frac{\pi}{p}$,
          and \eqref{eq74} gives
          $|\Delta_2|\to\frac{\pi}{q}$.
          This proves both lemmas.
       \end{proof}

%\majorsectiongap
\section{Incidence calculations and examples}

          We give the incidence calculation and the two example verifications used in \cref{se20}.

   \subsection{Finiteness of incident phases}
          \label{ap9}

       \begin{proof}[Proof of \cref{lm18}]
           Identify $M$ with $L=f(M)$.
           By analytic regularity,
           $L$ is a compact real-analytic submanifold \cite{Morrey1966}.
           The compact analytic set
           \[
                              \Gamma_f
                              =\{(\zeta,a,b)\in\Sphere^1\times L\times L:
                              b=\zeta a\}
           \]
           admits a finite stratification by connected real-analytic manifolds  \cite[Proposition~2.10 and Corollary~2.11]{BierstoneMilman1988}.
           If $(i\upsilon\zeta,A,B)$ is tangent to a stratum at $(\zeta,a,b)$,
                 by differentiating $b=\zeta a$ 
                 we have  $B=\zeta(A+i\upsilon a)$.
           Since $L$ is $\cC$-totally real,
           \[
                              0=\langle B,ib\rangle_{\RR}
                              =\langle A,ia\rangle_{\RR}+\upsilon
                              =\upsilon.
           \]
           Thus the phase projection is constant on each stratum, and its image $\mathcal P_f$ is finite.
       \end{proof}

  \subsection{A shared factor for the spherical examples}
          \label{apnp}

       \begin{proof}[Verification of \cref{ex2,expp}]
           Take $p=1$,
           $q=3$ with $C_1=-1$.
           By \cref{lm15,lm16},
           continuity,
           and \eqref{eq74},
           choose a nonsteady contact profile with $(\frac{\Delta_1}{2\pi},    \frac{\Delta_2}{2\pi})=(\frac 9 {16},  -\frac 3 {16})$.
           Then its ordinary return order is $16$.
           Let $T_{\mathrm{in}}$ be its complete-cell time
           and $u=(u_1,u_2)=(e^{\frac {9\pi } 8 i},e^{- \frac {3\pi } 8 i})$ its cell-phase return.
           Up to a rotation, 
           %Coupling it to a point and the standard real sphere gives,
           %after a constant rotation,
            $Y(t,x)=(\gamma_1(t),\gamma_2(t)x)$, $x\in \Sphere^2$,
            defines a  special Legendrian embedding of $(\RR/(16T_{\mathrm{in}}\ZZ))\times\Sphere^2$ into $\Sphere^7$.
                  Here  \cref{lm12} excludes opposite-branch coincidences,
                    while any remaining coincidence is a $j$-cell shift with $u^j=(1,\varepsilon)$, $\varepsilon\in\{1,-1\}$,
                  and $\operatorname{ord}(u_1)=16$ forces $16\mid j$.
           Denote its image by $M$.

           Take $\zeta=e^{\frac {\pi } 4 i}$.
           Since $u^2=(\zeta,-\zeta)$,
           we have $\zeta Y(t,x)=Y(t+2T_{\mathrm{in}},-x)$,
           so $\langle\zeta\rangle\subset\mathsf H_M$
           and
           \begin{equation}\label{eqphi8}
                \varphi_\zeta([t,x])=[t+2T_{\mathrm{in}},-x].
           \end{equation}
           Every $\lambda\in\mathsf H_M$ satisfies $\lambda^4\in\{\pm 1\}$
           by the Lagrangian angle of the special Lagrangian cone $C(M)$.
           Hence $\mathsf H_M=\langle\zeta\rangle$.
           Formula \eqref{eqphi8} is a circle translation times the antipodal map of $\Sphere^2$,
           providing the homology actions used in \cref{expp}.

         Arrange $t=0$ at a lower turning point.
       On the outgoing half-cell,  $\theta_1-\theta_2$ is strictly increasing with total increment $\frac{3\pi} 4$.
           Thus a unique $t_*\in(0, \frac {T_{\mathrm{in}}} 2)$ where this increment is $\frac{\pi} 2$.
           Reflection gives  $Y(-t_*,-x)=\lambda_*Y(t_*,x)$  for some $\lambda_*\in S^1$.
        The corresponding profile phase difference is $(\lambda_*,-\lambda_*)$,
       so \cref{lm12} gives
       $\chi_{1,3}(\lambda_*,-\lambda_*) =-\lambda_*^4\notin\{\pm1\}$.
       Thus $\lambda_*^8\ne1$ and $\lambda_*\in\mathcal P_M\setminus\mathsf H_M$.

           For the steady product in \cref{ex2},
           take $z_-=Y(t_*,x)$ and $z_+=Y(-t_*,-x)$.
           If the tangent spaces at the two preimages were equal,
                    intersecting them with the first $\CC^4$ block and the Legendrian condition would give $T_{z_+}M=\lambda_*T_{z_-}M$.
           Again by the Lagrangian angle argument, one can get $\lambda_*^4\in\{\pm 1\}$  contradicting  $\lambda_*^8\neq1$.
           This proves the crossing asserted in \cref{ex2}.
       \end{proof}

%\majorsectiongap
\section{Projective Cover and Index Sectors}
          \label{ap11}

         We establish the finite covering used in \cref{co9} and the factor-sector identities used in \cref{th18}.
          With the notation of \cref{se21},
          use $g_i=\iota_i^*g_{\mathrm{FS}}$ for the induced metrics.

 \begin{lemma}[Projective-domain cover]
           \label{lm19}
           The profile projection makes $\mathcal X_\gamma$
           a smooth fiber bundle over $\mathcal C_\gamma\cong S^1$
           with fiber $(\widehat L_1\times\widehat L_2)/\mathsf D_{12}$.
           The natural map
           $\mathfrak P([\tau,x_1,x_2])
           =([\tau],\pi_1(x_1),\pi_2(x_2))$
           defines a $\kappa$-sheeted Riemannian covering
           $$
              \mathfrak P:\mathcal X_\gamma\longrightarrow
              \left(\mathcal C_\gamma\times L_1\times L_2,
              \dd\tau^2+\cos^2s(\tau)\,g_1
              +\sin^2s(\tau)\,g_2\right).
           $$
           Consequently,
           with $b_1(Y)=\dim H^1(Y;\RR)$,
           \begin{equation}\label{eq83}
              b_1(\mathcal X_\gamma)
              \geq  b_1(L_1)+b_1(L_2)+1.
           \end{equation}
       \end{lemma}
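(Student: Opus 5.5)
I would first verify that $\mathcal X_\gamma$ fibers over $\mathcal C_\gamma$, working on $X_\gamma=\mathcal M_G$. That space is the mapping torus of $\Phi=\varphi_{h_1}\times\varphi_{h_2}$ over $[0,T_G]$, so it fibers over the circle $\RR/T_G\ZZ$ with fiber $\widehat L_1\times\widehat L_2$. By \cref{pr14} the group $\mathsf H_{G_\gamma}$ is finite cyclic and acts freely. The key step is to describe this action concretely. If $\lambda\overline G_\gamma(t,x_1,x_2)=\overline G_\gamma(t',x_1',x_2')$, then $s(t)=s(t')$ and $e^{i(\theta_j(t')-\theta_j(t))}\in\lambda\mathsf H_{f_j}$. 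The opposite-branch exclusion from the proof of \cref{th17} (character $\in\{\pm1\}$ via \cref{pr13} and \cref{lm12}) then forces $t'-t\in T_{\mathrm{cell}}\ZZ$ with $r:=(t'-t)/T_{\mathrm{cell}}\in\mathcal R_L=N\ZZ$ (\cref{prPR}).

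From this description the action of $\mathsf H_{G_\gamma}$ covers the translation group generated by $t\mapsto t+NT_{\mathrm{cell}}$ on $\RR/T_G\ZZ$. The quotient of the base is therefore $\mathcal C_\gamma$, and the elements acting trivially on the base are the diagonal phases $\lambda\in\mathsf H_{f_1}\cap\mathsf H_{f_2}=\mathsf D_{12}$ at fixed $t$. Hence $\mathcal X_\gamma\to\mathcal C_\gamma$ is a smooth fiber bundle with fiber $(\widehat L_1\times\widehat L_2)/\mathsf D_{12}$. Smoothness follows because a free finite quotient of a bundle, compatible with the projection, is again a bundle.

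Next I would verify that $\mathfrak P$ is well defined. The map $(t,x_1,x_2)\mapsto([t],\pi_1x_1,\pi_2x_2)$ is invariant under the gluing $\Phi$ and under $\mathsf H_{G_\gamma}$, because $\pi_j\circ\varphi_h=\pi_j$ for $h\in\mathsf H_{f_j}=\operatorname{im}\chi_{L_j}$, these being deck transformations by \cref{pr13}. Equivalently, by \cref{rm:HC}, $\mathfrak P$ is blockwise projectivization together with the profile coordinate. For the metric statement I would use \cref{lm1}: the induced metric is $v_\gamma^2dt^2+a^2\widehat g_1+b^2\widehat g_2$. Since $G_\gamma$ is horizontal, the Hopf projection is isometric on it, so after reparametrizing by $\tau$ the metric is $d\tau^2+\cos^2s\,g_1+\sin^2s\,g_2$ pulled back by the locally isometric covers $\pi_j$. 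A local isometry between compact manifolds is a Riemannian covering.

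The sheet count is where I expect the main obstacle, namely bookkeeping of the group orders. Fiberwise, $\mathfrak P$ restricts to $(\widehat L_1\times\widehat L_2)/\mathsf D_{12}\to L_1\times L_2$, which has degree $|\mathsf H_{f_1}||\mathsf H_{f_2}|/|\mathsf D_{12}|=\operatorname{lcm}=\kappa$. On the base, $\mathcal C_\gamma\to\mathcal C_\gamma$ is the identity, since both sides use the $N$-cell circle. Still, I must check that the fiber identification at $t$ and $t+NT_{\mathrm{cell}}$ introduces no extra collapse. This amounts to showing that the preimage of a point $([\tau],y_1,y_2)$ in a single fiber has exactly $\kappa$ elements. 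I would prove that count via the transitive $\mathsf H_{f_1}\times\mathsf H_{f_2}$ action on the fiber of $\pi_1\times\pi_2$ modulo the diagonal $\mathsf D_{12}$.

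For \eqref{eq83} I would use the transfer for finite coverings: $H^1(\mathcal C_\gamma\times L_1\times L_2;\RR)$ injects into $H^1(\mathcal X_\gamma;\RR)$ under $\mathfrak P^*$. By Künneth, the left side has dimension $1+b_1(L_1)+b_1(L_2)$, which gives the inequality.
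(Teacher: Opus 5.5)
Your proposal is correct and follows the paper's proof essentially step for step. The shared steps are: the product structure over an $N$-cell interval with gluing by factor stabilizers, which you make explicit by identifying the $\mathsf H_{G_\gamma}$-action as cell shifts in $N\ZZ$ with kernel $\mathsf D_{12}$; well-definedness of $\mathfrak P$ because the stabilizers act by deck transformations; the metric from \eqref{eq7} and horizontality; the fiberwise degree $|\mathsf H_{f_1}||\mathsf H_{f_2}|/|\mathsf D_{12}|=\kappa$; and injectivity of $\mathfrak P^*$ on $H^1$ by transfer (the paper's fiber averaging) together with K\"unneth.

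The one point to tighten is the opposite-branch exclusion. With a common phase $\lambda\in\mathsf H_{G_\gamma}$, the relevant character is $\lambda^{p+q}h_1^ph_2^q$, so \cref{pr13} alone does not place it in $\{\pm1\}$; you also need $\lambda^{p+q}\in\{\pm1\}$. This follows from the Lagrangian-angle argument applied to the special Legendrian $\overline G_\gamma$. Alternatively, a reflection about a turning point is ruled out directly: the required phase difference would vary continuously in $t$ while lying in the finite set $\lambda\mathsf H_{f_j}$.
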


       \begin{proof}
           Over an $N$-cell interval,
           the domain is the product with the stated fiber.
        The endpoint gluing is induced by factor stabilizers,
       so in the block coordinates of \cref{rm:HC}
         it fixes $[Z_1]$ and $[Z_2]$
          and hence preserves $\pi_1$ and $\pi_2$.
           Thus the profile bundle may be twisted, but $\mathfrak P$ is well defined.
           On each fiber it is the product covering $\pi_1\times\pi_2$ modulo the diagonal group $\mathsf D_{12}$,
           so its degree is
           $|\mathsf H_{f_1}|\,|\mathsf H_{f_2}|/|\mathsf D_{12}|=\kappa$.
           The horizontal isometry of the Hopf projection
           and \eqref{eq7} show the stated metric.
          % Finally,
       %    pullback by $\mathfrak P$ is injective on real cohomology,
       %    and the K\"unneth formula gives \eqref{eq83}.
      %     
            If a closed $1$-form $\alpha$ on the base satisfies $\mathfrak P^*\alpha=du$,
       averaging $u$ over each fiber gives a function $\bar u$ on the base with $d\bar u=\alpha$.
       Thus pullback is injective on first real cohomology, and the K\"unneth formula proves \eqref{eq83}.
       \end{proof}

           For the factor sectors,
          let $\Delta_{0,i}$ be the positive Laplacian of $(L_i,g_i)$,
          and write
          $\mathfrak p_i=\operatorname{pr}_{L_i}\circ\mathfrak P$,
          $(\mathfrak a_1,\mathfrak a_2)=(\cos^2s,\sin^2s)$.
          Here
          $\operatorname{pr}_{L_i}$ denotes projection onto the $L_i$ factor.
          Relation \eqref{eq85} and  $\mathfrak a_1+\mathfrak a_2=1$ 
          give
          $\Delta_0\mathfrak a_i=2m\mathfrak a_i-2(n_i+1)$
          for $i=1,2$.
          For mean-zero functions $\phi,\psi$ on $L_i$,
          the lift
               $\mathcal T_i\phi=\mathfrak a_i\mathfrak p_i^*\phi$
               also has mean zero on $\mathcal X_\gamma$.
          The warped-product Laplacian gives
          \[
                              (\Delta_0-2m)\mathcal T_i\phi
                              =\mathfrak p_i^*
                              (\Delta_{0,i}-2(n_i+1))\phi.
          \]
          With  $\Delta_0(\mathfrak p_i^*h)  =\mathfrak a_i^{-1}\mathfrak p_i^*(\Delta_{0,i}h)$,
       we apply $\Delta_0$ to this identity
       and pair with $\mathcal T_i\psi$.
       The factors $\mathfrak a_i^{-1}$ and $\mathfrak a_i$ cancel,
       so Oh's formula and Fubini's theorem give
                 \[
                              Q_{\overline \iota_\gamma}^{\mathrm{Ham}}
                              (\mathcal T_i\phi,\mathcal T_i\psi)
                              =\mathfrak c_i
                              Q_{\iota_i}^{\mathrm{Ham}}(\phi,\psi),
                              \qquad \mathfrak c_i>0.
          \]
          Since $\int_{L_i}\Delta_{0,i}h\,\dd\vol_{L_i}=0$,
          the same calculation yields
          \[
                              Q_{\overline \iota_\gamma}^{\mathrm{Ham}}
                              (\mathcal T_1\phi,\mathcal T_2\psi)=0,
                              \qquad
                              Q_{\overline \iota_\gamma}^{\mathrm{Ham}}
                              (\mathcal T_i\phi,v(\tau))=0.
          \]
          Thus the two factor sectors and the profile sector are mutually orthogonal for $Q_{\overline\iota_\gamma}^{\mathrm{Ham}}$,
          and this induces the factor contributions in \cref{th18}.

%\majorsectiongap
          \vspace{\baselineskip}
          \raggedbottom

\end{document}